\def \HoG{.6cm} 
\def \PotlI{.85} 
\def \PotrI{.85} 
\tikzset{
	every loop/.style={very thick},
	comp/.style={circle,fill,black,,inner sep=0pt,minimum size=5pt},
	order bottom left/.style={pos=.05,left,font=\tiny},
	order top left/.style={pos=.9,left,font=\tiny},
	order bottom right/.style={pos=.05,right,font=\tiny},
	order top right/.style={pos=.9,right,font=\tiny},
	order node dis/.style={text width=.75cm},
	circled number/.style={circle, draw, inner sep=0pt, minimum size=12pt},
	below left with distance/.style={below left,text height=10pt},
    below right with distance/.style={below right,text height=10pt}
	}
\newcommand{\referee}[1]{\textcolor{black}{ #1}}
\newcolumntype{C}[1]{>{\centering\arraybackslash$}p{#1}<{$}}
    \newcommand*{\@gobblenexttocentry}[9]{}
    \newcommand*{\@gobblenexttocentry}[4]{}
\newcommand*{\addsubsection}{%
    \addtocontents{toc}{\protect\@gobblenexttocentry}%
    \subsection*}
\begin{document}

\def\subsectionautorefname{Section}
\def\subsubsectionautorefname{Section}
\def\sectionautorefname{Section}
\def\equationautorefname~#1\null{(#1)\null}

\newcommand{\mynewtheorem}[4]{
  \if\relax\detokenize{#3}\relax 
    \if\relax\detokenize{#4}\relax 
      \newtheorem{#1}{#2}
    \else
      \newtheorem{#1}{#2}[#4]
    \fi
  \else
    \newaliascnt{#1}{#3}
    \newtheorem{#1}[#1]{#2}
    \aliascntresetthe{#1}
  \fi
  \expandafter\def\csname #1autorefname\endcsname{#2}
}

\mynewtheorem{theorem}{Theorem}{}{section}
\mynewtheorem{lemma}{Lemma}{theorem}{}
\mynewtheorem{rem}{Remark}{lemma}{}
\mynewtheorem{prop}{Proposition}{lemma}{}
\mynewtheorem{cor}{Corollary}{lemma}{}
\mynewtheorem{definition}{Definition}{lemma}{}
\mynewtheorem{question}{Question}{lemma}{}
\mynewtheorem{assumption}{Assumption}{lemma}{}
\mynewtheorem{example}{Example}{lemma}{}
\mynewtheorem{conj}{Conjecture}{}{section}


\def\defbb#1{\expandafter\def\csname b#1\endcsname{\mathbb{#1}}}
\def\defcal#1{\expandafter\def\csname c#1\endcsname{\mathcal{#1}}}
\def\deffrak#1{\expandafter\def\csname frak#1\endcsname{\mathfrak{#1}}}
\def\defop#1{\expandafter\def\csname#1\endcsname{\operatorname{#1}}}
\def\defbf#1{\expandafter\def\csname b#1\endcsname{\mathbf{#1}}}

\makeatletter
\def\defcals#1{\@defcals#1\@nil}
\def\@defcals#1{\ifx#1\@nil\else\defcal{#1}\expandafter\@defcals\fi}
\def\deffraks#1{\@deffraks#1\@nil}
\def\@deffraks#1{\ifx#1\@nil\else\deffrak{#1}\expandafter\@deffraks\fi}
\def\defbbs#1{\@defbbs#1\@nil}
\def\@defbbs#1{\ifx#1\@nil\else\defbb{#1}\expandafter\@defbbs\fi}
\def\defbfs#1{\@defbfs#1\@nil}
\def\@defbfs#1{\ifx#1\@nil\else\defbf{#1}\expandafter\@defbfs\fi}
\def\defops#1{\@defops#1,\@nil}
\def\@defops#1,#2\@nil{\if\relax#1\relax\else\defop{#1}\fi\if\relax#2\relax\else\expandafter\@defops#2\@nil\fi}
\makeatother

\defbbs{BZHQCNPALRVW}
\defcals{DOPQMNXYLTRAEHZKCFIG}
\deffraks{apijklmnopqueRB}
\defops{Gh, PGL,SL,Sp,mod,Spec,Re,Gal,Tr,End,GL,Hom,PSL,H,div,Aut,rk,Mod,R,T,Tr,Mat,Vol,MV,Res,vol,Z,diag,Hyp,ord,Im,ev,U,dev,c,CH,fin,pr,Pic,lcm,ch,td,LG,id,Sym,Aut,hor,lev,rel,stab, SU,PU,tor}
\defbfs{kiuvzwpds} 

\def\ep{\varepsilon}
\def\abs#1{\lvert#1\rvert}
\def\dd{\mathrm{d}}
\def\inj{\hookrightarrow}
\def\eq{=}
\newcommand{\hyp}{{\rm hyp}}
\newcommand{\odd}{{\rm odd}}

\def\i{\mathrm{i}}
\def\e{\mathrm{e}}
\def\st{\mathrm{st}}
\def\ct{\mathrm{ct}}

\def\uC{\underline{\bC}}
\def\ol{\overline}
  
\def\Vrel{\bV^{\mathrm{rel}}}
\def\Wrel{\bW^{\mathrm{rel}}}
\def\twolev{\mathrm{LG_1(B)}}

\def\be{\begin{equation}}   \def\ee{\end{equation}}     \def\bes{\begin{equation*}}    \def\ees{\end{equation*}}
\def\ba{\be\begin{aligned}} \def\ea{\end{aligned}\ee}   \def\bas{\bes\begin{aligned}}  \def\eas{\end{aligned}\ees}
\def\={\;=\;}  \def\+{\,+\,} \def\m{\,-\,}

\newcommand*{\proj}{\mathbb{P}}
\newcommand{\barmoduli}[1][g]{{\overline{\mathcal M}}_{#1}}
\newcommand{\moduli}[1][g]{{\mathcal M}_{#1}}
\newcommand{\omoduli}[1][g]{{\Omega\mathcal M}_{#1}}
\newcommand{\komoduli}[1][g]{{\Omega^k\mathcal M}_{#1}}
\newcommand{\modulin}[1][g,n]{{\mathcal M}_{#1}}
\newcommand{\omodulin}[1][g,n]{{\Omega\mathcal M}_{#1}}
\newcommand{\zomoduli}[1][]{{\mathcal H}_{#1}}
\newcommand{\barzomoduli}[1][]{{\overline{\mathcal H}_{#1}}}
\newcommand{\pomoduli}[1][g]{{\proj\Omega\mathcal M}_{#1}}
\newcommand{\pomodulin}[1][g,n]{{\proj\Omega\mathcal M}_{#1}}
\newcommand{\pobarmoduli}[1][g]{{\proj\Omega\overline{\mathcal M}}_{#1}}
\newcommand{\pobarmodulin}[1][g,n]{{\proj\Omega\overline{\mathcal M}}_{#1}}
\newcommand{\potmoduli}[1][g]{\proj\Omega\tilde{\mathcal{M}}_{#1}}
\newcommand{\obarmoduli}[1][g]{{\Omega\overline{\mathcal M}}_{#1}}
\newcommand{\obarmodulio}[1][g]{{\Omega\overline{\mathcal M}}_{#1}^{0}}
\newcommand{\otmoduli}[1][g]{\Omega\tilde{\mathcal{M}}_{#1}}
\newcommand{\pom}[1][g]{\proj\Omega{\mathcal M}_{#1}}
\newcommand{\pobarm}[1][g]{\proj\Omega\overline{\mathcal M}_{#1}}
\newcommand{\pobarmn}[1][g,n]{\proj\Omega\overline{\mathcal M}_{#1}}
\newcommand{\princbound}{\partial\mathcal{H}}
\newcommand{\omoduliinc}[2][g,n]{{\Omega\mathcal M}_{#1}^{{\rm inc}}(#2)}
\newcommand{\obarmoduliinc}[2][g,n]{{\Omega\overline{\mathcal M}}_{#1}^{{\rm inc}}(#2)}
\newcommand{\pobarmoduliinc}[2][g,n]{{\proj\Omega\overline{\mathcal M}}_{#1}^{{\rm inc}}(#2)}
\newcommand{\otildemoduliinc}[2][g,n]{{\Omega\widetilde{\mathcal M}}_{#1}^{{\rm inc}}(#2)}
\newcommand{\potildemoduliinc}[2][g,n]{{\proj\Omega\widetilde{\mathcal M}}_{#1}^{{\rm inc}}(#2)}
\newcommand{\omoduliincp}[2][g,\lbrace n \rbrace]{{\Omega\mathcal M}_{#1}^{{\rm inc}}(#2)}
\newcommand{\obarmoduliincp}[2][g,\lbrace n \rbrace]{{\Omega\overline{\mathcal M}}_{#1}^{{\rm inc}}(#2)}
\newcommand{\obarmodulin}[1][g,n]{{\Omega\overline{\mathcal M}}_{#1}}
\newcommand{\LTH}[1][g,n]{{K \overline{\mathcal M}}_{#1}}
\newcommand{\PLS}[1][g,n]{{\bP\Xi \mathcal M}_{#1}}

\DeclareDocumentCommand{\okmoduli}{ O{g} O{k}}{{\Omega^{#2}\mathcal M}_{#1}}
\DeclareDocumentCommand{\LMS}{ O{\mu} O{g,n} O{}}{\Xi\overline{\mathcal{M}}^{#3}_{#2}(#1)}
\DeclareDocumentCommand{\kLMS}{ O{\mu} O{g,n} O{k} O{}}{\Xi^{#3}\overline{\mathcal{M}}^{#4}_{#2}(#1)}
\DeclareDocumentCommand{\Romod}{ O{\mu} O{g,n} O{}}{\Omega\mathcal{M}^{#3}_{#2}(#1)}
\newcommand*{\Eq}{\mathrm{Eq}}
\newcommand*{\Tw}[1][\Lambda]{\mathrm{Tw}_{#1}}  
\newcommand*{\sTw}[1][\Lambda]{\mathrm{Tw}_{#1}^s}  

\newcommand{\bfa}{{\bf a}}
\newcommand{\bfb}{{\bf b}}
\newcommand{\bfd}{{\bf d}}
\newcommand{\bfe}{{\bf e}}
\newcommand{\bff}{{\bf f}}
\newcommand{\bfg}{{\bf g}}
\newcommand{\bfh}{{\bf h}}
\newcommand{\bfm}{{\bf m}}
\newcommand{\bfn}{{\bf n}}
\newcommand{\bfp}{{\bf p}}
\newcommand{\bfq}{{\bf q}}
\newcommand{\bfP}{{\bf P}}
\newcommand{\bfR}{{\bf R}}
\newcommand{\bfU}{{\bf U}}
\newcommand{\bfu}{{\bf u}}
\newcommand{\bfz}{{\bf z}}
\newcommand{\bfG}{{\bf G}}

\newcommand{\bfl}{{\boldsymbol{\ell}}}
\newcommand{\bfmu}{{\boldsymbol{\mu}}}
\newcommand{\bfeta}{{\boldsymbol{\eta}}}
\newcommand{\bfomega}{{\boldsymbol{\omega}}}
\newcommand{\bfsigma}{{\boldsymbol{\sigma}}}
\newcommand{\bftau}{{\boldsymbol{\tau}}}
\newcommand{\Stab}{\operatorname{Stab}}
\newcommand{\cl}{\operatorname{cl}}

\newcommand{\wh}{\widehat}
\newcommand{\wt}{\widetilde}
\newcommand{\whmu}{\widehat{\mu}}
\newcommand{\whLa}{\widehat{\Lambda}}

\newcommand{\ps}{\mathrm{ps}}  
\newcommand{\pmarked}{\mathrm{mp}}

\newcommand{\tdpm}[1][{\Gamma}]{\mathfrak{W}_{\operatorname{pm}}(#1)}
\newcommand{\tdps}[1][{\Gamma}]{\mathfrak{W}_{\operatorname{ps}}(#1)}

\newlength{\halfbls}\setlength{\halfbls}{.5\baselineskip}
\newlength{\halbls}\setlength{\halfbls}{.5\baselineskip}

\newcommand*{\Hrel}{\cH_{\text{rel}}^1}
\newcommand*{\Hrelbar}{\overline{\cH}^1_{\text{rel}}}
\newcommand*{\HrelB}{\overline{\cH}^1_{\text{rel},B}}

\newcommand*\interior[1]{\mathring{#1}}

\newcommand{\prodt}[1][j]{ t_{\lceil #1 \rceil}}
\newcommand{\prodtL}[1][\lceil L \rceil]{t_{#1}}

\DeclareDocumentCommand{\MSgrp}{ O{\mu} }{\mathcal{MS}_{#1}}
\NewDocumentCommand{\cherry}{O{i,j} O{p,q}}{{}_{#1} \Lambda_{#2}}

\newcommand{\whGtau}{\widehat \Gamma_{\!\tau}}
\newcommand{\area}{{\rm area}}
\newcommand{\PPer}{{\rm PPer}}
\newcommand{\tui}[1][i]{t_{\lceil #1 \rceil}}


\title[Chern classes of linear submanifolds]
{Chern classes of linear submanifolds \\
with application to spaces of $k$-differentials \\
and ball quotients }

\author{Matteo Costantini}
\email{matteo.costantini@uni-due.de}
\address{Institut f\"ur Mathematik, Universit\"at Duisburg-Essen,
	45117 Essen, Germany}
\author{Martin M\"oller}
\email{moeller@math.uni-frankfurt.de}
\address{Institut f\"ur Mathematik, Goethe-Universit\"at Frankfurt,
Robert-Mayer-Str. 6-8,
60325 Frankfurt am Main, Germany}
\author{Johannes Schwab}
\email{schwab@math.uni-frankfurt.de}
\address{Institut f\"ur Mathematik, Goethe-Universit\"at Frankfurt,
Robert-Mayer-Str. 6-8,
60325 Frankfurt am Main, Germany}

\thanks{Research of J.S and M.M.\ is supported
by the DFG-project MO 1884/2-1 and the Collaborative Research Centre
TRR 326 ``Geometry and Arithmetic of Uniformized Structures''.}
\thanks{Research of M.C. has been supported by the DFG
  Research Training Group 2553.}

\begin{abstract}
We provide formulas for the Chern classes of linear submanifolds of the
moduli spaces of Abelian differentials and hence for their Euler characteristic.
This includes as special case the moduli spaces of $k$-differentials, for
which we set up the full intersection theory package and implement it
in the SageMath package \texttt{diffstrata}.
\par
As an application, we give an algebraic proof of the theorems
of Deligne-Mostow and Thurston that suitable compactifications of
moduli spaces of $k$-differentials on the $5$-punctured projective line
with weights satisfying the INT-condition are quotients of the
complex two-ball.
\end{abstract}
\maketitle
\tableofcontents


\section{Introduction}

Linear submanifolds  are the most interesting and well-studied
subvarieties of moduli spaces of Abelian differentials $\omoduli[g,n](\mu)$
and their classification seems far from complete at present. They
are defined as the normalization  of algebraic substacks of~$\omoduli[g,n](\mu)$
that are locally a union of linear subspaces in period coordinates. In the holomorphic
case, linear submanifolds defined by real linear equations are precisely the
closures of $\GL_2^+(\bR)$-orbits by the fundamental theorems of
Eskin-Mirzakhani-Mohammadi (\cite{EsMi}, \cite{EsMiMo}).
These orbit closures are automatically algebraic subvarieties
by Filip's theorem (\cite{Filip}). Our results require algebraicity, but
they work as well for meromorphic differentials and for subvarieties whose
equations are only $\bC$-linear.
\par
Linear submanifolds include
\begin{itemize}
\item spaces of quadratic differentials,
\item Teichmüller curves, 
\item eigenform loci and Prym loci,
\item the recent sporadic examples from \cite{MMW} and \cite{emmw}, but also
\item spaces defined by covering constructions, and
\item in the meromorphic case, spaces defined by residue conditions.
\end{itemize}
These examples are $\bR$-linear. Spaces of $k$-differentials for
$k \geq 2$ and in particular the ball quotients in Section~\ref{sec:BQ} are
prominent examples that are only $\bC$-linear.
\par
Our primary goal is a formula for the Chern classes of the cotangent
bundle of any linear submanifold or rather of its compactification.
The Euler characteristic is an intrinsic compactification-independent
application. Knowing the Chern classes is a prerequisite for understanding the
birational geometry of linear submanifolds, such as computations of the
Kodaira dimension, see \cite{CCM}.
\par
This goal was achieved in \cite{CMZeuler} for the full projectivized strata
of Abelian differentials $\bP\omoduli[g,n](\mu)$ themselves, taking the modular
smooth normal crossing compactification $\bP\LMS$ of multi-scale differentials
from \cite{LMS} as point of departure. 
In the inextricable zoo of linear manifolds we are not aware of any
intrinsic way to construct a smooth compactification with modular properties.
Working with the normalization of the closure in some ambient compactification
is usually unsuitable for intersection theory computations. Here, however,
thanks to the work of Benirschke-Dozier-Grushevsky (\cite{BDG}) and some
minor upgrades we are able to work with this closure.
\par
We now introduce  more notation to state the general results and then apply
them to specific linear submanifolds. Let $\Omega\cH \to  \omoduli[g,n](\mu)$ be
a linear submanifold. Let moreover $\cH \to \bP\omoduli[g,n](\mu)$ be its
projectivization and let $\ol \cH \to \bP\LMS$ denote the normalization of its
closure into the space of multi-scale differentials. The boundary strata~$D_\Gamma$
of $\bP\LMS$ are indexed by level graphs~$\Gamma$ as we recall in
\autoref{sec:boundarycomb}. By \cite[Theorem~1.5]{BDG} the boundary of~$\ol \cH$
is divisorial and consists two types of divisors: First there are the
divisors $D^\cH_{\text{h}}$ of
curves whose level graphs have only horizontal edges (i.e.\ joining vertices of
the same level). Second there are the divisors $D^\cH_\Gamma$ parameterized by level
graphs~$\Gamma \in \LG_1(\cH)$ that have one level below the zero level and no
horizontal edges and such that the intersection of $\ol{\cH}$ with the interior
of the boundary divisor~$D_\Gamma$  is non-empty.
Those boundary divisors~$D^\cH_\Gamma$ come with the integer~$\ell_\Gamma$,
the least common multiple of the prongs~$\kappa_e$ along the edges. The
interior of $D_\Gamma$ can intersect the linear submanifold $\ol \cH$ in finitely
many irreducible components, whose number we denote by $n_\Gamma(\ol \cH)$. We
denote by $\LG^+_1(\cH) $ the set of pairs $\Gamma^+=(\Gamma,i)$ where
$\Gamma\in \LG_1(\cH)$ and $i\in \{1,\dots,n_\Gamma(\ol \cH)\}$ is the index
set of irreducible components of  $D_\Gamma^\circ\cap \ol \cH$. We refer to
$\Gamma^+=(\Gamma,i)$ as a \emph{refined level graph} and
set $\ell_{(\Gamma,i)}:=\ell_\Gamma$. This extra notational complexity is necessary since a priori it is possible that a linear submanifold intersects a boundary component in irreducible components with different level dimensions. This is clearly possible since for example we do not require our linear manifolds to be irreducible, which is
convenient in order to include entire strata of $k$-differentials, but it could be possible also in the case of irreducible linear submanifolds.
We let $\xi = c_1(\cO(-1))$ be the first Chern class of the tautological bundle
on~$\ol \cH$.
\par
\begin{theorem} \label{thm:c1cor}
The first Chern class of the logarithmic cotangent bundle of a
projectivized compactified linear submanifold $\ol \cH$ is
\be \label{eq:firstChern}
\c_1(\Omega^1_{\ol{\cH}}(\log \partial\cH)) \= N \cdot \xi + \sum_{\Gamma^+ \in 
\LG_1^+(\cH)} (N-N_{\Gamma^+}^\top) \ell_{\Gamma^+}  [D^\cH_{\Gamma^+}] \qquad \in \CH^1(\ol{\cH})\,,
\ee
where $N := \dim(\Omega \cH)$ and where $N_{\Gamma^+}^\top:=\dim(D^{\cH,\top}_{\Gamma^+})+1$
is the dimension of the unprojectivized top level stratum in~$D^\cH_{\Gamma^+}$.
\end{theorem}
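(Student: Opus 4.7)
The plan is to adapt the strategy of \cite{CMZeuler}, which proves the analogous formula for the full projectivized strata $\bP\LMS$, to the linear submanifold $\ol \cH$. Two ingredients make this adaptation possible: locally in the interior, $\Omega \cH$ is cut out by linear equations in period coordinates, so that $\Omega^1_{\Omega \cH}$ has a natural trivialization by the $N$ restrictions of period 1-forms; and by \cite{BDG}, the boundary of $\ol\cH$ is divisorial of the two types $D^\cH_{\hyp}$ and $D^\cH_\Gamma$, so that only a codimension-one boundary analysis is needed. A minor upgrade of \cite{BDG} provides that also near the boundary, the submanifold $\ol\cH$ is locally given by linear equations in a suitable system of \emph{perturbed period coordinates}.

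I would first work on the unprojectivized cone $\Omega\ol\cH$. In the interior, the trivialization by $N$ restricted period 1-forms is clear, and it extends logarithmically across the horizontal divisors $D^\cH_{\hyp}$, since near a horizontal node the smoothing parameter appears through standard plumbing and does not contribute to $c_1$. The core computation is the local analysis at the generic point of a non-horizontal divisor $D^\cH_\Gamma$ with $\Gamma \in \LG_1(\cH)$. Here the perturbed period coordinates split into $N_\Gamma^\top$ top-level periods and $N-N_\Gamma^\top$ lower-level periods which, by the prong-matching conditions, are rescaled by $t^{\ell_\Gamma}$, where $t$ is the smoothing parameter of the single level below zero and $\ell_\Gamma=\operatorname{lcm}(\kappa_e)$. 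The change of basis from the ``flat'' period 1-forms to a logarithmic basis $(\dd t/t, \text{top-level forms},t^{-\ell_\Gamma}\cdot\text{lower forms})$ has a determinant of order $\ell_\Gamma(N-N_\Gamma^\top)$ in $t$, yielding the contribution $(N-N_\Gamma^\top)\ell_\Gamma[D^\cH_\Gamma]$ to $\c_1(\Omega^1_{\Omega\ol\cH}(\log \partial\cH))$.

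Finally, I would pass from $\Omega\ol\cH$ to its projectivization $\ol\cH$ using the $\bC^*$-scaling action. Since $\Omega\ol\cH \to \ol\cH$ is (up to finite stabilizers) a $\bC^*$-bundle whose associated line bundle pulls back to $\cO(-1)$, and since each of the $N$ trivializing period 1-forms on $\Omega\ol\cH$ is $\bC^*$-homogeneous of weight one, the descent to $\ol\cH$ introduces a twist by $\cO(1)$ for each of them, adding $N\xi$ to $c_1$. Combining this with the previous step gives the formula in \eqref{eq:firstChern}.

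The main obstacle is the rigorous local analysis at the boundary: one must know that the perturbed period coordinates on $\bP\LMS$ restrict to good coordinates on $\ol\cH$ in which linearity is preserved after the $t^{\ell_\Gamma}$-rescaling, and that the normalization $\ol\cH$ matches up correctly with the divisors $D^\cH_\Gamma$ so that their multiplicities are well-defined. This is precisely where the structural results of \cite{BDG} (plus the hinted upgrades) enter, replacing the explicit modular description of $\bP\LMS$ used in \cite{CMZeuler} by the intrinsic description of $\ol\cH$ as a closure.
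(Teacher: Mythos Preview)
Your outline captures the right two contributions --- $N\xi$ from projectivization and $\ell_\Gamma(N - N_\Gamma^\top)$ from each non-horizontal boundary divisor --- and your identification of the local boundary mechanism (lower-level periods rescale by $t^{\ell_\Gamma}$) is exactly right. But the packaging via an ``unprojectivized compactification $\Omega\ol\cH$'' is problematic: no such space with divisorial boundary is available (the multi-scale compactification is intrinsically of the projectivization), and your descent step (``each of the $N$ weight-one forms twists by $\cO(1)$, adding $N\xi$'') is a heuristic that on its own does not compute $c_1$ of a rank-$(N-1)$ bundle.

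The paper avoids both issues by working directly on $\ol\cH$ via two exact sequences. The first is an Euler-type sequence (\autoref{thm:EulerDE})
\[
0 \longrightarrow \cK \longrightarrow (\Hrelbar)^\vee \otimes \cO_{\ol\cH}(-1) \longrightarrow \cO_{\ol\cH} \longrightarrow 0,
\]
where $\Hrelbar$ is the Deligne extension of the local system modelling $T\Omega\cH$. Since Deligne extensions have vanishing Chern classes (this is the point you do not mention but need), one reads off $c_1(\cK) = N\xi$; this is the rigorous replacement for your descent. The second sequence (\autoref{thm:coker})
\[
0 \longrightarrow \cE_\cH \otimes \cL_\cH^{-1} \longrightarrow \cK \longrightarrow \cC \longrightarrow 0,
\]
with $\cL_\cH = \cO(\sum_\Gamma \ell_\Gamma D^\cH_\Gamma)$, compares $\cK$ to the log cotangent bundle; the torsion cokernel $\cC$ is supported on the boundary with $\ch_1(\cC_\Gamma) = \ell_\Gamma(N_\Gamma^\top - 1)[D^\cH_\Gamma]$. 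Your determinant-of-change-of-basis computation is the local shadow of this sequence combined with the untwisting by $\cL_\cH$. The structural input you flag from \cite{BDG} --- that perturbed periods on $\ol\cH$ respect the level splitting with the correct $t^{\ell_\Gamma}$-scaling --- is precisely \autoref{prop:Omegadecomp}. Recast on $\ol\cH$ in this way, your argument becomes the paper's.
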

\par
To state a formula for the full
Chern character we need to recall a procedure that also determines adjacency
of boundary strata. It is given by undegeneration maps~$\delta_i$ that
contract all the edges except those that cross from level~$-i+1$ to level~$-i$, 
see \autoref{sec:boundarycomb}.
This construction can obviously be generalized so that a larger
subset of levels remains. For example the  undegeneration map~$\delta_i^\complement$
contracts only the edges crossing from level~$-i+1$ to level~$-i$. For any element $\Gamma$ of the set $ \LG_L(\cH)$ of graphs with  $L$ levels below zero and without 
horizontal edges, we can now
define  the boundary component $D^\cH_\Gamma$ of codimension $L$ and the
quantity $\ell_\Gamma = \prod_{i=1}^L \ell_{\delta_{i}(\Gamma)}$. We  also extend the undegeneration maps at the level of refined level graphs, i.e., for
elements in $\LG^+_L(\cH)$, which we define analogously to $\LG^+_1(\cH) $,
and we still denote them by the same letter.
\par
\begin{theorem} \label{intro:Chern}
The Chern character of the logarithmic cotangent
bundle is 
\bes
\ch(\Omega^1_{\overline{\cH}}(\log \partial\cH)) \= e^{\xi} \cdot \sum_{L=0}^{N-1}
\sum_{ \Gamma^+ \in \LG^+_L(\cH)} \!\!\!\!\!\!
\ell_{\Gamma^+}\left(N-N_{\delta_{L}(\Gamma^+)}^\top\right) \fraki_{\Gamma^+ *}
\prod_{i=1}^{L} \td \left(\cN_{\Gamma^+/\delta_{i}^\complement(\Gamma^+)}^{\otimes -\ell_{\delta_i(\Gamma^+)}} \right)^{-1} \!\!\!\!, 
\ees
where $\cN_{\Gamma^+/\delta_{i}^\complement(\Gamma^+)}$ denotes the normal bundle 
of $D^\cH_{\Gamma^+}$ in $D^\cH_{\delta_{i}^\complement(\Gamma^+)}$, where $\td$ is the
Todd class  and $\fraki_{\Gamma^+}: D^\cH_{\Gamma^+}\hookrightarrow \overline{\cH}$
is the inclusion map.
\end{theorem}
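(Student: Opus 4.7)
The plan is to prove the Chern character formula by induction on the level depth $L$, with the trivial-graph term at $L = 0$ accounting for the bulk contribution $Ne^\xi$ and \autoref{thm:c1cor} providing the template for the codimension-one correction. The strategy parallels \cite{CMZeuler}, where the analogous formula was proved for the full strata $\bP\omoduli[g,n](\mu)$, and the new ingredient is to verify that the iterated boundary restriction and pushforward arguments survive when $\bP\LMS$ is replaced by the normalization $\ol{\cH}$ of the closure of a linear submanifold, which is possible thanks to the divisorial boundary description of \cite{BDG} together with the minor upgrades alluded to in the introduction.

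\textbf{First step.} I would begin by establishing on $\ol{\cH}$ an Euler-type short exact sequence relating $\Omega^1_{\ol{\cH}}(\log \partial\cH)$ to a tautological piece isomorphic to $\cO(-1)^{\oplus N}$ and a boundary piece supported along $\partial\cH$. The $\cO(-1)^{\oplus N}$ reflects that on the open part $\Omega\cH$ period coordinates give $\Omega\cH$ a flat affine structure of complex dimension $N$ and, after projectivization, the cotangent bundle differs from this trivial rank-$N$ bundle by the $\cO(-1)$-twist coming from the scaling action. The boundary piece decomposes along the divisors $D^\cH_\Gamma$ for $\Gamma \in \LG_1(\cH)$. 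Taking Chern characters produces the bulk term $Ne^\xi$ and, by Grothendieck--Riemann--Roch applied to each closed embedding $\fraki_\Gamma$, a sum of pushforwards of Todd inverses. The twist $\cN_\Gamma^{\otimes -\ell_\Gamma}$ arises because the local defining equation of $D^\cH_\Gamma$ inside $\ol{\cH}$ is an $\ell_\Gamma$-th root of the natural normal coordinate, a direct consequence of the prong-matching data built into the multi-scale differential compactification, while the rank coefficient $N - N^\top_\Gamma$ records the dimension of the bottom-level factor along which the quotient is concentrated.

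\textbf{Iteration and main obstacle.} For $L \geq 2$, I would restrict the sequence from the first step to a boundary divisor $D^\cH_{\delta_1(\Gamma)}$ and repeat the analysis on each stratum; composing the regular embeddings $D^\cH_\Gamma \hookrightarrow D^\cH_{\delta_{L-1}^\complement(\Gamma)} \hookrightarrow \cdots \hookrightarrow \ol{\cH}$ and applying Grothendieck--Riemann--Roch for closed embeddings at each stage produces the iterated product $\prod_{i=1}^{L} \td(\cN_{\Gamma/\delta_i^\complement(\Gamma)}^{\otimes -\ell_{\delta_i(\Gamma)}})^{-1}$, while the coefficient $\ell_\Gamma(N - N_{\delta_L(\Gamma)}^\top)$ assembles from the multiplicative orbifold twists $\ell_{\delta_i(\Gamma)}$ and the rank of the residual log cotangent contribution on the deepest level. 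The principal technical difficulty is that $\ol{\cH}$ is only the normalization of a closure inside $\bP\LMS$, so $\partial\cH$ is not a priori a normal crossings divisor; one must verify using \cite{BDG} that the iterated embeddings $\fraki_\Gamma$ are sufficiently regular and that the normal bundles $\cN_{\Gamma/\delta_i^\complement(\Gamma)}$ exist as honest line bundles with the expected $\ell$-twist, so that each step of the inductive GRR argument goes through exactly as in \cite{CMZeuler}. Checking that the combinatorics of $\LG_L(\cH)$ and the iterated undegenerations $\delta_i^\complement$ are compatible with this normalization is the main content of the proof.
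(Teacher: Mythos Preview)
Your proposal is essentially correct and follows the same strategy as the paper: an Euler-type sequence (in the paper split into two exact sequences, \autoref{thm:EulerDE} and \autoref{thm:coker}, with an intermediate bundle~$\cK$ and a twist by $\cL_\cH^{-1}$) combined with GRR for the boundary inclusions yields a recursive expression (\autoref{prop:prerecursion}), and the iteration is then purely formal once the transversality and normal-bundle compatibilities inherited from \cite{BDG} are in place (\autoref{prop:transverse}, \autoref{cor:ordering}, \autoref{lem:compatibilities}, \autoref{lemma:pullbacknormal}). The only refinement to your description is that the middle term of the Euler sequence is $(\Hrelbar)^\vee \otimes \cO_{\ol\cH}(-1)$, which has the same Chern character as $\cO(-1)^{\oplus N}$ because the Deligne extension has trivial higher Chern classes, but is not literally isomorphic to it.
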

\par
So far the results have been stated to parallel exactly those in \cite{CMZeuler}. The ambient spaces can be mildly singular (see \autoref{sec:closurelinsection}), but the maps $i_{\Gamma^+}$ are regular embeddings (see \autoref{subsec:normalbundles}) which allows us to work tacitly with operational Chow groups just as in the case of the smooth DM stack in  \cite{CMZeuler}.
We start explaining the difference in evaluating this along with the next
result, a closed formula for the Euler characteristic.
\par
\begin{theorem} \label{intro:ECformula}
Let $\cH \to \bP\omoduli[g,n](\mu)$ be a projectivized linear submanifold.
The orbifold Euler characteristic of $\cH$ is given by
\bes
\chi(\cH) \= (-1)^d \sum_{L=0}^d \sum_{\Gamma^+ \in \LG^+_L(\cH)}
\frac{K_{\Gamma^+}^{\cH} \cdot N_{\Gamma^+}^\top }{|\Aut_{\cH}(\Gamma^+)| }
\cdot \prod_{i=0}^{-L} \int_{\cH_{\Gamma^+}^{[i]}}
\xi^{d_{\Gamma^+}^{[i]}}_{\cH_{\Gamma^+}^{[i]}},
\ees
where the  integrals are over the normalization of the closure $\ol \cH\to \bP\LMS$
inside the moduli space of multi-scale differentials and similar
integrals over boundary strata, where
\begin{itemize}
\item $\cH_{\Gamma^+}^{[i]}$ are the linear submanifolds at level $i$ of $\Gamma^+$ as
defined in \autoref{sec:closurelin},
\item $d_{\Gamma^+}^{[i]} := \dim(\cH_{\Gamma^+}^{[i]})$ is the projectivized dimension,
\item $K_{\Gamma^+}^{\cH}$ is the product of the number of prong-matchings
on each edge of $\Gamma$ that are actually contained in $D_{\Gamma^+}^\cH$,
\item $\Aut_{\cH}(\Gamma^+)$ is the set of automorphism of the
graph $\Gamma$ whose induced action on a neighborhood of $D_{\Gamma^+}^\cH$
preserves $\ol \cH$,
\item $d := \dim(\cH)$ is the projectivized dimension.
\end{itemize}
\end{theorem}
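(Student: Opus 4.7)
The strategy is the orbifold Gauss--Bonnet formula
\[
\chi(\cH) \= (-1)^d \int_{\ol \cH} c_d\bigl(\Omega^1_{\ol \cH}(\log \partial\cH)\bigr)
\]
combined with \autoref{intro:Chern}. The Chern character computed there is a finite sum of pushforwards $\fraki_{\Gamma *}(\cdot)$ from the boundary strata $D^\cH_\Gamma$, and since $c_d$ is a universal polynomial in the Chern character components $\ch_1,\dots,\ch_d$ (Newton's identities), the integral on the right decomposes, via the projection formula, as a finite sum
\[
\chi(\cH) \= (-1)^d \sum_{L=0}^{d}\sum_{\Gamma \in \LG_L(\cH)} \int_{D^\cH_\Gamma} \alpha_\Gamma
\]
for explicit classes $\alpha_\Gamma$ on $D^\cH_\Gamma$ built from $\xi$ and the Todd classes of the normal bundles $\cN_{\Gamma/\delta_i^\complement(\Gamma)}$.

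On each stratum one invokes the approximate product structure inherited from $\bP\LMS$ and the description of the closure $\ol \cH$ in \cite{BDG}: up to a finite cover of degree $K_\Gamma^\cH/|\Aut_\cH(\Gamma)|$, the stratum $D^\cH_\Gamma$ is the product $\prod_{i=0}^{-L} \cH_\Gamma^{[i]}$ of level-$i$ linear submanifolds introduced in \autoref{sec:closurelin}. The restriction of the tautological line bundle to the level-$i$ factor has first Chern class $\xi_{\cH_\Gamma^{[i]}}$; combining the $e^\xi$-factor of \autoref{intro:Chern} with the level decomposition saturates each level with the maximal power $\xi^{d_\Gamma^{[i]}}_{\cH_\Gamma^{[i]}}$ when the $c_d$-part is extracted, producing the claimed product of top $\xi$-integrals. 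The extra multiplicity $N_\Gamma^\top$ mirrors the $(d+1)$-factor in the classical identity $c_d(\Omega^1_{\bP^d})=(-1)^d(d+1)\xi^d$, but applied at the top level; note that the $L=0$ summand of the formula already reproduces $\chi(\bP^d)=d+1$ when $\cH=\bP^d$, giving a sanity check.

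The main obstacle is verifying that the Todd corrections $\td(\cN_{\Gamma/\delta_i^\complement(\Gamma)}^{\otimes -\ell_{\delta_i(\Gamma)}})^{-1}$ appearing in \autoref{intro:Chern} collapse, after extraction of $c_d$ and integration, into the clean product of level-wise $\xi$-tops in the statement. For full strata this cancellation is the content of \cite{CMZeuler}; for general linear submanifolds the additional difficulty is that $\ol \cH$ is not a modular compactification, so combinatorial factors cannot simply be inherited from $\bP\LMS$. Instead the factors $K_\Gamma^\cH$ and $|\Aut_\cH(\Gamma)|$, extracted using \cite{BDG} together with the linear substructure, record exactly those prong-matchings whose local smoothings remain in $\ol \cH$ and those graph automorphisms preserving $\ol \cH$, and they appear as the degree of the product cover of $D^\cH_\Gamma$. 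Verifying all these compatibilities at the level of cohomology classes — so that the Todd terms reduce to the stated product of $\xi$-integrals weighted by $K_\Gamma^\cH N_\Gamma^\top/|\Aut_\cH(\Gamma)|$ — is the bulk of the work.
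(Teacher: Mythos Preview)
Your plan is the same as the paper's: start from $\chi(\cH)=(-1)^d\int_{\ol\cH}c_d(\Omega^1_{\ol\cH}(\log\partial\cH))$, feed in the Chern character from \autoref{intro:Chern}, let the formal manipulations of \cite[Section~9]{CMZeuler} collapse the Todd terms, and then break each boundary integral into a product over levels via the commensurability roof. The paper records this in two steps: first the intermediate formula
\[
\chi(\cH)\=(-1)^d\sum_{L=0}^d\sum_{\Gamma\in\LG_L(\cH)}N_\Gamma^\top\cdot\ell_\Gamma\cdot\int_{D_\Gamma^\cH}\prod_{i=-L}^{0}(\xi_{\Gamma,\cH}^{[i]})^{d_\Gamma^{[i]}}
\]
(a direct transcription of \cite[Theorem~9.10]{CMZeuler}, with \autoref{lem:compatibilities} replacing the corresponding compatibility in the stratum case), and then \autoref{lem:evaluation} to split the integral over $D_\Gamma^\cH$ into the product of level integrals.

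One point to correct in your sketch: the ratio of degrees in the roof is not $K_\Gamma^\cH/|\Aut_\cH(\Gamma)|$ but $K_\Gamma^\cH/(|\Aut_\cH(\Gamma)|\,\ell_\Gamma)$, see \autoref{lem:ratio_degrees}. This $\ell_\Gamma$ in the denominator is precisely what cancels the $\ell_\Gamma$ surviving in the intermediate formula above (itself a remnant of the $\ell_\Gamma$ and $\ell_{\delta_i(\Gamma)}$ factors in \autoref{intro:Chern}), and that cancellation is why the final statement is free of $\ell$-factors. Without tracking this you cannot actually see why the Todd contributions reduce so cleanly.
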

\par
The number of \emph{reachable prong matchings} $K_{\Gamma^+}^{\cH}$ and the
number $|\Aut_{\cH}(\Gamma^+)|$ as
defined in the theorem are in general non-trivial to determine. Also
the description of $\cH_{\Gamma^+}^{[i]}$ requires specific investigation.
For example, for strata of $k$-differentials, these $\cH_{\Gamma^+}^{[i]}$
are again some strata of $k$-differentials, but the markings of the
edges have to be counted correctly.
\par
The most important obstacle to evaluate this formula however is to compute
the fundamental classes of linear submanifolds, or
to use tricks to avoid this. For strata of Abelian differentials, this
step was provided by the recent advances in relating fundamental classes
to Pixton's formula (\cite{HolmesSchmitt}, \cite{BHPSS}). Whenever we
have the fundamental classes at our disposal, we can evaluate expressions
in the tautological ring, as we briefly summarize in \autoref{sec:nb}.
\par
\medskip
\paragraph{\textbf{Applications: Teichm\"uller curves in genus two}}
As an example where fundamental class considerations can be avoided, we give in \autoref{sec:examples} an alternative quick proof of one of the first computations of Euler characteristics of Teichm\"uller curves, initially proven in \cite{Bai07}, see also \cite{MoeZag} for a proof via theta derivatives.
\par
\begin{theorem}[Bainbridge] \label{thm:ECWDintro}
  The orbifold Euler characteristic of the Teichm\"uller
curve~$W_D\subset  \bP\omoduli[2,1](2)$
in the eigenform locus for real multiplication by a non-square discriminant~$D$
is $\chi(W_D) = -9 \zeta(-1)$ where
$\zeta = \zeta_{\bQ(\sqrt D)}$ is the Dedekind zeta function.
\end{theorem}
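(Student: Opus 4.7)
The plan is to apply \autoref{intro:ECformula} directly with $\cH = W_D$. Since $W_D$ is a Teichm\"uller curve of projectivized dimension $d = 1$, only the summands $L = 0$ and $L = 1$ contribute, and for any $\Gamma \in \LG_1(W_D)$ the zero-dimensional divisor $D^{W_D}_\Gamma$ forces each factor $\int_{\cH^{[i]}_\Gamma} \xi^{d^{[i]}_\Gamma}$ to equal $1$. The formula collapses to
\begin{equation*}
  \chi(W_D) \= -\,2\int_{\ol W_D} \xi \;-\; \sum_{\Gamma \in \LG_1(W_D)} \frac{K^{W_D}_\Gamma \cdot N^\top_\Gamma}{|\Aut_{W_D}(\Gamma)|}\,,
\end{equation*}
so the proof reduces to the degree of the tautological class on $\ol W_D$ and to enumerating the cusps with their combinatorial data.

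First I would enumerate $\LG_1(W_D)$. The cusps of $W_D$ correspond to cylinder decompositions of eigenforms for $\cO_D$ in $\omoduli[2,1](2)$, and their level-graph types form a short list after the classification via McMullen's prototypes: the \emph{banana} graph (two elliptic components joined by two edges) together with a small number of pinched-torus variants. For each such $\Gamma$, the numbers $N^\top_\Gamma$ and $\ell_\Gamma$ are read off the graph and $|\Aut_{W_D}(\Gamma)|$ captures its symmetries; the reachable prong matchings $K^{W_D}_\Gamma$ count those degenerations compatible with real multiplication by $\cO_D$, which reduces to a weighted enumeration of primitive solutions of $b^2 - 4ac = D$ with prescribed sign patterns, i.e. the classical divisor sum appearing in Hirzebruch's evaluation of $\zeta_{\bQ(\sqrt D)}(-1)$.

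Second, for $\int_{\ol W_D} \xi$ I would avoid any fundamental-class computation in the tautological ring by exploiting the map from $W_D$ to the Hilbert modular surface $X_D$: the restriction of $\xi$ is, up to an explicit rational factor, the pullback of a Hodge class from $X_D$, and Siegel's classical formula $\chi(X_D) = 2\zeta_{\bQ(\sqrt D)}(-1)$ together with the combinatorics of the Hirzebruch resolution of cusps on $X_D$ yields $\int_{\ol W_D} \xi$ as an explicit $\bQ$-linear combination of $\zeta_{\bQ(\sqrt D)}(-1)$ and the cusp contributions already identified in the first step. Substituting back gives the stated Euler characteristic.

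The main obstacle will be the bookkeeping of reachable prong matchings $K^{W_D}_\Gamma$ and the careful comparison with Hirzebruch's cusp-resolution on $X_D$: the numerical factor of $9$ emerges only from the precise interplay of these two counts, and verifying the match requires tracking which eigenform-compatible boundary components actually lie in $\ol W_D$. Once this matching is in place the remaining arithmetic is a classical Hurwitz-class-number sum and the identification with $-9\zeta_{\bQ(\sqrt D)}(-1)$ follows.
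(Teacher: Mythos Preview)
Your plan is a genuinely different route from the paper's. You apply \autoref{intro:ECformula} to~$W_D$ itself, whereas the paper applies it to the two-dimensional eigenform locus~$E_D \subset \bP\omoduli[2,2](1,1)$. That single change of target is the whole trick: on~$E_D$ the $L=0$ term is $\int_{\ol E_D}\xi^2$, and this vanishes for free by \autoref{cor:topxivanishing} because $E_D$ has nontrivial REL. The $L=1$ boundary of~$E_D$ consists of exactly two divisors, namely~$P_D$ and~$W_D$ themselves (see Figure~\ref{fig:Eigenformlocus}), and the formula collapses to the linear relation $\chi(E_D) = -\chi(P_D) - 3\chi(W_D)$, the factor~$3$ being the single prong-matching count on the graph~$\Gamma_W$. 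Combined with the Hilbert-surface decomposition $\chi(X_D) = \chi(P_D) + \chi(W_D) + \tfrac12\chi(E_D)$ and Siegel's $\chi(X_D)=2\zeta(-1)$, $\chi(P_D)=-5\zeta(-1)$, one solves for $\chi(W_D)$ with no further input.

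By contrast, your approach requires two substantial ingredients that the paper deliberately avoids. First, $\int_{\ol W_D}\xi$ is not obtained just by pulling back a Hodge class from~$X_D$: you would need the class of~$W_D$ inside~$X_D$, which is itself one of the main computations in Bainbridge's original work. Second, your $L=1$ term asks you to enumerate all cusps of~$W_D$ with their local invariants; this is precisely the prototype count that Bainbridge carried out, and your description conflates two separate quantities (the number of boundary graphs~$\Gamma$ versus the reachable prong count~$K_\Gamma^{W_D}$ for each fixed~$\Gamma$). So while your outline is not wrong in principle, it amounts to repackaging Bainbridge's original argument rather than the short alternative proof advertised here. The paper's insight is that passing to~$E_D$ turns both of these hard computations into trivialities.
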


\paragraph{\textbf{Strata of $k$-differentials}} The space of quadratic
differentials is the cotangent space to moduli space of curves and thus fundamental
in Teichm\"uller dynamics. We give formulas for Chern classes, Euler
characteristics and for the intersection theory in these spaces. In fact,
our formulas work uniformly for spaces of $k$-differentials for all~$k \geq 1$.
Having the quadratic case in mind, we write $\ol{\cQ} = \bP \Xi^k \ol \cM_{g,n}(\mu)$ for the space of  multi-scale $k$-differentials defined in \cite{CoMoZa}.
The space $\ol{\cQ}$ is the disjoint union over all divisors~$d$ of~$k$ of the subspaces parametrizing powers of $k/d$-differentials. We write
$\ol{\cQ}_{\mathrm{pr}} = \bP \Xi_{\mathrm{pr}} ^k \ol \cM_{g,n}(\mu)$ for the (still possibly disconnected) subspace of \emph{primitive multi-scale $k$-differentials}, the closure of the components corresponding to $d=1$. The space  $\ol{\cQ}$
coincides (up to explicit isotropy groups, see \autoref{lem:deg_d}) with the
compactification as above of the linear submanifolds associated to its connected components obtained via the canonical covering construction.
\par
The formulas in \autoref{intro:Chern} apply to the connected components of $\ol{\cQ}$ viewed as linear
submanifolds in some higher genus stratum~$\cM_{\wh g,\wh n}(\wh \mu)$. However
the fundamental class of these submanifolds is not known, conceivably it is not
even a tautological class. The main challenge here is to convert these formulas
into formulas that can be evaluated on $\ol{\cQ}$ viewed as a submanifold
in $\barmoduli[g,n]$ where the fundamental class is given by Pixton's formula.
\par
While the boundary strata of the moduli space $\bP\LMS$ are indexed by level
graphs, the boundary strata of the moduli space of multi-scale
$k$-differentials $\ol{\cQ}$ are indexed by $k$-coverings of level graphs
$\pi : \widehat \Gamma_\pmarked \to \Gamma$, where the legs of
$\widehat \Gamma_\pmarked$ are marked only partially, see \autoref{sec:kdiff}
or also \cite[Section~2]{CoMoZa} for the definitions of these objects
and the labeling conventions of those covers. The $k$-coverings appearing in the boundary of $\ol{\cQ}_{\mathrm{pr}}$ are precisely those with $\widehat{\Gamma}$ connected. Each edge $e \in \Gamma$ has an associated $k$-enhancement $\kappa_e$ given by $|\ord_{e} \omega + k|$, where $\omega$ is the $k$-differential on a generic point of the associated boundary stratum $D_\pi$.   We let $\zeta = c_1(\cO(-1))$ be the first Chern class of the tautological bundle on $\ol \cQ$.
Via the canonical cover construction, \autoref{intro:ECformula}
implies the following formula for the Euler characteristic of strata
of $k$-differentials.  Note that if $\cH_k$ is the linear submanifold associated to a connected component of a stratum of $k$-differentials,  the information of $\pi$ is enough to uniquely determine the relevant information of the irreducible components $D_{\Gamma^+}$ of $D_\pi$, i.e. the  level strata dimensions  $d_{\Gamma^+}^{[i]}$, the number of reachable prong-matchings $K_{\Gamma^+}^{\cH_k}$ and $|\Aut_{\cH_k}(\Gamma^+)|$. So in the applications of the formulas of  \autoref{intro:Chern} and \autoref{intro:ECformula} to strata of $k$-differentials, we can group together all irreducible components of $D_\pi$. 
\par
\begin{cor} \label{cor:kstrata}
The orbifold Euler characteristic of a projectivized stratum of
$k$-differentials $\bP\okmoduli[g,n](\mu)$ is given by
\begin{multline*}
\chi(\bP \Omega^k_{\mathrm{pr}}\moduli[g,n](\mu)) = \\
\left(\frac{-1}{k}\right)^d \sum_{L=0}^d
\sum_{(\pi : \widehat \Gamma_\pmarked \to \Gamma) \in \LG_L(\cQ_{\mathrm{pr}})} S(\pi) \cdot
\frac{N_\pi^\top \cdot \prod_{e \in E(\Gamma)} \kappa_e,}
{|\Aut(\Gamma)|} \cdot \prod_{i=0}^{-L}
\int_{\cQ_\pi^{[i]}} \zeta^{d_\pi^{[i]}}_{\cQ_\pi^{[i]}},
\end{multline*}
where $S(\pi)$ is the normalized size of a stabilizer of a totally labeled version of the graph $\wh \Gamma_\pmarked$ and $\cQ_\pi^{[i]}$ are the strata of $k$-differentials of $D_\pi$ at level $i$.
\end{cor}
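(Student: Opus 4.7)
The plan is to derive the formula by applying \autoref{intro:ECformula} to the linear submanifold coming from $\bP\okmoduli[g,n](\mu)$ via the canonical covering construction, and then to translate every term from the abelian-differential bookkeeping into the combinatorics of covers of $k$-level graphs.

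First I would invoke the canonical covering construction: every $k$-differential $(C,q)$ of type $\mu$ carries a canonical cyclic cover $p: X \to C$ equipped with an abelian differential $\omega$ satisfying $\omega^{\otimes k} = p^\ast q$. This realizes $\bP\okmoduli[g,n](\mu)$, up to the explicit finite isotropy captured by \autoref{lem:deg_d}, as a linear submanifold $\cH$ inside an ambient abelian stratum $\bP\omoduli[\wh g, \wh n](\wh \mu)$. By the construction of multi-scale $k$-differentials in \cite{CoMoZa}, the boundary strata of $\ol\cQ = \bP\Xi^k\ol\cM_{g,n}(\mu)$ are indexed by covers $\pi: \wh\Gamma_\pmarked \to \Gamma$ of $k$-level graphs, and this indexing is compatible with the boundary stratification of $\ol\cH$ in $\bP\LMS$, so that $\LG_L(\cH)$ is identified with $\LG_L(\cQ)$ and the level-wise linear submanifolds $\cH_\Gamma^{[i]}$ are identified with the $k$-differential strata $\cQ_\pi^{[i]}$.

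With these correspondences, I would substitute into \autoref{intro:ECformula}. The tautological classes on the two sides are related by $\xi = \zeta/k$ in rational cohomology, because $\omega^{\otimes k} = p^\ast q$ means that rescaling $q$ by $t$ rescales $\omega$ by a $k$-th root of $t$. Hence each top-power integral $\int_{\cH_\Gamma^{[i]}} \xi^{d_\Gamma^{[i]}}$ converts to $k^{-d_\pi^{[i]}} \int_{\cQ_\pi^{[i]}} \zeta^{d_\pi^{[i]}}$, and the collected powers of $k$, together with the global sign $(-1)^d$ and the edge contributions discussed below, yield the prefactor $(-1/k)^d$. The top-level unprojectivized dimension $N_\Gamma^\top$ is preserved under the identification and equals $N_\pi^\top$.

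The remaining task, and the main obstacle, is to reconcile the combinatorial prefactor $K_\Gamma^\cH / |\Aut_\cH(\Gamma)|$ on the abelian side with $S(\pi) \prod_{e \in E(\Gamma)} \kappa_e / |\Aut(\Gamma)|$. The reachable prong matchings on the cover $\wh\Gamma$ factorize over deck-group orbits of its edge set: each base edge $e$ contributes its $k$-enhancement $\kappa_e = |\ord_e \omega + k|$ to the count, the residual deck-group stabilizer contributes the global factor $S(\pi)$, and the remaining powers of $k$ coming from the cover-to-base prong comparison absorb into the prefactor $(-1/k)^d$. On the automorphism side, an automorphism of the abelian level graph preserving the linear submanifold descends, via \autoref{lem:deg_d}, to an automorphism of the base graph $\Gamma$, giving $|\Aut_\cH(\Gamma)| = |\Aut(\Gamma)|$. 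Carrying out this reconciliation carefully, using the partial markings of $\wh\Gamma_\pmarked$ and the labeling conventions of \cite{CoMoZa}, is the main technical step; after that, the corollary follows by direct substitution into \autoref{intro:ECformula}.
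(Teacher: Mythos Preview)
Your overall strategy is right—apply \autoref{intro:ECformula} to the covering linear submanifold $\cH_k$ and translate—but two of your concrete claims in the combinatorial reconciliation are false, and the bookkeeping you wave through is where all the actual work lies.

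First, the prong-matching count: a base edge $e$ does \emph{not} contribute $\kappa_e$ reachable prong-matchings. The edge $e$ has $\gcd(\kappa_e,k)$ preimages in $\wh\Gamma$, each with abelian enhancement $\kappa_e/\gcd(\kappa_e,k)$, and the $\tau$-equivariance of prong-matchings (\autoref{le:bdHk}) means fixing one determines the others. So $K_{\wh\Gamma}^{\cH_k} = \prod_e \kappa_e/\gcd(\kappa_e,k)$, not $\prod_e \kappa_e$; see \autoref{le:newratdeg}. Second, and more seriously, the assertion $|\Aut_{\cH_k}(\wh\Gamma)| = |\Aut(\Gamma)|$ is wrong. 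Automorphisms of $\wh\Gamma$ commuting with $\tau$ do descend to automorphisms of $\Gamma$, but this map is in general neither injective nor surjective. The correct relation is the non-trivial identity of \autoref{le:stab}, which brings in the stabilizers $\Stab_\bfG(\wh\Gamma)$ and $\Stab_\bfG(\cH(\pi))$ and an extra factor $\prod_e \gcd(\kappa_e,k)$.

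What actually happens in the paper's proof: one passes from $\chi(\cQ)$ to $\chi(\cH_k)$ via the degree of $d$ (\autoref{lem:deg_d}), applies~\eqref{eq:eulerH}, regroups the sum over fully marked $\wh\Gamma \in \LG_L(\cH_k)$ into a sum over partially marked $\wh\Gamma_\pmarked$, and then uses \autoref{le:top_degree_comp_new2}—which packages \autoref{le:newratdeg}, \autoref{le:stab}, and the degree of $\bfd_\pi$ (\autoref{lem:deg_q})—to convert the $\xi$-integrals over $D_\pi^{\cH_k}$ into $\zeta$-integrals over the $\cQ(\pi_{[i]})$. The factors $\gcd(\kappa_e,k)$, $\gcd(m_i,k)$, $|G|$, $|\bfG|$, and the various $k$-powers from $d^*\zeta = k\xi$ and the dimension count all cancel against each other in a specific way to leave $S(\pi)\prod_e\kappa_e / |\Aut(\Gamma)|$ and the prefactor $(-1/k)^d$. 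Your proposal does not track any of these pieces, and the two explicit claims you do make would, if used, give the wrong answer.
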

The full definition of $S(\pi)$ is presented in~\eqref{eq:defSpi}. It equals one for
many~$\pi$, e.g. if all vertices in~$\Gamma$ have only one preimage
in~$\wh\Gamma_\pmarked$. See \autoref{rem:Spi1} for values of this combinatorial
constant.
\par
\newlength{\largeswidth}
\settowidth{\largeswidth}{$-\frac{1}{40}$}
\begin{figure}[h]
$$ \begin{array}{|c|*{9}{C{\largeswidth}|}}
\hline  &&&&&&&&& \\ [-\halfbls] 
k  & 1 & 2 & 3 & 4 & 5 & 6 & 7 & 8 & 9 \\
[-\halfbls] &&&&&&&&& \\ 
\hline &&&&&&&&& \\ [-\halfbls]
\chi(\bP \Omega^k_{\mathrm{pr}}\cM_{2,1}(2k)) 
& -\frac{1}{40} & 0& \frac{1}{3}  & \frac{3}{2} & \frac{21}{5} & 9 & 18 & 30 & 51 \\
[-\halfbls] &&&&&&&&& \\
\hline
\end{array}
$$
\captionof{table}[foo2]{Euler characteristics of some minimal strata of primitive $k$-differentials. Note that in the case of $k=2$ the stratum of primitive minimal $k$-differentials is empty, see \cite[Theorem~2(c)]{MaSmillie}.}
\label{cap:EulerMinimal}
\end{figure}
Table~\ref{cap:EulerMinimal} 
gives the Euler characteristics of some strata of primitive quadratic differentials, for more examples and cross-checks see \autoref{sec:values}.
\par
All the formulas for evaluations in the tautological ring of strata
of $k$-differentials have been coded in an extension of the SageMath package
\texttt{diffstrata} (an extension of \texttt{admcycles} by \cite{DSvZ})
that initially had this functionality for Abelian differentials only
(see \cite{CMZeuler}, \cite{CoMoZadiffstrata}).
See \autoref{sec:nb} for generalities on tautological ring computations and in
particular \autoref{sec:kdiff} for the application to $k$-differentials.
The program \texttt{diffstrata} has been used to verify the Hodge-DR-conjecture
from \cite{CGHMS} in low genus. Moreover, \texttt{diffstrata} confirms
that the values of the tables in \cite{Goujard} can be obtained via intersection theory computations:
\par
\begin{prop}
The Conjecture~1.1 in \cite{CMS} expressing Masur-Veech volumes for strata
of quadratic differentials as intersection numbers holds true for strata
of projectivized dimension up to six, e.g.\ $\cQ(12) =5614/6075 \cdot \pi^6$.
\end{prop}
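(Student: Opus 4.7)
The plan is a computer-verification argument built on top of the intersection-theoretic machinery developed in the paper. First, I would recall the precise form of Conjecture 1.1 of \cite{CMS}, which expresses the Masur-Veech volume $\vol(\cQ(\mu))$ of a stratum of quadratic differentials as an explicit polynomial in $\psi$-classes and boundary classes on the multi-scale compactification~$\overline{\cQ}$, evaluated at a specific top-degree monomial. Since the paper has just set up the full intersection theory package on $\overline{\cQ} = \bP\Xi^2\overline{\cM}_{g,n}(\mu)$ (with the $k=2$ boundary indexed by covers $\pi:\wh\Gamma_{\pmarked}\to\Gamma$ of level graphs with the correct $\kappa_e$, $\ell_\Gamma$, $S(\pi)$ data) and has implemented it in \texttt{diffstrata}, the conjectural right-hand side becomes a finite, algorithmically evaluable expression in the tautological ring of $\overline{\cQ}$.

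Next I would enumerate the relevant strata. The inequality $\dim \bP\cQ(\mu) \leq 6$ gives only finitely many signatures~$\mu$ (with $\sum \mu_i = 4g-4$, $\mu_i \geq -1$ and suitable parity), and they can be listed by hand or by a short search. For each such~$\mu$, \texttt{diffstrata} generates the poset of boundary strata $D_\pi$ (using the routines underlying \autoref{intro:Chern} and \autoref{cor:kstrata}), assembles the $\psi$- and boundary-class monomials appearing in the CMS formula, and evaluates them via the pushforward/pullback formalism ultimately reducing to $\zeta$-integrals on the linear submanifolds $\cQ_\pi^{[i]}$ at each level. The output is then compared, stratum by stratum, with the Masur–Veech volumes tabulated in \cite{Goujard}; matching on every one of the finitely many cases up to dimension six constitutes the proof.

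The main obstacle is not the intersection-number computations themselves — \texttt{diffstrata} already handles top $\zeta$-powers on boundary strata — but rather the correctness of the $k$-differential boundary data feeding into these computations. Specifically, one must be confident that the prong contributions $\prod_e \kappa_e$, the stabilizer factors~$S(\pi)$ defined in~\eqref{eq:defSpi}, the automorphism counts $|\Aut(\Gamma)|$, and the identification of $\cQ_\pi^{[i]}$ as a product of lower-dimensional strata of $k$-differentials are implemented exactly as in \autoref{cor:kstrata} and \autoref{sec:kdiff}. A practical sanity check, which I would run before the final comparison, is to recompute on the same strata the Euler characteristics of Table~\ref{cap:EulerMinimal} and other cross-checks listed in \autoref{sec:values}: agreement there (and with the independent low-genus verification of the Hodge-DR conjecture of \cite{CGHMS}) gives strong evidence that the boundary combinatorics is correctly coded. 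Once this is in place, the proposition is established simply by running \texttt{diffstrata} on each signature with $\dim \bP\cQ(\mu) \leq 6$ and observing, in particular, the sample value $\cQ(12) = (5614/6075)\,\pi^6$.
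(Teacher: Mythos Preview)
Your proposal is correct and matches the paper's approach exactly: the proposition is stated without a written proof precisely because it is a finite computer verification carried out with \texttt{diffstrata}, comparing the intersection-theoretic expressions from the $k=2$ tautological package against Goujard's tables for each stratum of projectivized dimension at most six. The cross-checks you mention (Euler characteristics, Hodge-DR) are likewise the sanity checks the paper invokes for the implementation.
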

\par
\medskip
\paragraph{\textbf{Ball quotients}}
Deligne-Mostow (\cite{DeligneMostow86}) and Thurston \cite{thurstonshapes}
constructed compactifications of strata of $k$-differentials on $\moduli[0,n]$ for
very specific choices of~$\mu$ and showed that these compactified strata are
quotients of the complex $(n-3)$-ball. These results were celebrated as they
give a list of non-arithmetic ball quotients, of which there are still only finitely many sporadic examples today, see  \cite{DPP16} and \cite{DerauxPU3} for recent
progress. The compactifications are given as GIT quotients (in \cite{DeligneMostow86})
or in the language of cone manifolds (in \cite{thurstonshapes}) and the proof of
the discreteness of the monodromy representation requires delicate arguments for
extension of the period at the boundary, resp.\ surgeries for the cone manifold
completion. 
\par
As application of our Chern class formulas we give a purely algebraic proof
that these compactifications are ball quotients, based on the fact that
the equality case in the Bogomolov-Miyaoka-Yau inequality implies a ball quotient
structure, see \autoref{prop:BQcrit}. Since this is a proof of concept, we restrict
to the case $n=5$, i.e.\ to quotients of the complex two-ball, and to the
condition $\text{INT}$ in~\eqref{eq:INT}, leaving the analog for Mostow's generalized
$\Sigma\text{INT}$-condition \cite{Mostow} for the reader.
\par
The computation of the hyperbolic volume of these ball quotients had been open
for a long time. A solution has been given by McMullen \cite{McMullenGaussBonnet}
and Koziarz-Nguyen \cite{KozNgu}, see also \cite{KM}. Since computing the hyperbolic
volume is equivalent to computing the Euler characteristic by Gauss-Bonnet, our
results provide alternative approach to this question, too.
\par
For spaces $\ol{\cQ}$ of multi-scale $k$-differentials in $g=0, n=5$ with these conditions, there are only four kinds of boundary divisors:
\begin{itemize}
\item The divisors $\Gamma_{ij}$ where two points with $a_i + a_j < k$ collide.
\item The divisors $L_{ij}$ where two points with $a_i + a_j > k$ collide.
\item The 'horizontal' boundary divisor $D_{\hor}$ consisting of all components
  where two points with $a_i + a_j = k$ collide.
\item The 'cherry' boundary divisors $ _{ij}\Lambda_{kl}$.
  \end{itemize}
\par
\begin{theorem} \label{intro:BQcertificate} Suppose that $\mu=(-a_1,\ldots,-a_5)$
is a tuple with $a_i \geq 0$ and with the condition
\be \label{eq:INT}
\Bigl(1-\frac{a_i}k - \frac{a_j}k\Bigr)^{-1} \in \bZ \qquad  \qquad 
\text{if $a_i + a_k < k$} \qquad\qquad
\hfill \text{(INT)}
\ee
for all $i \neq j$. Then there exists a birational contraction morphism $\ol{\cQ} \to
\ol\frakB$ onto a smooth proper DM-stack $\ol\frakB$ that contracts precisely all the
divisors $L_{ij}$ and $ _{ij}\Lambda_{kl}$. The target~$\ol\frakB$ satisfies
the Bogomolov-Miyaoka-Yau equality for $\Omega^1_{\ol\frakB }(\log D_{\hor})$.
\par
As a consequence $\frakB=\ol\frakB\setminus D_{\hor}$ is a ball quotient. 
\end{theorem}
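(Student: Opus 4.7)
The plan is to realize $\ol\frakB$ as the log-minimal model of $(\ol{\cQ},D_{\hor})$, compute the Chern numbers of the log cotangent bundle on this model explicitly from \autoref{intro:Chern}/\autoref{cor:kstrata}, verify the BMY equality, and finally invoke \autoref{prop:BQcrit}. Concretely, since $n=5$ the surface $\ol{\cQ}=\bP\Xi^k\ol{\cM}_{0,5}(\mu)$ has complex dimension two, and its boundary is an \emph{a priori} finite union of curves of the four types $\Gamma_{ij}$, $L_{ij}$, $D_{\hor}$ and $_{ij}\Lambda_{kl}$. \autoref{thm:c1cor} together with the canonical covering yields
\[
c_1(\Omega^1_{\ol{\cQ}}(\log\partial\cQ)) \= 2\zeta + \sum_{\Gamma\in\LG_1(\cQ)}(2-N_\Gamma^\top)\,\ell_\Gamma[D_\Gamma^\cQ],
\]
and, on the other hand, all intersection numbers $L_{ij}\cdot L_{i'j'}$, $\Gamma_{ij}\cdot\Gamma_{i'j'}$, $_{ij}\Lambda_{kl}\cdot(-)$ can be computed as explicit rational functions of $a_1/k,\ldots,a_5/k$, either by hand or with \texttt{diffstrata}.

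Next I would carry out the contraction. The key input is that under the INT condition \eqref{eq:INT} the self-intersection $L_{ij}^2$ is a \emph{negative} rational of the form $-1/m_{ij}$ with $m_{ij}=(1-a_i/k-a_j/k)^{-1}\in\bZ$, while the cherries $_{ij}\Lambda_{kl}$ are smooth rational curves meeting only adjacent $L$-divisors in a controlled way; a short combinatorial verification, graph by graph in $\LG_1(\cQ)$, shows that the intersection matrix of $\{L_{ij}\}\cup\{_{ij}\Lambda_{kl}\}$ is negative definite. One then applies Artin's contractibility theorem (in its DM-stack version, so as to retain a smooth orbifold target) to produce a proper birational morphism $\pi:\ol{\cQ}\to\ol\frakB$ contracting exactly these curves; the orders $m_{ij}$ of the resulting cyclic quotient charts are integers precisely by INT, which is what guarantees that $\ol\frakB$ is a smooth proper DM-stack. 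I expect this step—and in particular the identification of the local analytic model around each contracted stratum as a smooth orbifold chart of the expected cyclic order—to be the main technical obstacle.

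Having $\pi$ at hand, I would compute the two Chern numbers of $L:=\Omega^1_{\ol\frakB}(\log D_{\hor})$ by writing
\[
\pi^*(K_{\ol\frakB}+D_{\hor})\=K_{\ol{\cQ}}+D_{\hor}+\sum\nolimits_{\text{contr.}}(1-1/m)\,E,
\]
so that $(K_{\ol\frakB}+D_{\hor})^2$ is read off from the intersection numbers of the previous paragraph. For $c_2(L)$ I would use the topological identity $c_2(L)=\chi^{\topp}(\frakB)$ for a smooth log pair and compute $\chi(\frakB)=\chi(\ol\frakB)-\chi(D_{\hor})$, where $\chi(\ol{\cQ})$ (and hence $\chi(\ol\frakB)$ via $\pi$) comes directly from \autoref{cor:kstrata}. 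This gives $c_1(L)^2$ and $c_2(L)$ as explicit expressions in $a_i/k$; the INT assumption then causes the combination $c_1(L)^2-3c_2(L)$ to collapse to zero, verifying the BMY equality.

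The log canonical class $K_{\ol\frakB}+D_{\hor}$ is nef and big on $\ol\frakB$ because $\ol\frakB$ was obtained from $(\ol{\cQ},\partial\cQ)$ precisely by contracting all $(K+D)$-negative extremal rays and the remaining boundary component $D_{\hor}$ together with the interior $\zeta$ is positive. With both the positivity and the BMY equality in place, \autoref{prop:BQcrit} applies and identifies the universal cover of $\frakB=\ol\frakB\setminus D_{\hor}$ with the complex two-ball, which is the desired conclusion.
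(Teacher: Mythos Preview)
Your broad strategy matches the paper's, but you have misplaced the role of the INT condition. In the paper, once the smooth stack $\ol\frakB$ has been constructed, the formulas for $c_1^2$ and $c_2$ in \autoref{prop:allonBQ} satisfy $c_1^2 = 3c_2$ as a \emph{formal identity} in the $a_i/k$, using only $\sum_i a_i = 2k$; INT is not invoked at that step. Rather, INT enters in \autoref{le:contractible}, where it is precisely what allows the contraction to land in a \emph{smooth} DM-stack: it forces all abelian enhancements $\wh\kappa_e$ to equal~$1$ (so every $\ell_{\wh\Gamma}=1$), and it makes the isotropy orders $k/\kappa_{p,q}$ at the three special points of each $D_{L_{ij}}^\cQ$ into integers. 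The paper then exhibits, for each $L_{ij}$, a finite cover of a tubular neighbourhood of total degree $k^2/\kappa_{ij}^2$---a cyclic cover trivialising the generic isotropy followed by a cover governed by the \emph{spherical triangle group} $T\bigl(k/\kappa_{p,q},k/\kappa_{p,r},k/\kappa_{q,r}\bigr)$ trivialising the boundary isotropy---on which the divisor becomes an honest $(-1)$-curve, and applies Castelnuovo. Your proposed Artin-type contraction from negative definiteness yields only a normal algebraic-space target; smoothness does not follow, and the local model over the image of an $L$-divisor is not the simple cyclic quotient you anticipate. So the step you correctly flagged as the main obstacle is exactly where the paper does the real work, and with a different mechanism than you sketch.

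There are also small numerical slips: the self-intersection is $[D_{L_{ij}}^\cQ]^2 = -\kappa_{ij}^2/k^2$ (that is, $-1/m_{ij}^2$ rather than $-1/m_{ij}$ in your notation), and since $N=\dim\Omega\cH = 3$ the coefficient of the tautological class in \autoref{thm:c1cor} is~$3$ (giving $\tfrac{3}{k}\zeta$ on $\ol\cQ$ after the comparison~\eqref{eq:xi_zeta_comparision}), not~$2$.
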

\par
The signature of the intersection form on the eigenspace that $k$-differentials
are modeled on has been computed by Veech \cite{VeechFlat}.  The only other case where
the signature is $(1,2)$ are strata in $\moduli[1,3]$. As observed by
Ghazouani-Pirio in \cite{GP1}, (see also \cite{GP2}) there are only few cases
where the metric completion of the strata can be a ball quotient. However
they also find additional cases where the monodromy of the stratum is discrete.
This implies that the period map descends to a map from the compactified
stratum to a ball quotient. It would be interesting to investigate if there
are more such cases, possibly with non-arithmetic monodromy.
\par
\subsection*{Acknowledgments} We thank Selim Ghazouani, Daniel Greb and
Vincent Koziarz for helpful remarks  on ball quotients and the
Bogomolov-Miyaoka-Yau equality and Frederik Benirschke und Johannes Schmitt
for their comments. We thank the referee for carefully reading the paper and for examples that provided further cross-checks of the Euler characteristic formula.


\section{Logarithmic differential forms and toric varieties}
\label{sec:backEuler}

This section connects the Euler characteristic to integrals of
characteristic classes of the sheaf of logarithmic differential forms.
We work on a possibly singular but normal, proper and irreducible
variety $\ol{\cH}$ of dimension~$d$, whose singularities are toric and
contained in some boundary divisor~$\partial\cH$. We
are interested in the Euler characteristic of the (Zariski) open subvariety
$ \cH=\ol{\cH}\setminus \partial\cH$ given by the complement of $\partial\cH$, in the situation where the inclusion
$ \cH \hookrightarrow \ol{\cH}$ is a toroidal embedding. In particular, in this case,
the boundary divisor $\partial\cH$ is
locally on open subsets $U_\alpha$ a torus-invariant divisor.
\par
In this situation we define locally $\Omega^1_{U_\alpha}(\log)$ to be the
sheaf generated by  $(\bC^*)^d$-invariant meromorphic differential forms. These
glue to sheaf $\Omega^1_{\ol{\cH}} (\log \partial\cH)$, that is called
\emph{logarithmic differential sheaf}. This terminology is
justified by the following idea from \cite[Section~4]{mumford}, the
details and definitions being given in \cite{KKMS}.
For any 'allowable' smooth modification $p: \ol{W} \to
\ol{\cH}$ that maps a normal crossing boundary divisor $\partial W
\subset \ol{W}$ onto $\partial\cH$ we have $p^*\Omega^1_{\ol{\cH}}
(\log \partial\cH) = \Omega^1_{\ol{W}}(\log \partial W)$ for the
usual definition of the logarithmic sheaf on $\ol{W}$. Moreover, such an
'allowable' smooth modification always exists. The previous situation can be generalized verbatim to the case where $\ol{\cH}$ is a Deligne-Mumford stack and this is the setup we are interested in.
\par
\begin{prop} \label{prop:chiviaTlog}
Let $\ol{\cH}$ be a proper irreducible Deligne-Mumford stack of dimension $d$ with toric singularities. Assume moreover that the coarse moduli space of  $\ol{\cH}$  is projective. Let $ \cH \hookrightarrow \ol{\cH}$ be a toroidal embedding and $\partial \cH=\ol{\cH}\setminus\cH$ . Then  the Euler characteristic
of~$ \cH$ can be computed as the integral
\be \label{eq:chiviaint}
\chi( \cH) \= (-1)^d \int_{\overline{\cH}} \,c_d(\Omega^1_{\ol{\cH}}
(\log \partial\cH )) 
\ee
over the top Chern class of the logarithmic differential sheaf.
\end{prop}
\par
\begin{proof} If $\ol{\cH}$ is a smooth Deligne-Mumford stack and $\partial\cH =\emptyset$, this is well known (see e.g. \cite[Corollary~4.16]{toen}). In the case where $\ol{\cH}$ is still smooth but $\partial\cH$ is not empty, a self-contained proof of the statement was given in \cite[Proposition~2.1]{CMZeuler} (the proof was given in the case where $\ol\cH$ is a smooth variety, but it works verbatim for the more general case of smooth DM stack).
\par
In general we use an allowable modification.
By definition this restricts to an isomorphism $W \to \cH$,
hence does not change the left hand side. The right hand side also
stays the same by push-pull and the pullback formula along an
allowable smooth modification.
\end{proof}
\par


\section{The closure of linear submanifolds}
\label{sec:closurelinsection}

The compactification of a linear submanifold we work with has (currently)
no intrinsic definition.
Rather we consider the normalization of the closure of a linear
submanifold inside the moduli space of multi-scale differentials $\LMS$.
We recall from \cite{BDG} the basic properties of such closures.
The goal of this section is to make precise and to explain the
following two slogans:
\begin{itemize}
\item Near boundary points without horizontal edges, the closure is
determined as for the ambient Abelian stratum by the combinatorics of
the level graph and it is smooth. The \emph{ghost automorphisms}, the
stack structure at the boundary that stems from twist groups, agrees with
the ghost automorphisms of the ambient stratum and the intersection pattern
is essentially determined by the \emph{profiles} of the level graph,
a subset of the profiles of the ambient stratum.
\item In the presence of horizontal edges there are toric singularities.
Working with the appropriate definition of the logarithmic cotangent sheaf
these singularities don't matter. This sheaf decomposes into summands
from horizontal nodes, from the level structure, and the deformation of
the differentials at the various levels, just as in the ambient stratum.
\end{itemize}

\subsection{Linear submanifolds in generalized strata}
\label{sec:linsubmfd}
\label{sec:generalizedstrata}

Let $\omoduli[g,n](\mu)$ denote the moduli space of Abelian differentials
of possibly meromorphic signature~$\mu$. Despite calling them 'moduli space'
or 'strata' we always think of them as quotient stacks or orbifolds and
intersection numbers etc.\ are always understood in that sense. These
strata come with a linear structure given by period coordinates (e.g.\
\cite{zorich06} for an introduction).
A \emph{linear submanifold} $\Omega \cH$ of $\omoduli[g,n](\mu)$ is an
algebraic stack with a map $\Omega \cH \to \omoduli[g,n](\mu)$ which is
the normalization of its image and whose image is locally given as a
finite union of linear subspaces in period coordinate charts.
See \cite[Example 4.1.10]{FiSurvey} for an example that illustrates why
we need to pass to the normalization for  $\Omega \cH$ to be a smooth
stack. 
In the context of holomorphic signatures and $\GL_2(\bR)$-orbit
closures, the linear manifolds obtained in this way can locally be defined by
equations with $\bR$-coefficients (\cite{EsMi}, \cite{EsMiMo}).
We refer to them as \emph{$\bR$-linear submanifolds}. In this context,
the algebraicity follows from being closed by the result of Filip (\cite{Filip}),
but in general algebraicity is an extra hypothesis.
\par
To set up for clutching morphisms and a recursive description of the boundary
of compactified linear submanifolds, we now define \emph{generalized strata}, compare
\cite[Section~4]{CMZeuler}. For a tuple $\bfg = (g_1,\ldots,g_k)$
of genera and a tuple $\bfn = (n_1,\ldots,n_k)$ together with a collection of
types $\bfmu = (\mu_1,\ldots, \mu_k)$ with $|\mu_i| = n_i$ we first define the
disconnected stratum
$
\omoduli[\bfg,\bfn](\bfmu) \= \prod_{i=1}^k \omoduli[g_i,n_i](\mu_i)\,.
$ 
Then, for a linear subspace~$\frakR$ inside the space of the residues at all poles
of $\bfmu$ we define the generalized stratum $\Romod[\bfmu][\bfg,\bfn][\frakR]$
to be the subvariety with residues lying in~$\frakR$. Generalized strata obviously
come with period coordinates and we thus define a \emph{generalized linear
submanifold} $\Omega \cH$ to be an  algebraic stack together with a map to
$\Romod[\bfmu][\bfg,\bfn][\frakR]$ whose image is locally linear in period
coordinates and where $\Omega \cH$ is the normalization of its image.
\par
Rescaling the differential gives an action of $\bC^*$ on strata and the
quotient are projectivized strata $\bP \omoduli[g,n](\mu)$. The image of a linear
submanifold in $\bP \omoduli[g,n](\mu)$ is called \emph{projectivized linear manifold} $\cH$, but
we usually omit the 'projectivized'.
\par
We refer with an index~$B$ to quantities of the ambient projectivized stratum,
such as its  dimension $d_B$  and the unprojectivized
dimension $N_B = d_B +1$. The same letters without additional index are used
for the linear submanifold, e.g.\ $N = d+1$, and we write $d_\cH$ and $N_\cH$ only if
ambiguities may arise.

\subsection{Multi-scale differentials: boundary combinatorics}
\label{sec:boundarycomb}

We will work inside the moduli stack of multi-scale differentials, that is
the compactification $\ol{B}:=\bP\LMS$ of a stratum $B:=\bP\omoduli[g,n](\mu)$
defined in \cite{LMS} and recall some of its properties, see also
\cite[Section~3]{CMZeuler}.
Everything carries over with obvious modifications to the compactification
$\proj\LMS[\bfmu][\bfg,\bfn][\frakR]$ of generalized strata, see
\cite[Proposition~4.1]{CMZeuler}.
\par
Each boundary stratum of $\bP\LMS$ has its associated level graph~$\Gamma$,
a stable graph of the underlying pointed stable curve together with
a weak total order on the vertices, usually given by a a level function
normalized to have top level zero, and an enhancement~$\kappa_e \geq 0$
associated to the edges. Edges are called \emph{horizontal}, if they
start and end at the same level, and \emph{vertical otherwise}. Moreover
$\kappa_e = 0$ if and only if the edge is horizontal. We denote the closure of
the boundary stratum of points with level graph~$\Gamma$ by~$D_\Gamma^B$ and, for any level graph $\Delta$ that is a degeneration of $\Gamma$, we let $D_{\Gamma,\Delta}^{B,\circ}\subset D_\Gamma^B$ be the open subset parametrising multi-scale differentials compatible with an undegeneration of $\Delta$. In particular the points of  $D_{\Gamma}^{B,\circ}:=D_{\Gamma,\Gamma}^{B,\circ}$  represent multi-scale differentials with level graph exactly $\Gamma$.
These $D_\Gamma^B$ are in general not connected, and might
be empty (e.g.\ for unsuitably large~$\kappa_e$). 
\par
We let $\LG_L(B)$ be the set of all enhanced $(L+1)$-level graphs without
horizontal edges. The structure of the normal crossing
boundary of $\bP\LMS$ is encoded by {\em undegenerations}. For any
subset $I = \{i_1,\dots,i_n\}  \subseteq \{1,\dots,L\}$ 
there are undegeneration map
\bes
\delta_{i_1,\dots,i_n} \colon \LG_L(B)\to \LG_{n}(B)\,,
\ees
that preserves the level passage given as a horizontal line just above
level~$-i$ and contracts the remaining level passages. We define
$\delta_I^\complement = \delta_{I^\complement}$.
\par
The boundary strata $D_\Gamma^B$ for $\Gamma \in \LG_L(B)$ are commensurable
to a product of generalized strata $B_\Gamma^{[i]} =
\bP \LMS[\bfmu_i][\bfg_i,\bfn_i][\frakR_i]$ defined via the following diagram.
\be \label{dia:covboundary} \begin{tikzcd}
&c_\Gamma^{-1}(D_{\Gamma,\Delta}^{B,\circ})\arrow{dr}[swap]{q_\Delta}\arrow{r}{\subset}&D_{\Gamma}^{B,s}\arrow{ddll}[swap]{p_{\Gamma}} \arrow{ddrr}{c_{\Gamma}} && \\
&& B_{\Gamma,\Delta}^s \arrow{dl}{p_\Gamma^\Delta}\arrow{dr}[swap]{c_\Gamma^\Delta} &&\\
\prod_{i=-L}^{0} B_\Gamma^{[i]} =: {B}_{\Gamma} & B_{\Gamma,\Delta} \arrow{l}[swap]{\supset} && D_{\Gamma,\Delta}^{B,\circ} \arrow{r}{\subset} & D_\Gamma^B
\end{tikzcd}
\ee
Here $\bfg_i,\bfn_i$ and $\bfmu_i$ are the tuples of the genera, marked
points and signatures of the components at level~$i$ of the level graph
and $\frakR_i$ is the global residue condition induced by the levels above.
The covering space $D_\Gamma^{B,s}$ and the moduli stack $B_{\Gamma,\Delta}^s$ of
\emph{simple multi-scale differentials compatible with an undegeneration
of $\Delta$} were constructed in \cite[Section~4.2]{CMZeuler}.

\subsection{Multi-scale differentials: Prong-matchings and stack structure}
\label{sec:prongmatchings}

The notion of a multi-scale differential is based on the following construction.
Given a pointed stable curve~$(X,\bfz)$, a {\em twisted differential} is
a collection of differentials $\eta_v$ on each component~$X_v$ of~$X$, that
is {\em compatible with a level structure} on the dual graph~$\Gamma$ of~$X$,
i.e.\ vanishes as prescribed by~$\mu$ at the marked points~$z$, satisfies
the matching order condition at vertical nodes, the matching residue
condition at horizontal nodes and global residue condition of \cite{BCGGM1}.
A {\em multi-scale differential of type $\mu$} on a stable curve~$(X,\bfz)$
consists of an enhanced level structure~$(\Gamma,\ell,\{\kappa_e\})$
on the dual graph~$\Gamma$ of~$X$, a twisted differential~$\bfomega$ of
type~$\mu$ compatible with the enhanced level structure, and a prong-matching for
each node of~$X$ joining components of non-equal level. Here a \emph{prong-matching}
$\bfsigma$ is an identification of the horizontal (outgoing resp.\ incoming) real tangent
vectors at a zero resp.\ a pole corresponding to each vertical edge of~$\Gamma$.
Multi-scale differentials are equivalences classes of $(X,\bfz,\Gamma,\bfsigma)$
up to the action of the level rotation torus that rescales differentials
on lower levels and rotates prong-matchings at the same time.
\par
To an enhanced two-level graph we associate the quantity
\be \label{eq:defellGamm}
\ell_\Gamma \= \lcm(\kappa_e \colon e \in E(\Gamma)) \,.
\ee
which appears in several important places of the construction of $\bP\LMS$:
\begin{itemize}
\item[i)] It is the size of the orbit of prong-matchings when rotating the
  lower level differential. Closely related:
\item[ii)] The local equations of a node are $xy = t_1^{\ell_\Gamma/\kappa_e}$,
where~$t_1$ is a local parameter (a \emph{level parameter}) transverse to
the boundary. As a consequence a family of differential forms that tends
to a generator on top level scales with $t_1^{\ell_\Gamma}$ on the bottom level
of~$\Gamma$.
\end{itemize}
For graphs with $L$ level passages we define $\ell_i = \ell_{\Gamma,i}
= \ell_{\delta_i(\Gamma)}$ to be the lcm of the edges crossing
the $i$-th level passage and $\ell_{\Gamma}=\prod_{i=1}^{L}\ell_{\Gamma,i}$.
\par
There are two sources of automorphisms of multi-scale differentials:
on the one hand, there are automorphism of pointed stable curves that
respect the additional structure (differential, prong-matching). On the
other hand, there are \emph{ghost automorphisms}, whose group we denote by
$\Gh_\Gamma = \Tw[\Gamma]/\sTw[\Gamma]$,
that stem from the toric geometry of the compactification. We emphasize that
the twist group $\Tw[\Gamma]$ and the simple twist group~$\sTw[\Gamma]$, hence also the ghost group $\Gh_\Gamma$,
depend only on the data of the enhanced level graph and will be inherited
by linear submanifolds below. The local isotropy group of $\LMS$ sits
in an exact sequence
\bes
0 \to \Gh_\Gamma \to \mathrm{Iso}(X,\bfomega) \to \Aut(X,\bfomega) \to 0
\ees
and locally near $(X,\bfz,\Gamma,\bfsigma)$ the stack of multi-scale differentials
is the quotient stack $[U/\mathrm{Iso}(X,\bfomega)]$ for some open $U \subset
\bC^{N_B}$. The same holds for $\bP\LMS$ where the automorphism group is
potentially larger since $\bfomega$ is only required to be fixed projectively.

\subsection{Decomposition of the logarithmic tangent bundle}
\label{sec:decompLTB}

We now define a \emph{$\Gamma$-adapted basis}, combining \cite{BDG}
and \cite{CMZeuler} with the goal of giving a decomposition of
the logarithmic tangent bundle that is inherited by a linear submanifold,
if the $\Gamma$-adapted basis is suitably chosen.
\par
We work on a neighborhood~$U\subseteq B$ of a point $p=(X,[\omega],\bfz) \in D_\Gamma^{B,\circ}$, where~$\Gamma$ is an
arbitrary level graph with~$L$ levels below zero.
We let $\alpha_j^{[i]}$ for $i=0,\ldots,-L$ be the vanishing cycles
around the horizontal nodes at level~$i$. Let $\beta_j^{[i]}$ be
a dual horizontal-crossing cycle, i.e.\ $i$ is the top level (in the sense
of \cite{BDG}) of this cycle, $\langle \alpha_j^{[i]},\beta_j^{[i]}
\rangle = 1$ and $\beta_j^{[i]}$ does not have non-zero intersection with any other horizontal vanishing cycle at level~$i$. Let $h(i)$ be the number of those horizontal vanishing cycles at level~$i$.
\par
We complement the cycles $\beta_j^{[i]}$ by a collection
of relative cycles $\gamma_j^{[i]}$ such that for any fixed level~$i$ their top
level restrictions form a basis of the cohomology at level~$i$ relative
to the poles and zeros of~$\omega$  and holes at horizontal nodes quotiented
by the subspace of global residue conditions. In particular the span
of the $\gamma_j^{[i]}$ contains the $\alpha_j^{[i]}$, and moreover the
union
\bes
\bigcup_{j=-L}^0 \bigl\{\beta_1^{[j]}, \ldots, \beta_{h(j)}^{[j]},\gamma_1^{[j]},
\ldots, \gamma_{s(j)}^{[j]} \bigr\} \quad
\text{is a basis of} \quad H_1(X \setminus P, Z,\bC).
\ees
Next, we define the $\omega$-periods of these cycles and exponentiate to
kill the monodromy around the vanishing cycles. The functions
\bes
a_j^{[i]} \= \int_{\alpha_j^{[i]}} \omega\,, \quad b_j^{[i]} \= \int_{\beta_j^{[i]}}
\omega\,, \quad q_j^{[i]} \= \exp(2\pi I b_j^{[i]}/a_j^{[i]}) \,, \quad
c_j^{[i]} \= \int_{\gamma_j^{[i]}} \omega\,.
\ees
are however still not defined on~$U$ (only on sectors of the boundary
complement) due to monodromy around the vertical nodes.  
\par
Coordinates on~$U$ are given by \emph{perturbed period coordinates}
(\cite{LMS}), which are related to the periods above as follows.
For each level passage there is a \emph{level parameter~$t_i$}
that stem from the construction of the moduli space via plumbing.
On the bottom level passage~$L$ we may take $t_{L} = c_1^{[-L]}$ as a period.
For the higher level passage, the $t_i$ are closely related to the periods of
a cycle with top level~$-i$, but the latter are in general not monodromy
invariant. It will be convenient to write 
\begin{equation}\label{eq:prodtnotation}
\prodt[i] \= \prod_{j=1}^i t_j^{\ell_j}, \quad i\in \bN.
\end{equation}
There are perturbed periods $\wt{c}_j^{[-i]}$ obtained by integrating
$\omega/\prodt[i]$ against a cycle with top level~$-i$ over the part of
level~$-i$ to points nearby the nodes, cutting off the lower level part.
By construction, on each sector of the boundary complement we have
\be \label{eq:ccdiff}
\wt{c}_j^{[-i]} - c_j^{[-i]}/\prodt[i] \= \sum_{s > i}
\frac{\prodt[s]}{\prodt[i]} E_{j,i}^{[-s]} 
\ee
for some linear ('error') forms $E_{j,i}^{[-s]}$ depending on the variables
$c_j^{[-s]}$ on the lower level~$-s$.
Similarly, we can exponentiate the ratio over $a_j^{[-i]}$ of the
similarly perturbed $\wt{b}_j^{[-i]}$ and obtain  perturbed exponentiated
periods $\wt{q}_j^{[-i]}$, such that on each sector 
\be \label{eq:logqdiff}
 \log\wt{q}_j^{[-i]} - \log {q_j^{[-i]}} \= \sum_{s > i}
\frac{\prodt[s]}{\prodt[i]} E_{j,i}'^{[-s]} 
\ee
for some linear forms $E_{j,i}'^{[-s]}$.
In these coordinates the boundary is given by $\wt{q}_j^{[-i]} = 0$
and $t_i=0$. If we let
\bas
\Omega_{i,B}^{\hor}(\log) &\= \langle d\wt{q}_1^{[i]}/\wt{q}_1^{[i]}, \ldots,
d\wt{q}_{h(i)}^{[i]}/\wt{q}_{h(i)}^{[i]} \rangle, \quad
\Omega_{i,B}^{\lev}(\log) &\= \langle dt_{-i}/t_{-i} \rangle \\
\Omega_{i,B}^{\rel} &\= \langle d\wt{c}_2^{[i]}/\wt{c}_2^{[i]}, \ldots,
d\wt{c}_{N(i)-h(i)}^{[i]}/\wt{c}_{N(i)-h(i)}^{[i]} \rangle, 
\eas
with $\Omega_{0,B}^{\lev}(\log)=0$ by convention, we thus obtain a decomposition 
\be \label{eq:OmDecompB}
\Omega^1_{\ol{B}} (\log \partial B )|_U
\= \bigoplus_{i=-L}^0  \Bigl(\Omega_{i,B}^{\hor}(\log) \oplus
\Omega_{i,B}^{\lev}(\log) \oplus \Omega_{i,B}^{\rel} \Bigr)\,.
\ee

\subsection{The closure of linear submanifolds}
\label{sec:closurelin}

For a linear submanifold $\cH$ we denote by $\ol{\cH}$ the normalization of
the closure of the image of $\cH$ as a substack of $\LMS$. We denote by $D_\Gamma = D^\cH_\Gamma$
the preimage of the boundary divisor~$D_\Gamma^B$ in $\ol{\cH}$. 
Again, a~$\circ$ denotes the complement of more degenerate
boundary strata, i.e., $D_\Gamma^{\cH,\circ}$ is the preimage of $D_\Gamma^{B,\circ}$ in $\ol{\cH}$.
\par
We will now give several propositions that explain that~$\ol{\cH}$ is
a compactification of~$\cH$ almost as nice as the compactification $\bP\LMS$
of strata. The first statement explains the 'almost'.
\par
\begin{prop} \label{prop:bdtoricsing}
Let $\Gamma$ be a level graph with only horizontal nodes, i.e., with one level only. Then \referee{$D_\Gamma^{\cH,\circ}$ has at worst toric singularities}. 
\end{prop}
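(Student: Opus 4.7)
The plan is to exhibit local coordinates at a point $p \in D_\Gamma^{B,\circ}$ in which the image of $\ol{\cH}$ factors as an affine toric variety times an affine linear subspace, so that its normalization is a normal toric variety (times a smooth factor) and therefore has at worst toric singularities. First, I would take as ambient local coordinates the perturbed period coordinates of \autoref{sec:decompLTB}: since $\Gamma$ has a single level, no level parameters $t_i$ appear, and local coordinates on $\ol B$ near $p$ consist of the $h$ exponentiated horizontal periods $\wt q_1, \ldots, \wt q_h$ together with the relative periods $\wt c_1, \ldots, \wt c_{N_B - h}$, with the boundary cut out by $\wt q_1 \cdots \wt q_h = 0$.

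The next step, which is the substantive one, is to translate the $\bC$-linear equations defining $\cH$ into equations in $(\wt q, \wt c)$. A linear equation involving the cycle integrals $b_j = \int_{\beta_j} \omega$ dual to the horizontal vanishing cycles becomes, after using $\wt q_j = \exp(2\pi\i\, b_j/a_j)$, a constraint that is monomial in the $\wt q_j$ modulo terms depending only on the $\wt c_k$. Here I would invoke the local description of closures of linear submanifolds at horizontal nodes from \cite{BDG}: monodromy around the divisor $\wt q_j = 0$ translates $b_j \mapsto b_j + a_j$, and invariance of $\ol{\cH}$ under these translations forces the system of period equations to decouple, after an \'etale change of variables in the $\wt c$-directions, into (i) binomial relations of the form $\prod_j \wt q_j^{m_{j,r}^+} = \prod_j \wt q_j^{m_{j,r}^-}$ among the exponentiated horizontal periods and (ii) $\bC$-linear relations among the $\wt c_k$.

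With this decomposition in hand, the image of $\ol{\cH}$ near $p$ is locally the product of an affine (in general non-normal) toric variety cut out by a binomial ideal in the $\wt q$-coordinates with a linear subspace in the $\wt c$-coordinates. Taking the normalization then yields the product of a normal affine toric variety with a smooth affine space, which has at worst toric singularities by definition. I expect the main obstacle to be the second step: namely, verifying the clean decoupling of the monomial and linear parts, and in particular controlling the contribution of the residues $a_j$ (which themselves are among the $\wt c$-coordinates) to the monomial exponents in the $\wt q_j$. This is essentially the content of the local analysis of horizontal degenerations of linear submanifolds in \cite{BDG}, combined with the observation that the monodromy-invariant closure of a $\bC$-linear equation in the $b_j$ is generated by relations with integer exponents in the $\wt q_j$.
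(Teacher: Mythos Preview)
Your proposal is correct and follows essentially the same approach as the paper. The paper's own proof is a one-liner: it cites the form of the binomial equations~\eqref{eq:toriceqns} (together with the linear equations~\eqref{eq:lineareqns}) that were already set up in \autoref{sec:closurelin} as a summary of \cite{BDG}, and then refers to \cite[Theorem~1.6]{BDG} for the conclusion that such equations define a variety with at worst toric singularities. Your three steps reconstruct exactly this setup---the decoupling into linear equations in the $\wt c$-variables and binomial equations in the $\wt q$-variables is precisely what the paper records as~\eqref{eq:lineareqns} and~\eqref{eq:toriceqns}, and the ``main obstacle'' you anticipate (the clean decoupling and the role of the residues $a_j$) is indeed the content of the BDG analysis that both you and the paper invoke.
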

\par
More precisely, the linear submanifold is cut out by linear and
binomial equations, see~\eqref{eq:toriceqns} below.
\par
Second, the intersection with non-horizontal  boundary components is transversal
in the strong sense that each level actually causes dimension drop.
\par
\begin{prop} \label{prop:transverse}
Let $\Gamma \in \LG_L(B)$ be a level graph without horizontal
nodes. Each point in $D_\Gamma^{\cH,\circ}$ is smooth and  $D_\Gamma^{\cH,\circ}$  is
a normal crossing divisor given as the intersection of $L$ different divisors
$D^\cH_{\delta_i(\Gamma)}$.
\par
In particular,  $D^\cH_\Gamma$ has codimension~$L$ in $\ol{\cH}$.
\end{prop}
\par
Fix now an enumeration of two level graphs, i.e. a bijection between $\LG_1(B)$ and $\{1,\dots, |\LG_1(B)|\}$, and  define $D_{i_1,\ldots,i_L}\subseteq \bigcap_{j=1}^L D_{\Gamma_{i_j}}  $ to be the subset  of all $D_{\Lambda}$, with  $\Lambda\in \LG_L(B)$,  such that $\delta_j(\Lambda) = \Gamma_{i_j}$ for all $j=1,\ldots,L$. The previous proposition allows to  show, via the same argument as the
proof of \cite[Proposition~5.1]{CMZeuler}, the key result in order to
argue inductively.
\par
\begin{cor} 	\label{cor:ordering}
If $\cap_{j=1}^L D^\cH_{\Gamma_{i_j}}$ is not empty,  there is a unique
ordering $\sigma\in\Sym_L$ on the set $I=\{i_1,\dots,i_L\}$ of indices
such that
\[D_{\sigma(I)}\=\bigcap_{j=1}^L D^\cH_{\Gamma_{i_j}}\,.\]
Moreover if $i_k=i_{k'}$ for a pair of indices $k\not =k'$, then $D_{i_1,\dots,i_L}=\emptyset$.
\end{cor}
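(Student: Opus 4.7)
The plan is to mimic the argument of \cite[Proposition 5.1]{CMZeuler} verbatim, the only new input being \autoref{prop:transverse}, which ensures that the local geometry of $\ol{\cH}$ at a non-horizontal boundary point is as nice as that of $\ol{B}$.

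First I would pick any point $p$ in the (assumed non-empty) intersection $\bigcap_{j=1}^L D^\cH_{\Gamma_{i_j}}$ and consider the unique open boundary stratum $D^{\cH,\circ}_{\Gamma}$ containing it, where $\Gamma\in \LG_M(\cH)$ is some level graph with $M$ levels below zero. Since $p$ lies on each $D^\cH_{\Gamma_{i_j}}$, the two-level graph $\Gamma_{i_j}$ must be an undegeneration of $\Gamma$, say $\Gamma_{i_j}=\delta_{k_j}(\Gamma)$ for some $k_j\in\{1,\dots,M\}$. By \autoref{prop:transverse}, in a neighbourhood of $p$ the boundary of $\ol{\cH}$ is a normal crossings union of exactly the $M$ smooth divisors $D^\cH_{\delta_k(\Gamma)}$, meeting transversally, with intersection~$D^{\cH,\circ}_\Gamma$ of codimension~$M$.

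Next I would compare dimensions. The intersection $\bigcap_{j=1}^L D^\cH_{\Gamma_{i_j}}$ contains the codimension-$M$ stratum $D^{\cH,\circ}_\Gamma$ and is contained in the intersection of $L$ normal-crossings divisors, which has codimension at most $L$. Hence $M\geq L$ on the one hand, and since the $L$ indices $k_j$ attached to the distinct divisors $D^\cH_{\delta_{k_j}(\Gamma)}$ are forced to be pairwise distinct by transversality, $M\leq L$ as well, so $M=L$ and the map $j\mapsto k_j$ is a bijection from $\{1,\dots,L\}$ to itself. The resulting permutation $\sigma\in \Sym_L$ with $\sigma(i_j)=k_j$ is unique because distinct undegenerations of~$\Gamma$ yield distinct two-level graphs, and by construction $D_{\sigma(I)}= D^{\cH}_{\Gamma}$ equals the whole intersection (not just its component through~$p$, because the profile $\sigma(I)$ determines~$\Gamma$ up to the usual combinatorial data). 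The second assertion is then immediate: if $i_k=i_{k'}$ for $k\neq k'$, two of the divisors in the intersection coincide, forcing the codimension of the intersection to be at most $L-1$, so it cannot agree with any codimension-$L$ stratum $D_{i_1,\dots,i_L}$, and the latter must be empty.

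The main (minor) obstacle is not conceptual but bookkeeping: one has to check that the profile $\sigma(I)$ assigned to the intersection is independent of the chosen point $p$, i.e.\ that different local branches of $\ol{\cH}$ through different points of the intersection record the same ordering. This follows because $\sigma$ is read off from the level function of~$\Gamma$, which is a locally constant invariant of the open stratum $D^{\cH,\circ}_\Gamma$, and by \autoref{prop:transverse} the intersection is covered by such open strata of a single codimension. Everything else is a direct transcription of the argument in \cite{CMZeuler}.
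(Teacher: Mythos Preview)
Your approach is exactly the paper's: both defer to the argument of \cite[Proposition~5.1]{CMZeuler}, with \autoref{prop:transverse} supplying the transversality input for~$\ol{\cH}$ in place of the ambient stratum.

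That said, your dimension-counting paragraph is garbled. From ``the intersection contains a codimension-$M$ stratum'' you only obtain that the intersection has codimension $\leq M$, not $M\geq L$; and from ``the $L$ distinct indices $k_j$ lie in $\{1,\dots,M\}$'' you get $L\leq M$, not $M\leq L$. As written, neither inequality you need is actually established. The clean fix is to choose~$p$ generic in the intersection, i.e.\ in the open stratum of minimal depth, so that near~$p$ the intersection agrees with~$D^{\cH,\circ}_\Gamma$ itself. Then \autoref{prop:transverse} identifies the intersection locally with $\bigcap_{j}\{t_{k_j}=0\}$, forcing $\{k_1,\dots,k_L\}=\{1,\dots,M\}$; distinctness of the~$\Gamma_{i_j}$ (which you may assume for the first assertion) then gives $L=M$ and pins down the ordering~$\sigma$. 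The ``Moreover'' clause is likewise about the stratum $D_{i_1,\dots,i_L}$, not the intersection of divisors; it follows because no $(L+1)$-level graph can have two of its $\delta_k$-undegenerations equal.
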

\par
The next statement is crucial to inductively apply the formulas in
this paper.   We now need to refine our analysis by looking at irreducible components of $D^\cH_\Gamma$. Recall that $\LG^+_1(\cH) $ is the set of pairs $\Gamma^+=(\Gamma,i)$ where $\Gamma\in \LG_1(\cH)$ and $i\in \{1,\dots,n_\Gamma(\ol \cH)\}$ is the index set of irreducible components of  $D_\Gamma^{\cH,\circ}$. We denote by  $D^\cH_{\Gamma^+}$ the irreducible component of $D^\cH_{\Gamma}$ corresponding to $\Gamma^+$. Recall that $p_\Gamma$ and $c_\Gamma$ are the projection and clutching morphisms of the diagram~\eqref{dia:covboundary}.
\par
\begin{prop} \label{prop:linatboundary}
There are generalized  linear submanifolds $\Omega \cH_{\Gamma^+}^{[i]} \to
\Romod[\bfmu_i][\bfg_i,\bfn_i][\frakR_i]$ of dimension $d_i$ with projectivization
$\cH_{\Gamma^+}^{[i],\circ}$, such that $$\sum_{i=-L}^0 d_i  \=  d_\cH -L$$
and such that the normalizations $\cH_{\Gamma^+}^{[i]} \to B_\Gamma^{[i]}$ of closures
of $\cH_{\Gamma^+}^{[i],\circ}$ together give a product decomposition
$\cH_{\Gamma^+} = \prod_{i=-L}^0 \cH_{\Gamma^+}^{[i]}$ of the normalization of
the~$p_\Gamma$-image of the~$c_\Gamma$-preimage of~$\mathrm{Im}(D^\cH_{\Gamma^+})
\subset \bP\LMS$. 
\end{prop}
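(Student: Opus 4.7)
The plan is to work in the perturbed period coordinates from \autoref{sec:decompLTB} near a point $p=(X,[\omega],\bfz) \in D_\Gamma^{\cH,\circ}$, where $\Gamma \in \LG_L(\cH)$ has no horizontal edges. Near~$p$ the ambient stratum $\bP\LMS$ admits coordinates $(t_i, \wt{c}_j^{[-i]})$ (the $\wt{q}$-coordinates are absent since $\Gamma$ has no horizontal edges). First I would express the defining $\bC$-linear equations of $\cH$, which are written in terms of the actual periods $c_j^{[-i]} = \prodt[i]\cdot\wt{c}_j^{[-i]} + \text{(lower level corrections)}$ of~\eqref{eq:ccdiff}, in the perturbed coordinates. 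After rescaling each equation by the appropriate power of $\prodt[i]$, they become holomorphic equations in the $\wt{c}_j^{[-i]}$ that extend across the boundary.

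Next I would set all $t_i = 0$ to restrict to~$D_\Gamma^\circ$. Since the correction terms in~\eqref{eq:ccdiff} are of strictly higher order in the~$t_s$, this limit decouples the rescaled equations into a system of linear equations in the variables $\wt{c}_j^{[-i]}$ for each fixed level $i$ separately. I define $\Omega\cH_\Gamma^{[i]}$ as the normalization of the locus cut out by the level-$i$ equations inside the generalized stratum $\Romod[\bfmu_i][\bfg_i,\bfn_i][\frakR_i]$; by construction these are generalized linear submanifolds in the sense of \autoref{sec:generalizedstrata}. The product $\prod_{i=-L}^0 \cH_\Gamma^{[i]}$ maps via the clutching data in~\eqref{dia:covboundary} to the $p_\Gamma$-image of the $c_\Gamma$-preimage of the image of $D^\cH_\Gamma$, and the universal property of normalization together with smoothness of the product identifies it with the normalization $\cH_\Gamma$.

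For the dimension count, \autoref{prop:transverse} shows that each of the $L$ level passages drops the dimension by exactly one, so the level parameters $t_{-L},\dots,t_{-1}$ contribute $L$ transverse directions on~$\overline{\cH}$. The remaining tangent directions along~$D_\Gamma^\circ$ are governed precisely by the level-wise decoupled equations, hence $\sum_{i=-L}^0 d_i = d_\cH - L$. The projectivization is already absorbed into the definition of~$d_i$ since the $\bC^*$-rescaling of the differential acts only on the top-level periods.

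The main obstacle is to justify the decoupling of equations. Two points need care: first, the error terms in~\eqref{eq:ccdiff} might a priori cause mixing between levels, so one must verify that after the rescaling they vanish to positive order in the $t_s$ and thus drop out at $t=0$. This is essentially the content of~\cite[Theorem~1.5]{BDG}, which guarantees that the closure is cut out by equations of this leading-order form. Second, the generalized stratum at level $i$ carries the global residue condition $\frakR_i$ inherited from the levels above, and one must check that the linear equations restricted from $\cH$ are compatible with~$\frakR_i$, so that~$\Omega\cH_\Gamma^{[i]}$ genuinely lands in $\Romod[\bfmu_i][\bfg_i,\bfn_i][\frakR_i]$ as a substack rather than merely in the ambient product of strata.
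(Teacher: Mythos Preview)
Your sketch is essentially correct and follows the strategy carried out in \cite{Benirschke} and \cite{BDG}. The paper's own proof is much terser: it simply cites those references for the existence of the level-wise linear submanifolds and the product decomposition, and then invokes \autoref{prop:transverse} for the dimension formula. So you are not taking a different route so much as unpacking the contents of the citation.

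One small correction: your remark that ``the $\bC^*$-rescaling of the differential acts only on the top-level periods'' is not quite right. Each generalized level stratum $B_\Gamma^{[i]}$ carries its own projectivization (this is the $(\bC^*)^{L+1}$-action mentioned around~\eqref{dia:covboundary2}), so $d_i = \dim \cH_\Gamma^{[i]}$ already accounts for a separate $\bC^*$ at level~$i$. The dimension identity $\sum d_i = d_\cH - L$ then reflects that the unprojectivized dimensions satisfy $\sum (d_i+1) = d_\cH + 1$, consistent with $D_\Gamma^\cH$ having codimension~$L$ in~$\overline{\cH}$. This does not affect your argument, only its justification.

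Your identification of the main obstacle is accurate: the genuine work is showing that the defining equations of~$\cH$ can be organized level by level so that the rescaled limits decouple. This requires the reduced-row-echelon-by-level normal form of \cite{BDG} (summarized in \autoref{sec:closurelin}), not just the observation that the error terms in~\eqref{eq:ccdiff} vanish at $t=0$.
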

We will call  $\cH_{\Gamma^+}^{[i]} \to B_{\Gamma^+}^{[i]}$ the \emph{$i$-th level linear
manifold}. Our ultimate goal here is to show the following decomposition.
The terminology is explained along with the definition of coordinates.
\par
\begin{prop} \label{prop:Omegadecomp}
  Let~$\Gamma$ be an arbitrary level graph with~$L$
levels below zero. In a small neighborhood~$U$ of a point in $
D_\Gamma^\cH$
there is a direct sum decomposition 
\be \label{eq:OmDecompH}
\Omega^1_{\ol{\cH}} (\log \partial\cH )|_U
\= \bigoplus_{i=-L}^0 \Bigl(\Omega_i^{\hor}(\log) \oplus
\Omega_i^{\lev}(\log) \oplus \Omega_i^{\rel} \Bigr)
\ee
for certain subsheaves such that the natural restriction map induces surjections
\bes
\Omega_{i,B}^{\hor}(\log)|_{\ol{\cH}} \twoheadrightarrow \Omega_i^{\hor}(\log), \quad
\Omega_{i,B}^{\lev}(\log)|_{\ol{\cH}}\, \simeq\, 
\Omega_i^{\lev}(\log) \quad
\text{and}\quad \Omega_{i,B}^{\rel}|_{\ol{\cH}} \twoheadrightarrow \Omega_i^{\rel}\,.
\ees
Moreover the statements in  items~i) and~ii) of \autoref{sec:prongmatchings}
hold verbatim for the linear submanifold with the same~$\ell_\Gamma$.
\end{prop}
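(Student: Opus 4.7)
The plan is to deduce the decomposition on $\ol{\cH}$ by restricting the ambient decomposition \eqref{eq:OmDecompB} to $\ol{\cH}$, and then to identify each restricted summand using the three structural propositions already established in this section. Throughout I would work in the perturbed period coordinates of \autoref{sec:decompLTB} on a small neighborhood $U$ of a point $p\in D_\Gamma^{\cH,\circ}$, with the key preparatory step of choosing the $\Gamma$-adapted basis of the ambient stratum $B$ compatibly with the linear structure of $\cH$.

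For the \emph{relative summand}, I would use \autoref{prop:linatboundary}, which gives a product decomposition $\cH_\Gamma=\prod_{i=-L}^0 \cH_\Gamma^{[i]}$ into generalized linear submanifolds level-by-level. Since each $\cH_\Gamma^{[i]}\subset B_\Gamma^{[i]}$ is linear in period coordinates of its ambient level, I can choose the relative cycles $\gamma_j^{[i]}$ so that their periods $c_j^{[i]}$ split into a subset giving coordinates on $\cH_\Gamma^{[i]}$ and a complementary subset on which the defining linear equations of $\cH_\Gamma^{[i]}$ are supported. Declaring $\Omega_i^{\rel}$ to be the sheaf spanned by the restrictions of $d\wt{c}_j^{[i]}/\wt{c}_j^{[i]}$ for the surviving coordinates produces the required surjection $\Omega_{i,B}^{\rel}|_{\ol{\cH}}\twoheadrightarrow \Omega_i^{\rel}$.

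For the \emph{level} and \emph{horizontal} summands I would invoke \autoref{prop:transverse} and \autoref{prop:bdtoricsing}. By the former, the divisors $D_{\delta_i(\Gamma)}^{\cH}$ for $i=1,\dots,L$ are Cartier on $\ol{\cH}$ and meet transversally, so the ambient level parameter $t_{-i}$ restricts to a local equation of $D_{\delta_i(\Gamma)}^{\cH}$ on $\ol{\cH}$. Hence $dt_{-i}/t_{-i}$ pulls back to a nowhere-vanishing log form, and setting $\Omega_i^{\lev}(\log)=\langle dt_{-i}/t_{-i}\rangle$ delivers the claimed isomorphism. For the horizontal summand, \autoref{prop:bdtoricsing} gives local binomial equations among the $\wt{q}_j^{[i]}$; such binomial equations translate directly into linear relations among the forms $d\wt{q}_j^{[i]}/\wt{q}_j^{[i]}$. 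Taking $\Omega_i^{\hor}(\log)$ to be the image of $\Omega_{i,B}^{\hor}(\log)|_{\ol{\cH}}$ thus yields the desired surjection, and horizontal identifications are forced to stay within a single level because $\kappa_e=0$ characterises horizontal edges.

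Finally, I would verify the direct-sum property and the prong-matching statements. Directness follows because the error forms $E_{j,i}^{[-s]}$ and $E_{j,i}'^{[-s]}$ appearing in \eqref{eq:ccdiff} and \eqref{eq:logqdiff} involve only lower-level relative periods, so after restriction they still lie in $\bigoplus_{s>i}\Omega_s^{\rel}$; the three families of summands are then separated by the divisors along which they have poles, exactly as on $\ol{B}$. For items~i) and~ii) of \autoref{sec:prongmatchings}, the node equations $xy=t_1^{\ell_\Gamma/\kappa_e}$ and the size-$\ell_\Gamma$ orbit of prong-matchings are features of the plumbing construction of $\ol{B}$ near $D_\Gamma^B$, depending only on the enhanced level graph; since $\ol{\cH}$ is the normalization of a closed substack cut out by conditions on the periods $b_j^{[i]}$ and $c_j^{[i]}$ and not on the local equations of vertical nodes, those descriptions pull back verbatim. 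The main obstacle will be handling the global residue condition, which can couple linear equations across several levels; this is overcome by placing the absolute cycles $\alpha_j^{[i]}$ and the residue-carrying cycles at the beginning of the adapted relative basis on each level, so that residue conditions become triangular with respect to the level filtration and do not mix the three summand types.
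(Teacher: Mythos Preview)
Your proposal is correct and follows essentially the same approach as the paper. The paper's proof is more concise: it simply observes that the defining equations \eqref{eq:lineareqns} and \eqref{eq:toriceqns} already established in the BDG summary each involve only the generators of a single summand of \eqref{eq:OmDecompB} at a single level~$i$, so the decomposition and the three restriction maps are immediate, while items~i) and~ii) follow from the transversality of the level parameters (\autoref{prop:transverse}); you unwind the same content via Propositions~\ref{prop:bdtoricsing}--\ref{prop:linatboundary}, which is fine, though for the relative summand it is cleaner to cite \eqref{eq:lineareqns} directly rather than to extrapolate from the boundary product structure of \autoref{prop:linatboundary}.
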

\par
As a consequence we may use the symbols $\ell_\Gamma$ and $\ell_{\Gamma_i}$ ambiguously
for strata and their linear submanifolds.
\par
We summarize the relevant parts of \cite{BDG}. Equations of~$\cH$ are
interpreted as homology classes and we say that a \emph{horizontal node is
crossed by an equation}, if the corresponding vanishing cycle has non-trivial
intersection with the equation. The horizontal nodes are partitioned
into \emph{$\cH$-cross-equivalence classes} by simultaneous appearance
in equations for~$\cH$. A main observation is that $\omega$-periods
of the vanishing cycles in an $\cH$-cross-equivalence class are
proportional. Similarly, for each equation and for any level passage
the intersection numbers of the equation with the nodes crossing that
level add up to zero when weighted appropriately with the residue
times  $\ell_\Gamma/\kappa_e$ (\cite[Proposition~3.11]{BDG}).
\par
Next, in \cite{BDG} they sort the equations by level and then write them in
reduced row echelon from. One may order the periods so that the distinguished
$c_1^{[i]}$ (whose period is close to the level parameter~$t_{-i}$) is among
the pivots of the echelon form for each~$i$. The second main observation
is that each defining equation of~$\cH$ can be split into a sum of
defining equations, denoted by $F_k^{[i]}$, with the following properties.
The upper index~$i$ indicate the highest level, whose periods are involved
in the equation. Moreover, either~$F_k^{[i]}$ has non-trivial intersection
with some (vanishing cycles of a) horizontal node at level~$i$ and
then no intersection with a horizontal node at lower level, or
else no intersection with a horizontal node at all.
\par
As a result $\cH$ is cut out by two sets of equations, see
\cite[Equations~(4.2), (4.3), (4.4)]{BDG}. First, there are the equations
$G_k^{[i]}$ that are 
$\prodt[-i]$-rescalings of linear functions

\be \label{eq:lineareqns}
G_k^{[i]} \=  L_k^{[i]}\bigl(\wt{c}_{2-\delta_{i,0}^{[i]}}, \ldots, \wt{c}_{N(i)-h(i)}^{[i]}\bigr)
\ee
in the periods at level~$i$. (To get this form from the version in
\cite{BDG} absorb the terms from lower level periods into the
function $c_j^{[i]}$ where $j=j(k,i)$ is the pivot of the equation $F_k^{[i]}$.
This does not effect the truth of~\eqref{eq:ccdiff}).
\par
Second, there are multiplicative monomial equations among the exponentiated
periods, that can be written as bi-monomial equations with positive exponents
\be \label{eq:toriceqns}
H^{[i]}_k \= (\wt{\bfq}^{[i]})^{J_{1,k}} - (\wt{\bfq}^{[i]})^{J_{2,k}}
\ee
where $\wt{\bfq}^{[i]}$ is the tuple of the variables $\wt{q}_j^{[i]}$ and
$J_{1,k}, J_{2,k}$ are tuples of non-negative integers. (In the multiplicative
part \cite{BDG} already incorporated the lower level blurring into
the pivot variable.)
\par
\begin{proof}[Proof of \autoref{prop:bdtoricsing}] This  follows
directly from the form of the binomial equations~\eqref{eq:toriceqns},
see \cite[Theorem~1.6]{BDG}.
\end{proof}
\par
\begin{proof}[Proof of \autoref{prop:transverse}]
Smoothness and normal crossing is contained in \cite[Corollary~1.8]{BDG}.
The transversality claimed there contains the dimension drop claimed in
the proposition. The more precise statement in \cite[Theorem~1.5]{BDG}
says that after each intersection of $\ol{\cH}$ with a vertical boundary
divisor the result is empty or contained in the open boundary divisor
$D_\Gamma^{B,\circ}$.
\end{proof}
\par
\begin{proof}[Proof of \autoref{prop:linatboundary}] This is the
main result of \cite{Benirschke} or the restatement
in \cite[Proposition~3.3]{BDG} and this together with the 
Proposition~\ref{prop:transverse} implies the dimension statement.
\end{proof}
\par
\begin{proof}[Proof of \autoref{prop:Omegadecomp}]
Immediate from~\eqref{eq:lineareqns} and~\eqref{eq:toriceqns}, which
are equations among the respective set of generators of the
decomposition in~\eqref{eq:OmDecompB}.  The additional claim item~ii)
follows from the isomorphism of level parameters and transversality. Item~i)
is a consequence of this.
\end{proof}

\subsection{Push-pull comparison for linear submanifolds}

For recursive computations, we will transfer classes from $\cH_{\Gamma^+}^{[i]}$,
which were defined via \autoref{prop:linatboundary}, to $D_{\Gamma^+}^\cH$
essentially via $p_{\Gamma^+}$-pullback and $c_{\Gamma^+}$-pushforward. More precisely,
taking the normalizations into account, we have to use the maps
$c_{{\Gamma^+},\cH}$ and $p_{{\Gamma^+},\cH}$ defined on the normalization $\cH_{\Gamma^+}^s$ of the
$c_{\Gamma^+}$-preimage of the image of $D_{\Gamma^+}^\cH$ in $D_{\Gamma^+}^B$. To compute degrees we use the analog
of the inner triangle in~\eqref{dia:covboundary} and give a concrete
description of~$\cH_{\Gamma^+}^s$.
\par
Recall from the introduction that $K^\cH_{\Gamma^+}$ is  the product of the number of prong-matchings
on each edge of $\Gamma$ that are actually contained in  $D_{\Gamma^+}^\cH$.
\be \label{dia:covboundary2} \begin{tikzcd}
(\Omega\cH_{\Gamma^+}^\circ)^{\mathrm{pm}}\arrow{dd} \arrow{rrr} 
&&&\cH_{{\Gamma^+}}^{s,\circ}\arrow{ddll}[swap]{p_{{\Gamma^+},\cH}} \arrow{ddrr}{c_{{\Gamma^+},\cH}} \ar[d]
&& \\
&&&B_{{\Gamma},{\Gamma}}^s \arrow{dl}{p_{\Gamma}^{\Gamma}}\arrow{dr}[swap]{c_{\Gamma}^{\Gamma}} &&\\
\Omega\cH^\circ_{\Gamma^+} \arrow{r}  &{\cH}^\circ_{{\Gamma^+}} \arrow{r}
& B_{\Gamma,\Gamma}  && D_{\Gamma}^{B,\circ}  & D^{\cH,\circ}_{\Gamma^+} \arrow{l}
\end{tikzcd}
\ee
\par
Consider $\Omega\cH^\circ_{\Gamma^+} := \prod \Omega \cH_{\Gamma^+}^{[i]}$ as a moduli space
of differentials subject to some (linear) conditions imposed on its periods.
Consider moreover the moduli space $(\Omega\cH_{\Gamma^+}^\circ)^{\mathrm{pm}}:= (\prod \Omega
\cH_{\Gamma^+}^{[i]})^{\mathrm{pm}}$
where we add the additional datum of one of the $K_{\Gamma^+}^\cH$ prong-matchings
reachable from the interior.  The torus $(\bC^*)^{L+1}$ acts on $\Omega\cH^\circ_{\Gamma^+}$ with quotient~$\cH^\circ_{\Gamma^+}
= \prod \cH_{\Gamma^+}^{[i],\circ} $. On the
other hand, if we take the quotient of $(\Omega\cH_{\Gamma^+}^\circ)^{\mathrm{pm}}$ by
$(\bC^*)^{L+1} = (\bC^*) \times (\bC^L/\text{Tw}_\Gamma^s)$ we obtain a space~$\cH_{\Gamma^+}^{s,\circ}$
which is naturally the normalization of a subspace of~$U_{\Gamma^+}^s$, since
it covers~$D^{\cH,\circ}_{\Gamma^+}$ with marked (legs and) edges and whose generic
isotropy group does not stem from $\Gh_{\Gamma}$ (it might be non-trivial,
e.g.\ if a level of~${\Gamma^+}$ consists of a hyperelliptic stratum), while
the generic isotropy group of~$D^{\cH,\circ}_{\Gamma^+}$ is an extension of~$\Gh_{\Gamma}$ by
possibly some group of graph automorphisms and possibly isotropy groups
of the level strata. 
\par
\begin{lemma} \label{lem:ratio_degrees}
The ratio of the degrees the maps in \ref{dia:covboundary2} on $\cH_{\Gamma^+}^s$ is
\bes
\frac{\deg(p_{{\Gamma^+},\cH})}{\deg(c_{{\Gamma^+},\cH})}
\= \frac{K^\cH_{\Gamma^+} }{|\Aut_{\cH}({\Gamma^+})|\ell_{\Gamma^+}}, 
\ees
where $\Aut_{\cH}({\Gamma^+})$ is the subgroup of~$\Aut(\Gamma)$ whose induced action
on a neighborhood of $D_{\Gamma^+}^\cH$ preserves $\ol \cH$ and $\ell_{\Gamma^+}=\ell_\Gamma$.
\end{lemma}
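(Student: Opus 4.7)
The plan is to compute $\deg(p_{\Gamma,\cH})$ and $\deg(c_{\Gamma,\cH})$ separately via the diagram~\eqref{dia:covboundary2}, exploiting the description of $\cH_\Gamma^{s,\circ}$ as the quotient $(\Omega\cH_\Gamma^\circ)^{\mathrm{pm}}/(\bC^*)^{L+1}$ and the comparison of generic isotropy groups on the two sides.

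For $\deg(c_{\Gamma,\cH})$, I would invoke the description of isotropy groups given in the paragraph preceding the lemma: the generic stabilizer of $\cH_\Gamma^{s,\circ}$ contains only the isotropies inherited from the level linear submanifolds $\cH_\Gamma^{[i]}$, whereas the generic stabilizer of $D^{\cH,\circ}_\Gamma$ is an extension of the ghost group $\Gh_\Gamma$ by the subgroup $\Aut_\cH(\Gamma) \subseteq \Aut(\Gamma)$ of graph automorphisms preserving~$\ol\cH$, together with the same level-stratum isotropies. The level-stratum contributions cancel between source and target, and the remaining factor gives $\deg(c_{\Gamma,\cH}) = |\Gh_\Gamma|\cdot|\Aut_\cH(\Gamma)| = \ell_\Gamma\cdot|\Aut_\cH(\Gamma)|$, where the identification $|\Gh_\Gamma| = \prod_i \ell_{\Gamma,i} = \ell_\Gamma$ comes from items~i)--ii) of \autoref{sec:prongmatchings}; these identifications transfer verbatim from the ambient stratum to the linear submanifold thanks to \autoref{prop:Omegadecomp}.

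For $\deg(p_{\Gamma,\cH})$, I would start from the forgetful map $(\Omega\cH_\Gamma^\circ)^{\mathrm{pm}} \to \Omega\cH_\Gamma^\circ$, which is étale of degree $K_\Gamma^\cH$ by the very definition of the reachable prong-matchings. This map is equivariant for the $(\bC^*)^{L+1}$-action (acting on the source via the effective quotient $\bC^* \times ((\bC^*)^L/\sTw)$), and hence descends to a map $\cH_\Gamma^{s,\circ} \to \cH^\circ_\Gamma$ of the same degree $K_\Gamma^\cH$. The passage from $(\bC^*)^L$ to $(\bC^*)^L/\sTw$ contributes no extra factor, since elements of the simple twist group act trivially on the underlying multi-scale data (this is their defining property in \cite{LMS}). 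Dividing the two degrees yields the claimed formula.

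The main obstacle I anticipate is the bookkeeping of isotropy groups on both sides, in particular verifying that the only generic isotropy of $D^{\cH,\circ}_\Gamma$ beyond the level-stratum contribution is the extension of $\Gh_\Gamma$ by $\Aut_\cH(\Gamma)$, and that the $\sTw$-quotient indeed introduces no spurious factor into $\deg(p_{\Gamma,\cH})$. Both points should follow by propagating the corresponding facts for the ambient stratum through \autoref{prop:Omegadecomp}, paralleling the proof of \cite[Proposition~5.1]{CMZeuler} for full strata, where the only genuinely new ingredient is replacing the total prong-matching count by the reachable count $K_\Gamma^\cH$ and restricting from $\Aut(\Gamma)$ to $\Aut_\cH(\Gamma)$.
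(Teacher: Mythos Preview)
Your final ratio is correct, but both intermediate degree computations are off by the same factor, and the compensating errors hide a genuine misunderstanding of the torus action.

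The claim $|\Gh_\Gamma| = \ell_\Gamma$ is false in general. By definition $\Gh_\Gamma = \Tw/\sTw$, whereas $\ell_\Gamma = [R_\Gamma : \sTw]$ with $R_\Gamma \cong \bZ^L$ the full level rotation group. These differ by the index $[R_\Gamma : \Tw]$, which is typically greater than one (already for a two-level graph with a single edge of enhancement $\kappa>1$ one has $|\Gh_\Gamma|=1$ but $\ell_\Gamma=\kappa$). So the paper's computation is $\deg(c_{\Gamma,\cH}) = [\Tw:\sTw]\cdot|\Aut_\cH(\Gamma)|$, not $\ell_\Gamma\cdot|\Aut_\cH(\Gamma)|$.

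Correspondingly, your equivariance argument for $p_{\Gamma,\cH}$ breaks down: the $(\bC^*)^{L+1}$-action on $(\Omega\cH_\Gamma^\circ)^{\mathrm{pm}}$ is the level rotation torus action, which \emph{rotates prong-matchings} as well as rescaling differentials. When you pass to the quotient, prong-matchings lying in the same orbit of $R_\Gamma$ (with stabilizer $\Tw$) get identified, so the degree drops to $K_\Gamma^\cH/[R_\Gamma:\Tw]$, the number of prong-matching \emph{equivalence classes}. This is exactly what the paper computes. The statement that ``the $\sTw$-quotient introduces no spurious factor'' is beside the point; the issue is the residual $R_\Gamma/\Tw$-action on the prong-matching fiber, which you have not accounted for.

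Once both degrees are corrected, the ratio telescopes via $[R_\Gamma:\Tw]\cdot[\Tw:\sTw]=[R_\Gamma:\sTw]=\ell_\Gamma$ to the stated formula, which is how the paper concludes.
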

\par
\begin{proof} We claim that $\deg(p_{{\Gamma^+},\cH}) = K_{\Gamma^+}^\cH / [R_\Gamma :  \sTw[\Gamma]]$ where $R_\Gamma \cong \bZ^L \subset \bC^L$ is the level
rotation group. In fact this follows since in the left quadrilateral in~\eqref{dia:covboundary2} the left vertical arrow has degree $K^\cH_{\Gamma^+}$ while the
bottom arrow is the quotient by $(\bC^*) \times ((\bC^*)^L / R_\Gamma)$ and the
top arrow is quotient by $(\bC^*) \times (\bC^*)^L / \sTw[\Gamma]$.
\par 
On the other side under the map~$c_{\Gamma}^{\Gamma}$ of the ambient stratum two points have the same image only if they differ by an automorphism of~$\Gamma$. However only the  subgroup $\Aut_{\cH}({\Gamma^+}) \subset \Aut(\Gamma)$ acts on $c_{{\Gamma^+},\cH}(\cH_{{\Gamma^+}}^{s,\circ})$ and its normalization and contributes to the local isotropy group of the normalization. Thus only this subgroup contributes to the degree of $c_{{\Gamma^+},\cH}$. The claimed equality now follows because $[R_\Gamma : \sTw[\Gamma]] = \ell_\Gamma$.
\end{proof}
\par
Consider $\Gamma^+=(\Gamma,j)\in \LG^+_L(\cH)$ and  $\Delta^+ \in \LG^+_1(\cH_{\Gamma^+}^{[i]})$  defining an irreducible component of a divisor
in $\cH_{\Gamma^+}^{[i]}$. We aim to compute its pullback  to $D_{\Gamma^+}^s$ and the push
forward to~$D_{\Gamma^+}$ and to~$\ol{\cH}$. For this purpose we need extend
the commensurability diagram~\eqref{dia:covboundary2} to include degenerations
of the boundary strata. This works by copying verbatim the construction
that lead in~\cite{CMZeuler} to the commensurability diagram~\eqref{dia:covboundary}.
We will indicate with subscripts~$\cH$ to the morphisms that we work in this
adapted setting. Recall from this construction that in $B_{\Gamma,\Gamma}^{s}$ (and hence in $\cH_{\Gamma^+}^{s,\circ}$) the edges of~${\Gamma}$ have been labeled once
and for all (we write $\Gamma^\dagger$ for this labeled graph) and that the
level strata~$\cH_{\Gamma^+}^{[i]}$ inherit these labels.
Consequently, there is a unique irreducible component $ D_{\widetilde{\Delta}_+^\dagger}$  associated to a level graph $\widetilde{\Delta}^\dagger$ which is a
degeneration of~$\Gamma^\dagger$ and such that the products of the levels levels~$i$ and~$i-1$ of~$D_{{\widetilde{\Delta}_+}^\dagger}$ equals~$\cH_{\Delta^+}^\circ$.
The resulting refined unlabeled graph will simply be denoted by~$\widetilde{\Delta}^+$. For a fixed labeled graph~$\Gamma^\dagger$ we denote by $J(\Gamma^\dagger,\widetilde{\Delta}^+)$ the set of $\Delta^+ \in \LG^+_1(\cH_{\Gamma^+}^{[i]})$ such that $\widetilde{\Delta}^+$ is the result of that procedure. Obviously the graphs in $J(\Gamma^\dagger,\widetilde{\Delta}^+)$ differ only by the labeling of their half-edges and the following lemma computes its cardinality.
\par
\begin{lemma} \label{le:autcancel}
The cardinality of $J(\Gamma^\dagger,\widetilde{\Delta}^+)$
is determined by
\bes
|J(\Gamma^\dagger,\widetilde{\Delta}^+)|\cdot |\Aut_\cH(\widetilde{\Delta}^+)| \=
|\Aut_{\cH^{[i]}_{\Gamma^+}}(\Delta^+)|\cdot |\Aut_\cH({\Gamma^+})|\,.
\ees\
\end{lemma}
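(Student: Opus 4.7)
The plan is an orbit-counting argument based on the contraction from $\widetilde{\Delta}$ to $\Gamma$. Contracting the level passage that separates levels $-i{+}1$ and $-i$ turns $\widetilde{\Delta}$ into $\Gamma$ and induces a group homomorphism
\[
c\colon \Aut_\cH(\widetilde{\Delta}) \longrightarrow \Aut_\cH(\Gamma)
\]
by restricting each automorphism to the surviving edges and legs. The first step will be to identify the kernel of $c$ with $\Aut_{\cH^{[i]}_\Gamma}(\Delta)$. An element of $\ker(c)$ acts trivially on each vertex and edge of $\Gamma$, so it may only permute the edges introduced by the degeneration from $\Gamma$ to $\widetilde{\Delta}$, i.e.\ the edges of $\Delta$. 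By the product decomposition of boundary strata in \autoref{prop:linatboundary} and the compatibility of the linear structure with the factorization in \autoref{prop:Omegadecomp}, preservation of $\cH$ for such an automorphism is equivalent to preservation of the factor $\cH^{[i]}_\Gamma$. Writing $I = \mathrm{im}(c)$, the first isomorphism theorem then yields $|\Aut_\cH(\widetilde{\Delta})| = |I|\cdot|\Aut_{\cH^{[i]}_\Gamma}(\Delta)|$, reducing the claim to the combinatorial identity
\[
|J(\Gamma^\dagger,\widetilde{\Delta})| \= [\Aut_\cH(\Gamma) : I].
\]

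To prove this identity I would construct an explicit bijection between $J(\Gamma^\dagger,\widetilde{\Delta})$ and the coset space $\Aut_\cH(\Gamma)/I$. Every element of $J$, together with the fixed $\Gamma^\dagger$, determines a unique labeled graph $\widetilde{\Delta}^\dagger$ as in the discussion preceding the lemma; two such labeled graphs coincide as elements of $J$ precisely when they are related by an automorphism in $\Aut_\cH(\widetilde{\Delta})$ fixing $\Gamma^\dagger$, i.e.\ by an element of $\ker(c)$. Conversely, given a coset representative $\phi \in \Aut_\cH(\Gamma)\setminus I$, acting by $\phi$ on the half-edge labels of a representative $\widetilde{\Delta}^\dagger$ and then relabeling the surviving $\Gamma$-edges back to match $\Gamma^\dagger$ transfers the effect of $\phi$ entirely onto the new edges, producing a genuinely different element of $J$ indexed by the coset $\phi I$. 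Well-definedness in both directions follows from the free action of $\Aut_\cH(\Gamma)$ on labelings of $\Gamma$.

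I expect the main obstacle to be verifying this bijection, and in particular the equality $\ker(c)=\Aut_{\cH^{[i]}_\Gamma}(\Delta)$ rather than a mere inclusion: one needs that every automorphism of the level-$i$ linear submanifold $\cH^{[i]}_\Gamma$ lifts to an automorphism of $\ol{\cH}$ acting trivially on all other levels. This should follow from the local product description of the boundary near $D_\Gamma^\cH$ provided by \autoref{prop:linatboundary}, which realizes a neighborhood of $D_\Gamma^\cH$ as a product of the level-wise linear submanifolds up to the inherited ghost and graph automorphisms. Once the bijection is established, combining it with the index formula concludes the proof:
\[
|J|\cdot|\Aut_\cH(\widetilde{\Delta})| \= [\Aut_\cH(\Gamma):I]\cdot|I|\cdot|\Aut_{\cH^{[i]}_\Gamma}(\Delta)| \= |\Aut_\cH(\Gamma)|\cdot|\Aut_{\cH^{[i]}_\Gamma}(\Delta)|.
\]
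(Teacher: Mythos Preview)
Your approach is essentially the same as the paper's: the paper's proof simply cites \cite[Lemma~4.6]{CMZeuler} and says one considers the kernel and cokernel of the undegeneration map $\varphi\colon\Aut_\cH(\widetilde{\Delta})\to\Aut_\cH(\Gamma)$, which is exactly your map~$c$. Your identification of $\ker(c)$ with $\Aut_{\cH^{[i]}_\Gamma}(\Delta)$ and of $|J(\Gamma^\dagger,\widetilde{\Delta})|$ with the index of the image is precisely the kernel--cokernel analysis the paper alludes to, and you have correctly flagged the one point requiring care (that every $\cH^{[i]}_\Gamma$-preserving automorphism of $\Delta$ extends, via the product decomposition, to an $\cH$-preserving automorphism of $\widetilde{\Delta}$ trivial on the other levels).
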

\begin{proof}
The proof is analogous to the one of \cite[Lemma~4.6]{CMZeuler}, where one considers the kernel and cokernel of the map $\varphi:\Aut_\cH(\widetilde{\Delta}^+)\to\Aut_\cH(\Gamma^+)$ given by undegeneration. 
\end{proof}
\par
We now determine the multiplicities of the push-pull procedure. 
Recall from \autoref{sec:prongmatchings} the definition
of $\ell_{\Gamma,j} = \ell_{\delta_j(\Gamma)}$ for $j\in \bZ_{\geq 1}$.
\par
\begin{prop}
	\label{prop:pushpullcomparison}
For a fixed $\Delta^+ \in \LG_1^+(\cH_\Gamma^{[i]})$, the divisor classes of
$D^\cH_{\widetilde{\Delta}^+}$ and the clutching of $D^\cH_{\Delta^+}$ are related by
\be \label{eq:Dcomparison}
\frac{|\Aut_\cH(\widetilde{\Delta}^+)|}
{|\Aut_{\cH^{[i]}_{\Gamma^+}}(\Delta^+)| |\Aut_\cH(\Gamma^+)|}
\cdot
c_{\Gamma^+,\cH}^* [D^\cH_{\widetilde{\Delta}^+}]
\= \frac{\ell_{{\Delta}}}{\ell_{\widetilde{\Delta},-i+1}}
 \cdot p_{\Gamma^+,\cH}^{[i],*} [D^\cH_{\Delta^+}]\,.
\ee
in $\CH^1(\cH_{\Gamma^+}^s)$ and consequently by 
\be \label{eq:Dcomparison2}
\frac{|\Aut_\cH(\widetilde{\Delta}^+)|}{|\Aut_\cH(\Gamma^+)|} \cdot 
\ell_{\widetilde{\Delta},-i+1}   \cdot [D^\cH_{\widetilde{\Delta}^+}]
\=  \frac{|\Aut_{\cH^{[i]}_{\Gamma^+}}(\Delta^+)|}{\deg(c_{\Gamma^+,\cH})} \cdot  \ell_\Delta \cdot
\,c_{\Gamma^+,\cH,*} \bigl(p_{\Gamma,\cH}^{[i],*} [D^\cH_{\Delta^+}]\bigr)
\ee
in $\CH^1(D_{\Gamma^+})$.
\end{prop}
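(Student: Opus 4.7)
The plan is to adapt the proof of the analogous statement for strata in \cite[Proposition~4.7]{CMZeuler} to the linear-submanifold setting, using that by \autoref{prop:Omegadecomp} the local coordinates, the lcm-quantities $\ell_\Gamma$, and the plumbing description items~i) and~ii) of \autoref{sec:prongmatchings} are inherited from the ambient stratum. The first step is to describe the intersection $c_{\Gamma,\cH}^{-1}(D^\cH_{\widetilde{\Delta}}) \cap \cH_\Gamma^s$ on the cover as a union of components. By the construction of the commensurability diagram and the labeling convention (working with $\Gamma^\dagger$), the components are indexed precisely by the set $J(\Gamma^\dagger,\widetilde{\Delta})$: each $\Delta \in J(\Gamma^\dagger,\widetilde{\Delta})$ corresponds to a different way of pairing the edges of $\widetilde{\Delta}$ with the labeled edges of $\Gamma^\dagger$. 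Under $p_{\Gamma,\cH}^{[i]}$, each such component maps to the factor $D^\cH_\Delta$ inside $\cH_\Gamma^{[i]}$ (times the identity on the remaining levels).

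Next I would compute the multiplicities on both sides as Cartier divisors, using the plumbing/level-parameter description. On the $\cH_\Gamma^{[i]}$-side, $[D^\cH_\Delta]$ is locally cut out by the single bottom-level parameter of $\Delta$ with its natural $\ell_\Delta$-exponent. When one pulls this back via $p_{\Gamma,\cH}^{[i]}$ and reinterprets the bottom-level parameter of $\Delta$ in terms of the level parameter of the new level passage inserted between level $-i+1$ and level $-i$ inside $\widetilde{\Delta}$, the exponent picked up is precisely $\ell_{\widetilde{\Delta},-i+1}$, the lcm of the prongs crossing that level passage in the big graph. Conversely $c_{\Gamma,\cH}^*[D^\cH_{\widetilde{\Delta}}]$ contributes a factor $\ell_{\widetilde{\Delta},-i+1}$ with multiplicity one on each component in $J(\Gamma^\dagger,\widetilde{\Delta})$. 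This yields, as divisors on $\cH_\Gamma^s$,
\[
\ell_{\widetilde{\Delta},-i+1}\cdot c_{\Gamma,\cH}^*[D^\cH_{\widetilde{\Delta}}] \;=\; \ell_\Delta \sum_{\Delta' \in J(\Gamma^\dagger,\widetilde{\Delta})} p_{\Gamma,\cH}^{[i],*}[D^\cH_{\Delta'}]\,.
\]
Since all the $\Delta'$ differ only by labelings of half-edges, they have the same class under $p_{\Gamma,\cH}^{[i],*}$, so the sum equals $|J(\Gamma^\dagger,\widetilde{\Delta})| \cdot p_{\Gamma,\cH}^{[i],*}[D^\cH_\Delta]$. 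Inserting the expression for $|J(\Gamma^\dagger,\widetilde{\Delta})|$ from \autoref{le:autcancel} now gives precisely the first identity~\eqref{eq:Dcomparison}.

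To obtain~\eqref{eq:Dcomparison2} I would push forward the first identity along $c_{\Gamma,\cH}$: the left-hand side becomes $\deg(c_{\Gamma,\cH}) \cdot [D^\cH_{\widetilde{\Delta}}]$ (by the projection formula, since $D^\cH_{\widetilde{\Delta}}$ is already on the base), and then apply \autoref{lem:ratio_degrees} together with the automorphism-cancellation of \autoref{le:autcancel} to absorb $\deg(p_{\Gamma,\cH})$ and $K^\cH_\Gamma/(|\Aut_\cH(\Gamma)|\ell_\Gamma)$ into the stated combinatorial prefactor. I expect the main obstacle to be bookkeeping: one has to verify that the definitions of $\Aut_\cH(\widetilde{\Delta})$ and $\Aut_{\cH_\Gamma^{[i]}}(\Delta)$ used in \autoref{le:autcancel} really are the automorphism groups that reappear in the local isotropies of $D^\cH_{\widetilde{\Delta}}$ and $D^\cH_\Delta$ respectively, so that the degree argument above cleanly produces the displayed formula. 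Once this compatibility is established the rest is the same plumbing calculation as for the ambient stratum.
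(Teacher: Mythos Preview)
Your approach is essentially the same as the paper's: both reduce to the argument of \cite[Proposition~4.7]{CMZeuler}, comparing the ramification orders of $c_{\Gamma,\cH}$ and $p_{\Gamma,\cH}$ along the components indexed by $J(\Gamma^\dagger,\widetilde{\Delta})$, and the paper explicitly notes that the only new ingredient is that the automorphism factors are the ones preserving~$\ol\cH$ --- exactly the bookkeeping point you flag at the end. One small remark: the passage from~\eqref{eq:Dcomparison} to~\eqref{eq:Dcomparison2} is a direct $c_{\Gamma,\cH,*}$-pushforward and rearrangement; you do not need \autoref{lem:ratio_degrees} or a second application of \autoref{le:autcancel} for that step.
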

\par
Here~\eqref{eq:Dcomparison} is used later for the proofs of the main theorems
while~\eqref{eq:Dcomparison2} is implemented in \texttt{diffstrata} for the special case of $k$-differentials to compute the pull-back of tautological classes from $D_{\Delta^+}^\cH$ to $D_{\wt\Delta^+}^\cH$, see also \autoref{sec:kdiff}.
\par
\begin{proof}
The proof is similar to the one of \cite[Proposition~4.7]{CMZeuler} and works by comparing the ramification orders of the maps $c_{\Gamma^+,\cH}^{\widetilde{\Delta}^+}$ and $p_{\Gamma^+,\cH}^{\widetilde{\Delta}^+}$.
The main difference to the original proof is only that the automorphism factors appearing in the clutching morphisms are the ones fixing irreducible components of $\cH$.
\end{proof}
\par
\medskip
The final part of this section is to compare various natural vector bundles under
pullback along the maps $c_{\Gamma^+,\cH}$ and $p_{\Gamma^+,\cH}$. The first bundle we consider 
is $\cE_{\Gamma^+}^\top$, a vector bundle of rank $N_{\Gamma^+}^\top-1$ on~$D^\cH_{\Gamma^+}$  that
should be thought of
as the top level version of the logarithmic cotangent bundle. Formally,
let $U\subset D^\cH_{\Gamma^+}$ be an open set centered at a degeneration of the top level of ${\Gamma^+}$
into $k$ level passages. Then  we define 
\begin{equation}\label{eq:cEtop}
	{\cE_{\Gamma^+}^\top}_{|U} \ =\ \bigoplus_{i=-k}^0  	{\Omega_i^{\lev}(\log)}_{|U}	\oplus \Omega_i^{\hor}(\log)_{|U}\oplus {\Omega_i^{\rel}}_{|U} 	\,.
\end{equation}
Let moreover $\xi^{[i]}_{{\Gamma^+},\cH}$ be the first Chern class of the line bundle on~$D^\cH_{\Gamma^+}$  generated by the multi-scale component at level $i$ and and $\cL_{{\Gamma^+}}^{[i]}$ be the line bundle whose divisor is given by the degenerations of the $i$-th level of ${\Gamma^+}$, as  defined more formally  in~\eqref{eq:defLithlev} below.
\par
We have the following compatibilities.
\begin{lemma}\label{lem:compatibilities}
	The first Chern classes of the tautological bundles on the levels
	of a boundary divisor are related by
	\be
	c_{{\Gamma^+},\cH}^* \, \xi^{[i]}_{{\Gamma^+},\cH}  \=  p_{{\Gamma^+},\cH}^{[i],*} \xi_{\cH_{\Gamma^+}^{[i]}}
	\qquad \text{in} \quad \CH^1(\cH_{\Gamma^+}^s)\,.
	\ee
It is also true that
	\be
	p_{{\Gamma^+},\cH}^{[i]*} \cL_{\cH_{\Gamma^+}^{[i]}} \= c_{{\Gamma^+},\cH}^* \cL_{{\Gamma^+}}^{[i]}
	\quad \text{where} \quad
	\cL_{\cH_{\Gamma^+}^{[i]}} \= \cO_{\cH_{\Gamma^+}^{[i]}}
	\Bigl( \sum_{\Delta \in \LG_1(\cH_{\Gamma^+}^{[i]})} {\ell_{\Delta}} D_{\Delta}\Bigr).
\ee
Similarly for the logarithmic cotangent bundles we have
\be
p_{{\Gamma^+},\cH}^{[0],*}\, \Omega^1_{\cH_{{\Gamma^+}}^{[0]}}(\log D_{\cH_{\Gamma^+}^{[0]}})
\=  c_{{\Gamma^+},\cH}^* \, \cE_{{\Gamma^+},\cH}^\top \,.\ee
\end{lemma}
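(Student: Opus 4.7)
The strategy is to bootstrap each of the three compatibilities from the corresponding identity in the ambient (generalized) stratum, which was established in \cite[Section~4]{CMZeuler}, and to transfer it across the commensurability diagram~\eqref{dia:covboundary2} using the decomposition from \autoref{prop:Omegadecomp} and the push--pull comparison of \autoref{prop:pushpullcomparison}. Concretely, the three identities live on $\cH_\Gamma^{s,\circ}$ (which sits inside $U_\Gamma^s$ in~\eqref{dia:covboundary2}), and on this space the two projections $c_{\Gamma,\cH}$ and $p_{\Gamma,\cH}^{[i]}$ are, by construction of $(\Omega\cH_\Gamma^\circ)^{\mathrm{pm}}$, compatible with the projections of the ambient stratum. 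So it suffices in each case to check the identity for the corresponding ambient datum and then restrict via $\ol{\cH} \hookrightarrow \bP\LMS$ (and its analogues on level strata), invoking \autoref{prop:linatboundary}.

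For the first identity, I would argue that both $c_{\Gamma,\cH}^* \xi^{[i]}_{\Gamma,\cH}$ and $p_{\Gamma,\cH}^{[i],*} \xi_{\cH_\Gamma^{[i]}}$ compute, pointwise on $\cH_\Gamma^s$, the first Chern class of the line bundle whose fiber is the one-dimensional space spanned by the $i$-th level component of the multi-scale differential. The level rotation torus $(\bC^*)^{L+1}$ acts on $(\Omega\cH_\Gamma^\circ)^{\mathrm{pm}}$ in a way that rescales exactly this component on the $c_{\Gamma,\cH}$-side and on the $p_{\Gamma,\cH}^{[i]}$-side identically, so the two line bundles coincide. This is the restriction to the linear submanifold of the ambient-stratum identity.

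For the second identity, I would expand $\cL_{\cH_\Gamma^{[i]}}$ as $\sum_{\Delta\in\LG_1(\cH_\Gamma^{[i]})}\ell_\Delta D_\Delta$ and apply \eqref{eq:Dcomparison} from \autoref{prop:pushpullcomparison} term by term. For each $\Delta$, pulling back $[D_\Delta]$ via $p_{\Gamma,\cH}^{[i]}$ produces a class that, up to the explicit ratio $\ell_\Delta/\ell_{\widetilde{\Delta},-i+1}$ and the automorphism factors, matches $c_{\Gamma,\cH}^*[D^\cH_{\widetilde{\Delta}}]$. Summing over $\Delta$ and then over the orbits under relabeling (using \autoref{le:autcancel} to collect graphs in the same $J(\Gamma^\dagger,\widetilde{\Delta})$), the $\ell_\Delta$ factor cancels on one side, the automorphism factors cancel between \eqref{eq:Dcomparison} and \autoref{le:autcancel}, and one obtains precisely $c_{\Gamma,\cH}^*\cL_\Gamma^{[i]}$ if $\cL_\Gamma^{[i]}$ is defined so that its divisor is $\sum_{\widetilde{\Delta}} \ell_{\widetilde{\Delta},-i+1}[D^\cH_{\widetilde{\Delta}}]$ over the degenerations of the $i$-th level. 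This combinatorial bookkeeping is the step I expect to be the main obstacle, essentially because all the prong data, the ghost automorphisms, and the labelings must be tracked simultaneously.

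For the third identity, I would use the local decomposition of \autoref{prop:Omegadecomp} on a neighborhood in $D^\cH_\Gamma$ and the analogous decomposition on $\cH_\Gamma^{[0]}$ near a degeneration of its top level into $k$ level passages. By construction in~\eqref{eq:cEtop}, $c_{\Gamma,\cH}^*\cE_\Gamma^\top$ is the direct sum of the top-$k$ horizontal, level, and relative pieces on $\cH_\Gamma^{s,\circ}$, and by the isomorphism of level parameters (item~ii of \autoref{prop:Omegadecomp}) and the fact that horizontal periods and relative periods at level zero of $\Gamma$ pull back under $p_{\Gamma,\cH}^{[0]}$ from $\cH_\Gamma^{[0]}$ in an isomorphic manner, the right-hand side equals $p_{\Gamma,\cH}^{[0],*}\Omega^1_{\cH_\Gamma^{[0]}}(\log D_{\cH_\Gamma^{[0]}})$. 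The key point is that the surjectivity in \autoref{prop:Omegadecomp} becomes an isomorphism once one restricts to the top level datum, since the lower-level pieces on $\ol{\cH}$ are precisely the ones that are absent from $\cE_\Gamma^\top$, and similarly the lower-level pieces on $\cH_\Gamma^{[0]}$ are absent because there are none.
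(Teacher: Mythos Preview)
Your proposal is correct and follows essentially the same approach as the paper's own proof: the first identity is the compatibility of the $\cO(-1)$ bundles (the paper cites \cite[Proposition~4.9]{CMZeuler}), the second is derived as a formal consequence of \autoref{le:autcancel} and \autoref{prop:pushpullcomparison} (just as in \cite[Lemma~7.4]{CMZeuler}), and the third by comparing the local generators from~\eqref{eq:cEtop} (as in \cite[Lemma~9.6]{CMZeuler}). Your identification of the combinatorial bookkeeping in the second part as the main obstacle is apt, and that is precisely what the two cited lemmas handle.
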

\par
\begin{proof}
The first claim is just the global compatibility of the definitions of the bundles
$\cO(-1)$ on various spaces, compare \cite[Proposition~4.9]{CMZeuler}.
\par
The second claim is a formal consequence of \autoref{le:autcancel} and \autoref{prop:pushpullcomparison}, just
as in \cite[Lemma~7.4]{CMZeuler}.
\par
The last claim follows as in \cite[Lemma~9.6]{CMZeuler}
by considering local generators, which are given in~\eqref{eq:cEtop} and
have for linear submanifolds the same shape as for strata.
\end{proof}
\par
In the final formulas we will use these compatibilities together with the
following restatement of \autoref{lem:ratio_degrees}.
\par
\begin{lemma}\label{lem:evaluation}
Suppose that $\alpha_{\Gamma^+} \in \CH_0(D^\cH_{\Gamma^+})$ is a top degree class
and that $c_{{\Gamma^+},\cH}^* \alpha_{\Gamma^+} = \prod_{i=0}^{-L({\Gamma^+})} p_{{\Gamma^+},\cH}^{[i],*} \alpha_i$
for some $\alpha_i$. Then
\bes
\int_{D^\cH_{\Gamma^+}}\alpha_{\Gamma^+} \= \frac{K^\cH_{\Gamma^+}  }{|\Aut_\cH({\Gamma^+})|\ell_{\Gamma^+}}
\prod_{i=0}^{-L({\Gamma^+})} \int_{{\cH}_{\Gamma^+}^{[i]}} \alpha_i\,.
\ees
\end{lemma}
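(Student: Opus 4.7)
The plan is to run the push-pull procedure through the commensurability diagram~\eqref{dia:covboundary2} and then invoke the degree ratio recorded in \autoref{lem:ratio_degrees}. Concretely, I would apply the projection formula first to $c_{\Gamma,\cH}$ to convert the integral over $D^\cH_\Gamma$ into an integral over $\cH_\Gamma^s$, and then to $p_{\Gamma,\cH}$ to convert that into an integral over the product $\prod_i \cH_\Gamma^{[i]}$; the numerical factor that appears is exactly the ratio of the two degrees.

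In detail, the projection formula for $c_{\Gamma,\cH}$ together with the hypothesis on $c_{\Gamma,\cH}^*\alpha_\Gamma$ gives
\[
\deg(c_{\Gamma,\cH}) \cdot \int_{D^\cH_\Gamma} \alpha_\Gamma \= \int_{\cH_\Gamma^s} c_{\Gamma,\cH}^*\alpha_\Gamma \= \int_{\cH_\Gamma^s} \prod_{i=0}^{-L(\Gamma)} p_{\Gamma,\cH}^{[i],*}\alpha_i\,.
\]
Since each map $p_{\Gamma,\cH}^{[i]}$ factors as $\pi_i \circ p_{\Gamma,\cH}$, where $\pi_i\colon \prod_j \cH_\Gamma^{[j]} \to \cH_\Gamma^{[i]}$ is the projection to the $i$-th factor, the product of pullbacks equals $p_{\Gamma,\cH}^*\bigl(\prod_i \pi_i^*\alpha_i\bigr)$. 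Applying the projection formula to $p_{\Gamma,\cH}$ and then Fubini for top-degree classes on a product of DM-stacks yields
\[
\int_{\cH_\Gamma^s} p_{\Gamma,\cH}^*\Bigl(\prod_{i} \pi_i^*\alpha_i\Bigr) \= \deg(p_{\Gamma,\cH}) \cdot \prod_{i=0}^{-L(\Gamma)} \int_{\cH_\Gamma^{[i]}} \alpha_i\,.
\]

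Combining these two displays and inserting $\deg(p_{\Gamma,\cH})/\deg(c_{\Gamma,\cH}) = K_\Gamma^\cH/(|\Aut_\cH(\Gamma)|\ell_\Gamma)$ from \autoref{lem:ratio_degrees} gives the claimed identity. There is no real obstacle: all the substantive work has already been done, first in constructing the space $\cH_\Gamma^s$ and the maps in~\eqref{dia:covboundary2} in \autoref{sec:closurelin}, and second in the careful orbifold bookkeeping that produced the degree ratio in \autoref{lem:ratio_degrees}. The only thing one has to check is that the projection formula and Fubini are used in the DM-stack / orbifold sense with rational degrees, which is standard and valid verbatim.
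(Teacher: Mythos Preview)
Your proposal is correct and matches the paper's approach: the paper in fact gives no separate proof, presenting the lemma explicitly as a ``restatement of \autoref{lem:ratio_degrees}'', i.e.\ exactly the push-pull through the commensurability diagram that you wrote out. Your added detail (factoring $p_{\Gamma,\cH}^{[i]}$ through the projection and invoking Fubini on the product) just makes explicit what the paper leaves implicit.
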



\section{Evaluation of tautological classes} \label{sec:nb}

This section serves two purposes. First, we briefly sketch a definition of the
tautological ring of linear submanifolds and how the results of the previous
section can be used to evaluate expressions in the tautological ring, provided
the classes of the linear manifold are known. Second, we provide formulas to
compute the first Chern class of the normal bundle~$\cN^\cH_\Gamma = \cN_{D^\cH_\Gamma}$
to a boundary divisor~$D^\cH_\Gamma$ of a projectivized linear submanifold $\ol{\cH}$.
This is needed both for the evaluation algorithm and as an ingredient to prove
our main theorems.

\subsection{Vertical tautological ring}
We denote by $\psi_i \in \CH^1(\ol \cH)$ the pull-backs of the classes $\psi_i
\in \CH^1(\barmoduli[g,n])$ to a linear submanifold $\ol\cH$. The \emph{clutching
maps} are defined as $\cl_{{\Gamma^+},\cH} = \i_{{\Gamma^+},\cH} \circ c_{{\Gamma^+},\cH}$, where
$\i_{{\Gamma^+},\cH} : D^\cH_{\Gamma^+} \to \ol{\cH}$ is the inclusion map of an irreducible components of the boundary divisor. We define the refined \emph{(vertical) tautological ring $R_v^\bullet(\ol\cH)$
of~$\ol\cH$} to be the ring with additive generators
\be \label{eq:addgenR}
\cl_{{\Gamma^+},\cH,*} \Bigl(\,\prod_{i=0}^{-L} p_{{\Gamma^+},\cH}^{[i],*} \alpha_i\,\Bigr) 
\ee
where ${\Gamma^+}$ runs over all irreducible components of boundary components associated to level graphs without horizontal edges for all boundary
strata of~$\cH$, including the trivial graph, and where~$\alpha_i$ is a
monomial in the $\psi$-classes supported on level~$i$
of the graph~$\Gamma^+$. That this is indeed a ring follows from the
excess intersection formula \cite[Proposition~8.1]{CMZeuler} that works exactly
the same for linear submanifolds, and the normal bundle formula
\autoref{prop:generalnormalbundle}  which allows together with
\autoref{prop:Adrienrel} to rewrite products in terms of our standard
generators. We do not claim that pushforward $R_v^\bullet(\ol\cH) \to
\CH^\bullet(\barmoduli[g,n])$ maps to the tautological ring $R^\bullet(\barmoduli[g,n])$,
since the fundamental classes of linear submanifolds, e.g.\ loci of double
covers of elliptic curves, may be non-tautological in~$\barmoduli[g,n]$
(see e.g.\ \cite{GP03}).
\par
If $\alpha\in \CH_0(\ol \cH)$  is a top-degree class which is also an additive generator of the tautological ring, i.e., it is has an expression as in \eqref{eq:addgenR}, we   can apply \autoref{lem:evaluation}  to obtain
\be \label{eq:levelproduct}
\int_{\ol\cH} \alpha \=\int_{\ol\cH} \cl_{{\Gamma^+},\cH,*} \Bigl(\,\prod_{i=0}^{-L} p_{{\Gamma^+},\cH}^{[i],*} \alpha_i\,\Bigr)  =\frac{K_{\Gamma^+}^\cH}{|\Aut_\cH({\Gamma^+})| \ell_{\Gamma^+}}
\prod_{i=0}^{-L(\Gamma^+)} \int_{\cH_{\Gamma^+}^{[i]}}\alpha_i. 
\ee
To evaluate this expression, one needs to determine the fundamental classes
of the level linear submanifolds $\cH_{\Gamma^+}^{[i]}$ in their corresponding generalized
strata, which is in general a non-trivial task.
\par
In the case where $\alpha \in \CH_0(\ol \cH)$ is a special top-degree class supported on a full boundary stratum $D_\Gamma$, and not only on one of its components $D_{\Gamma^+}$, there is a possibly different way to evaluate it. Indeed, note first that 
\[\alpha=\cl_{{\Gamma},\cH,*} \left(\,\prod_{i=0}^{-L} p_{{\Gamma},\cH}^{[i],*}\left(\prod_{j=1}^{l(i)} \psi_j^{p_j}\,\right)\right)=\psi_{1}^{p_1} \cdots
\psi_{n}^{p_n} \cdot [D_{\Gamma}^\cH] \]
since the $\psi$ classes are compatible under clutchings and projections.

If one knows the class $[\ol\cH] \in
\CH_{\dim(\cH)}(\PLS(\mu))$ and this class happens to be tautological, one may evaluate
\[
\int_{\ol\cH} \alpha \= \int_{\PLS(\mu)} \psi_{1}^{p_1} \cdots \psi_{n}^{p_n}
\cdot [D_{\Gamma}] \cdot [\ol\cH]
\]
using the methods described in~\cite{CMZeuler}.  This has the advantage of not requiring the computation of the classes of all the level linear submanifolds $\cH_{\Gamma}^{[i]}$.
\subsection{Evaluation of $\xi_{\cH}$}
If we want to evaluate a top-degree class in $\CH_0(\ol \cH)$ that is not just
a product of $\psi$-classes and a boundary stratum, but also involves
the $\xi_\cH$-class, we can reduce to the previous case by applying the following
proposition.
\par
\begin{prop} \label{prop:Adrienrel}
The class $\xi_\cH$ on the closure of a projectivized linear submanifold $\ol \cH$
can be expressed as 
\be \label{eq:xirel}
\xi_\cH \= (m_{i}+1) \psi_{i} \,-\, \sum_{\Gamma \in \tensor[_{i}]{\LG}{_1}(\cH)}
\ell_\Gamma [D^\cH_\Gamma]\,
\ee
where $\tensor[_{i}]{\LG}{_1}(\cH)$ are two-level graphs with
the leg~$i$ on lower level.
\end{prop}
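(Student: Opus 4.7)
The strategy is to exhibit both sides as first Chern classes of comparable line bundles, following the approach taken on the ambient stratum (cf.\ \cite[Proposition~8.2]{CMZeuler} and Sauvaget's $\xi$-relation), and to verify that the scaling data controlling the multiplicities~$\ell_\Gamma$ carries over to~$\ol\cH$ through \autoref{prop:Omegadecomp}.

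Concretely, on~$\ol\cH$ the tautological (multi-scale) differential~$\omega$ is a section of a line bundle with first Chern class~$-\xi_\cH$, and at the $i$-th marked section~$z_i$ the generic differential has order exactly~$m_i$. Writing $\omega = f_i(z)\,z^{m_i}\,dz$ in a local coordinate~$z$ centered at~$z_i$, the leading value $f_i(z_i)$ assembles into a global section
\[
s_i \colon \ol\cH \;\longrightarrow\; \cO_{\ol\cH}(-1)^{\vee}\otimes T_{z_i}^{\otimes -(m_i+1)},
\]
and taking first Chern classes gives $[\mathrm{div}(s_i)] = -\xi_\cH + (m_i+1)\psi_i$. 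The proposition then reduces to identifying $\mathrm{div}(s_i) = \sum_{\Gamma\in \tensor[_{i}]{\LG}{_1}(\cH)} \ell_\Gamma [D^\cH_\Gamma]$.

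I would then analyze $\mathrm{div}(s_i)$ stratum by stratum. Over the interior~$\cH$ the section is nowhere zero because the order at~$z_i$ is exactly~$m_i$ by the signature. On horizontal boundary divisors the differential on the component containing $z_i$ is unrescaled, hence $f_i$ stays non-vanishing. On a divisor $D^\cH_\Gamma$ with $\Gamma\in \LG_1(\cH)\setminus \tensor[_{i}]{\LG}{_1}(\cH)$ the leg~$i$ sits on top level where no rescaling occurs and $f_i$ is generically non-zero. Finally for $\Gamma\in \tensor[_{i}]{\LG}{_1}(\cH)$, item~ii) of \autoref{sec:prongmatchings}, extended verbatim to linear submanifolds with the same $\ell_\Gamma$ by \autoref{prop:Omegadecomp}, says that the differential on the component of~$z_i$ equals $t_1^{\ell_\Gamma}$ times a twisted differential which itself has order exactly~$m_i$ at~$z_i$; hence $f_i$ vanishes to order precisely $\ell_\Gamma$ along $D^\cH_\Gamma$.

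The main technical point is not the Chern class identity but the transfer of the scaling behavior from the ambient stratum~$\bP\LMS$ to the linear submanifold~$\ol\cH$; that the factor $\ell_\Gamma$ is preserved is exactly the content of \autoref{prop:Omegadecomp} (which is itself built on the closure results from \cite{BDG}). An alternative and essentially equivalent route would be to restrict the known relation on~$\bP\LMS$ to~$\ol\cH$ along the normalization of the closure, using \autoref{prop:transverse} to identify the pullback of~$[D_\Gamma^B]$ with~$[D_\Gamma^\cH]$ whenever $\Gamma\in \tensor[_{i}]{\LG}{_1}(\cH)$, and to see that the remaining terms with $\Gamma\notin \LG_1(\cH)$ contribute trivial divisor classes on~$\ol\cH$ since $\ol\cH\cap D_\Gamma^B$ is then either empty or lies in higher codimension boundary.
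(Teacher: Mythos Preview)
Your proposal is correct. The ``alternative route'' you sketch at the end --- pulling back the ambient $\xi$-relation and invoking transversality (\autoref{prop:transverse}) --- is precisely the paper's two-line proof. Your primary argument instead re-derives the relation directly on~$\ol\cH$ by building the section~$s_i$ and reading off its vanishing order from the $t_1^{\ell_\Gamma}$-scaling in \autoref{prop:Omegadecomp}; this is essentially how the relation is established on the ambient stratum in \cite{CMZeuler} in the first place, so you are reproducing that proof rather than transporting its conclusion. Both routes ultimately rest on the same input (that the level parameter~$t_1$ restricts to a local equation for~$D^\cH_\Gamma$), so neither is strictly more elementary; the pullback is just shorter because the ambient case is already in the literature.

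One small imprecision in your alternative: for $\Gamma \in {}_i\LG_1(B) \setminus \LG_1(\cH)$ the intersection $\ol\cH \cap D^B_\Gamma$ is in fact empty, not merely of higher codimension --- if $\ol\cH$ met any degeneration $D^{B,\circ}_\Delta \subset \overline{D^B_\Gamma}$, the normal-crossing transversality of \autoref{prop:transverse} would force it to meet $D^{B,\circ}_\Gamma$ as well. This does not affect your conclusion.
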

\begin{proof}
The formula is obtained by pulling-back the formula in
\cite[Proposition~8.1]{CMZeuler} to $\ol \cH$ and thereby using the transversality
statement from \autoref{prop:transverse}.
\end{proof}
\par
We remark here that in some cases it is possible to directly evaluate the top $\xi_\cH$-powers by using that we can represent the powers of the $\xi_\cH$-class via an explicit closed current.

Let $\bP\omoduli[g,n](\mu)$ be a \emph{holomorphic stratum}, i.e.\ a stratum of flat
surfaces of finite area or equivalently all the entries of $\mu$ are non-negative.
Then there is a canonical hermitian metric on the tautological bundle
$\cO_{\bP\omoduli[g,n](\mu)}(-1)$ given by the flat area form
\be \label{eq:defh}
h(X,\omega,\bfz) \= \area_{X}(\omega) \= \frac{i}{2} \int_{X} \omega \wedge 
\overline{\omega}
\ee
which extends to an hermitian metric of the tautological bundle on $\bP\LMS$.
If $\ol\cH\to \bP\LMS$ is the compactification of  a linear submanifold of such
a holomorphic stratum, then the area metric induces an hermitian metric, which we denote
again by~$h$, on the pull-back $\cO_{\ol\cH}(-1)$ of the tautological bundle
to~$\ol\cH$. Recall from \autoref{prop:bdtoricsing} (combined with the level-wise
decomposition in \autoref{prop:linatboundary}) that the singularities of~$\ol\cH$
are toric. Let $\ol\cH^{\tor} \to \ol\cH$ be a resolution of singularities which
is locally toric.
\par
\begin{prop}\label{prop:goodness}
Let $\ol\cH^{\tor} \to \bP\LMS$ be a resolution of a compactified linear submanifold
of a holomorphic stratum. The curvature form  $\tfrac{i}{2\pi}[F_h]$ of the pull
metric~$h$ to~$\ol\cH^{\tor}$ is a closed current that represents the
first Chern class $c_1(\cO_{\ol\cH^{\tor}}(-1))$. More generally, the $d$-th wedge power
of the curvature form represents $c_1(\cO_{\ol\cH^{\tor}}(-1))^d$ for any $d \geq 1$.
\end{prop}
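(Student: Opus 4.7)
The plan is to reduce the statement to Mumford's theorem on \emph{good} Hermitian metrics along normal crossings divisors, following the blueprint used in \cite{CMZeuler} for the ambient strata. Recall that a smooth Hermitian metric on a line bundle over the complement of an SNC divisor in a smooth variety is called good in the sense of Mumford if the local matrix entries of $h$, $h^{-1}$ and their derivatives have at worst polynomial growth in powers of $|\log|z_i||$ in local coordinates in which the boundary is $\{z_1 \cdots z_k = 0\}$. For such metrics, the curvature form extends as a closed current representing $c_1$ of the line bundle, and iterated wedge products of curvature forms of good metrics are well-defined closed currents representing the corresponding products of Chern classes.

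First I would recall that on the smooth normal crossings compactification $\bP\LMS$ of the ambient Abelian stratum, the area metric defined by~\eqref{eq:defh} is known to be good; this was established in \cite{CMZeuler} using the explicit description of the area via periods of the tautological differential in a $\Gamma$-adapted basis of \autoref{sec:decompLTB}, whose asymptotic behavior along boundary strata is controlled by the level parameters~$t_i$ and the perturbed periods~$\wt{c}_j^{[i]}$, $\wt{q}_j^{[i]}$.

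Next I would transfer this goodness to the toric resolution $\ol\cH^{\tor}$ via the composition $\ol\cH^{\tor} \to \ol\cH \hookrightarrow \bP\LMS$. Goodness is stable under pull-back along morphisms of toroidal pairs, so it suffices to verify that each map in the composition respects the toroidal structures. For boundary points of~$\ol\cH$ without horizontal edges this follows from the transversality established in \autoref{prop:transverse}, and more precisely from the decomposition of the logarithmic cotangent sheaf in \autoref{prop:Omegadecomp}, which shows that the local logarithmic generators on~$\ol\cH$ are obtained by restriction (and quotient) from those on the ambient stratum. For boundary points with horizontal nodes, \autoref{prop:bdtoricsing} gives the toric singularity structure, and the locally toric resolution $\ol\cH^{\tor} \to \ol\cH$ straightens out the SNC boundary without disturbing the logarithmic growth of~$h$. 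With goodness established, Mumford's theorem yields the first claim, and the product rule for good metrics yields the statement for wedge powers.

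The main obstacle will be the bookkeeping at boundary points involving both horizontal and vertical edges, where one must combine the binomial toric equations~\eqref{eq:toriceqns} with the linear equations~\eqref{eq:lineareqns} and verify that the pull-back of the area metric retains the required logarithmic growth after the toric resolution. A clean way to organize this is to use the level-wise product structure of \autoref{prop:linatboundary} together with the fact that on each level~$i$ the area contribution is built from periods in the sub-basis spanning $\Omega_i^{\hor}(\log) \oplus \Omega_i^{\lev}(\log) \oplus \Omega_i^{\rel}$, reducing the check to the known good behavior for the corresponding sub-pieces of the ambient stratum.
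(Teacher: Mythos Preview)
Your approach and the paper's are essentially the same in spirit: reduce to the known result for the ambient stratum and verify that the pull-back to $\ol\cH^{\tor}$ preserves the required growth. The paper, however, is more concrete. Rather than invoking Mumford's notion of goodness abstractly, it quotes from \cite[Proposition~4.3]{CoMoZa} (not \cite{CMZeuler}, which is a different paper) the explicit local shape of the area metric near a boundary point of $\bP\LMS$:
\[
h \,=\, \sum_{i=0}^L |\prodt[i]|^2 \bigl( h^{\mathrm{tck}}_{(-i)} + h^{\mathrm{ver}}_{(-i)} + h^{\mathrm{hor}}_{(-i)} \bigr),
\]
where the vertical and horizontal pieces are sums of terms $\log|t_p|$ and $\log|q_j^{[i]}|$ weighted by smooth non-negative (respectively positive, bounded-away-from-zero) functions. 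The conclusion for $\bP\LMS$ follows from this shape by formal computations (\cite[Propositions~4.4 and~4.5]{CoMoZa}). The paper then observes that a toric resolution of the singularities from~\eqref{eq:toriceqns} is given locally by monomial maps $q_j^{[i]} = \prod_k (y_k^{[i]})^{b_{i,j,k}}$ with $b_{i,j,k}\in\bZ_{\geq 0}$, so that $\log|q_j^{[i]}|$ becomes a non-negative combination of the $\log|y_k^{[i]}|$. Substituting preserves exactly the same structural shape and the sign/boundedness properties of the coefficients, so the formal computations of loc.\ cit.\ apply verbatim on $\ol\cH^{\tor}$.

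Your abstract claim that ``goodness is stable under pull-back along morphisms of toroidal pairs'' is morally correct, but making it rigorous \emph{is} this monomial-substitution calculation: one has to check that the local boundary coordinates on the source pull back from monomials in the boundary coordinates of the target with non-negative exponents, so that logarithmic growth bounds are preserved. So your outline is correct, but the substantive step --- the explicit form of~$h$ and the effect of the toric coordinate change on it --- is precisely what the paper writes down and what your proposal leaves implicit.
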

\begin{proof}
In \cite[Proposition~4.3]{CoMoZa} it was shown that on the neighborhood~$U$ of
a boundary point of $\bP\LMS$ in the interior of the stratum $D_\Gamma$
the metric~$h$ has the form 
\begin{equation}\label{eq:hshapegeneral}
h(X,q)\= \sum_{i=0}^L  |\tui|^2 \left(h^{\text{tck}}_{(-i)}
+h^{\text{ver}}_{(-i)}+h^{\text{hor}}_{(-i)}\right)
\end{equation}
where $h^{\text{tck}}_{(-i)}$ (coming from the 'thick' part) are smooth positive
functions bounded away from zero and 
\begin{align}\label{eq:cfunctions}
h^{\text{ver}}_{(-i)}:=-\sum_{p=1}^{i}R^{\text{ver}}_{(-i),p} \log\left|t_p\right|,\quad h^{\text{hor}}_{(-i)}:=- \sum_{j=1}^{E_{(-i)}^h} R^{\text{hor}}_{(-i),j} \log|q^{[i]}_{j}|\,,
\end{align}
where $R^{\text{ver}}_{(-i),p}$  is a smooth non-negative function and
$R^{\text{hor}}_{(-i),j}$   is a smooth positive function bounded away from zero, both
involving only perturbed period coordinates on levels~$-i$ and below.
\par
The statement of the proposition in loc. cit. follows by formal computations from the shape
of~\eqref{eq:hshapegeneral} and the properties of its coefficients, see
\cite[Proposition~4.4 and~4.5]{CoMoZa}. We thus only need to show that in local
coordinates of a point in~$\ol\cH^{\tor}$ (mapping to the given stratum~$D_\Gamma$)
the metric has the same shape~\eqref{eq:hshapegeneral}. For this purpose,
recall that by \autoref{prop:linatboundary}, the level parameters~$t_i$ are
among the coordinates. On the other hand, a toric resolution of the toric
singularities arising from~\eqref{eq:toriceqns} is given by fan subdivision
and thus by a collection of variables $y_j^{[i]}$ for each level~$i$, each of
which is a product of integral powers of the $q_j^{[i]}$ at that level~$i$.
Conversely the map $\ol\cH^{\tor} \to \bP\LMS$ is given locally by
$q_j^{[i]} = \prod_k (y_k^{[i]})^{b_{i,j,k}}$ for some $b_{i,j,k} \in \bZ_{\geq 0}$, not
all of the $b_{i,j,k} = 0$ for fixed $(i,j)$. Plugging
this into~\eqref{eq:hshapegeneral} and~\eqref{eq:cfunctions} gives an
expression of the same shape and with coefficients satisfying the same smoothness
and positivity properties. Mimicking the proof in loc.\ cit. thus implies the claim.
\end{proof}
\par
For a linear submanifold $\cH$ consider the vector space given  in local period
coordinates  by the intersection of the tangent space of the unprojectivized linear
submanifold with the span of relative periods. We call this space the REL space
of~$\cH$ and we denote by $R_\cH$  its dimension.
\par
Using \autoref{prop:goodness} we can now generalize the result about vanishing of top $\xi$-powers on non-minimal strata of differentials to linear submanifolds with non-zero REL (see \cite[Proposition~3.3]{SauvagetMinimal} for the holomorphic Abelian strata case). 
\par
\begin{cor}\label{cor:topxivanishing}
Let $\ol\cH\to \bP\LMS$ be a linear submanifold of a holomorphic stratum. Then
\[\int_{\ol\cH} \xi_{\ol\cH}^i \alpha \=0 \quad \text{ for $i\geq d_\cH-R_\cH+1$,}\]
where $d_\cH$ is the dimension of $\cH$ and $R_\cH$ is the dimension of the REL space
and where $\alpha$ is any class of dimension~$d_\cH - i$.
\end{cor}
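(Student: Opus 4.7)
The plan is to work on a toric resolution $p:\ol\cH^{\tor}\to\ol\cH$ and use \autoref{prop:goodness} to represent $\xi_{\ol\cH}^i$ as the $i$-th wedge power of the curvature current of the area metric $h$. The key idea is that the area metric sees only absolute periods, which forces a rank bound on its Chern curvature that is strictly smaller than $d_\cH$ as soon as $R_\cH\geq 1$.

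More precisely, I would observe that the area $h(X,\omega)=\tfrac{\i}{2}\int_X\omega\wedge\bar\omega$ is a sesquilinear form on $H^1(X,\bC)$ pulled back via the cohomology class of $\omega$. Hence on $\Omega\cH$ it factors through the projection onto absolute periods, whose kernel contains the $R_\cH$-dimensional REL subspace of the tangent space. Thus in any local period chart on $\Omega\cH$ the function $h$ depends on at most $N_\cH-R_\cH=d_\cH+1-R_\cH$ coordinates. Because $h$ is homogeneous of degree $2$ in $\omega$, the identity $\log h(\lambda\omega)=\log|\lambda|^2+\log h(\omega)$ shows that the $\omega$-direction contributes a pluriharmonic term annihilated by $\partial\bar\partial$. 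Consequently, in a local trivialization of $\cO_{\ol\cH}(-1)$ the curvature form $F_h=-\partial\bar\partial\log h$ has rank at most $d_\cH-R_\cH$ at every point of the interior of $\ol\cH$. Pulling back to $\ol\cH^{\tor}$ preserves this bound, so the smooth $(i,i)$-form $F_h^i$ vanishes identically on the preimage of the interior as soon as $i\geq d_\cH-R_\cH+1$.

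By \autoref{prop:goodness} the closed current $(\tfrac{\i}{2\pi})^i F_h^i$ on $\ol\cH^{\tor}$ represents $c_1(\cO_{\ol\cH^{\tor}}(-1))^i=\xi_{\ol\cH^{\tor}}^i$. The goodness of $h$ established in the proof of that proposition, via the explicit local shape of $h$ near the boundary divisors, shows that this current is the $L^1$-extension of its smooth interior restriction, without any additional distributional contribution supported on the boundary. Since the restriction vanishes, the current itself vanishes on $\ol\cH^{\tor}$, and the projection formula applied to the birational resolution $p$ yields
\begin{equation*}
\int_{\ol\cH}\xi_{\ol\cH}^i\,\alpha \= \int_{\ol\cH^{\tor}} c_1(\cO_{\ol\cH^{\tor}}(-1))^i\cdot p^*\alpha \= 0
\end{equation*}
for any class $\alpha$ of complementary dimension.

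The main obstacle is the last step: ensuring that no distributional part appears along the boundary, so that the pointwise vanishing of $F_h^i$ on the interior really implies the vanishing of the current on the entire resolution. This relies on a careful use of the explicit local shape of $h$ recorded in \eqref{eq:hshapegeneral} and \eqref{eq:cfunctions} inside the proof of \autoref{prop:goodness}, combined with the toric nature of the boundary of $\ol\cH$ provided by \autoref{prop:bdtoricsing} and the level-wise product decomposition of \autoref{prop:linatboundary}; together these guarantee the desired $L^1$-behavior after pullback by $p$.
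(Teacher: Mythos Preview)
Your proposal is correct and follows essentially the same approach as the paper: represent $\xi^i$ on a toric resolution by the $i$-th power of the curvature current of the area metric via \autoref{prop:goodness}, observe that this curvature form depends only on absolute periods and hence has rank at most $d_\cH-R_\cH$, so its $i$-th wedge power vanishes for $i\geq d_\cH-R_\cH+1$. The paper's proof is terser and cites \cite[Lemma~2.1]{SauvagetMinimal} for the explicit shape of the curvature in period coordinates; your write-up is more explicit about why no distributional boundary contribution appears, which the paper leaves implicit in the goodness framework of \autoref{prop:goodness}.
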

\begin{proof} Since the area is given by an expression in absolute periods, the
pullback of~$\xi$ to~$\ol{\cH}^{\tor}$ is represented by \autoref{prop:goodness} by
a $(1,1)$-form involving only absolute periods (see \cite[Lemma~2.1]{SauvagetMinimal}
for the explicit expression in the case of strata). Taking a wedge power that
exceeds the dimension of the space of absolute periods gives zero.
\end{proof}

\subsection{Normal bundles}\label{subsec:normalbundles}
Finally we state the normal bundle formula, which is necessary to evaluate
self-intersections, which is  for example needed to evaluate powers of $\xi_\cH$. 
More generally, we provide formulas for the normal bundle of an inclusion
$\frakj_{\Gamma^+,\Pi^+}\colon D^\cH_{\Gamma^+} \hookrightarrow D^\cH_{\Pi^+}$ between irreducible components of non-horizontal boundary strata of relative codimension one, say defined by the $L$-level graph~$\Pi$ and one of its $(L+1)$-level graph degenerations~$\Gamma$.
This generalization is needed for recursive evaluations. Such an inclusion
is obtained by splitting one of the levels of~$\Pi^+$, say
the level~$i\in \{0,-1,\dots,-L\}$. Here we use the structure of the equations cutting out the linear manifold in \autoref{sec:decompLTB} to observe that $j$ is a regular embedding, in fact with ideal sheaf locally generated by the parameter $t_i$, to talk about normal bundles (as opposed to merely normal sheaves). In particular these regular embeddings~$j$ and thus also their compositions~$i$ come with classes in operational Chow groups (see \cite[Section~17]{Fulton} for background and e.g.\ \cite[Section~2]{BHPSS} for the extension to stacks). This is the language that justifies all the intersection theory we need working on the (singular) stack $\ol{\cH}$. We do not reflect this in our notation of Chow groups since for the morphisms we consider, all formulas of the classical setting carry over. We define 
\be \label{eq:defLithlev}
\cL_{\Gamma^+}^{[i]} \=  \cO_{D^\cH_{\Gamma^+}}
\Bigl(\sum_{\Gamma^+ \overset{[i]}{\rightsquigarrow} 
	\widetilde{\Delta}^+ } \ell_{\widetilde{\Delta}^+,-i+1}D^\cH_{\widetilde{\Delta}^+} \Bigr)
\quad \text{for any}  \quad i\in \{0,-1,\dots,-L\}\,,
\ee
where the sum is over all refined graphs $\widetilde{\Delta}^+ \in \LG^+_{L+2}(\cH)$
that yield divisors in~$D^\cH_{\Gamma^+}$ by splitting the $i$-th level, which in
terms of undegenerations means $\delta_{-i+1}^\complement
(\widetilde{\Delta}^+) = \Gamma^+$.
The following result contains the formula for the normal bundle as the
special case where~$\Pi$ is the trivial graph.
\par
\begin{prop}\label{prop:generalnormalbundle}
For $\Pi^+ \overset{[i]}{\rightsquigarrow} \Gamma^+$ (or equivalently
for $\delta_{-i+1}^\complement(\Gamma^+)=\Pi^+$) the Chern class of the normal
bundle $\cN^\cH_{\Gamma^+,\Pi^+} := \cN_{D^\cH_{\Gamma^+}/D^\cH_{\Pi^+}}$  is given by
\be \label{eq:nbinPi}
c_1(\cN^\cH_{\Gamma^+,\Pi^+}) \= \frac{1}{\ell_{\Gamma,(-i+1)}} \bigl(-\xi_{\Gamma^+,\cH}^{[i]}
- c_1(\cL_{\Gamma^+,\cH}^{[i]}) + \xi_{\Gamma^+,\cH}^{[i-1]} \bigr)\quad \text{in} \quad
\CH^1(D^\cH_{\Gamma^+})\,.
\ee
\end{prop}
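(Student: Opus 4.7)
The plan is to adapt the normal bundle computation performed for strata in \cite{CMZeuler} to the linear submanifold setting, leveraging the fact that by the structure theorems of Section \ref{sec:closurelin} all the local geometry near $D^\cH_\Gamma \subset D^\cH_\Pi$ at the $(-i+1)$-th level passage is essentially inherited from that of the ambient stratum $D^B_\Gamma \subset D^B_\Pi$. Specifically, \autoref{prop:transverse} ensures that $D^\cH_\Gamma$ is a smooth Cartier divisor in $D^\cH_\Pi$ cut out by the level parameter $t := t_{-i+1}$ inherited from the ambient moduli space, and the final assertion of \autoref{prop:Omegadecomp} ensures that the scaling integers $\ell_{\Gamma,j}$ (hence also the constants $\ell_{\widetilde{\Delta},-i+1}$ appearing in $\cL^{[i]}_{\Gamma,\cH}$) coincide with those of the ambient stratum.

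First, the normal bundle $\cN^\cH_{\Gamma,\Pi}$ is locally trivialized by $\partial/\partial t$, so it equals $\cO_{D^\cH_\Pi}(D^\cH_\Gamma)|_{D^\cH_\Gamma}$. Next, by item~(ii) of \autoref{sec:prongmatchings} (applied via \autoref{prop:Omegadecomp} to the linear submanifold), crossing the level passage $-i+1$ rescales the generator of the tautological bundle on the upper side by $t^{\ell_{\Gamma,(-i+1)}}$ to obtain the generator on the lower side, up to correction terms that vanish on exactly those deeper degenerations $\widetilde{\Delta}$ produced by further splitting level $i$ of $\Gamma$, with multiplicities $\ell_{\widetilde{\Delta},-i+1}$. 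Interpreting this as an identity of line bundles on $D^\cH_\Gamma$ yields
\[
\cO^{[i]}_{\cH}(-1) \otimes \cL^{[i]}_{\Gamma,\cH} \;\simeq\; \cO^{[i-1]}_{\cH}(-1) \otimes (\cN^\cH_{\Gamma,\Pi})^{\otimes -\ell_{\Gamma,(-i+1)}}.
\]

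Taking first Chern classes and recalling $\xi^{[j]}_{\Gamma,\cH} = c_1(\cO^{[j]}_{\cH}(-1))$ then gives
\[
\xi^{[i]}_{\Gamma,\cH} + c_1(\cL^{[i]}_{\Gamma,\cH}) \;=\; \xi^{[i-1]}_{\Gamma,\cH} - \ell_{\Gamma,(-i+1)}\,c_1(\cN^\cH_{\Gamma,\Pi}),
\]
and solving for $c_1(\cN^\cH_{\Gamma,\Pi})$ yields formula~\eqref{eq:nbinPi}. The main obstacle is bookkeeping: making sure that exactly the divisors enumerated by $\cL^{[i]}_{\Gamma,\cH}$ (with the correct multiplicities $\ell_{\widetilde\Delta,-i+1}$) appear in the scaling identity, and that other level passages of $\Gamma$ contribute nothing to the normal bundle of $D^\cH_\Gamma$ in $D^\cH_\Pi$. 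Both points are handled by the independence and transversality of the level parameters (\autoref{prop:transverse}) together with the fact, contained in the last assertion of \autoref{prop:Omegadecomp}, that the $\ell$-constants governing the scaling are inherited unchanged from the ambient stratum; the rest of the proof is a formal rewriting of the corresponding argument for strata.
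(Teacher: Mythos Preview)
Your proof is correct and follows the same approach as the paper: both invoke the transversality statement \autoref{prop:transverse} to identify $t_{-i+1}$ as the transversal parameter and then reduce to the argument for abelian strata from \cite[Proposition~7.5]{CMZeuler}. You have simply unpacked the content of that cited argument (the scaling identity of tautological line bundles across the level passage) rather than leaving it as a black-box reference, which is fine.
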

\begin{proof}
 We use the transversality statement \autoref{prop:transverse}
of $\cH$ with a boundary stratum $D^B_{\Gamma^+}$ in order to have that the transversal
parameter is given by~$t_i$. Then the proof is as the same as the one in the
case of Abelian strata, see \cite[Proposition~7.5]{CMZeuler}. 
\end{proof}
\par
Since in \autoref{sec:BQ} we will need to compute the normal bundle to horizontal divisors for strata of $k$-differentials, we provide here the general formula for the case of smooth horizontal degenerations of linear submanifolds.

\begin{prop}\label{prop:normalhor}
Let $D^\cH_{h}\subset D^\cH$ be a divisor in a boundary component $D^\cH$ obtained
by horizontal degeneration. Suppose that the linear submanifold is smooth along $D^\cH_{h}$
and let~$e$ be one of the new horizontal edges in the level graph of~$D^\cH_{h}$.
Then the  first Chern class of the normal bundle $\cN^\cH_{D_{h}}$ is given by
	\[c_1(\cN^\cH_{D_{h}}) \=-\psi_{e^+}-\psi_{e^-}\in \CH^1(D^\cH)\]
where  $e^+$ and $e^-$ are the half-edges associated to the two ends of $e$.
\end{prop}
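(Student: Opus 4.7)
The plan is to reduce the computation to the classical deformation-theoretic formula for the normal bundle to the divisor of curves with a node in $\ol{\cM}_{g,n}$, and then transfer this via the natural morphism $\ol{\cH}\to \bP\LMS \to \ol{\cM}_{g,n}$ under the smoothness hypothesis. Recall the classical identification: for the boundary divisor $\delta_e\subset \ol{\cM}_{g,n}$ of curves with a single node $e$, the versal deformation $xy=s$ yields an isomorphism $\cN_{\delta_e}\cong T_{e^+}\boxtimes T_{e^-}$ of the normal bundle with the exterior tensor product of tangent line bundles at the two preimages of the node, and hence $c_1(\cN_{\delta_e})=-\psi_{e^+}-\psi_{e^-}$.

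Next, I would analyze the situation in $\bP\LMS$ at a generic point of the image of $D^\cH_h$. For a horizontal node $e$ the plumbing construction used to define multi-scale differentials gives the local equation $xy=s_e$ \emph{without} any $\ell/\kappa_e$-th power, in contrast to vertical nodes (compare item ii) of \autoref{sec:prongmatchings}). The smoothing parameter $s_e$ can be identified with the exponentiated period $\wt{q}_e$ attached to $e$, so that $\wt{q}_e$ is a local coordinate on $\bP\LMS$ transverse to the horizontal boundary divisor associated to $e$ and cutting it out with multiplicity one. Restricting to $\ol{\cH}$, the linear submanifold is locally cut out by the equations~\eqref{eq:lineareqns} and~\eqref{eq:toriceqns}; the smoothness hypothesis along $D^\cH_h$ ensures that the binomial equations~\eqref{eq:toriceqns} contribute only linearly independent transverse directions at a generic point of $D^\cH_h$, so that $\wt{q}_e$ descends to a local coordinate on $\ol{\cH}$ transverse to $D^\cH_h$ and the ideal sheaf of $D^\cH_h$ is locally generated by $\wt{q}_e$.

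Consequently, $D^\cH_h$ is the reduced preimage of $\delta_e$ under the composed morphism to $\ol{\cM}_{g,n}$, so $\cN^\cH_{D_h}$ is the pullback of $\cN_{\delta_e}$, and the $\psi$-classes at $e^\pm$ on $D^\cH_h$ are the pullbacks of the corresponding classes on $\delta_e$. The formula $c_1(\cN^\cH_{D_h}) = -\psi_{e^+}-\psi_{e^-}$ then follows at once. The main obstacle is the intermediate step: verifying carefully that the smoothness hypothesis eliminates any potential multiplicity or torsion coming from the binomial equations~\eqref{eq:toriceqns}, so that $\wt{q}_e$ really is a transverse local coordinate on $\ol{\cH}$ without any hidden power. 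Once this reduction is made, the remainder of the argument is the classical deformation theory of nodes applied verbatim.
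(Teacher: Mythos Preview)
Your proposal is correct and follows essentially the same approach as the paper: reduce to the classical formula $c_1(\cN_{\delta_e})=-\psi_{e^+}-\psi_{e^-}$ on $\ol\cM_{g,n}$ via the forgetful map, and argue that the normal bundle pulls back isomorphically. The paper compresses your careful analysis of the transverse parameter $\wt{q}_e$ and the role of the smoothness hypothesis into the parenthetical ``compare local generators!'' together with a reference to \cite[Proposition~7.2]{CMZeuler}, but the substance is the same.
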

\par
\begin{proof}
Similarly to the proof of \cite[Proposition~7.2]{CMZeuler}, consider the divisor~$D_e$
in $\barmoduli[g,n]$ corresponding to the single edge~$e$ and denote by~$\cN_e$ its
normal bundle. The forgetful map $f: D_{h} \to D_e$ induces an isomorphism
$\cN^\cH_{D_{h}} \to f^* \cN_{D_e}$ (compare local generators!) and the formula follows
from the well-known expression of $\cN_{D_e}$ in terms of $\psi$-classes.
\end{proof}
\par
We will need the following result about pullbacks of normal bundles to apply the
same arguments as in \cite{CMZeuler} recursively over inclusions of boundary divisors.
The proof is the same as in \cite[Corollary~7.7]{CMZeuler},
since it follows from \autoref{prop:generalnormalbundle} that we can $\frakj$-pullback
properties of $\xi$ and $\cL_\Gamma^{[i]}$ that hold on the whole stratum and
hence on linear submanifolds.
\par
\begin{lemma} \label{lemma:pullbacknormal}
Let $\Gamma^+\in\LG^+_L(\cH)$ and let $\widetilde{\Delta}^+$ be a codimension one
degeneration of the $(-i+1)$-th level of $\Gamma^+$, i.e., such that $\Gamma^+=
\delta_i^\complement(\widetilde{\Delta}^+)$, for some $i\in \{1,\dots,L+1\}$. Then  
\[\frakj_{\widetilde{\Delta}^+,\Gamma^+}^*\left(\ell_{\Gamma,j}
\c_1\bigl(\cN^\cH_{\Gamma^+/\delta_{j}^\complement(\Gamma^+)}\bigr)\right)
\= \begin{cases}
	\ell_{\widetilde{\Delta},j}\,\,\c_1\left(\cN^\cH_{\widetilde{\Delta}^+/\delta_{j}^\complement(\widetilde{\Delta}^+)}\right)
	,&  \text{ for } j< i \\
	\ell_{\widetilde{\Delta}^+,j+1}\c_1\left(\cN^\cH_{\widetilde{\Delta}^+/\delta_{(j+1)}^\complement(\widetilde{\Delta}^+)}\right)
	& \text{ otherwise.}
\end{cases}\]
\end{lemma}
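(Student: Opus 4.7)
The plan is to reduce everything to \autoref{prop:generalnormalbundle}, which expresses the quantity $\ell_{\Gamma,j}\,c_1(\cN^\cH_{\Gamma/\delta_j^\complement(\Gamma)})$ as a linear combination of the form $-\xi_{\Gamma,\cH}^{[k]} - c_1(\cL_{\Gamma,\cH}^{[k]}) + \xi_{\Gamma,\cH}^{[k-1]}$, where $k$ is the level of $\Gamma$ being split by passage~$j$ (so the $\ell$-factor on the left is designed exactly to clear the denominator on the right). Doing the same for the corresponding data on $\widetilde{\Delta}$, the claim becomes an equality of $\frakj_{\widetilde{\Delta},\Gamma}^*$-pullbacks of the tautological expressions $\xi^{[\cdot]}$ and $\cL^{[\cdot]}$ attached to the two graphs.

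First I would describe how $\widetilde{\Delta}$ relates to $\Gamma$ at the level of combinatorics: $\widetilde{\Delta}$ is obtained from $\Gamma$ by splitting the $(-i+1)$-th level into two. Hence level passages of $\Gamma$ with index $j < i$ match the same-indexed passages in $\widetilde{\Delta}$, while passages with index $j \geq i$ in $\Gamma$ correspond to passages $j+1$ in $\widetilde{\Delta}$. This is precisely the case distinction recorded in the statement. In particular, the $k$-th level of $\Gamma$ appearing in the normal bundle formula is identified with either the $k$-th or the $(k\mp 1)$-th level of $\widetilde{\Delta}$, depending on whether $k$ sits above or below the split.

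Second I would verify the two pullback compatibilities $\frakj_{\widetilde{\Delta},\Gamma}^*\xi_{\Gamma,\cH}^{[k]} = \xi_{\widetilde{\Delta},\cH}^{[k']}$ and $\frakj_{\widetilde{\Delta},\Gamma}^*c_1(\cL_{\Gamma,\cH}^{[k]}) = c_1(\cL_{\widetilde{\Delta},\cH}^{[k']})$, where $k'$ is the image of $k$ under the index shift. The former is immediate from the global definition of $\cO(-1)$ at each level, as already used in \autoref{lem:compatibilities}. The latter follows from the definition~\eqref{eq:defLithlev}, since the codimension-one degenerations of level~$k$ of $\Gamma$ that lie inside $D_{\widetilde{\Delta}}^\cH$ correspond bijectively, with matching $\ell$-weights, to the codimension-one degenerations of level~$k'$ of $\widetilde{\Delta}$.

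Substituting these compatibilities into the two instances of \autoref{prop:generalnormalbundle} then yields both cases of the lemma at once. The bulk of the work is just index bookkeeping, and the argument runs verbatim as in \cite[Corollary~7.7]{CMZeuler} with our linear-submanifold version of the normal bundle formula in place of its ambient-stratum analog; the key point is that \autoref{prop:generalnormalbundle} and \autoref{lem:compatibilities} show that the constituent classes $\xi^{[k]}$ and $\cL^{[k]}$ on linear submanifolds are pulled back from the ambient stratum, so the pullback behavior is inherited by restriction. The only potential obstacle is aligning the level indices correctly across the split, which is handled by the explicit dichotomy $j<i$ versus $j\geq i$.
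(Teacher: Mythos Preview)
Your proposal is correct and follows essentially the same approach as the paper: both reduce the claim to \autoref{prop:generalnormalbundle}, use the $\frakj$-pullback compatibilities of the classes $\xi^{[\cdot]}$ and $\cL^{[\cdot]}$, and invoke \cite[Corollary~7.7]{CMZeuler} for the index bookkeeping. Your write-up is simply a more detailed unpacking of what the paper compresses into a one-sentence reference.
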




\section{Chern classes of the cotangent bundle via the Euler sequence}
\label{sec:chern}

The core of the computation of the Chern classes is given by
two exact sequences that are the direct counterparts of the corresponding
theorems for Abelian strata. The proof should be read in parallel with
\cite[Section~6 and~9]{CMZeuler} and we mainly highlight the differences
and where the structure theorems of the compactification from
\autoref{sec:closurelin} are needed.
\par
\begin{theorem} \label{thm:EulerDE}
There is a vector bundle~$\cK$ on $\ol{\cH}$ that fits into an exact
sequence
\be \label{eq:EulerExt}
0\longrightarrow \cK \overset{\psi}
\longrightarrow (\Hrelbar)^\vee\otimes \cO_{\overline{\cH}}(-1)
\overset{\ev}{\longrightarrow}   \cO_{\overline{\cH}}\longrightarrow 0\,,
\ee
where $\Hrelbar$ is the Deligne extension of the local subsystem
that defines the tangent space to~$\Omega\cH$ inside the relative cohomology
$\HrelB|_{\ol{\cH}}$,
such that the restriction of $\cK$ to the interior~$\cH$ is the cotangent
bundle $\Omega^1_{\cH}$ and for~$U$ as in \autoref{prop:Omegadecomp} we have
\bes
\cK|_U \= \bigoplus_{i=-L}^0  \prodt[-i] \cdot \Bigl(\Omega_i^{\hor}(\log) \oplus
\Omega_i^{\lev}(\log) \oplus \Omega_i^{\rel} \Bigr).
\ees
\end{theorem}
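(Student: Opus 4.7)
\emph{Proof plan.} The approach mirrors the construction of the Euler sequence for strata in \cite[Sections~6 and~9]{CMZeuler}: build a global evaluation map out of the tautological inclusion of $\cO(-1)$ into the Deligne extension of the local subsystem of relative cohomology tangent to $\Omega\cH$, check exactness over the interior via the standard projective-Euler sequence, and do the local computation at the boundary using the structural results from \autoref{sec:closurelin}.

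First I would construct the sequence. Period coordinates identify the tangent bundle of $\Omega\cH$ over $\cH$ with a sub-local-system $\Hrel$ of $\HrelB|_\cH$. Since the equations~\eqref{eq:lineareqns} and~\eqref{eq:toriceqns} extend the defining equations of $\cH$ across the boundary, $\Hrel$ extends as a sub-local-system of $\HrelB|_{\ol\cH}$; let $\Hrelbar$ be its Deligne canonical extension. The tautological line bundle $\cO(-1)$ has fiber at $(X,[\omega])$ equal to the line $\bC \cdot \omega$ in relative cohomology, yielding a canonical inclusion $\cO(-1) \hookrightarrow \HrelB$ which factors through $\Hrelbar$ because $\omega$ always lies in the tangent subspace. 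I then define $\ev \colon (\Hrelbar)^\vee \otimes \cO(-1) \to \cO$ by $\phi \otimes \omega \mapsto \phi(\omega)$. The map is surjective because at each point some period of $\omega$ is non-zero, so $\cK := \ker(\ev)$ is a vector bundle and~\eqref{eq:EulerExt} is exact.

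Over the open part $\cH$ the identification $T_{\Omega\cH} = \Hrel$ together with the standard Euler sequence for the $\bC^*$-bundle $\Omega\cH \to \cH$ gives $\cK|_\cH = \Omega^1_\cH$. The main content of the theorem is the local description on the boundary. Around a point of $D^\cH_\Gamma$, I would choose a $\Gamma$-adapted basis of $\HrelB$ as in \autoref{sec:decompLTB}, so that by \autoref{prop:Omegadecomp} the level-wise decomposition of $\Omega^1_{\ol\cH}(\log\partial\cH)$ matches a decomposition of $\Hrelbar$ by top level of the underlying cycles. By item~(ii) of \autoref{sec:prongmatchings}, the multi-scale differential has the local shape $\omega = \sum_{i=0}^{L} \prodt[i] \, \omega^{[-i]}$ to leading order, where $\omega^{[-i]}$ is a (perturbed) differential supported on level $-i$. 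So for a functional $\phi \in (\Hrelbar)^\vee$ whose cycle has top level $-i$ one has $\ev(\phi \otimes \omega) = \prodt[i] \, \phi(\omega^{[-i]})$ up to higher-order terms. Matching this with the logarithmic generators $d\wt{q}_j^{[-i]}/\wt{q}_j^{[-i]}$, $dt_{-i}/t_{-i}$ and $d\wt{c}_j^{[-i]}/\wt{c}_j^{[-i]}$ of $\Omega^1_{\ol\cH}(\log\partial\cH)|_U$ via~\eqref{eq:ccdiff} and~\eqref{eq:logqdiff} then shows that these generators lift to $\cK \subset (\Hrelbar)^\vee \otimes \cO(-1)$ with a common prefactor $\prodt[-i]$ at level $-i$, which is exactly the decomposition claimed.

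The main obstacle is the bookkeeping in this last step: tracking the powers of the level parameters as one passes between the geometric cycles, the Deligne-normalized frame of $\Hrelbar$, and the logarithmic generators of $\Omega^1_{\ol\cH}(\log\partial\cH)$, and verifying that the error terms appearing in~\eqref{eq:ccdiff} and~\eqref{eq:logqdiff} remain inside the expected level-wise summand so that the direct sum decomposition survives. \autoref{prop:transverse} (clean transversality with non-horizontal divisors), the linear-plus-binomial shape of~\eqref{eq:lineareqns} and~\eqref{eq:toriceqns}, and \autoref{prop:Omegadecomp} together ensure that the argument for the ambient stratum in \cite[Sec.~9]{CMZeuler} carries over essentially verbatim, with only the adaptation that the relevant cycles are those lying in the subsystem $\Hrel$ rather than all of $\HrelB$.
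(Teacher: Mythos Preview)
Your overall strategy matches the paper's: restrict the ambient Euler sequence of \cite[Section~6]{CMZeuler} to~$\ol\cH$ and pass to the quotient by the sheaf of defining equations. The paper organizes this via a three-row diagram with the equation sheaf~$\Eq$ on top, the ambient Euler sequence in the middle, and the target sequence as the quotient, whereas you work directly with~$\Hrelbar$; these are equivalent viewpoints.

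There are, however, two technical points that the paper treats carefully and that your sketch underestimates. First, the sentence ``$\Hrel$ extends as a sub-local-system of $\HrelB|_{\ol\cH}$'' hides real content. The paper \emph{defines}~$\Eq$ near the boundary by replacing each variable in the equations~$F_k^{[i]}$ by its Deligne extension, and then must verify that this agrees with the interior definition. For equations not crossing horizontal nodes this is \cite[Proposition~3.11]{BDG}; for equations that do cross horizontal nodes it uses the proportionality of the periods $a_j^{[i]}$ within an $\cH$-cross-equivalence class together with the binomial relation~$H_k^{[i]}$. Simply invoking the Deligne canonical extension does not make this step disappear.

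Second, your local computation ``$\ev(\phi\otimes\omega)=\prodt[i]\,\phi(\omega^{[-i]})$ up to higher-order terms, hence the generators lift with prefactor $\prodt[-i]$'' is too coarse, especially for the horizontal summand. The paper's argument goes the other way: it computes the $\psi$-preimage of each generator of~$\Eq$ and checks that the quotient by~$\cK_\Eq$ produces no classes beyond $\bigoplus_i \prodt[-i]\cdot\Omega^{[i]}$. For the linear equations~\eqref{eq:lineareqns} this is a straightforward use of~\eqref{eq:ccdiff} and~\eqref{eq:chatvsctilde}. For the binomial equations~\eqref{eq:toriceqns} one takes $d\log$ and must identify $d\log q_j^{[i]}$ under~$\psi$ with $\tfrac{2\pi I}{a_j^{[i]}}\bigl(\beta_j^{[i]}-\tfrac{b_j^{[i]}}{a_j^{[i]}}\alpha_j^{[i]}\bigr)\otimes\omega$, recognize this as (part of) the Deligne extension $\wh\beta_j^{[i]}$, and then again use the proportionality of the~$a_j^{[i]}$ to see that the relevant linear combination lies in~$\Eq$ up to lower-level terms carrying the correct $\prodt$-factor. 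This is the step that justifies the surjection $\Omega^{\hor}_{i,B}(\log)|_{\ol\cH}\twoheadrightarrow\Omega^{\hor}_i(\log)$ in \autoref{prop:Omegadecomp}, and it is more than bookkeeping.
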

\par
The definition of the evaluation map and the notion of Deligne extension
on a stack with toric singularities requires justification given in the
proof. For the next result we define the abbreviations 
\be \label{eq:ELgeneral}
\cE_\cH \= \Omega^1_{\overline{\cH}}(\log \ol\partial\cH) \quad \text{and} \quad
\cL_\cH \= \cO_{\overline{\cH}}\Bigl( \sum_{\Gamma \in \twolev} \ell_\Gamma D^\cH_\Gamma\Bigr)
\ee
that are consistent with the level-wise definitions in~\eqref{eq:cEtop}
and~\eqref{eq:defLithlev}.
\par
\begin{theorem}	\label{thm:coker}
There is a short exact sequence of quasi-coherent $\cO_{\overline{\cH}}$-modules    
\begin{equation} \label{eq:maineq}
0\longrightarrow \cE_\cH \otimes \cL_\cH^{-1} \to \cK \to \cC\longrightarrow 0
\end{equation}
where $\cC \= \bigoplus_{\Gamma \in \LG_1(\cH)} \cC_\Gamma$ is a coherent sheaf
supported on the non-horizontal boundary divisors, whose precise form
is given in \autoref{prop:coker} below.
\end{theorem}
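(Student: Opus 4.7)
The plan is to construct the map $\cE_\cH \otimes \cL_\cH^{-1} \to \cK$ in local charts around a boundary point with level graph $\Gamma \in \LG_L(\cH)$, using the parallel decompositions provided by \autoref{prop:Omegadecomp} and \autoref{thm:EulerDE}. Concretely, in a neighborhood $U$ as in \autoref{prop:Omegadecomp} both sheaves admit splittings indexed by levels $i \in \{0,-1,\dots,-L\}$, and on each level-$(-i)$ summand I would define the map to be multiplication by $\prodt[L]/\prodt[i] = \prod_{j=i+1}^L t_j^{\ell_j}$. This is well-defined because the generator of $\cL_\cH^{-1}|_U$ is precisely $\prod_{j=1}^L t_j^{\ell_j}$ (the $t_j$ being the level parameters cutting out $D^\cH_{\delta_j(\Gamma)}$, with multiplicity $\ell_{\delta_j(\Gamma)} = \ell_j$ dictated by~\eqref{eq:ELgeneral}), and the level-$(-i)$ component of~$\cK$ carries the twist~$\prodt[i]$ by \autoref{thm:EulerDE}.

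Next I would verify that these local maps glue to a global map. The level-wise decomposition is only local, so the argument is that the subsheaf~$\cK$ of the rescaled logarithmic forms is intrinsic, as is $\cE_\cH \otimes \cL_\cH^{-1}$, and the multiplication-by-$\prod_{j>i}t_j^{\ell_j}$ map corresponds to the canonical inclusion of $\cE_\cH \otimes \cO(-\sum_j \ell_j D^\cH_{\delta_j(\Gamma)})$ into the bundle obtained by twisting each level-$(-i)$ summand only by $\cO(-\sum_{j\leq i} \ell_j D^\cH_{\delta_j(\Gamma)})$. Writing the map as the tensor product of the identity of~$\cE_\cH$ with the level-wise refinement of the canonical section $\cL_\cH^{-1} \hookrightarrow \cO$ makes it manifestly global and independent of the choice of $\Gamma$-adapted coordinates. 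Injectivity is then immediate because on each local summand the map is multiplication by a non-zero regular function.

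The heart of the proof is the cokernel analysis, which is where the precise statement of \autoref{prop:coker} will have to be extracted. The cokernel is set-theoretically supported on $\bigcup_{j=1}^L \{t_j=0\}$, i.e., on the non-horizontal boundary divisors~$D^\cH_\Gamma$ for $\Gamma \in \LG_1(\cH)$. On the level-$(-i)$ summand, the quotient $\cK_i / (\prod_{j>i} t_j^{\ell_j}) \cdot (\cE_\cH)_i$ is a torsion module supported exactly on $\bigcup_{j>i}\{t_j=0\}$; collecting contributions and distributing them to the appropriate $D^\cH_\Gamma$ is a bookkeeping step that mirrors \cite[Section~9]{CMZeuler} and yields the direct-sum decomposition $\cC = \bigoplus_{\Gamma\in\LG_1(\cH)}\cC_\Gamma$. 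The crucial point — and the main obstacle — is that these constituents $\cC_\Gamma$ are well defined globally on~$D^\cH_\Gamma$ and compatible across overlapping charts. For this I would use \autoref{prop:transverse} and \autoref{cor:ordering}, which guarantee that the different boundary divisors meet in the way predicted by profiles of the ambient stratum, so the support and multiplicities of $\cC$ are intrinsic. The remaining work is to identify $\cC_\Gamma$ itself, which reduces via \autoref{prop:linatboundary} and the pullback description of $\cE_\cH$ to a formula involving $\cE_{\Gamma,\cH}^\top$, the relative dualizing data at the lower levels, and the normal bundle $\cN^\cH_\Gamma$ computed in \autoref{prop:generalnormalbundle}; this is the content to be stated in the referenced \autoref{prop:coker}.
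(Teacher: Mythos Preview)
Your proposal is essentially correct and follows the same approach as the paper: both identify $\cE_\cH\otimes\cL_\cH^{-1}$ and $\cK$ locally via the level-wise decompositions from \autoref{prop:Omegadecomp} and \autoref{thm:EulerDE}, observe that the former is the $\prodt[L]$-twist and the latter the $\prodt[-i]$-twist of the same summands, and read off the cokernel level by level (the paper's proof is literally one sentence deferring to these ingredients, with the explicit identification of $\cC_\Gamma$ pushed to \autoref{prop:coker}). One small imprecision: your description of the global map as ``the identity of $\cE_\cH$ tensored with the canonical section $\cL_\cH^{-1}\hookrightarrow\cO$'' is not quite right, since that would land in $\cE_\cH$ rather than in $\cK$; the correct global description is simply that both sheaves are canonically subsheaves of $\Omega^1_{\ol\cH}(\log\partial\cH)$ (or of the middle term of the Euler sequence), and the map is the inclusion of one subsheaf into the other --- this is what makes gluing automatic and removes the need to argue compatibility of the level splittings across charts.
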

\par
\begin{proof}[Proof of \autoref{thm:EulerDE}] We start with the
definition of the maps in the Euler sequence for the ambient stratum,
see the middle row in the commutative diagram below. It uses the 
evaluation map
\be \label{eq:defev}
\ev_B\colon(\HrelB)^\vee\otimes \cO_{\ol{B}}(-1)\to \cO_{\ol{B}}, \quad
\gamma\otimes \omega\mapsto \int_{\gamma} \omega\,,
\ee
restricted to $\ol{\cH}$. The first map in the sequence is 
\be \label{eq:eulerinbasis}
\dd c_i\mapsto \Bigl(\gamma_i-\frac{c_i}{c_k}\alpha_k\Bigr)\otimes \omega,\qquad
i=1,\dots,\hat{k},\dots,N\,,
\ee
as usual in the Euler sequence, on a chart of~$\cH$ where $c_k$ is
non-zero. The exactness of the middle row is the content of
\cite[Theorem~6.1]{CMZeuler}. 
\par
We next define the sheaf $\Eq$. In the interior,
$\Eq$ is the local system of equations cutting out~$\Omega\cH$, and thus
the quotient $(\cH_{\rel}^1)^\vee = (\cH_{\rel,B}^1)^\vee / \Eq$ is the
relative homology local system, by definition of a linear manifold.
The proof in \cite[Section~6.1]{CMZeuler} concerning the restriction of
the sequence to the interior~$\cH$ uses that $\cH$ has
a linear structure with tangent space modeled on the local system
$\bH^1_{\mathrm{rel}}$. In particular it gives the claim about $\cK|_\cH$.
\par
As an interlude, we introduce notation for the Deligne extension of
$(\cH_{\rel,B}^1)^\vee$. For each $\gamma_j^{[i]}$ we let $\wh{\gamma}_j^{[i]}$
be it extension, the sum of the original cycles and vanishing cycles
times logarithms of the coordinates of the boundary divisors to kill
monodromies. The functions
\bes
\wh{c}_j^{[i]} \= \frac1{\prodt[-i]} \int_{\wh{\gamma}_j^{[i]}} \omega
\ees
are called \emph{log periods} in \cite{BDG}.
\par
We now \emph{define} $\Eq $ at the boundary, say
locally near a point $p \in D_\Gamma$, to be the subsheaf of $(\HrelB)^\vee$ generated by the defining equations~$F_k^{[i]}$  constructed in \autoref{sec:closurelin}, but
with each variable replaced by its Deligne extension. It requires justification
that this definition near the boundary agrees with the previous
definition in the interior. We can verify this for the distinguished
basis consisting of the $F_k^{[i]}$.
Equations that do not intersect horizontal nodes agree with their Deligne
extension. This cancellation of the compensation terms is \cite[Proposition~3.11]{BDG} (
see also the expression for $F_k^{[i]}$ after \cite[Proposition~4.1]{BDG}) which
displays the $\omega$-integrals of the terms to be compared. For
equations $F_k^{[i]}$ that do intersect horizontal nodes (thus only at
level~$i$ by construction) the difference $F_k^{[i]}(c_j^{[s]}, \text{all $(j,s)$})
- F_k^{[i]}(\wh{c}_j^{[s]}, \text{all $(j,s)$})$ vanishes thanks to the 
proportionality of the periods of horizontal nodes in an $\cH$-equivalence class
and since on~$\ol{\cH}$ the equation~$H_k^{[i]}$ holds. 
\par
By the very definition of defining equation its periods evaluate to zero,
explaining the right arrow in the top row of the following diagram and 
showing that~$\ev$ is well-defined on the quotient.
\bes
\begin{tikzcd}
0 \arrow{r} & \cK_\Eq \arrow{r} \arrow{d}& \Eq
\otimes \cO_{\overline{\cH}}(-1) \arrow{r} \arrow{d}& 0 \arrow{d} \\
0 \arrow{r} & \bigoplus \limits_{i=0}^L \prodt[-i] \cdot 
\Omega^{[i]}_{B}|_{\ol{\cH}} \arrow{r}{\psi} \arrow{d}{q_\Omega} &
(\HrelB)^\vee\otimes \cO_{\overline{\cH}}(-1) \arrow{r}{\ev_B} \arrow{d}&
\cO_{\overline{\cH}} \arrow{r} \arrow{d} & 0 \\
0 \arrow{r} & \bigoplus \limits_{i=0}^L \prodt[-i] \cdot 
\Omega^{[i]}  \arrow{r} &
(\Hrelbar)^\vee\otimes \cO_{\overline{\cH}}(-1) \arrow{r}{\ev} &
\cO_{\overline{\cH}} \arrow{r} & 0\,.
\end{tikzcd}
\ees
Here we used the abbreviations
\bes
\Omega^{[i]}_{B}  \= \Omega_{i,B}^{\hor}(\log) \oplus
\Omega_{i,B}^{\lev}(\log) \oplus \Omega_{i,B}^{\rel}, \qquad
\Omega^{[i]}  \= \Omega_i^{\hor}(\log) \oplus
\Omega_i^{\lev}(\log) \oplus \Omega_i^{\rel}\,.
\ees
\par
The surjectivity of $q_\Omega$ follows from the definition of the
summands in~\eqref{eq:OmDecompH}. It requires justification that the
image is not larger, since the derivatives of the local equations
of~$\cH$ do not respect the direct sum decomposition~\eqref{eq:OmDecompB}.
More precisely we claim that $\cK_\Eq$ is generated by two kinds of
equations. Before analyzing them, note that the log periods satisfy
by construction an estimate of the form 
\be \label{eq:chatvsctilde}
\wt{c}_j^{[-i]} - \wh{c}_j^{[-i]} \=  \sum_{s>i} \frac{\prodt[s]}{\prodt[i]}
\wh{E}_{j,i}^{[-s]} 
\ee
with some error term $\wh{E}_{j,i}^{[-k]}$ depending on the variables
$c_j^{[-s]}$ on the lower level~$-s$ as in~\eqref{eq:ccdiff}.
\par
For each of the equations~\eqref{eq:lineareqns} the corresponding linear
function $L_k^{[i]}$ in the variables~${c}_j^{[i]}$ is an element in~$\Eq$.
We use the comparisons~\eqref{eq:chatvsctilde} and~\eqref{eq:ccdiff}
to compute its $\psi$-preimage in $\cK_\Eq$ via~\eqref{eq:eulerinbasis}. It is
$\prodt[-i]$ times the corresponding expression in the~$\wh{c}_j^{[i]}$ plus
a linear combination of the terms $\prodt[-s] \wh{E}_{j,i}^{[s]}$. The quotient
by such a relation does not yield any quotient class beyond those
in $\oplus_{i=0}^i \prodt[-i] \cdot \Omega^{[i]}$.
\par
We write the other equations~\eqref{eq:toriceqns} as $(\bfq^{[i]})^{J_{1,k}-J_{2,k}}
= 1$ since we are interested in torus-invariant differential forms and
can compute on the boundary complement. Consider $d\log$ of this equation.
Under the first map~$\psi$ of the Euler sequence
\be
dq_j^{[i]}/q_j^{[i]}\=d\log(q_j^{[i]})\=d\left(2\pi I \frac{b_j^{[i]}}{a_j^{[i]}}\right)
\, \mapsto \, \frac{2\pi I}{a_j^{[i]}} \Bigl(\beta_j^{[i]} - \frac{b_j^{[i]}}
{a_j^{[i]}} \alpha_j^{[i]}\Bigr) \otimes \omega
\ee
Recall from summary of~\cite{BDG} in \autoref{sec:closurelin}
that the functions ${a_j^{[i]}}$ for all $j$ where
$(v_1,\ldots,v_{N(i)-h(i)}) := J_{1,k}-J_{2,k}$ is non-zero are rational
multiples of each other. Note  moreover that
$\beta_j^{[i]} - \tfrac{b_j^{[i]}} {a_j^{[i]}} \alpha_j^{[i]}
= \beta_j^{[i]} - \tfrac1{2\pi I}\log(q_j^{[i]}) \alpha_j^{[i]}$ is the
Deligne extension of $\beta_j^{[i]}$ across all the boundary divisors that stem
from horizontal nodes at level~$i$.
For the full Deligne extension $\wh{\beta}_j^{[i]}$ the correction terms for
the lower level nodes have to be added. Together with~\eqref{eq:logqdiff}
we deduce that the $\psi$-image of
\bes
\sum_{m=1}^{h(i)} v_m {a_m^{[i]}} \,\frac{d\wt{q}_m^{[i]}}{\wt{q}_m^{[i]}}
\= \sum_{m=1}^{h(i)} v_j c_{j(m)}^{[i]} \,\frac{d\wt{q}_m^{[i]}}{\wt{q}_m^{[i]}}
\ees
differs from the element in~$\Eq$ responsible for the equation~$H_k^{[i]}$
only by terms from lower level~$s$, which come with a factor $\prodt[-s]$.
In this equation used that $a_m^{[i]} = {c}_{j(m)}^{[i]}$ for an
appropriate~$j(m)$. Since~${c}_{j(m)}^{[i]}$ is close to $\prodt[-i] \wt{c}_{j(m)}^{[i]}$,
compare with~\eqref{eq:ccdiff} this element indeed belongs to the
kernel of~$\psi$ as claimed in the commutative diagram. 
The quotient by such a relation does not yield any quotient class beyond
those above either. Since the~\eqref{eq:toriceqns} and~\eqref{eq:lineareqns}
correspond to a basis (in fact: in reduced row echelon form) of~$\Eq$,
this completes the proof.
\end{proof}
\par
\begin{proof}[Proof of \autoref{thm:coker}]
Uses that the summands of $\cK|_U$ are, up to $t$-powers, the decomposition
of the logarithmic tangent sheaf by \autoref{prop:Omegadecomp}.
\par
\end{proof}
\par
\begin{cor} \label{cor:cKclasses}
The Chern character and the Chern polynomial of the kernel~$\cK$ of
the Euler sequence are  given by
\[\ch(\cK)\=Ne^{\xi_\cH}-1 \quad\text{ and } \quad
\c(\cK) \= \sum_{i=0}^{N-1}\binom{N}{i}\xi_\cH^i\,.\]
\end{cor}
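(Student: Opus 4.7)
The plan is to extract both formulas directly from the short exact sequence~\eqref{eq:EulerExt} of \autoref{thm:EulerDE}. Applying additivity of the Chern character and multiplicativity of the total Chern class to
$$0 \longrightarrow \cK \longrightarrow (\Hrelbar)^\vee \otimes \cO_{\ol{\cH}}(-1) \longrightarrow \cO_{\ol{\cH}} \longrightarrow 0$$
one obtains immediately $\ch(\cK) = \ch((\Hrelbar)^\vee)\, e^{\xi_\cH} - 1$ and $\c(\cK) = \c((\Hrelbar)^\vee \otimes \cO_{\ol{\cH}}(-1))$, so the entire statement reduces to showing $\ch((\Hrelbar)^\vee) = N$ and $\c((\Hrelbar)^\vee) = 1$, i.e.\ that the Deligne extension has trivial rational Chern classes.

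The crux is therefore this vanishing. It is classical when the monodromy is unipotent: the flat logarithmic connection on the Deligne canonical extension has nilpotent residues along the boundary divisors, so all traces of powers of the curvature vanish as Chern--Weil forms and the rational Chern classes are zero. The monodromy of $\HrelB$ on $\bP\LMS$ combines Dehn twists (which act unipotently on cohomology) with prong rotations; passing to a suitable finite cover followed by a toric resolution $\ol{\cH}^{\tor} \to \ol{\cH}$, which exists by the toric singularity structure of \autoref{prop:bdtoricsing}, one arranges unipotent monodromy. The vanishing of the Chern classes of the extension upstairs descends to $\ol{\cH}$ in the rational Chow ring by the projection formula, mirroring the argument for the ambient stratum in \cite{CMZeuler}.

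Granting these vanishings, the standard formula for tensoring a rank $N$ bundle of trivial Chern classes by a line bundle yields $\c(\cK) = (1+\xi_\cH)^N$. Since $\dim \ol{\cH} = d = N-1$, the class $\xi_\cH^N$ vanishes in $\CH^N(\ol{\cH})$ for dimension reasons, so the top term drops and
$$\c(\cK) \= \sum_{i=0}^{N-1}\binom{N}{i}\xi_\cH^i,$$
which is the claimed formula; the expression for $\ch(\cK)$ is then immediate from the first paragraph. The main obstacle is the justification of the Chern class vanishing in the presence of the toric singularities of $\ol{\cH}$ and the ghost automorphism contribution to the local isotropy groups, which is handled by passing to the toric resolution and applying orbifold Chern--Weil theory exactly as in \cite{CMZeuler}.
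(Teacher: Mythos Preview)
Your proposal is correct and follows essentially the same route as the paper: reduce via the Euler sequence to the vanishing of the higher Chern classes of $(\Hrelbar)^\vee$, and justify that vanishing by passing to a toric resolution where one has a genuine Deligne extension of a local system, then descend by push--pull. The one structural difference is that the paper makes the vanishing for $(\Hrelbar)^\vee$ explicit via the short exact sequence $0 \to \Eq \to (\HrelB)^\vee|_{\ol{\cH}} \to (\Hrelbar)^\vee \to 0$ from the proof of \autoref{thm:EulerDE}: the ambient term $(\HrelB)^\vee|_{\ol{\cH}}$ is the restriction of a Deligne extension on the smooth space $\bP\LMS$, and the equations sheaf~$\Eq$ becomes a Deligne extension after pulling back to an allowable toric resolution, so both have trivial Chern classes and hence so does the quotient. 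Your direct argument about $(\Hrelbar)^\vee$ implicitly relies on the same decomposition, since the very definition of $\Hrelbar$ at the boundary in \autoref{thm:EulerDE} goes through~$\Eq$; making this step visible is what the paper's version buys. The finite-cover-to-unipotent-monodromy maneuver you invoke is a standard way to justify the Chern class vanishing for Deligne extensions and is compatible with what the paper cites, though the paper simply quotes the vanishing as a known property rather than spelling out the Chern--Weil mechanism.
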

\par
\begin{proof}
As a Deligne extension of a local system, $(\HrelB)^\vee|_{\ol{\cH}}$ has
trivial Chern classes except for $c_0$.  By construction, the pullback of the
sheaf $\Eq$  to an allowable modification (toric resolution with normal crossing
boundary, see the proof of \autoref{prop:chiviaTlog}) is the Deligne extension
of a local system. It follows that all Chern classes but $c_0$ of this pullback
vanish and by push-full this holds for $\Eq$, too. The Chern class vanishing for $(\Hrel)^\vee$
and the corollary follows.
\end{proof}
To start with the computation of~$\cC$, we will also need an infinitesimal
thickening the of the boundary divisor~$D^\cH_\Gamma$, namely we define
$D^\cH_{\Gamma,\bullet}$ to be its $\ell_\Gamma$-th thickening, the non-reduced
substack of $\ol\cH$ defined by the ideal $\cI_{D^\cH_\Gamma}^{\ell_\Gamma}$. We will
factor the above inclusion using the notation
\[\fraki_\Gamma = \fraki_{\Gamma,\bullet} \circ j_{\Gamma,\bullet} \colon
D^\cH_\Gamma \, \overset{j_{\Gamma,\bullet}}{\hookrightarrow} \, D^\cH_{\Gamma, \bullet} \,
\overset{\fraki_{\Gamma,\bullet}}{\hookrightarrow}\, \overline{\cH}\,. \]
We will denote by $\cL_{\Gamma,\bullet}^\top=(j_{\Gamma,\bullet})_*(\cL_{\Gamma}^\top)$ and  $\cE_{\Gamma,\bullet}^\top=(j_{\Gamma,\bullet})_*(\cE_{\Gamma}^\top)$ the push-forward to the thickening of the vector bundles defined in \eqref{eq:defLithlev} and \eqref{eq:cEtop}.

\begin{prop}\label{prop:coker}
		The cokernel of~\eqref{eq:maineq} is given by 
		\be \label{eq:defcalC}
		\cC \= \bigoplus_{\Gamma \in \twolev} \cC_\Gamma \quad \text{where}
		\quad \cC_\Gamma \= (\fraki_{\Gamma,\bullet})_* (\cE_{\Gamma,\bullet}^\top
		\otimes (\cL_{\Gamma,\bullet}^\top)^{-1})\,.
		\ee
Moreover, there is an equality of Chern characters
\[
\ch\Bigl((\fraki_{\Gamma,\bullet})_*(\cE_{\Gamma,\bullet}^\top \otimes
(\cL_{\Gamma,\bullet}^\top)^{-1}) \Bigr) \= \ch \Bigl((\fraki_\Gamma)_*
\bigl(\bigoplus_{j=0}^{\ell_\Gamma-1} \cN_{\Gamma}^{\otimes -j} \otimes
\cE_{\Gamma}^\top \otimes (\cL_{\Gamma}^\top)^{-1}\bigl)\Bigr)\,.
\]
\end{prop}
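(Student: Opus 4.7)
The plan is to first identify the cokernel $\cC$ by a local computation at a generic point of each non-horizontal boundary divisor, then derive the Chern character identity by filtering the thickened sheaf by powers of the ideal sheaf.

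\textbf{Local identification of $\cC_\Gamma$.} Fix $\Gamma\in\LG_1(\cH)$ and work in a neighborhood $U$ of a point of $D_\Gamma^{\cH,\circ}$, so that only the single divisor $D^\cH_\Gamma$ is relevant and its level parameter $t_1$ cuts it out. By \autoref{thm:EulerDE}, $\cK|_U$ is generated as an $\cO_U$-module by the top-level log cotangent generators $\Omega^{[0]}$ together with the rescaled lower-level generators $t_1^{\ell_\Gamma}\Omega^{[-1]}$. On the other hand, by \autoref{prop:Omegadecomp} and the definition of $\cL_\cH$ in~\eqref{eq:ELgeneral}, the subsheaf $(\cE_\cH \otimes \cL_\cH^{-1})|_U$ is generated by $t_1^{\ell_\Gamma}\Omega^{[0]} \oplus t_1^{\ell_\Gamma}\Omega^{[-1]}$. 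The lower-level summand cancels in the quotient, so
\[
\cC|_U \;\cong\; \Omega^{[0]} \big/ t_1^{\ell_\Gamma}\Omega^{[0]}\,,
\]
which is the pushforward from the $\ell_\Gamma$-th thickening $D^\cH_{\Gamma,\bullet}$ of the top-level log cotangent sheaf $\cE_\Gamma^\top$ of~\eqref{eq:cEtop}. Globalizing, the cokernel decomposes as $\cC = \bigoplus_{\Gamma \in \LG_1(\cH)} \cC_\Gamma$ because the loci $D^{\cH,\circ}_\Gamma$ are disjoint by \autoref{prop:transverse}, and the twist by $(\cL_{\Gamma,\bullet}^\top)^{-1}$ arises when one matches the contribution to $\cL_\cH$ coming from further (top-level) degenerations against the top-level line bundle $\cL_\Gamma^\top$ of~\eqref{eq:defLithlev}, using \autoref{lem:compatibilities}.

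\textbf{Chern character identity.} For fixed $\Gamma \in \LG_1(\cH)$, let $\cI = \cI_{D^\cH_\Gamma}$, so that $D^\cH_{\Gamma,\bullet}$ is cut out by $\cI^{\ell_\Gamma}$. Filtering $\cE_{\Gamma,\bullet}^\top \otimes (\cL_{\Gamma,\bullet}^\top)^{-1}$ by the submodules $\cI^j$ for $j = 0, 1, \ldots, \ell_\Gamma - 1$, each successive quotient is annihilated by $\cI$ and is hence a coherent sheaf on the reduced divisor $D^\cH_\Gamma$ isomorphic to
\[
\cI^j/\cI^{j+1} \otimes \cE_\Gamma^\top \otimes (\cL_\Gamma^\top)^{-1} \;\cong\; \cN_\Gamma^{\otimes -j} \otimes \cE_\Gamma^\top \otimes (\cL_\Gamma^\top)^{-1}\,,
\]
using that $\cI/\cI^2$ is the conormal bundle, i.e., the dual of $\cN_\Gamma$. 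Since $\fraki_{\Gamma,\bullet}$ is a closed immersion and $\fraki_{\Gamma,\bullet}\circ j_{\Gamma,\bullet} = \fraki_\Gamma$, pushing the filtration forward and applying additivity of the Chern character to the resulting short exact sequences gives the stated equality.

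\textbf{Main obstacle.} The delicate point is the globalization in the first step, namely justifying the appearance of $(\cL_{\Gamma,\bullet}^\top)^{-1}$ rather than simply $\cO$ in the description of $\cC_\Gamma$. One must verify that after the cancellation of the bottom-level summand, the remaining contribution of $\cL_\cH$ over all of $D^\cH_\Gamma$ (not just over the open locus $D^{\cH,\circ}_\Gamma$ where only $\Gamma$ is relevant) matches $\cL_\Gamma^\top$. This requires carefully tracking the restrictions to $D^\cH_\Gamma$ of those boundary divisors of $\overline{\cH}$ that further degenerate the top level of $\Gamma$, and appealing to the pullback/pushforward compatibilities of \autoref{lem:compatibilities}.
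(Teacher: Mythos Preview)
Your argument for the Chern character identity via the $\cI$-adic filtration is correct and is exactly the standard argument (the one the paper defers to \cite[Lemma~9.3]{CoMoZa}).

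The identification of $\cC_\Gamma$, however, has the gap you yourself flag. Computing the quotient $\cK/(\cE_\cH\otimes\cL_\cH^{-1})$ only on the open locus $D_\Gamma^{\cH,\circ}$ of a two-level graph determines $\cC_\Gamma$ only generically on $D_\Gamma^\cH$; it does not pin down the sheaf along the deeper strata where several boundary divisors meet, nor does it justify the direct sum decomposition $\cC=\bigoplus_\Gamma\cC_\Gamma$ (the closed divisors $D_\Gamma^\cH$ intersect, so disjointness of the open loci is not the point). Your appeal to \autoref{lem:compatibilities} does not close this: that lemma compares bundles along the maps $p_{\Gamma,\cH}$ and $c_{\Gamma,\cH}$, not the restriction of $\cL_\cH$ to $D_\Gamma^\cH$ inside $\ol\cH$.

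The paper sidesteps this entirely by working at an \emph{arbitrary} boundary point, with level graph of any depth~$L$. There one has explicitly
\[
\cK|_U \= \bigoplus_{i=-L}^0 \prodt[-i]\cdot\Omega^{[i]}\quad\text{and}\quad
(\cE_\cH\otimes\cL_\cH^{-1})|_U \= \bigoplus_{i=-L}^0 \prodt[L]\cdot\Omega^{[i]}
\]
from \autoref{thm:EulerDE} and \autoref{prop:Omegadecomp}. The quotient is then visibly a direct sum over the $L$ level passages, the $k$-th summand being supported on $\{t_k=0\}$ (i.e.\ on $D_{\delta_k(\Gamma)}^\cH$) and matching the claimed $\cC_{\delta_k(\Gamma)}$ with its $\cL^\top$-twist already built in, because the extra factors $\prod_{j\neq k}t_j^{\ell_j}$ are exactly the local equations of the top-level degenerations entering $\cL_{\delta_k(\Gamma)}^\top$. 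So the fix is simply to redo your local computation at a point of arbitrary depth rather than depth one; no separate globalization step is needed.
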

\begin{proof}
The second part of the statement is justified by the original argument in \cite[Lemma 9.3]{CoMoZa}.

The first part of the statement follows since, from \autoref{thm:EulerDE} we know that 
\bes
\cK|_U \= \bigoplus_{i=-L}^0  \prod_{j=1}^{-i} t_j^{\ell_j} \cdot \Bigl(\Omega_i^{\hor}(\log) \oplus
\Omega_i^{\lev}(\log) \oplus \Omega_i^{\rel} \Bigr)
\ees
 and from \autoref{prop:Omegadecomp} we also know that
 \be
(\cE_\cH \otimes \cL_\cH^{-1})|_U
 \= \bigoplus_{i=-L}^0 \prod_{j=1}^L t_j^{\ell_j} \cdot \Bigl(\Omega_i^{\hor}(\log) \oplus
 \Omega_i^{\lev}(\log) \oplus \Omega_i^{\rel} \Bigr)
 \ee
where  $\Gamma$ is an arbitrary level graph with~$L$
levels below zero and $U$ is a small neighborhood of a point in $D^{\cH,\circ}_\Gamma$.
\end{proof}
We can finally compute 
\begin{prop} \label{prop:prerecursion}
	The Chern character of the twisted logarithmic cotangent bundle~$\cE_\cH \otimes \cL_\cH^{-1}$ can be
	expressed in terms of the twisted logarithmic cotangent bundles of the top levels of non-horizontal divisors as
	\bas
	\ch(\cE_\cH \otimes \cL_\cH^{-1}) \=  N e^{\xi} -1  \,-\,  \sum_{\Gamma \in \twolev}
	{\fraki_\Gamma}_* \left(
	\ch(\cE_{\Gamma}^\top) \cdot \ch(\cL_{\Gamma}^\top)^{-1}
	\cdot  \frac{(1-e^{-\ell_\Gamma\c_1(\cN_{\Gamma})})}{\c_1(\cN_{\Gamma})}\right)\,.
	\eas
\end{prop}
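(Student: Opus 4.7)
My plan is to combine the two exact sequences of \autoref{thm:EulerDE} and \autoref{thm:coker} with Grothendieck--Riemann--Roch for the closed embeddings $\fraki_\Gamma\colon D^\cH_\Gamma\hookrightarrow\ol{\cH}$, and then to collapse the resulting sum over~$j$ via an elementary geometric-series identity.

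First I would apply additivity of the Chern character on the sequence~\eqref{eq:maineq}, which yields
\[
\ch(\cE_\cH \otimes \cL_\cH^{-1}) \= \ch(\cK) \m \ch(\cC).
\]
\autoref{cor:cKclasses} evaluates $\ch(\cK) = Ne^\xi - 1$, and \autoref{prop:coker} decomposes $\cC = \bigoplus_{\Gamma \in \twolev}\cC_\Gamma$, so it remains to compute $\ch(\cC_\Gamma)$ for each non-horizontal one-level graph. Using the second assertion of \autoref{prop:coker}, I would then rewrite
\[
\ch(\cC_\Gamma) \= \ch\Bigl(\fraki_{\Gamma,*}\bigoplus_{j=0}^{\ell_\Gamma-1}
\cN_{\Gamma}^{\otimes -j} \otimes \cE_{\Gamma}^\top \otimes (\cL_{\Gamma}^\top)^{-1}\Bigr),
\]
trading the thickened pushforward for a sum of ordinary pushforwards.

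Next I would apply GRR to each summand. By \autoref{prop:transverse} the divisor $D^\cH_\Gamma$ is smooth Cartier in $\ol{\cH}$, so $\fraki_\Gamma$ is a regular embedding with line-bundle normal $\cN_\Gamma$, and GRR specializes to
\[
\ch(\fraki_{\Gamma,*}\cF) \= \fraki_{\Gamma,*}\Bigl(\ch(\cF) \cdot
\tfrac{1-e^{-\c_1(\cN_\Gamma)}}{\c_1(\cN_\Gamma)}\Bigr)
\]
using $\td(\cN_\Gamma)^{-1} = (1-e^{-\c_1(\cN_\Gamma)})/\c_1(\cN_\Gamma)$. Applying this termwise with $\ch(\cN_\Gamma^{\otimes -j}) = e^{-j\c_1(\cN_\Gamma)}$ and factoring out everything independent of $j$ produces
\[
\ch(\cC_\Gamma) \= \fraki_{\Gamma,*}\Bigl(\ch(\cE_\Gamma^\top)\,\ch(\cL_\Gamma^\top)^{-1}\,
\tfrac{1-e^{-\c_1(\cN_\Gamma)}}{\c_1(\cN_\Gamma)}\sum_{j=0}^{\ell_\Gamma-1} e^{-j\c_1(\cN_\Gamma)}\Bigr).
\]
The geometric-series identity $\sum_{j=0}^{\ell_\Gamma-1} e^{-jx} = (1-e^{-\ell_\Gamma x})/(1-e^{-x})$ cancels the factor $1-e^{-\c_1(\cN_\Gamma)}$ and collapses the bracket to $(1-e^{-\ell_\Gamma\c_1(\cN_\Gamma)})/\c_1(\cN_\Gamma)$, matching the summand in the proposition; summing over $\Gamma$ and subtracting from $Ne^\xi - 1$ closes the argument.

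The only step I expect to require real care is the invocation of GRR, since $\ol{\cH}$ may carry the toric singularities of \autoref{prop:bdtoricsing}. These are confined to horizontal strata, hence disjoint from the non-horizontal $D^\cH_\Gamma$ appearing here, so the regular-embedding form of GRR used above does apply; alternatively one can pass to an allowable toric resolution as in the proof of \autoref{prop:chiviaTlog} and push back down, since all sheaves involved are pullbacks from $\ol{\cH}$.
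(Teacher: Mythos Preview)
Your proof is correct and follows essentially the same approach as the paper, which simply cites \cite[Prop.~9.5]{CoMoZa} and notes that the only tool needed is Grothendieck--Riemann--Roch for the regular embedding~$\fraki_\Gamma$. You have reconstructed in full the argument behind that citation, including the geometric-series collapse and the discussion of why GRR applies despite the toric singularities of~$\ol{\cH}$.
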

\begin{proof}
The proof \cite[Prop. 9.5]{CoMoZa} works in the same way, since the only tool that was used is  the Grothendieck-Riemann-Roch Theorem applied to the
map $f=\fraki_\Gamma$, which is still a regular embedding. 
\end{proof}
\par
\begin{proof}[Proof of Theorem~\ref{thm:c1cor} and Theorem~\ref{intro:Chern}]
The final formulas of the full twisted Chern character, Chern polynomials
and Euler characteristic follow from the arguments used for Abelian strata in
\cite[Section~9]{CoMoZa}, since they were purely formal starting from the
previous proposition. Here we need a more refined sum distinguishing irreducible components, but this works since the relevant inputs needed are the compatibility
statement of \autoref{lem:compatibilities}, the formula for pulling back
normal bundles given in \autoref{lemma:pullbacknormal} and
\autoref{cor:ordering} which work in this more refined setting.
\end{proof}
\par
\begin{proof}[Proof of Theorem~\ref{intro:ECformula}]
 A formal consequence of Theorem~\ref{intro:Chern} and the rewriting in 
\cite[Theorem~9.10]{CMZeuler} (with the reference to 
\cite[Proposition~4.9]{CMZeuler} replaced by Lemma~\ref{lem:compatibilities})
is 
\begin{equation}
	\label{eq:eulerH}
	\chi(\cH) =
	(-1)^d
	\sum_{L=0}^d \sum_{\Gamma^+ \in \LG^+_L(\cH)} N_{\Gamma^+}^\top \cdot \ell_{\Gamma^+} \cdot
	\int_{D_{\Gamma^+}^\cH} \prod_{i=-L}^{0}(\xi_{\Gamma^+,\cH}^{[i]})^{d_{\Gamma^+}^{[i]}},
\end{equation}
We now use 
\autoref{lem:evaluation} to convert integrals on a boundary component
into the product of integrals of its the level strata.
\end{proof}


\section{Example: Euler characteristic of the eigenform locus}
\label{sec:examples}

For a non-square $D \in \bN$ with $D \equiv 0$ or $1 \pmod{4}$ let
\bes
\Omega E_D(1,1) \subseteq \omoduli[2,2](1,1) \qquad \text{and} \qquad
\Omega W_D \subseteq \omoduli[2,1](2)
\ees
be the eigenform loci for real multiplication by $\cO_D$ in the given
stratum, see \cite{McMBi}, \cite{Calta}, \cite{McMFol} for the first proofs that these loci
are linear submanifolds and some background. We define $E_D := \bP \Omega E_D(1,1)$ and \emph{Weierstrass curve} $W_D:=\bP \Omega W_D $
as the projectivized eigenform loci. Associating with the curve its Jacobian,
the projectivized eigenform loci map to the \emph{Hilbert modular surface}
\bes
X_D \= \bH \times \bH / \SL(\cO_D \oplus \cO_D^\vee)\,.
\ees
Inside $X_D$ let $P_D \subseteq X_D$ denote the \emph{product locus}, i.e.\ the
curve consisting of those surfaces which are polarized products of elliptic curves.
The images of $E_D$ and $W_D$ are  contained in the complement $X_D \setminus P_D$.
\par
The goal of this section is to provide a new short proof of \autoref{thm:ECWDintro}.
\begin{proof}[Proof of \autoref{thm:ECWDintro}]
The Hilbert modular surface~$X_D$ is
the disjoint union of the symmetrization of the eigenform locus $E_D \subset
\omoduli[2,1](1,1)$, the product locus~$P_D$ of reducible Jacobians
and the Teichm\"uller curve~$W_D$.  
This gives
\begin{equation}\label{eq:chiXD}
\chi(P_D) + \chi(W_D) + \frac12 \chi(E_D) \= \chi(X_D)\,.
\end{equation}
The numerical input is
\begin{equation}\label{eq:numinputs}
\chi(X_D) = 2 \zeta(-1) \quad \text{and} \qquad
\chi(P_D) = - \frac{5}{2} \chi(X_D) = -5 \zeta(-1),
\end{equation}
where $\zeta = \zeta_{\bQ(\sqrt D)}$ is the Dedekind zeta function. The first formula
is due to Siegel \cite{Siegel}, see also \cite[Theorem~IV.1.1]{vdG}, the
second is given in \cite[Theorem~2.22]{Bai07} viewing $P_D$ as
the vanishing locus of the product of odd theta functions .

We are left to compute $\chi(E_D)$, which we will do using the formula for the Euler characteristic provided in Theorem~\ref{intro:ECformula}. We first  need to list the vertical boundary strata of the linear submanifold $\overline{E_D}\subset \bP \Xi\cM_{2,2}(1,1)$. This list consists of two divisorial strata only, given in Figure~\ref{fig:Eigenformlocus}.
\begin{figure}[ht]
	\def\eq{=}
\newlength\leveldist
\setlength{\leveldist}{1.3cm}
\newlength\hordist
\setlength{\hordist}{1cm}

\begin{tikzpicture}[
		baseline={([yshift=-.5ex]current bounding box.center)},
		scale=2,very thick,
		node distance=\HoG, 
		bend angle=30,
		every loop/.style={very thick},
     		comp/.style={circle,fill,black,,inner sep=0pt,minimum size=5pt},	
		order top left/.style={pos=\PotlI,left,font=\tiny},
		order top right/.style={pos=\PotrI,right,font=\tiny},		
		bottom right with distance/.style={below right,text height=10pt},
		bottom left with distance/.style={below left,text height=10pt}]

\begin{scope}[local bounding box = l]
\node[circled number] (T1) [] {$1$}; 
\node[circled number] (T2) [right=of T1] {$1$}; 
\coordinate (mid) at ($(T1.south)!0.5!(T2.south)$);
\node[comp] (B) [below=of mid] {}
	edge
		node [order bottom left] {$-2$} 
		node [order top left, yshift=-1] {$0$} (T1)
	edge
		node [order bottom right] {$-2$} 
		node [order top right, yshift=-1] {$0$} (T2);
\node [bottom right with distance] (B-1) at (B.south east) {$1$};
\node [bottom left with distance] (B-2) at (B.south west) {$1$};
\path (B) edge [shorten >=4pt] (B-1.center);
\path (B) edge [shorten >=4pt] (B-2.center);
\end{scope}

\begin{scope}[shift={($(l.base) + (2\hordist,0)$)}, local bounding box = ct]
\node[circled number] (T) [] {$2$}; 
\node[comp] (B) [below=of T] {}
	edge 
		node [order bottom left] {$-4$} 
		node [order top left] {$2$} (T);
\node [bottom right with distance] (B-1) at (B.south east) {$1$};
\node [bottom left with distance] (B-2) at (B.south west) {$1$};
\path (B) edge [shorten >=4pt] (B-1.center);
\path (B) edge [shorten >=4pt] (B-2.center);
\end{scope}

\node [xshift=-5, anchor=north east] at (l.north west) {$\Gamma_P$};
\node [xshift=-5, anchor=north east] at (ct.north west) {$\Gamma_W$};

\end{tikzpicture}
	\caption{The boundary divisors of the eigenform locus $E$.} \label{fig:Eigenformlocus}
\end{figure}

Hence in this situation the formula  of \autoref{intro:ECformula} gives
\begin{align*}
	\chi(E_D)&=3\int_{E_D}\xi_{E_D}^2 +2\frac{K^{E_D}_{\Gamma_P}}{|\Aut_{E_D}(\Gamma_P)|}\int_{D_{\Gamma_P}^\top}\xi_{D_{\Gamma_P}^\top} \int_{D_{\Gamma_P}^\bot} 1\\
	&\quad \quad \quad + 2\frac{K^{E_D}_{\Gamma_W}}{|\Aut_{E_D}(\Gamma_W)|}\int_{D_{\Gamma_W}^\top}\xi_{D_{\Gamma_W}^\top} \int_{D_{\Gamma_W}^\bot} 1.
\end{align*}

Firstly note that the top-$\xi$-integral on $E_D$ vanishes by \autoref{cor:topxivanishing}, since $E_D$ is a linear submanifold with REL non-zero.

Secondly, the full automorphism groups of the graphs $\Gamma_P$ and $\Gamma_W$ are trivial and all three prong-matchings
	for~$\Gamma_W$ are reachable since they belong to one orbit of the prong rotation
	group. Hence we obtain
	\[\frac{K^{E_D}_{\Gamma_P}}{|\Aut_{E_D}(\Gamma_P)|}=1,\quad \frac{K^{E_D}_{\Gamma_W}}{|\Aut_{E_D}(\Gamma_W)|}=3.\]
	
	Thirdly, we can identify the  top levels $D_{\Gamma_P}^\top$ and $D_{\Gamma_W}^\top$ with $P_D$ and $W_D$ respectively. Hence, again by applying \autoref{intro:ECformula}  to the top level strata, we get
\[2\int_{D_{\Gamma_P}^\top}\xi_{D_{\Gamma_P}^\top}=-\chi(P_D),\quad 2\int_{D_{\Gamma_W}^\top}\xi_{D_{\Gamma_W}^\top}=-\chi(W_D).\] 

Finally, is it clear that
\[ \int_{D_{\Gamma_P}^\bot} 1 = 1, 
\quad \text{and} \quad \int_{D_{\Gamma_W}^\bot} 1 = 1\]
since there is unique differential up to scale of
type $(1,1,-2,-2)$ on a~$\bP^1$ with vanishing residues and $D_{\Gamma_W}^\bot\cong \cM_{0,3}$. 

From the previous computations we hence obtain that
\[\chi(E_D) \= -\chi(P_D) - 3\chi(W_D).\]
This, together with \eqref{eq:chiXD} and the numerical inputs \eqref{eq:numinputs}, yields the desired result
\[\chi(W_D)=-2\chi(X_D)+\chi(P_D)=-9\zeta(-1).\]
\end{proof}


\section{Strata of $k$-differentials}
\label{sec:kdiff}

Our goal here is to prove \autoref{cor:kstrata} that gives a
formula for the Euler characteristic of strata $\bP \Omega^k \cM_{g,n}(\mu)$
of $k$-differentials. Those strata can be viewed as linear submanifolds
of strata of Abelian differentials $\bP \omoduli[\wh{g},\wh{n}](\wh\mu)$ via
the canonical covering construction and thus \autoref{intro:ECformula} applies.
This is however of little practical use as we do not know the classes of
$k$-differential strata in $\bP \omoduli[\wh{g},\wh{n}](\wh\mu)$. However, we do
know their classes in $\ol\cM_{g,n}$  via Pixton's formulas for the DR-cycle
(\cite{HolmesSchmitt}, \cite{BHPSS}).
As a consequence the formula in \autoref{cor:kstrata} can
be implemented, and the \texttt{diffstrata} package does provide such
an implementation. In this section we thus recall the basic
definitions of the compactification and collect all the statements
to perform evaluation of expressions in the tautological rings on
strata of $k$-differentials.
\par
\subsection{Compactification of strata of $k$-differentials}
We want to work on the multi-scale compactification 
$\ol{\cQ} :=  \ol{\cQ}_k := \bP \Xi^k \ol \cM_{g,n}(\mu)$
of the space of $k$-differentials.  As topological
space this compactification was given in \cite{CoMoZa}, reviewing the
plumbing construction from \cite{LMS}, but without giving the stack
structure. Here we consider a priori the compactification of
\autoref{sec:closurelinsection}. We give some details, describing auxiliary stacks
usually by giving $\bC$-valued points and morphisms, from which the
reader can easily deduce the notion of families following the
procedure in \cite{LMS}. From this description it should become clear
that the two compactifications,  the one of \autoref{sec:closurelinsection} and
\cite{CoMoZa}, agree up to explicit isotropy groups (see \autoref{lem:deg_d}). In
particular the compactification $\ol{\cQ}_k$ is smooth. This follows also directly
from the definition of \autoref{sec:closurelinsection}, since the only potential
singularities are at the horizontal nodes. There however the local
equations~\eqref{eq:toriceqns}
simply compare monomials (with exponent one), the various $q$-parameters
of the $k$~preimages of a horizontal node. 
\par
We start by recalling notation for  the canonical $k$-cover \emph{in the
primitive case}. Let $X$ be a Riemann surface of genus $g$ and let~$q$ be a \emph{primitive}
meromorphic $k$-differential of type~$\mu=(m_1,\dots,m_n)$, i.e.\ not the $d$-th power of
a $k/d$-differential for any $d>1$. This datum defines (see e.g.\
\cite[Section~2.1]{kdiff}) a connected $k$-fold cover $\pi \colon \wh{X} \to X$
such that $\pi^* q = \omega^k$ is the $k$-power of an Abelian differential.
This differential~$\omega$ is of
type 
$$
\whmu \,:=\, \Bigl(\underbrace{\wh m_1, \ldots, \wh m_1}_{g_1:=\gcd(k,m_{1})},\,
\underbrace{\wh m_2,  \ldots, \wh m_2}_{g_2:=\gcd(k,m_{2})} ,\ldots,\,
  \underbrace{\wh m_n, \ldots, \wh m_n}_{g_n:= \gcd(k,m_{n})} \Bigr)\,,
  $$
where $\wh m_i := \tfrac{k+m_{i}}{\gcd(k,m_{i})}-1$. (Here and throughout
marked points of order zero may occur.) We let
$\wh{g} = g(\wh{X})$ and $\wh{n} = \sum_i \gcd(k,m_{i})$. The type
of the covering determines a natural subgroup $S_{\whmu} \subset S_{\wh{n}}$
of the symmetric group that allows only the permutations of each the
$\gcd(k,m_i)$ points corresponding to a preimage of the $i$-th point.
In the group $S_{\whmu}$ we fix the element
\be \label{eq:deftau}
\tau_0 \= \Bigl(12\cdots g_1\Bigr)\Bigl(g_1+1\ \, g_1+2\cdots g_1+g_2\Bigr)
\cdots \Bigl(1+\sum_{i=1}^{n-1}g_i \,\cdots \,\sum_{i=1}^{n}g_n \Bigr)\,,
\ee
i.e.\ the product of cycles shifting  the $g_i$ points in the
$\pi$-preimage of each point in~$\bfz$. We fix a primitive $k$-th root of unity
$\zeta_k$ throughout.
\par
We consider the stack $\Omega \cH_k := \Omega \cH_k(\wh{\mu})$
whose points  are
\be \label{eq:point_Hk}
\{(\wh{X},\wh{\bfz},\omega, \tau) \,:\, \tau \in \Aut(\wh{X}),
\quad \ord(\tau)\=k, \quad
\tau^* \omega \= \zeta_k \omega, \quad \tau|_{\wh{\bfz}} = \tau_0\} \,.
\ee
Families are defined in the obvious way. Morphisms are morphisms
of the underlying pointed curves that commute with~$\tau$. Since the
marked points determine the differential up to scale, the differentials
are identified by the pullback of morphisms up to scale. Commuting with~$\tau$
guarantees that morphisms descend to the quotient curves by $\langle \tau \rangle$
(for a morphism~$f$ to descend, a priori $f \tau f^{-1} = \tau^a$ for
some~$a$ would be sufficient, but the action on~$\omega$ implies that in fact
$a=1$). It will be convenient to label the tuple of points~$\wh{\bfz}$ by
tuples $(i,j)$ with $i=1,\ldots,n$ and $j = 1,\ldots,\gcd(k,m_i)$. 
There is a natural forgetful map $\Omega\cH_k \to \omoduli[\wh{g},\wh{n}]$
and period coordinates (say, after providing both sides locally with
a Teichm\"uller marking) show that this map is the normalization of its image
and the image is cut out by linear equations, i.e.\ that~$\Omega\cH_k$ is a
linear submanifold as defined in \autoref{sec:linsubmfd}.
\par
The subgroup
\be \label{eq:defG}
G \=  \Bigl\langle \Bigl(12\cdots g_1\Bigr), \Bigl(g_1+1\ \, g_1+2\cdots g_1+g_2\Bigr),
\cdots, \Bigl(1+\sum_{i=1}^{n-1}g_i \,\cdots \,\sum_{i=1}^{n}g_n \Bigr)
\Bigr \rangle \, \subset S_{\wh{\mu}}
\ee
generated by the cycles that~$\tau_0$
is made from  acts on $\Omega \cH_k$ and on the
projectivization $\cH_k$. We denote the quotient of the latter by
$\cH_k^\pmarked := \cH_k /G$,
where the upper index is an abbreviation of \emph{marked (only) partially}.
\par
Since $\tau$ has $\omega$ as eigendifferential, its $k$-th power
naturally descends to (projectivized) $k$-differential~$[q]$ on 
the quotient $X = \wh{X}/\langle \tau \rangle$, which is decorated
by the marked points~$\bfz$, the images of $\wh{\bfz}$.
\par
We denote by $\cQ$ the stack given by the rigidification of $\cH_k^\pmarked$ by the action of $\langle\tau\rangle$, i.e., the stack with the same underlying set as $\cH_k^\pmarked$,
but where morphisms are given by the morphisms of $(X/\langle \tau \rangle,
\bfz, [q])$ in $\bP \komoduli[g,n](\mu)$. Written out on curves, a morphism
in $\cQ$ is a map $f:\wh{X}/\langle \tau \rangle \to \wh{X'}/\langle \tau'
\rangle$, such that there exists a commutative diagram
\be \label{eq:Qmorphism}
\begin{tikzcd}
\wh{X} \ar[r, dashed, "\widetilde{f}"] \ar[d] & \wh{X'} \ar[d] \\
X = \wh{X} / \langle \tau \rangle \ar[r, "f"] &
X' = \wh{X'} / \langle \tau' \rangle \,.
\end{tikzcd}
\ee
If two such maps $\widetilde{f}$ exist, they differ by pre- or postcomposition with
an automorphism of~$\wh{X}$ resp.~$\wh{X}'$. Via the canonical cover construction,
the stack $\cQ$ is isomorphic to $\bP \komoduli[g,n](\mu)$. The non-uniqueness
of~$\widetilde{f}$ exhibits $\cH_k^\pmarked = \cQ / \langle \tau \rangle$ as the quotient
stack by a group of order~$k$, acting trivially.
\par
As in \autoref{sec:closurelinsection}, we denote by $\ol{\Omega \cH}_k := \ol{\Omega \cH_k}(\mu)$ the normalization
of the closure of $\Omega \cH_k$ in $ \Xi \ol \cM_{\wh{g},\wh{n}}(\mu)$ an let
$\ol{\cH}_k := \ol{\cH_k}(\mu)$ be the corresponding projectivizations.
We next describe the boundary strata of $\ol{\cH}_k$. These are indexed by
enhanced level graphs $\wh \Gamma$ together with an $\langle \tau \rangle$-action
on them. We will leave the group action implicit in our notation. The following
lemma describes the objects parametrized by the boundary components
$D_{\wh \Gamma}^{\cH_k}$ (using the notation from \autoref{sec:closurelinsection})
of the compactification $\ol{\cH}_k$.
\par
\begin{lemma} \label{le:bdHk}
A point in the interior of the boundary stratum $D_{\wh \Gamma}^{\cH_k}$ is
given by a tuple
\[
\{(\wh{X}, \wh \Gamma, \wh \bfz, [\bfomega], \bfsigma, \tau ) \,:\, \tau
\in \Aut(\wh{X}), \quad \ord(\tau)\=k, \quad
\tau^* \bfomega = \zeta_k\bfomega, {\quad \tau|_{\wh{\bfz}} = \tau_0} \}
\]
where $(\wh{X}, \wh \Gamma, \wh \bfz, [\bfomega], \bfsigma) \in
\bP\LMS[\whmu][\wh g,\wh n]$
is a multi-scale differential and where moreover the prong-matching $\bfsigma$ is
equivariant with respect to the action of $\langle \tau \rangle$.
\end{lemma}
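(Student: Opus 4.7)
The plan is to identify boundary points of $\ol{\cH}_k$ via limits of degenerating families in $\cH_k$, using the fact that the finite order automorphism $\tau$ extends across the boundary. First I would take a smooth one-parameter family $(\wh X_s, \wh \bfz_s, \omega_s, \tau_s)$ of points of $\cH_k$ over a punctured disk that degenerates at $s=0$ to a point in the open stratum of $D^{\cH_k}_{\wh \Gamma}$. Under the forgetful map to $\bP\LMS[\whmu][\wh g,\wh n]$ the family converges to a multi-scale differential $(\wh X_0, \wh \Gamma, \wh \bfz_0,[\bfomega],\bfsigma)$, which is exactly the ambient description of the image point and is available since $\cH_k$ is a linear submanifold in the sense of \autoref{sec:linsubmfd}. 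The combinatorial data of $\tau_s$ (order $k$, permutation $\tau_0$ of the marked points, multiplier $\zeta_k$ on the differential) is discrete, hence constant along the family. By properness of the scheme of automorphisms of a pointed stable curve, $\tau_s$ extends to an automorphism $\tau$ of $(\wh X_0,\wh \bfz_0)$ of order dividing $k$; the order is exactly $k$ by semicontinuity of the fixed locus.

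Next I would verify that $\tau$ preserves the full multi-scale data. The relation $\tau^*\bfomega=\zeta_k\bfomega$ holds componentwise by continuity from $\tau_s^*\omega_s=\zeta_k\omega_s$. Compatibility of $\tau$ with the enhanced level graph $\wh\Gamma$ (permuting vertices preserving levels and enhancements, and edges preserving $\kappa_e$) follows because the level function, the prong numbers $\kappa_e$ and the dual graph are invariants of the limiting family. Equivariance of the prong-matching $\bfsigma$ under $\tau$ is forced by the fact that for each $s\neq 0$ the trivial prong-matching on the smooth fiber is tautologically $\tau_s$-equivariant, and the prong-matching varies continuously with the plumbing parameters of the family; passing to the limit yields a $\tau$-equivariant $\bfsigma$. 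Finally $\tau|_{\wh\bfz_0}=\tau_0$ by continuity of the permutation action along the family.

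For the converse, given a datum $(\wh X,\wh\Gamma,\wh\bfz,[\bfomega],\bfsigma,\tau)$ satisfying the conditions of the lemma, I would construct a smoothing family using the $\tau$-equivariant plumbing of \cite{LMS}. The $\tau$-equivariance of $\bfsigma$ is precisely what is needed for $\tau$ to lift to an automorphism of the plumbed family, whose smooth fibers then carry a finite order automorphism preserving an abelian differential with eigenvalue $\zeta_k$ and fixed permutation of marked points; such smooth fibers lie in $\cH_k$ by definition~\eqref{eq:point_Hk}, so the given datum is a limit of points in $\cH_k$ and hence defines a point in the closure.

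The main obstacle is handling the normalization correctly: different points of $D^{\cH_k}_{\wh\Gamma}$ may map to the same multi-scale differential in $D^B_{\wh\Gamma}$, and the bookkeeping of which branch a given limit corresponds to is encoded precisely by the choice of automorphism $\tau$, not just by its conjugacy class. Including $\tau$ as part of the data (rather than only requiring its existence) is what separates these branches and promotes the set-theoretic surjection from the ambient closure to a bijection on the normalization $\ol{\cH}_k$. Once this is in place, families of such $\tau$-decorated multi-scale differentials glue to give the universal family on $D^{\cH_k}_{\wh\Gamma}$, completing the identification.
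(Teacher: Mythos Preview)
Your proposal is correct and follows essentially the same two-step strategy as the paper: extend the automorphism~$\tau$ along a degenerating family and read off the constraints (including prong-matching equivariance) in the limit, then use equivariant plumbing for the converse. Two minor points: the paper obtains prong-matching equivariance by invoking the explicit construction of the induced prong-matching in a degenerating family (\cite[Proposition~8.4]{LMS}) and applying~$\tau$, rather than by a continuity argument from a ``trivial'' prong-matching on smooth fibers; and the equivariant plumbing the paper uses is the one from \cite{kdiff}, not \cite{LMS}. Your additional paragraph on the role of~$\tau$ in resolving branches of the normalization is not in the paper's proof but is a useful clarification.
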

\par
The equivariance of prong-matching requires an explanation: Suppose $x_i$
and~$y_i$ are standard coordinates near the node corresponding to an edge~$e$
of~$\Gamma$, so that the prong-matching at~$e$ is given by $\sigma_e =
\tfrac{\partial}{\partial x_i} \otimes -\tfrac{\partial}{\partial y_i}$
(compare \cite[Section~5]{LMS} for the relevant definitions). Then $\tau^*x_i$
and $\tau^* y_i$ are standard coordinates near $\tau(e)$. We say that
a global prong-matching $\bfsigma = \{\sigma_e\}_{e \in E(\wh{\Gamma})}$ is
\emph{equivariant} if $\sigma_{\tau(e)} =
\tfrac{\partial}{\partial \tau^*x_i} \otimes - \tfrac{\partial}{\partial \tau^*y_i}$
for each edge~$e$.
\par
\begin{proof}
The necessity of the conditions on the boundary points is obvious from
the definition in~\eqref{eq:point_Hk}, except for the prong-matching
equivariance. This follows from the construction of the induced prong-matching
in a degenerating family in \cite[Proposition~8.4]{LMS} and applying~$\tau$ to it.
\par
Conversely, given $(\wh{X}, \wh \Gamma, \wh \bfz, [\bfomega], \bfsigma,
\langle \tau \rangle)$ as above with equivariant prong-matchings, we need to
show that it is in the boundary of $\cH_k$. This is achieved precisely by the
equivariant plumbing construction given in \cite{kdiff}.
\end{proof}
\par
The group~${G}$ still acts on the
compactification $\ol{\Omega \cH_k}$ and on
its projectivization $\ol \cH_k$. As above we denote the quotient by
$\ol{\cH}^\pmarked_k = \ol \cH_k/G$ to indicate that the points~$\wh{\bfz}$
are now marked only partially. By \autoref{le:bdHk} we may construct~$\ol \cQ$
just as in the uncompactified case.
\par
The map $\ol{\cH}^\pmarked_k \to \ol\cQ$ is in general non-representable due to
the existence of additional automorphisms of objects in $\ol{\cH}_k^\pmarked$. This resembles
the situation common for Hurwitz spaces, where the target map is in general
non-representable, too.
We denote by $s : \ol\cH_k \to \ol{\cH}^\pmarked_k \to \cQ$ the composition
of the maps.
\par

\subsection{Generalized strata of $k$-differentials}

Our notion of generalized strata is designed for recursion purposes so that the
extraction of levels of a boundary stratum of $\ol\cQ$ is an instance of
a generalized stratum (of $k$-differentials). This involves incorporating
disconnected strata, differentials that are non-primitive on some components,
and residue conditions. Moreover, we aim for a definition of a space of
$k$-fold covers on which the group~$G$ acts, to match with the previous setup.
The key is to record which of the marked points is adjacent to which component of the canonical cover, an information that is obviously trivial in the case of primitive $k$-differentials.
\par
A map $\cA: \wh\bfz \to \pi_0(\wh{X})$ that records which marked point is
adjacent to which component of $\wh{X}$ is called an \emph{adjacency datum}.
Such an adjacency datum is equivalent to specifying a one-level graph of
a generalized stratum, which is indeed the information we get when we extract
level strata.  Note in particular that from the adjacency datum it is possible to reconstruct  the unique $u$ such that the Abelian differentials on $(\wh{X},\wh\bfz)$ are $u$-th power of  primitive $k/u$ differentials.
\par
More abstractly, an adjacency datum is given by a set $\pi_0$ with a transitive action of $\bZ/k\bZ$ together with a map $\cA : \wh\bfz\to \pi_0$ that is equivariant with respect to the action of $\bZ/k\bZ$. We say that $(\wh{X},  \wh\bfz)$ has adjacency $\cA$ if there is a $\bZ/k\bZ$ -equivariant bijection $\pi_0 \cong \pi_0(\wh{X})$ such that $\cA$ records the adjacency of the markings $ \wh\bfz$ in the components of $\wh{X}$.
The subgroup~$G$ from~\eqref{eq:defG} acts on the triples $(\wh X, \wh \bfz,
\cA)$ of pointed stable curves with adjacency map by acting simultaneously
on $\wh \bfz$ and on $\cA$ by precomposition. For a fixed adjacency datum~$\cA$
we consider the stack $\Omega \wt \cH_k(\wh{\mu}, \cA)$
whose points are
\begin{align*}
\{(\wh{X},\wh{\bfz},\omega, \tau) \,:\, \, \text{$(\wh{X},\wh{\bfz})$ have
adjacency~$\cA$}, \,\,  \tau \in \Aut(\wh{X}), \\
\quad \ord(\tau)\=k, \quad
\tau^* \omega \= &\zeta_k \omega, \quad \tau|_{\wh{\bfz}} = \tau_0, 
\} \,.
\end{align*}
We denote by $\Omega \cH_k(\wh{\mu}, [\cA]) := G \cdot \Omega \wt \cH_k(\wh \mu, \cA)$ the $G$-orbit of this space.  
\par
A \emph{residue condition} is given by a $\tau$-invariant partition~$\lambda_\frakR$ of a subset of
the set $H_p \subseteq \{1, \dots, \wh n\}$ of marked points such
that $\widehat{m}_i < -1$. We often also call the associated linear subspace
\[
	\frakR := \left\{(r_i)_{i \in H_p} \in \bC^{H_p} \;:\; \sum_{i \in\lambda} r_i = 0 \text{ for all } \lambda \in \lambda_\frakR\right\}.
\]
the residue condition. The linear subspace $\frakR$ is obviously  not $G$-invariant in general.
\par
We denote by $\Omega \cH_k^{\frakR}(\wh \mu, \cA) \subseteq
\Omega \cH_k(\wh \mu, \cA)$ the subset where for each $R \in \frakR$ the residues
of $\wh \omega$ at all the points $z_i \in R$ add up to zero. If $(\wh X, \wh \bfz,
\omega, \tau)$ is contained in $\Omega \cH_k^{\frakR}(\wh \mu, \cA)$, then
$g \cdot (\wh X, \wh \bfz, \omega, \tau)$ is contained in $\Omega
\cH_k^{g \cdot \frakR}(\wh \mu, g \cdot \cA)$ for any $g \in G$. That is, the $G$-action
simultaneously changes the residue condition and the adjacency datum. We
denote by $[\frakR,\cA]$ the $G$-orbit of this pair and use the abbreviation
\be \label{eq:defOmHRA}
\Omega \cH_k^{[\frakR,\cA]} 
\,:= \,G \cdot \Omega  \cH_k^{\frakR}(\wh \mu, \cA)
\ee
for the $G$-orbit of the spaces, $\wh\mu$ being tacitly fixed throughout.
\par
As above, we denote by $\cH_k^{[\frakR,\cA]}$ the projectivization of $\Omega
\cH_k^{[\frakR,\cA]}$ and by $\cH_k^{\frakR,\pmarked} := \cH_k^{[\frakR,\cA]} / G$ the $G$-quotient,
dropping the information about adjacency and the connected components to ease
notation. Finally, we denote by $\cQ^\frakR$  the stack with the same underlying set as
$\cH_k^{\frakR,\pmarked}$ and with morphisms defined in the same way as above for~$\cQ$.
Recall that the curves in $\cQ^\frakR$ may be disconnected. We call such a stratum
with possibly disconnected curves and residue conditions a \emph{generalized
stratum of $k$-differentials}. Since $\cH_k^{[\frakR,\cA]}$ is a linear
submanifold, we can still compactify them as before and a version
of \autoref{le:bdHk} with adjacency data still holds.
\par
We will now compute the degree of the map $s$ from the linear submanifolds to the strata of $k$-differential. Our definition of generalized strata of $k$-differentials makes the degree of this map the same in the usual and in the generalized case.
  
\begin{lemma} \label{lem:deg_d}
	The map $s: \ol \cH^{[\frakR,\cA]}_k \to \ol \cQ^\frakR$ is proper, quasi-finite, unramified and of degree
	\[
	\deg(s) \= \frac{1}{k} \prod_{m_i \in \mu} \gcd(m_i, k)\,.
	\]
\end{lemma}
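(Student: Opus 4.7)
My plan is to factor $d$ through the partially marked quotient and compute the degrees of the two pieces separately,
\[
d \colon \ol\cH_k^{[\frakR,\cA]} \,\xrightarrow{\,q_G\,}\, \ol\cH_k^{\frakR,\pmarked} \,\xrightarrow{\,q_\tau\,}\, \ol\cQ^\frakR,
\]
where $q_G$ is the stack quotient by the finite group $G$ of order $|G| = \prod_i \gcd(m_i,k)$ from~\eqref{eq:defG} and $q_\tau$ is the map presenting $\ol\cH_k^{\frakR,\pmarked}$ as the $B\langle\tau\rangle$-gerbe over $\ol\cQ^\frakR$ as observed after~\eqref{eq:Qmorphism} (and extended verbatim to the boundary using \autoref{le:bdHk} together with the equivariance of prong-matchings under~$\tau$).

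First I would verify the topological/properness properties. Both source and target are proper DM stacks: $\ol\cH_k^{[\frakR,\cA]}$ is the normalization of a closed substack of the proper stack $\bP\LMS[\whmu][\wh g,\wh n][\frakR]$, and $\ol\cQ^\frakR$ is proper by the construction in \cite{CoMoZa}. Since both $q_G$ and $q_\tau$ are quotients by finite groups acting on a DM stack, each is representable-up-to-gerbe, finite étale in the appropriate stacky sense, hence proper, quasi-finite and unramified; the same then holds for their composition $d$.

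For the degree, I would use multiplicativity: $\deg(d) = \deg(q_G)\cdot\deg(q_\tau)$. For a finite group acting on a DM stack the (orbifold) degree of the quotient map is the order of the group, so $\deg(q_G) = |G| = \prod_i \gcd(m_i,k)$. On the other hand, $\langle\tau\rangle$ acts trivially on $\cQ^\frakR$ by the definition of morphisms in~$\cQ$, so $q_\tau$ exhibits $\ol\cH_k^{\frakR,\pmarked}$ as a trivial $\langle\tau\rangle$-gerbe over $\ol\cQ^\frakR$ and has orbifold degree $\deg(q_\tau) = 1/|\langle\tau\rangle| = 1/k$ (here one uses that $\tau$ has order exactly $k$, which is part of the definition~\eqref{eq:point_Hk}). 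Multiplying yields the claimed formula.

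The only mildly subtle point is checking that the factorization of $d$ extends over the boundary in the stated form; this is where I would invoke \autoref{le:bdHk} to see that a boundary point of $\ol\cH_k$ together with its equivariant prong-matching descends to a multi-scale $k$-differential and, conversely, that the fiber of $q_\tau$ over a boundary point of $\ol\cQ^\frakR$ consists of a single object with automorphism group precisely $\langle\tau\rangle$ modulo automorphisms coming from~$\cQ$. Once this is in place the degree count is automatic and the rest of the lemma is formal.
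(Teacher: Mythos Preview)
Your proposal is correct and follows essentially the same approach as the paper: factor $d$ as the $G$-quotient followed by the inverse of the trivial $\langle\tau\rangle$-quotient, giving degree $|G|\cdot\tfrac{1}{k}$. The paper's proof is just a two-sentence version of what you wrote, without spelling out the boundary check.
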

\par
\begin{proof}
The composition of the map $s$ with the quotient map $\ol \cQ^\frakR  \to \ol \cH^{[\frakR,\cA]}_k$ by the trivial action of $\bZ/k\bZ$ is  the quotient by a group of order $|G| = \prod_{m_i \in \mu} \gcd(m_i, k)$. Since degrees are multiplicative under compositions, the claimed formula for $\deg(s)$ follows.
\par
{The map is unramified as both quotient maps are unramified.}
\end{proof}
\par

\subsection{Decomposing boundary strata }

Having constructed strata of $k$-differentials, we now want to decompose their
boundary strata again as a product of generalized strata of $k$-differentials and
argue recursively.  In fact, the initial stratum should be  a generalized stratum
$\ol \cQ^\frakR$, thus coming with its own residue condition, but we suppress
this in our notation, focusing on the new residue condition that arise
when decomposing boundary strata. 
Here 'decomposition' of the boundary strata
should be read as a construction of a space finitely covering both of
them, as given by the following diagram,
\be \label{dia:k-comens}
\begin{tikzcd}[column sep=small]
&& D_{\pi}^{\circ,\cH_k,s} \ar[dl,
"p_{\pi}"'] \ar[dr, "c_{\pi}"] & \\
\cH_{k}(\pi):=\prod_{i=0}^{-L} \cH_{k}(\pi_{[i]})
\ar[d, "\bs_{\pi}"']  & \mathrm{Im}(p_\pi) \ar[l,"\supseteq"]&&
D_{\pi}^{\circ,\cH_k} \ar[d, "d_{\pi}"] \\
\cQ(\pi):=\prod_{i=0}^{-L} \cQ(\pi_{[i]})
&&& D_{\pi}^{\circ,\cQ}\,,
\end{tikzcd}
\ee
whose notation we now start to explain. Note that the diagram is
for the open boundary strata throughout, since we mainly need
the degree all these maps as in \autoref{lem:ratio_degrees} (the
existence of a similar diagram over the completions follows as
at the beginning of \autoref{sec:boundarycomb}).
\par
We denote by $\wh{\Gamma}$ the level graphs indexing the boundary strata of
$\bP\LMS[\whmu][\wh g,\wh n]$ and thus of $\ol{\cH}_k$.
Following our general convention for strata their legs are labeled, but
not the edges. In $\ol\cH_k^\pmarked$ the leg-marking is only
well-defined up to the action of~$G$. A graph with such a marking
is said to be  \emph{marked (only) partially} and denoted by $\wh \Gamma_\pmarked$.
Even though curves in $\ol{\cH}_k$ are marked (and not only marked up to the
action of $G$), the boundary strata of $\ol{\cH}_k$ are naturally indexed by
partially marked graphs as well: If $\wh \Gamma$ is the dual graph of one stable
curve in the boundary of $\ol{\cH}_k$, then for all $g \in G$ the graph
$g \cdot \wh \Gamma$ is the dual graph of another stable curve in the boundary
of $\ol{\cH}_k$. The existence of~$\tau$ implies that level graphs $\wh{\Gamma}$ at
the boundary of~$\ol{\cH}_k$ come with the quotient map by this action.
To each boundary stratum of~$\ol\cQ$ we may thus associate a $k$-cyclic
covering of graphs $\pi : \wh \Gamma_\pmarked \to \Gamma$ (see
\cite[Section~2]{CoMoZa} for the definitions of such covers). We denote the
corresponding (open) boundary strata by $D_\pi^{\circ,\cQ} \subset \ol\cQ$
and the (open) boundary strata corresponding to such a $G$-orbit of graphs
by $D_{\pi}^{\circ,\cH_k} \subset \ol\cH_k$. The map $d_\pi : D_{\pi}^{\circ,\cH_k} \to D_\pi^{\circ,\cQ}$ is
the restriction of the map $s : \ol\cH_k \to \ol\cQ$.
\par
Next we construct the commensurability roof just as in~\eqref{dia:covboundary2},
though for each $\wh{\Gamma}$ in the $G$-orbit separately, so that $
D_{\pi}^{\circ,\cH_k,s}$ is the disjoint union of a $G$-orbit of the roofs
in~\eqref{dia:covboundary2}.
\par
Next we define the spaces $\cH_{k}(\pi_{[i]})$. Consider the linear submanifolds of
generalized strata of $k$-differentials with signature and adjacency datum given
by the $i$-th level of one marked representative $\wh \Gamma$ of $\wh
\Gamma_{\pmarked}$ (the resulting strata are independent of the choice of a
representative). Their product defines the image $\Im(p_\pi)$. For every level~$i$, consider the orbit under $G(\cH_k(\pi_{[i]}))$, where $G(\cH_k(\pi_{[i]}))$ is the group as in \eqref{eq:defG} for the $i$-th level, of the linear submanifolds we extracted from the levels. We define  $\cH_{k}(\pi_{[i]})$ to be these orbits, which in particular are then linear submanifolds associated to generalized strata of $k$-differentials as we defined them above. We can hence consider, for every level, the morphism given by the quotient by $G(\cH_k(\pi_{[i]}))$ composed  with the rigidification  by the action of  $\langle\tau\rangle$ at each level and denote by $\cQ(\pi_{[i]})$ its image, which is called the  generalized stratum of $k$-differentials at level~$i$.
The map $\bs_\pi$ in diagram~\ref{dia:k-comens} is just a product of maps
like the map~$s$ above, thus \autoref{lem:deg_d} immediately implies:
\par
\begin{lemma} \label{lem:deg_q}
The degree of the map $\bs_\pi$ in the above diagram~\eqref{dia:k-comens} is
\[ \deg(\bs_\pi) \= \frac{1}{k^{L+1}} \prod_{i=1}^n \gcd(m_i, k)
\prod_{e \in E(\Gamma)} \gcd(\kappa_e, k)^2
\]
where $\kappa_e$ is the $k$-enhancement of the edge~$e$.
\end{lemma}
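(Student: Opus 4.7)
The plan is to apply \autoref{lem:deg_d} level-by-level. Since $\bfd_\pi = \prod_{i=0}^{-L} d_{\pi_{[i]}}$ is a product over the $L+1$ levels of maps of precisely the same shape as the map $d$ in \autoref{lem:deg_d}, but applied to the generalized stratum of $k$-differentials at level $i$, its degree factorizes as $\deg(\bfd_\pi) = \prod_{i=0}^{-L} \deg(d_{\pi_{[i]}})$. The content of the lemma is then to bookkeep the product of the formula from \autoref{lem:deg_d} over all levels.

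First I identify the signature at level~$i$. By the construction of the level strata $\cQ(\pi_{[i]})$ (a $k$-differential analog of \autoref{prop:linatboundary}), the signature at level~$i$ consists of the original marked points of~$\mu$ lying at level~$i$, together with one half-edge for each edge of~$\Gamma$ adjacent to level~$i$. If $e$ is a vertical edge with top at level~$i^+$ and bottom at level~$i^-$, then the $k$-differential at a generic point of $D_\pi^\cQ$ has order $\kappa_e - k$ at the top half-edge and order $-\kappa_e - k$ at the bottom half-edge, by the definition of the $k$-enhancement recalled in the introduction.

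Next I apply \autoref{lem:deg_d} to each factor. The key elementary observation is that $\gcd(k, \pm\kappa_e - k) = \gcd(k, \kappa_e)$, so each of the two half-edges of an edge~$e$ contributes a factor $\gcd(k,\kappa_e)$ to the degree at the level it is adjacent to. An original marked point of order $m_j$ is adjacent to exactly one level and contributes $\gcd(k, m_j)$ to the degree there. Multiplying the $L+1$ instances of the formula from \autoref{lem:deg_d} then produces the overall factor $k^{-(L+1)}$, the product $\prod_{j=1}^n \gcd(m_j, k)$ in which each original marked point appears once, and the product $\prod_{e \in E(\Gamma)} \gcd(\kappa_e, k)^2$ in which each edge appears twice, once at each of its endpoints.

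The only subtlety worth double-checking is that the residue conditions induced on the level strata by the global residue condition do not alter the formula; this is automatic because the degree in \autoref{lem:deg_d} is independent of the residue subspace~$\frakR$, as the map $d$ there is simply the composition of a quotient by the finite group of order $\prod \gcd(m_i, k)$ with the inverse of a trivial-action quotient by $\langle\tau\rangle$, and both operations are performed fiberwise over the residue constraints. Beyond this, the proof is a purely combinatorial bookkeeping over levels and edges.
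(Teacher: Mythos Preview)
Your proof is correct and takes essentially the same approach as the paper, which simply states that $\bfd_\pi$ is a product of maps like the map~$d$ above so \autoref{lem:deg_d} immediately implies the formula. You have carefully spelled out the level-by-level bookkeeping that the paper leaves implicit, including the key observation that $\gcd(k,\pm\kappa_e - k) = \gcd(k,\kappa_e)$ so that each edge contributes $\gcd(\kappa_e,k)$ once at each of its two endpoints.
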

\par
\par
We recall \autoref{lem:ratio_degrees} and compute explicitly the
coefficients appearing in our setting here. Note that the factor
$|\Aut_{\cH}(\Gamma)|$ there should be called $|\Aut_{\cH_k}(\wh{\Gamma})|$ in
the notation used in this section.
\par
\begin{lemma} \label{le:newratdeg}
The ratio of the degrees of the topmost maps in~\eqref{dia:k-comens} is
\bes
\frac{\deg(p_{\pi})}
{\deg(c_{\pi})}
\= \frac{K^{\cH_k}_{\wh \Gamma} } {|\Aut_{\cH_k}(\wh\Gamma)|  \cdot {\ell_{\wh \Gamma}}}
\ees
where the number of reachable prong-matchings is given by
\[
K_{\wh \Gamma}^{\cH_k} \= \prod_{e \in E(\Gamma)} \frac{\kappa_e}{\gcd(\kappa_e, k)}
\] and $\Aut_{\cH_k}(\wh\Gamma)$ is the subgroup of automorphisms of $\wh\Gamma$
commuting with $\tau$.
\end{lemma}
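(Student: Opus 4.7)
The plan is a direct specialization of Lemma \ref{lem:ratio_degrees} to the linear submanifold $\cH = \cH_k^{[\frakR,\cA]}$, followed by an explicit count of reachable prong-matchings dictated by the canonical-cover structure.

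First, I would invoke Lemma \ref{lem:ratio_degrees} with $\cH$ replaced by $\cH_k^{[\frakR,\cA]}$. This immediately yields the abstract formula
\[
\frac{\deg(p_\pi)}{\deg(c_\pi)} \= \frac{K^{\cH_k}_{\wh\Gamma}}{|\Aut_{\cH_k}(\wh\Gamma)| \cdot \ell_{\wh\Gamma}}\,.
\]
It remains to identify $\Aut_{\cH_k}(\wh\Gamma)$ in the sense of Lemma \ref{lem:ratio_degrees} (those automorphisms preserving the linear submanifold in a neighborhood of $D_{\wh\Gamma}^{\cH_k}$) with the automorphisms of $\wh\Gamma$ commuting with $\tau$: since the defining condition of the linear submanifold is precisely the eigendifferential identity $\tau^*\omega = \zeta_k\omega$, a graph automorphism preserves this condition exactly when it commutes with~$\tau$.

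The substantive work is to compute $K^{\cH_k}_{\wh\Gamma}$. By Lemma \ref{le:bdHk}, the prong-matchings reachable from the interior of $D^{\cH_k,\circ}_{\wh\Gamma}$ are precisely the $\tau$-equivariant ones, so I would count these edge by edge. Fix $e \in E(\Gamma)$ with enhancement $\kappa_e$ and set $d_e := \gcd(\kappa_e, k)$. From the standard description of the canonical cover at a node, the preimage $\pi^{-1}(e)$ consists of $d_e$ edges of $\wh\Gamma$, each carrying abelian enhancement $\wh\kappa = \kappa_e/d_e$, forming a single $\tau$-orbit of size $d_e$ with stabilizer $\Stab \subset \langle\tau\rangle$ of order $s := k/d_e$. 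A $\tau$-equivariant prong-matching on this orbit is determined by its value on one chosen edge $\wh e$, provided this value is fixed by $\Stab$.

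The key numerical observation is the coprimality
\[
\gcd(s, \wh\kappa) \= \gcd\bigl(k/d_e,\, \kappa_e/d_e\bigr) \= \gcd(k,\kappa_e)/d_e \= 1\,.
\]
The set of prong-matchings on $\wh e$ is a $\bZ/\wh\kappa$-torsor and $\Stab$ acts on it through a homomorphism $\Stab \to \bZ/\wh\kappa$; by the coprimality of the source order $s$ and the target order $\wh\kappa$, this homomorphism must be trivial. Consequently every prong-matching on $\wh e$ is $\Stab$-fixed and extends uniquely to a $\tau$-equivariant prong-matching on $\pi^{-1}(e)$, giving $\wh\kappa = \kappa_e/\gcd(\kappa_e,k)$ equivariant choices per edge of $\Gamma$. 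Taking the product over $e \in E(\Gamma)$ gives the claimed formula for $K^{\cH_k}_{\wh\Gamma}$.

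The main obstacle is the formalization of the stabilizer action on prong-matchings: verifying that the action of $\Stab$ on the $\bZ/\wh\kappa$-torsor really factors through a genuine group homomorphism (so that the coprimality argument applies) requires unpacking the equivariance condition of Lemma \ref{le:bdHk} at the local coordinates on both sides of a preimage node. Once that local compatibility is in place, the numerical coprimality $\gcd(k/d_e, \kappa_e/d_e) = 1$ closes the argument cleanly.
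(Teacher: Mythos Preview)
Your proposal is correct and follows the same three-step approach as the paper: invoke Lemma~\ref{lem:ratio_degrees}, identify the automorphism group, and count equivariant prong-matchings edge by edge via Lemma~\ref{le:bdHk}. Two differences in emphasis are worth noting: your prong-matching argument is more thorough than the paper's, which simply asserts that the choice at one preimage of~$e$ determines the others without checking the consistency condition you verify via the coprimality $\gcd(k/d_e,\kappa_e/d_e)=1$; conversely, the paper treats the automorphism identification more carefully, observing that an automorphism~$\rho$ preserving~$\ol\cH_k$ a priori only \emph{normalizes} $\langle\tau\rangle$, and then uses the specific eigenvalue~$\zeta_k$ to rule out $\rho\tau\rho^{-1}=\tau^a$ for $a\neq 1$---a step your one-line justification compresses.
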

\par
We remark that the quantity $\ell_{\wh \Gamma}$ is intrinsic to $\Gamma$, for a two-level graph it is given by
 $\ell_{\wh \Gamma} = \lcm\bigl(\frac{\kappa_e}
{\gcd(\kappa_e,k)} \text{ for } e \in E(\Gamma)\bigr)$.
\begin{proof}
The first statement is exactly the one of \autoref{lem:ratio_degrees} since the
topmost maps in~\eqref{dia:k-comens} are given by a disjoint union of the topmost
maps in~\eqref{dia:covboundary2}. 
\par
For the second statement, consider an edge $e \in E(\Gamma)$. The edge $e$ has
$\gcd(\kappa_e, k)$ preimages, each with an enhancement
$\frac{\kappa_e}{\gcd(\kappa_e,k)}$. The prong-matching at one of the preimages
determines the prong-matching at the other preimages by \autoref{le:bdHk},
as they are related by the action of the automorphism.
\par
For the third statement, we need to prove that the subgroup of $\Aut(\wh\Gamma)$
fixing setwise the linear subvariety~$\ol\cH_k$ is precisely the subgroup commuting
with~$\tau$. If $\rho \in \Aut(\wh\Gamma)$ commutes with~$\tau$, then it descends
to a graph automorphism of~$\Gamma$ and gives an automorphism of families of
admissible covers of stable curves, thus preserving ~$\ol\cH_k$. Conversely, if
$\rho$ fixes~$\ol\cH_k$, it induces an automorphism of families of
admissible covers of stable curves, thus of coverings of graphs. A priori this
implies only that~$\rho$ normalizes the subgroup generated by~$\tau$. Note however
that on~$\ol\cH_k$ the automorphism~$\tau$ acts by a fixed root of unity~$\zeta_k$.
If $\rho \tau \rho^{-1}$ is a non-trivial power of~$\tau$, this leads to another
(though isomorphic) linear subvariety. We conclude that~$\rho$ indeed commutes
with~$\tau$.
\end{proof}
\par
The aim of the following paragraphs is to rewrite the evaluation
\autoref{lem:evaluation} in our context in order to find the shape of the formula
in \autoref{cor:kstrata}. We elaborate on basic definitions to distinguish
notions of isomorphisms and automorphisms.
The underlying graph of an enhanced (k-)level graph can be written as a tuple
$\Gamma = (V, H, L, a : H \cup L \to V, i: H \to H)$, where $V$, $H$ and $L$ are the
sets of vertices, half-edges and legs, $a$ is the attachment map and $i$ is the
fixpoint free involution that specifies the edges.
An isomorphism of graphs $\sigma: \Gamma \to \Gamma'$ is a pair of bijections
$\sigma = (\sigma_V : V \to V', \sigma_H: H \to H')$  that preserve the attachment
of the half-edges and legs and the the identification of the half-edges to edges,
i.e. the diagrams
\be \label{dia:graph_auto}
\begin{tikzcd}
	H \cup L \ar[r, "a"] \ar[d, "\sigma_H \cup \id_L"] & V \ar[d, "\sigma_V"] &[2cm] H \ar[r, "i"] \ar[d, "\sigma_H"] & H \ar[d, "\sigma_H"] \\
	H' \cup L \ar[r, "a'"] & V' & H' \ar[r, "i'"] & H'
\end{tikzcd}
\ee
commute. If the graph is an enhanced level graph, we additionally ask that $\sigma$ preserves the enhancements and level structure. In the presence of a deck transformation $\tau$, we moreover ask that $\sigma$ commutes with $\tau$.
\par
In the sequel we will encounter isomorphisms of graphs with the same underlying sets of vertices and half-edges. We emphasize that in this case an isomorphism~$\sigma$ is an \emph{automorphism} if and only if it preserves the maps $a$ and $i$, i.e.\ if
\be \label{eq:graph_auto}
\sigma_V^{-1} \circ a \circ (\sigma_H \cup \id_L) = a \qquad \text{and} \qquad \sigma_H^{-1} \circ i \circ \sigma_H = i.
\ee
\par
We now define the group of level-wise half-edge permutations compatible
with the cycles of~$\tau$, i.e., we let
\[
\bfG \,:=\ \bfG_{\pi}
\= \prod_{i=0}^{-L} G(\cH_k(\pi_{[i]})),
\]
where $G(\cH_k(\pi_{[i]}))$ is the group $G$ from~\eqref{eq:defG} applied
to the $i$-th level stratum. An element of the group $\bfG$ is a permutation $g : H \cup L \to H \cup L$ and acts on a graph~$\wh \Gamma$ via $g \cdot \wh \Gamma
= (V, H, L, a \circ g, i)$.
\par
There is a natural action of the group $\bfG$
on the set of all (possibly disconnected) graphs with the same set of
underlying vertices as $\wh\Gamma_\pmarked$.
We denote by 
\begin{equation}\label{def:stab}
\Stab_\bfG(\wh \Gamma) := \{ g \in \bfG\colon g \wh\Gamma \cong \wh\Gamma\}
\end{equation}
the stabilizer. Note that this is in general not a group, as it is not the stabilizer of an element but of an isomorphism class. We also denote by 
$\Stab_{\bf G}(\cH(\pi))$ the set of elements of $\bf G$ which fix the adjacency data (or equivalently the $1$-level graphs) of the level-wise linear manifolds $\cH(\pi_{[i]})$, i.e., elements which  permute vertices with the same signature and permute  legs of the same order on the same vertex.
\par
\begin{lemma} \label{le:stab}
	We have
	$$|\Aut_{\cH_k}(\wh \Gamma)| \cdot |\Stab_\bfG(\wh \Gamma)| \=  |\Aut(\Gamma)| \prod_{e \in E(\Gamma)}
	\gcd(\kappa_e,k)\cdot |\Stab_{\bf G}(\cH(\pi)) | $$
\end{lemma}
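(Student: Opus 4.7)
The plan is to prove the identity by a double counting argument modelled on \cite[Lemma~4.6]{CMZeuler}. I will consider the set
\[
T \;=\; \{(g, \sigma)\,:\, g \in \bfG,\ \sigma : g\wh\Gamma \xrightarrow{\ \sim\ } \wh\Gamma \text{ is a $\tau$-equivariant isomorphism of enhanced level graphs}\}
\]
and count $|T|$ in two different ways, obtaining the left-hand and right-hand sides respectively.

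For the first count, I will project to the first coordinate. For a fixed $g$, the set of such isomorphisms $\sigma$ is empty unless $g \in \Stab_\bfG(\wh\Gamma)$, in which case it is a torsor under $\Aut_{\cH_k}(\wh\Gamma)$ (pick any one lift and translate by automorphisms commuting with $\tau$). This immediately yields
\[
|T| \;=\; |\Aut_{\cH_k}(\wh\Gamma)| \cdot |\Stab_\bfG(\wh\Gamma)|,
\]
the left-hand side of the claim.

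For the second count, since each $\sigma$ commutes with $\tau$, it descends to an automorphism $\bar\sigma \in \Aut(\Gamma)$ of the base graph, giving a map $T \to \Aut(\Gamma)$. The plan is to show this map is surjective with every fiber of cardinality $\prod_{e \in E(\Gamma)} \gcd(\kappa_e, k) \cdot |\Stab_\bfG(\cH(\pi))|$. Over a fixed $\bar\sigma \in \Aut(\Gamma)$ the lifting data will be decomposed into three stages: \emph{(i)} a $\tau$-equivariant vertex-lift $\sigma_V$, whose ambiguity from matching preimages with equal signature data will contribute exactly $|\Stab_\bfG(\cH(\pi))|$; \emph{(ii)} for each edge $e \in E(\Gamma)$, a $\tau$-compatible matching of the $\gcd(\kappa_e, k)$ preimage edges (which form a single $\tau$-orbit in $\wh\Gamma$), contributing the factor $\gcd(\kappa_e, k)$ per edge by choosing the image of one distinguished preimage; \emph{(iii)} the element $g \in \bfG$, which is then uniquely determined by the attachment compatibility $a \circ g = \sigma_V^{-1} \circ a \circ (\sigma_H \cup \id_L)$.

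The main obstacle I expect is verifying the uniformity of the fiber count over $\Aut(\Gamma)$, in particular surjectivity and the handling of $\tau$-orbits of vertices or half-edges with nontrivial stabilizer. This will require a careful kernel/cokernel analysis of the map induced by the quotient under $\tau$, paralleling the argument of \cite[Lemma~4.6]{CMZeuler} and using the explicit cyclic structure of $\tau$ together with the fact that $\bfG$ is generated level-wise by the $\tau$-cycles; equivariance of prong-matchings (\autoref{le:bdHk}) will enter to ensure that edge-matchings at each preimage extend uniquely from one chosen preimage.
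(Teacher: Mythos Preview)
Your overall framework---double-counting the set $T$ of pairs $(g,\sigma)$---is exactly the approach the paper takes, and your first count is correct. The projection $T \to \Aut(\Gamma)$ is also the right organizing device for the second count. However, the itemized attribution of factors in the fiber over a fixed $\bar\sigma$ is wrong as written, even though the product happens to come out right.

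In step (i), the ambiguity in choosing a $\tau$-equivariant vertex-lift $\sigma_V$ of $\bar\sigma_V$ is $\prod_{v \in V(\Gamma)} p_v$, where $p_v$ is the number of $\pi$-preimages of~$v$: one chooses a shift $\lambda_v \in \bZ/p_v\bZ$ on each $\tau$-orbit of vertices over~$v$. This is \emph{not} $|\Stab_\bfG(\cH(\pi))|$. In step (iii), the element $g$ is \emph{not} uniquely determined by the attachment constraint $a \circ g = \sigma_V^{-1} \circ a \circ (\sigma_H \cup \id_L)$: this equation only fixes $a \circ g$, so $g$ is determined merely up to the subgroup $\Stab^f \subset \bfG$ of elements with $a \circ g' = a$, i.e.\ those permuting half-edges and legs within their vertex of attachment.

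The paper handles this by introducing the group $\cP$ of all $\tau$-equivariant lifts $(\sigma_V,\sigma_H)$ of elements of $\Aut(\Gamma)$, computing $|\cP| = |\Aut(\Gamma)| \cdot \prod_e \gcd(\kappa_e,k) \cdot \prod_v p_v$, and then establishing a bijection $\Stab_\bfG(\wh\Gamma)/\Stab^f \leftrightarrow \cP/\Aut_{\cH_k}(\wh\Gamma)$. Along the way it decomposes $\Stab_\bfG(\cH(\pi))$ as an extension of a quotient $\Stab^p$ of order $\prod_v p_v$ by $\Stab^f$. Your two misattributions cancel precisely because $\prod_v p_v \cdot |\Stab^f| = |\Stab_\bfG(\cH(\pi))|$, but the argument should place the factors correctly. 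The genuinely nontrivial step, which you rightly flag as the main obstacle, is verifying that for every $\sigma \in \cP$ there exists at least one $g \in \bfG$ realizing the attachment constraint (this is where surjectivity onto $\Aut(\Gamma)$ comes from). Finally, the reference to equivariance of prong-matchings is a red herring here: the lemma is purely at the level of enhanced level graphs, and prong-matchings play no role.
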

\par
\begin{proof}
Fix a cover $\wh \Gamma \to \Gamma$. We may assume that the vertices of $\Gamma$ are $\{1, \dots, v_\Gamma\}$, the legs are $\{1, \dots, n\}$ and the half-edges are $\{1^\pm, \dots, h_\Gamma^\pm\}$ with the convention that $i(h^\pm) = h^\mp$. For $\wh \Gamma$, we may assume that the preimages of vertex $v$ are $(v, 1), \dots, (v, p_v)$ such that $\tau((v, q)) = (v, q+1)$, where equality in the second entry is to be read mod $p_v$.
	Similarly, we index the legs of $\wh \Gamma$ by tuples $(m,1),\dots,(m,p_{m})$ for $m=1,\dots,n$, and the half-edges by tuples $(h^\pm,1),\dots,(h^\pm,p_{h^\pm})$ for $h^\pm=1,\dots,h^\pm_\Gamma$, again such that $(h^+,q)$ and $(h^-,q)$ form an edge.
	
	We consider the group $\cP$ of pairs of permutations $\sigma = (\sigma_V, \sigma_H)$ of the vertices and half-edges of $\wh \Gamma$ that are of the following form:
	There exists a $\gamma = (\gamma_V, \gamma_H) \in \Aut(\Gamma)$, integers $\lambda_v\in \bZ/p_v\bZ$ for any $v\in V(\Gamma)$ and integers $\mu_{h^\pm}\in \bZ/p_{h^\pm}\bZ$ for any $h^\pm\in E(\Gamma)$ such that
	\bes
	\sigma_V = \{(v, q) \mapsto (\gamma_V(v), q + \lambda_v)\} \qquad \text{and} \qquad
	\sigma_H = \{(h^\pm, q) \mapsto (\gamma_H(h^\pm), q + \mu_{h^\pm})\}.
	\ees
	We let this group act on $\wh \Gamma$ via $\sigma\cdot \wh \Gamma = (V, H, L, \sigma_V^{-1} \circ a \circ (\sigma_H \cup \id_L), i)$.
	An element $\sigma \in \cP$ acts always as an isomorphism since the
        diagrams~\eqref{dia:graph_auto} commute.
	If we denote by $e$ the edge given by $h^\pm$, we have $p_{h^\pm}= \gcd(\kappa_e, k)$. Hence  the group $\cP$ has cardinality
	\bes
	|\cP| \= |\Aut(\Gamma)| \cdot \prod_{e \in E(\Gamma)} \gcd(\kappa_e, k) \cdot \prod_{v \in V(\Gamma)} p_v.
	\ees
\par	
	Recall that the group $\bfG$ is a product cyclic groups and thus Abelian.
	The stabilizer $\Stab_\bfG(\cH_k(\pi))$ has a subgroup $\Stab^f$ where only half-edges and legs attached to the same vertex are permuted (the superscript $f$ is for \emph{fixed}), i.e.\ the elements $g \in \Stab^f$ are exactly those for which $a \circ g = a$.
	The quotient $\Stab^p := \Stab_\bfG(\cH_k(\pi)) / \Stab^f$ can be identified with those elements of $\bfG$ that permute legs and half-edges in such a way that whenever a leg or half-edge attached to a vertex $v_1$ is moved to another vertex $v_2$, then all the legs and half-edges attached to $v_1$ are moved to $v_2$.
	So we may alternatively identify $\Stab^p$ with $\tau$-invariant permutations of the vertices of $\wh \Gamma$ (hence the superscript $p$ for \emph{permutation}).
	This yields $|\Stab^p| = \prod_{v \in V(\Gamma)} p_v$.
\par	
	The group $\cP$ comes with a commutative triangle
	\bes
	\begin{tikzcd}
		\Aut_\cH(\wh \Gamma) \ar[r, hookrightarrow] \ar[rd] & \cP \ar[d, two heads] \\
		& \Aut(\Gamma)
	\end{tikzcd}
	\ees
	where the vertical map is the forgetful map, the diagonal map is the quotient by $G$-map and the horizontal map is natural injection. 
	Since we computed above $|\cP|$, we know that the kernel of the surjective map $\cP \to \Aut(\Gamma)$ has cardinality $\prod_{e \in E(\Gamma)} \gcd(\kappa_e, k) \cdot \prod_{v \in V(\Gamma)} p_v$.
	
	Note now that the group $\Stab^f$ acts  on the set $\Stab_\bfG(\wh\Gamma)$ and we denote by $\Stab_\bfG(\wh\Gamma)/\Stab^f$ the space of orbits.
	We are done if we can identify elements of $\Stab_\bfG(\wh\Gamma)/\Stab^f$ with elements of the cosets in $\cP / \Aut_\cH(\wh \Gamma)$.
	
	For this identification, first consider $g \in \Stab_\bfG(\wh\Gamma)$.
	By definition, there exists an isomorphism $\sigma(g) : g \cdot \wh \Gamma \to \wh \Gamma$ such that $g \cdot \wh\Gamma = \sigma(g)(\wh \Gamma)$. This induces a map $\sigma:\Stab_\bfG(\wh\Gamma)\to \cP$.
	Note that $\Stab^f$ is a subgroup of $\Aut_\cH(\wh \Gamma)$.
	If we had chosen a different representative $g'$ in the orbit $ g\cdot \Stab^f$, the resulting element $\sigma(g') \in \cP$ would differ by an element of $\Aut_\cH(\wh\Gamma)$. Hence $\sigma$ induces a well-defined map $\Stab_\bfG(\wh\Gamma)/\Stab^f\to\cP / \Aut_\cH(\wh \Gamma)$.
	We now construct an inverse map for $\sigma$. For any $\rho \in \cP$,
	we need to find an element $g \in \bfG$ such that $\sigma(g) = \rho$, i.e.\ such that $g \cdot \wh \Gamma = \rho(\wh\Gamma)$. 
	This implies that $g$ must satisfy the equation
	\[
	a \circ g \= \rho_V^{-1} \circ a \circ (\rho_H \cup \id_L),
	\]
	which determines the element~$g$ up to the action of $\Stab^f$.
	The resulting $g$ does not depend on the choice of a representative of the coset $\rho / \Aut_\cH(\wh\Gamma)$ because of~\eqref{eq:graph_auto}.
\end{proof}
We let now
\be \label{eq:defSpi}
S(\pi) \= \frac{|G|}{|\bfG|} \cdot\frac{|\Stab_\bfG(\wh \Gamma)|}{|\Stab_G(\wh \Gamma)|}
\=\frac{|\Stab_{\bfG/G}(\wh \Gamma)|}{\prod_{e} \gcd(\kappa_e, k)^2}
\ee
where the stabilizers are defined in a way analogous to \eqref{def:stab}.
\par
\begin{rem} \label{rem:Spi1}
We have the ratio $S(\pi) = 1$ for many coverings of graphs $\pi: \wh\Gamma \to \Gamma$, e.g. when all vertices of $\Gamma$ have exactly one preimage in $\wh\Gamma$. In this case $\bfG/G$ only permutes half-edges adjacent to one vertex, and this always stabilizes the graph. Thus $S(\pi) = 1$, as $|\bfG/G| = \prod_e\gcd(\kappa_e,k)^2$. More generally $S(\pi) =1$ if each edge of $\Gamma$ is adjacent to at least one vertex which has exactly one preimage in~$\wh\Gamma$. In this case it is straightforward to verify that the obvious generators of $\bfG / G$ are stabilizing the graph.
\par
If there are vertices of $\Gamma$ with more than one pre-image in $\wh\Gamma$,
then $S(\pi)$ is in general non-trivial. Consider for example the covering of graphs $\pi$ depicted in Figure~\ref{cap:non-triv_Spi1}, for
which $S(\pi) = \tfrac{1}{2}$.
\end{rem}
\begin{figure}
\bes
\begin{tikzpicture}[
baseline={([yshift=-.5ex]current bounding box.center)},
scale=2,very thick,
bend angle=30,
every loop/.style={very thick},
comp/.style={circle,black,draw,thin,inner sep=0pt,minimum size=5pt,font=\tiny},
order bottom left/.style={pos=.05,left,font=\tiny},
order top left/.style={pos=.9,left,font=\tiny},
order bottom right/.style={pos=.05,right,font=\tiny},
order top right/.style={pos=.9,right,font=\tiny},
order node dis/.style={text width=.75cm}]
\node[circle, draw, inner sep=0pt, minimum size=12pt] (T1) []{$1$};
\node[circle, draw, inner sep=0pt, minimum size=12pt] (T2) [right=1cm of T1]{$1$};
\node[circle, draw, inner sep=0pt, minimum size=12pt] (B1) [below=1cm of T1]{$1$} 
edge 
node [order bottom left] {$-2$}
node [order top left] {$0$} (T1)
edge 
node [order bottom right] {$-2$}
node [order top left] {$0$} (T2); 
\node[circle, draw, inner sep=0pt, minimum size=12pt] (B2) [below=1cm of T2]{$1$}
edge 
node [order bottom left] {$-2$}
node [order top right] {$0$} (T1)
edge 
node [order bottom right] {$-2$}
node [order top right] {$0$} (T2); 
\node [minimum width=18pt,below left] (B-1) at (B1.south west) {$4$};
\path (B1) edge [shorten >=5pt] (B-1.center);
\node [minimum width=18pt,below right] (B-2) at (B2.south east) {$4$};
\path (B2) edge [shorten >=5pt] (B-2.center);
\end{tikzpicture}
\xrightarrow{\quad\pi\quad}
\begin{tikzpicture}[
baseline={([yshift=-.5ex]current bounding box.center)},
scale=2,very thick,
bend angle=30,
every loop/.style={very thick},
comp/.style={circle,black,draw,thin,inner sep=0pt,minimum size=5pt,font=\tiny},
order bottom left/.style={pos=.05,left,font=\tiny},
order top left/.style={pos=.9,left,font=\tiny},
order bottom right/.style={pos=.05,right,font=\tiny},
order top right/.style={pos=.9,right,font=\tiny},
order node dis/.style={text width=.75cm}]
\node[circle, draw, inner sep=0pt, minimum size=12pt] (T1) []{$1$};
\node[circle, draw, inner sep=0pt, minimum size=12pt] (B1) [below=1cm of T1]{$1$} 
edge [bend left]
node [order bottom left] {$-4$}
node [order top left] {$0$} (T1)
edge [bend right]
node [order bottom right] {$-4$}
node [order top right] {$0$} (T1); 
\node [minimum width=18pt,below left] (B-1) at (B1.south west) {$8$};
\path (B1) edge [shorten >=5pt] (B-1.center);
\end{tikzpicture}
\ees
\caption{A covering of graphs $\pi : \wh\Gamma \to \Gamma$ in~$\kLMS[8][3,1][2]$ with non-trivial~$S(\pi)$.}
\label{cap:non-triv_Spi1}
\end{figure}
\par
As a consequence of the degree computation in \autoref{le:newratdeg} and \autoref{le:stab}, we can write an evaluation lemma for $k$-differentials analogous to \autoref{lem:evaluation}. We give two versions, for~$\cH_k$ and~$\cQ$ respectively.
\par
\begin{lemma} \label{le:top_degree_comp_new2}
Let $(\pi:\wh\Gamma_\pmarked \to \Gamma) \in \LG_L(\cH^\pmarked_k)$ and $\wh\Gamma$ a marked version of $\wh\Gamma_\pmarked$. Suppose that $\alpha_{\pi} \in \CH_0(D_{\pi}^{\cH_{k}})$ and $\beta_{\pi} \in \CH_0(D_{\pi}^\cQ)$ are top degree classes and that
\[
c_{\pi}^* \alpha_{\pi} \= p_{\pi}^* 
\prod_{i=0}^{-L} \alpha_{i}
\qquad \text{and} \qquad
c_{\pi}^*d_\pi^* \beta_{\pi} \= p_{\pi}^*\bs_\pi^* 
\prod_{i=0}^{-L} \beta_{i}	\]
for some $\alpha_i$ and $\beta_i$. Then
\[
\int_{D_{\pi}^{\cH_{k}}} \alpha_{\pi} \= S(\pi)\cdot \frac{\prod_{e \in E(\Gamma)} \kappa_e}
{|\Aut(\Gamma)| \cdot \prod_{e \in E(\Gamma)} \gcd(\kappa_e,k)^2
	\cdot {\ell_{\wh \Gamma}}}  \cdot \prod_{i=0}^{-L}
\int_{\cH_{k}(\pi_{[i]})} \alpha_i
\]
and
\[
\int_{D_{\pi}^{\cQ}} \beta_{\pi} \= S(\pi)\cdot \frac{\prod_{e \in E(\Gamma)} \kappa_e}
{k^L \cdot |\Aut(\Gamma)| \cdot {\ell_{\wh \Gamma}}}  \cdot \prod_{i=0}^{-L}
\int_{\cQ(\pi_{[i]})} \beta_i.
\]
\end{lemma}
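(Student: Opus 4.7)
The plan is to deduce both formulas from \autoref{lem:evaluation} applied along the upper roof of \eqref{dia:k-comens}, combined with the explicit ratio of degrees \autoref{le:newratdeg}, the stabilizer identity \autoref{le:stab}, and the degree computations \autoref{lem:deg_d} and \autoref{lem:deg_q}.

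For the $\cH_k$-version, I would first apply the push-pull argument of \autoref{lem:evaluation} verbatim to the top of the roof in \eqref{dia:k-comens}. Since $c_\pi^* \alpha_\pi = p_\pi^* \prod_i \alpha_i$ and each $\alpha_i$ is top-degree on the factor $\cH_k(\pi_{[i]})$, the integral on $\cH_k(\pi)$ splits level-wise, and \autoref{le:newratdeg} yields
\be
\int_{D_\pi^{\cH_k}} \alpha_\pi \= \frac{K_{\wh\Gamma}^{\cH_k}}{|\Aut_{\cH_k}(\wh\Gamma)|\,\ell_{\wh\Gamma}} \prod_{i=0}^{-L} \int_{\cH_k(\pi_{[i]})} \alpha_i.
\ee
Next I would substitute the explicit value $K_{\wh\Gamma}^{\cH_k} = \prod_e \kappa_e/\gcd(\kappa_e, k)$ from \autoref{le:newratdeg} and eliminate $|\Aut_{\cH_k}(\wh\Gamma)|$ using \autoref{le:stab}. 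After collecting factors of $\gcd(\kappa_e, k)$, the coefficient becomes
\[
\frac{\prod_e \kappa_e}{|\Aut(\Gamma)| \prod_e \gcd(\kappa_e, k)^2 \,\ell_{\wh\Gamma}} \cdot \frac{|\Stab_\bfG(\wh\Gamma)|}{|\Stab_\bfG(\cH(\pi))|}.
\]
To conclude the first formula it remains to identify the ratio of stabilizers with $S(\pi)$, which follows from \eqref{eq:defSpi} together with $|\bfG|/|G| = \prod_e \gcd(\kappa_e, k)^2$ and a comparison of the stabilizers of $\wh\Gamma$ inside $G \subseteq \bfG$.

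For the $\cQ$-version, I would set $\alpha_\pi := d_\pi^* \beta_\pi$ and $\alpha_i := \bfd_{\pi_{[i]}}^* \beta_i$, so that the given hypothesis on $\beta_\pi$ becomes exactly the hypothesis of the first formula for these $\alpha_\pi$ and $\alpha_i$. The integrals then push down along the proper unramified maps $d_\pi$ and $\bfd_{\pi_{[i]}}$, and \autoref{lem:deg_d} and \autoref{lem:deg_q} give the ratio $\deg(\bfd_\pi)/\deg(d_\pi) = k^{-L} \prod_e \gcd(\kappa_e, k)^2$. Multiplying by this ratio cancels the $\prod_e \gcd(\kappa_e, k)^2$ in the denominator produced by the first formula and supplies the $k^{-L}$ factor in the target.

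The main obstacle will be the final step of the first half, namely identifying $|\Stab_\bfG(\wh\Gamma)|/|\Stab_\bfG(\cH(\pi))|$ with $S(\pi)$. Every other step is a formal consequence of the preceding lemmas together with push-pull, but this identification requires carefully unpacking the interplay between the $G$-action on leg labels and the level-wise $\bfG$-action on all half-edges, and tracking how the various cyclic factors combine on the isomorphism class of $\wh\Gamma$. The decomposition $\Stab_\bfG(\cH(\pi)) = \Stab^f \cdot \Stab^p$ already set up in the proof of \autoref{le:stab} provides the natural bookkeeping framework, and it is precisely this structure that should match the two alternative presentations of $S(\pi)$ given in \eqref{eq:defSpi}.
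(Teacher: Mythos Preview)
Your overall strategy is right, but there is a genuine gap in the first step. You apply \autoref{lem:evaluation} to the roof of \eqref{dia:k-comens} and write
\[
\int_{D_\pi^{\cH_k}} \alpha_\pi \= \frac{K_{\wh\Gamma}^{\cH_k}}{|\Aut_{\cH_k}(\wh\Gamma)|\,\ell_{\wh\Gamma}} \prod_{i=0}^{-L} \int_{\cH_k(\pi_{[i]})} \alpha_i,
\]
as if $p_\pi$ were surjective onto $\cH_k(\pi) = \prod_i \cH_k(\pi_{[i]})$. It is not: the diagram \eqref{dia:k-comens} explicitly records that $p_\pi$ only hits $\mathrm{Im}(p_\pi) \subsetneq \cH_k(\pi)$. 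The space $\cH_k(\pi)$ was defined as the full $\bfG$-orbit of adjacency data of the level strata, whereas the image of $p_\pi$ consists only of the $G$-orbit of the adjacency datum coming from the chosen marked representative $\wh\Gamma$. Thus \autoref{lem:evaluation} gives the integral over $\mathrm{Im}(p_\pi)$, and one needs the correction factor
\[
\frac{|G|/|\Stab_G(\wh\Gamma)|}{|\bfG|/|\Stab_\bfG(\cH(\pi))|}
\]
to pass from the integral over $\mathrm{Im}(p_\pi)$ to the integral over $\cH_k(\pi)$. This is exactly the point the paper singles out (``the map $p_\pi$ is not surjective in general'').

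This missing factor is not cosmetic: without it, after eliminating $|\Aut_{\cH_k}(\wh\Gamma)|$ via \autoref{le:stab} you arrive at the ratio $|\Stab_\bfG(\wh\Gamma)|/|\Stab_\bfG(\cH(\pi))|$, which is \emph{not} $S(\pi)$. Only once you multiply by the non-surjectivity correction does $|\Stab_\bfG(\cH(\pi))|$ cancel, leaving $\tfrac{|G|}{|\bfG|}\cdot \tfrac{|\Stab_\bfG(\wh\Gamma)|}{|\Stab_G(\wh\Gamma)|}$, which is the definition of $S(\pi)$ in \eqref{eq:defSpi}. So the ``main obstacle'' you anticipate is not a bookkeeping issue inside $\bfG$ but rather the insertion of this component-count correction; once that is in place the identification with $S(\pi)$ is immediate. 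Your argument for the $\cQ$-version is then fine.
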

\begin{proof}
In order to show the first statement, we first apply  \autoref{le:newratdeg} and note that the map $p_{\pi}$ is not surjective in general. It is now enough to check that the number of  
 of adjacency data appearing in $\cH_{k}(\pi)$ is $ |\mathbf{G}|/|\Stab_{\bf G}\big(\cH_k(\pi)\big) |$, while the one appearing in the image of $p_{\pi}$ is  $| G|/|\Stab_{ G}\wh \Gamma |$.  We finally use \autoref{le:stab} to rewrite the prefactor.
For the second statement, we additionally apply \autoref{lem:deg_d} and \autoref{lem:deg_q}.
\end{proof}
\par
We are finally ready to prove \autoref{cor:kstrata}.
\par
\begin{proof}[Proof of \autoref{cor:kstrata}]
The orbifold Euler characteristics of $\cQ = \bP \Omega^k \cM_{g,n}(\mu)$
and $\cH_k$ are related by
\bes
\chi(\bP \Omega^k \cM_{g,n}(\mu)) \= \frac{1}{\deg(s)} \cdot \chi(\cH_k).
\ees
We apply the general Euler characteristic formula in the form~\eqref{eq:eulerH} to
$\cH_k$ and group the level graphs $\wh{\Gamma} \in \LG_L(\cH_k)$ by those with
the same  graph~$\wh{\Gamma}_{\pmarked}$ that is marked partially. Since the integrals
do not depend on the marking, we obtain 
\bes
\chi(\cQ) \= \frac{k}{|G|} (-1)^d
\sum_{L=0}^d \sum_{(\pi:\wh\Gamma_\pmarked \to \Gamma)  \in \LG_L(\cH_k^{\pmarked})} N_{\pi}^\top \cdot \ell_{\wh\Gamma} \cdot
\int_{D_{\pi}^{\cH_k}} \prod_{i=-L}^{0}(\xi_{\wh\Gamma,\cH_k}^{[i]})^{d_\Gamma^{[i]}}
\ees
where we used the notation that $\wh\Gamma$ is a fully marked representative of $\wh\Gamma_\pmarked$.
Thanks to \autoref{lem:compatibilities} we can apply \autoref{le:top_degree_comp_new2}
and convert the integral over $D_{\pi}^{\cH_k}$ into a $\xi$-integral over
the product of $\cH_k(\pi_{[i]})$. We hence obtain
\bas
&\chi(\bP \okmoduli[g,n](\mu))\\
&\;=\frac{k}{|G|} \cdot
\left(-1\right)^d \sum_{L=0}^d
\sum_{(\pi:\wh\Gamma_\pmarked \to \Gamma) \in \LG_L(\cH^\pmarked_k)}
S(\pi)\frac{\prod_{e \in E(\Gamma)} \kappa_e \cdot N_{\pi}^\top}
{|\Aut(\Gamma)| \cdot \prod_{e} \gcd(\kappa_e, k)^2}
\cdot \prod_{i=0}^{-L}
\int_{\cH_{k}(\pi_{[i]})} \xi^{d_{\pi}^{[i]}} \\
&\;=\left(\frac{-1}{k}\right)^d \sum_{L=0}^d
\sum_{(\pi:\wh\Gamma_\pmarked \to \Gamma) \in \LG_L(\cQ)} S(\pi)\cdot
\frac{\prod_{e \in E(\Gamma)} \kappa_e \cdot N_{\pi}^\top}
{|\Aut(\Gamma)| } \cdot \prod_{i=0}^{-L}
\int_{\cQ(\pi_{[i]})} \zeta^{d_{\pi}^{[i]}}.
\eas
For the second equality, we used that
\be \label{eq:xi_zeta_comparision}
s^* \zeta \= k\xi\,, \quad \text{and hence} \quad
d_*\xi \= \frac{\deg(s)}{k} \zeta
\ee
for any level stratum, together with the dimension statement of
\autoref{prop:linatboundary}. The final result is what we claimed in
\autoref{cor:kstrata}.
\end{proof}
\par

\subsection{Evaluating tautological classes}
In this section we explain how to evaluate any top degree class of the form
\begin{equation}\label{eq:beta}
\beta \,:=\, \zeta^{p_0} \psi_1^{p_1} \cdots \psi_n^{p_n}
\cdots [D_{\pi_1}^\cQ] \cdots [D_{\pi_w}^\cQ] \in \CH_0(\ol\cQ)
\end{equation}
for any generalized stratum $\ol\cQ$ of $k$-differentials.
First, we show how to transform the previous class into the form
\[
\beta \= \sum_{i} \psi_1^{q_{i,1}} \cdots \psi_1^{q_{i,n}} [D_{\sigma_i}^\cQ].
\]
Then by \autoref{le:top_degree_comp_new2}, we can write every summand of $\beta$ as a product of $\psi$-classes evaluated on generalized strata of $k$-differentials. We finally will explain how to evaluate such classes.
\par
Let us start with the first task. The relations in the Chow ring of a general linear submanifold we obtained in \autoref{sec:nb} immediately apply to the covering $\ol\cH_k$ and we want to restate them in the Chow ring of the generalized stratum $\ol\cQ$ of $k$-differentials.
Let $i$ be the index of a marked point in $\ol\cQ$ and $(i,j)$ be the index of a preimage of this point in $\ol\cH_k$.
Moreover, let $m_i$ denote the order of the $k$-differential at the $i$-th marked point, and let $\wh m_{i,j}$ denote the order of the Abelian covering at the $(i,j)$-th marked point.
Then the relation
\be \label{eq:psicomp}
\psi_{i,j} \= \frac{\gcd(m_i,k)}{k} \cdot d^* \psi_i
\ee
holds, see for example~\cite[Lemma~3.9]{SvZ}.
Using the relation $$\wh m_{i,j} + 1= (m_i+k) / \gcd(m_i,k)$$ and applying push-pull we obtain
\be \label{eq:psi_comparision}
(\wh m_{i,j} + 1)d_*\psi_{i,j} \= \frac{\deg(d)}{k} (m_i + k) \psi_i.
\ee
\par
We can now write the analogue of \autoref{prop:Adrienrel} for the first Chern class $\zeta\in \CH^1(\ol{Q})$ of the tautological line bundle on the stratum of $k$-differentials.
\begin{cor} \label{cor:k-adrien}
	The class $\zeta$ can be expressed as
	\begin{align*}
		\zeta &= (m_i + k) \psi_i
		- \sum_{(\pi : \wh \Gamma_\pmarked \to \Gamma) \in \prescript{}{i}\LG_1(\ol\cQ)} k \ell_{\wh \Gamma_\pmarked}
		[D^\cQ_\pi] \\
		&= (m_i + k) \psi_i
		- \sum_{(\pi : \wh \Gamma_\pmarked \to \Gamma) \in \prescript{}{i}\LG_1(\ol\cQ)} S(\pi)\frac{\prod_{e \in E(\Gamma)} \kappa_e}{|\Aut(\Gamma)|}
		\cl_{\pi,*}p_\pi^*\bs_{\pi}^*[\cQ(\pi)]
	\end{align*}
	where $\prescript{}{i}\LG_1(\ol\cQ)$ are covers of two-level graphs with the
	leg~$i$ on lower level and $\cl_{\pi}=\i_\pi\circ d_\pi\circ c_\pi $ is the clutching morphism analogous to \eqref{eq:addgenR}.
\end{cor}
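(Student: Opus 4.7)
The plan is to derive both equalities from the linear-submanifold version~\autoref{prop:Adrienrel} applied to $\ol\cH_k$, and then transport the resulting identity down to $\ol\cQ$ via the canonical $k$-covering~$d$. Concretely, pick any preimage $(i,j_0)$ of the leg~$i$ under this covering. Applying \autoref{prop:Adrienrel} to $\ol\cH_k$ at the leg $(i,j_0)$ gives
\[
\xi_{\cH_k} \= (\wh m_{i,j_0}+1)\,\psi_{i,j_0} \,-\, \sum_{\wh\Gamma\in{}_{(i,j_0)}\LG_1(\cH_k)} \ell_{\wh\Gamma}\,[D^{\cH_k}_{\wh\Gamma}]\,.
\]
Multiplying by~$k$ and using \eqref{eq:xi_zeta_comparision} turns the left-hand side into $d^*\zeta$, while \eqref{eq:psicomp} together with the numerical identity $(\wh m_{i,j_0}+1)\gcd(m_i,k)=m_i+k$ turns the $\psi$-term into $(m_i+k)\,d^*\psi_i$.

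For the boundary sum, the crucial observation is that the level structure on $\wh\Gamma$ is $\tau$-equivariant and descends to $\Gamma$, so all preimages of a leg of $\Gamma$ lie on a common level of $\wh\Gamma$. Consequently, ``$(i,j_0)$ on lower level in $\wh\Gamma$'' is equivalent to ``$i$ on lower level in $\Gamma$'', and the set ${}_{(i,j_0)}\LG_1(\cH_k)$ decomposes as the disjoint union, over $\pi\in{}_i\LG_1(\ol\cQ)$, of the fully marked lifts of $\wh\Gamma_\pmarked$. Since $d$ is unramified by \autoref{lem:deg_d}, one has $d^*[D^\cQ_\pi]=\sum_{\wh\Gamma\mapsto\wh\Gamma_\pmarked}[D^{\cH_k}_{\wh\Gamma}]$, and $\ell_{\wh\Gamma}=\ell_{\wh\Gamma_\pmarked}$ depends only on~$\pi$ (as noted after \autoref{le:newratdeg}). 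Reassembling, the identity above becomes $d^*\zeta=d^*\bigl((m_i+k)\psi_i-k\sum_\pi \ell_{\wh\Gamma_\pmarked}[D^\cQ_\pi]\bigr)$, and the first equality follows from the injectivity of $d^*$ on $\bQ$-Chow, which holds because $d$ is proper and finite of positive degree, so $d_*d^*=\deg(d)\cdot\id$.

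The second equality is then obtained by expressing each class $k\ell_{\wh\Gamma_\pmarked}[D^\cQ_\pi]$ as a clutching pushforward using the commensurability diagram~\eqref{dia:k-comens}. Since $\bfd_\pi$ is proper, finite and unramified one has $\bfd_\pi^*[\cQ(\pi)]=[\cH_k(\pi)]$, and then $p_\pi^*[\cH_k(\pi)]=[D_\pi^{\circ,\cH_k,s}]$. Pushing forward along $\cl_\pi=\i_\pi\circ d_\pi\circ c_\pi$ yields $\cl_{\pi,*}\,p_\pi^*\,\bfd_\pi^*[\cQ(\pi)]=\deg(c_\pi)\deg(d_\pi)\,[D^\cQ_\pi]$. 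Combining the degree formulas of \autoref{le:newratdeg}, \autoref{lem:deg_d}, and \autoref{lem:deg_q} with the group-theoretic bookkeeping encoded in \autoref{le:stab} and the definition~\eqref{eq:defSpi} of $S(\pi)$, one identifies $\deg(c_\pi)\deg(d_\pi)$ with $k\,\ell_{\wh\Gamma_\pmarked}\,|\Aut(\Gamma)|\,/\,\bigl(S(\pi)\prod_{e\in E(\Gamma)}\kappa_e\bigr)$. Substituting into the first equality produces the second.

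The main obstacle is purely combinatorial: disentangling the automorphism groups $\Aut_{\cH_k}(\wh\Gamma)$, $\Aut(\Gamma)$, $G$, $\bfG$ and their various stabilizers to match the product $\deg(c_\pi)\deg(d_\pi)$ of degrees in the commensurability diagram with the explicit prefactor of the second equality. Once this matching is set up carefully, both equalities are formal consequences of \autoref{prop:Adrienrel} together with the comparison relations \eqref{eq:xi_zeta_comparision} and \eqref{eq:psicomp}.
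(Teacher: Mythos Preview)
Your proposal is correct and follows essentially the same route as the paper. The paper's own proof is terse: it derives the first equation by pushing forward \autoref{prop:Adrienrel} along~$d$ via the relations~\eqref{eq:xi_zeta_comparision} and~\eqref{eq:psi_comparision}, and deduces the second from the first ``by \autoref{le:top_degree_comp_new2}''. Your argument is the dual of this: rather than pushing forward you verify that both sides of the first equation have equal $d^*$-pullback and invoke injectivity of $d^*$, and for the second you unwind the degree computations underlying \autoref{le:top_degree_comp_new2} explicitly. Both strategies are equivalent, and your identification of the relevant ingredients (\autoref{lem:deg_d}, \autoref{lem:deg_q}, \autoref{le:newratdeg}, \autoref{le:stab}, and the definition~\eqref{eq:defSpi}) is exactly what makes the terse reference to \autoref{le:top_degree_comp_new2} work in the paper.
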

\begin{proof}
The first equation is obtained by pushing forward the equation in
\autoref{prop:Adrienrel} along $d$ and using the
relations~\eqref{eq:xi_zeta_comparision} and~\eqref{eq:psi_comparision}.
The second equation is obtained from the first by \autoref{le:top_degree_comp_new2}.
\end{proof}
\begin{rem}
The expression given by the second line of \autoref{cor:k-adrien} reproves the
formula of \cite[Theorem 3.12]{sauvagetVolumes} and computes explicitly the
coefficients appearing in loc.cit., which were computed only for special
two-level graphs.
\end{rem}
\par
To state the formula for the normal bundle, let
\[
\cL_\pi^\top = \cO_{D^\cQ_\pi}\Big(\sum_{\substack{(\sigma:\wh\Delta_\pmarked\to\Delta) \in \LG_2(\ol\cQ)\\\delta_2(\sigma) = \pi}} \ell_{\wh\Delta,1}D_{\sigma}^\cH\Big)
\]
denote the top level correction bundle.
\begin{cor} \label{cor:normalbundle}
Suppose that $D_\pi$ is a divisor in $\ol \cQ$ corresponding to a covering of graphs $(\pi:\wh\Gamma_\pmarked\to\Gamma) \in \LG_1(\ol\cQ)$. Then the first Chern class of the normal bundle is given by
\[
	c_1(\cN_\pi) \= \frac{1}{\ell_{\wh\Gamma}}\Big(-\frac{1}{k}\zeta_\pi^\top - c_1(\cL_\pi^\top) + \frac{1}{k}\zeta_\pi^\bot\Big) \in \CH^1(D_\pi^\cQ),
	\]
	where  $\zeta_\pi^\top$, resp. $\zeta_\pi^\bot$, is the first Chern class of the line bundle generated by the top, resp. bottom, level multi-scale component.
\end{cor}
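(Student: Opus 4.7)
The plan is to reduce the statement to Proposition~\ref{prop:generalnormalbundle} via the finite unramified map $d \colon \ol\cH_k \to \ol\cQ$ of Lemma~\ref{lem:deg_d}. Fix a fully marked representative $\wh\Gamma$ of the $G$-orbit $\wh\Gamma_\pmarked$ corresponding to $\pi$; since $\wh\Gamma$ has two levels one has $\ell_{\wh\Gamma,1}=\ell_{\wh\Gamma}$. Applying Proposition~\ref{prop:generalnormalbundle} to the linear submanifold $\ol\cH_k$ with $\Pi$ the trivial graph yields
\[
c_1(\cN^{\cH_k}_{\pi}) \= \frac{1}{\ell_{\wh\Gamma}}\bigl(-\xi^{[0]}_{\wh\Gamma,\cH_k} - c_1(\cL^{[0]}_{\wh\Gamma,\cH_k}) + \xi^{[-1]}_{\wh\Gamma,\cH_k}\bigr) \quad \text{in} \quad \CH^1(D^{\cH_k}_{\pi}),
\]
where $\cN^{\cH_k}_\pi$ denotes the normal bundle of $D^{\cH_k}_\pi$ in $\ol\cH_k$.

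Next I would verify three pullback compatibilities along the restriction $d_\pi \colon D^{\cH_k}_\pi \to D^{\cQ}_\pi$. First, $d_\pi^*\cN_\pi = \cN^{\cH_k}_\pi$, because $d$ is unramified and hence the conormal ideal of $D^\cQ_\pi$ pulls back cleanly to that of $D^{\cH_k}_\pi$. Second, $d_\pi^*\zeta^\top_\pi = k\,\xi^{[0]}_{\wh\Gamma,\cH_k}$ and $d_\pi^*\zeta^\bot_\pi = k\,\xi^{[-1]}_{\wh\Gamma,\cH_k}$; these follow from the tautological comparison~\eqref{eq:xi_zeta_comparision} applied separately to the top- and bottom-level strata, each of which is itself a generalized stratum of $k$-differentials covered via the commensurability diagram~\eqref{dia:k-comens}. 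Third, $d_\pi^*\cL^\top_\pi = \cL^{[0]}_{\wh\Gamma,\cH_k}$: the summands defining the two line bundles are in natural bijection via $\sigma \leftrightarrow \wh\Delta$, namely a codimension-two degeneration of $\pi$ splitting the top level corresponds to the $G$-orbit of a 3-level refinement of $\wh\Gamma$; they carry identical $\ell_{\wh\Delta,1}$ weights, and since $d$ is unramified $d_\pi^*[D^\cQ_\sigma] = [D^{\cH_k}_\sigma]$ with no stray multiplicity.

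Substituting these identities into the formula from the first step gives
\[
d_\pi^*\,c_1(\cN_\pi) \= d_\pi^*\Bigl(\frac{1}{\ell_{\wh\Gamma}}\bigl(-\tfrac{1}{k}\zeta^\top_\pi - c_1(\cL^\top_\pi) + \tfrac{1}{k}\zeta^\bot_\pi\bigr)\Bigr),
\]
and since $d_\pi$ is finite and surjective of positive degree, $d_\pi^*$ is injective on $\CH^1(D^\cQ_\pi)_\bQ$, so the desired equality descends to $D^\cQ_\pi$. The main obstacle will be the third compatibility: one must check carefully that the $G$-orbit bookkeeping on fully marked three-level graphs introduces no stray combinatorial factor, so that on both sides each boundary divisor appears with precisely its intrinsic $\ell$-coefficient and multiplicity one.
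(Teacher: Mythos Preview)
Your approach is essentially identical to the paper's: pull back both sides of the proposed identity along the finite unramified map~$d$, use the comparison $d^*\zeta = k\xi$ from~\eqref{eq:xi_zeta_comparision} together with Proposition~\ref{prop:generalnormalbundle} to see that the pulled-back relation holds, and conclude by injectivity of pullback along~$d$. The paper's proof is terser and does not isolate the three compatibilities you list, but the logic is the same; your explicit treatment of the $\cL^\top$-compatibility (and the caveat you flag about $G$-orbit bookkeeping) is a welcome expansion of what the paper leaves implicit.
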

\begin{proof}
We can pull-back the right and left hand sides of the relation via $d$. Using the expression \eqref{eq:xi_zeta_comparision}, we see that the pulled-back relation holds since it agrees with the one of \autoref{prop:generalnormalbundle}. Since $d$ is a quasi-finite proper unramified map, we are done. The same argument, together with \autoref{prop:normalhor}, works for the second statement about horizontal divisors.
\end{proof}
\par
Using the same arguments as \cite[Proposition~8.1]{CMZeuler}, it is possible to show an excess intersection formula in this context of $k$-differentials. We will not explicitly do this here since the methods and the result are exactly parallel to the original ones for Abelian differentials. Using the previous ingredients we can then reduce the computation of the class $\beta$ in \eqref{eq:beta} to the computation of a top-degree product of $\psi$-classes 
	\[\alpha := \psi_{1}^{p_1} \cdots \psi_{n}^{p_n} \in \CH_0(\ol \cQ) \]
	on a generalized stratum.
If we can describe the class of a generalized stratum in its corresponding moduli space of pointed curves, then we are done since it is possible to compute top-degree tautological classes on the moduli space of curves, e.g. with the SageMath package {\it admcycles}, see \cite{DSvZ}.
\par
One of the advantages in comparison to the situation with general linear submanifolds (as explained in \autoref{sec:nb}) is that the fundamental classes of strata of primitive $k$-differentials $\bP\kLMS$ are known in $\ol\cM_{g,n}$, see \cite{BHPSS}.
\par
More generally, if $\cQ$ parameterizes $k$-differentials, on a curve with connected $\tau$-quotient, which are $d$-th powers of primitive $k':=k/d$-differentials, we can compare $\psi$-classes on $\ol\cQ$ to $\psi$-classes on the stratum of primitive $k'$ differentials $\bP\kLMS[\mu/d][g,n][k']$ via the diagram
\[
	\begin{tikzcd}
		\cH_k^\pmarked(\mu) \ar[r, "\phi"] \ar[d, "d_1"] & \cH_{k'}^\pmarked(\mu/d) \ar[d, "d_2"] \\
		\cQ & \bP\kLMS[\mu/d][g,n][k']
	\end{tikzcd}
\]
where the map $\phi$ sends the disconnected curve $(\bigcup_{i=1}^d \wh X_i, \bigcup_{i=1}^d \wh \bfz_i, \bigcup_{i=1}^d \omega_i, \tau)$ to $(\wh X_1, \bfz_1, \omega_1, \tau^d|_{\wh X_1})$.
The map $\phi$ has degree $\deg(\phi) = d^{n-1}$, since up to the action of~$\tau$ there are such many ways to distribute the marked points $\wh \bfz$ onto the connected components of $\wh X$.
Using $\deg(d_1) = \frac{1}{k}$ and $\deg(d_2) = \frac{1}{k'}$ we can evaluate~$\alpha$ as
\[
	\int_\cQ \alpha \= d^{n} \int_{\bP\kLMS[\mu/d][g,n][k']} \psi_{1}^{p_1} \cdots \psi_{n}^{p_n}.
\]
\par
If $\cQ$ parameterizes primitive differentials on disconnected curves, then $\int_\cQ \alpha = 0$ since we go down in dimension by looking at the image of the projection to the moduli spaces of curves.
\par
It remains to explain how to evaluate intersection numbers in the presence
of residue conditions. In addition to the space~$\frakR$ defined starting
from a $\tau$-invariant partition~$\lambda_\frakR$ we consider the linear
subspace
\[
R \,:=\, \left\{(r_i)_{i \in H_p} \in \bC^{H_p} \;:\; 
	\begin{array}{c}
		\sum_{i \in \cA^{-1}(\wh X')} r_i = 0 \,\,\text{ for all } \wh X' \in \pi_0(\wh X) \\[2mm]
		r_i = \zeta_k^{-1} r_{\tau(i)} \,\, \text{ for all } i \in H_p
	\end{array}
\right\}
\]
cut out by the residue theorem on each component and the deck transformation.
Recall that $\lambda_\frakR$ is $\tau$-invariant. Let $\lambda_{\frakR_0}$ denote a
subset of $\lambda_\frakR$ obtained by removing one element,
and let $\frakR_0$ denote the new set of residue conditions. For ease of notation
let for now $H_k^\frakR := \bP\Omega \cH_k^{[\frakR,\cA]}$ and $H_k^{\frakR_0}
:= \bP\Omega \cH_k^{[\frakR_0,\cA]}$. If $R \cap \frakR = R \cap \frakR_0$
then $\cH_k^{\frakR} = \cH_k^{\frakR_0}$. So assume that
$R \cap \frakR \neq R \cap \frakR_0$, in which case $\cH_k^{\frakR} \subsetneq
\cH_k^{\frakR_0}$ is a divisor since removing one element from $\lambda_\frakR$
forces to remove its $\tau$-orbit.
For a divisor $D_{\pi}^{\cH_k^{\frakR}} \subseteq \ol\cH_k^{\frakR}$, we denote by $\frakR^\top$ the residue conditions induced by $\frakR$ on the top-level stratum $\cH_k(\pi_{[0]})$. It can be simply computed by discarding from the parts of $\lambda_\frakR$ all indices of legs that go to lower level in $D_{\pi}^{\cH_k^\frakR}$.
Moreover, we denote be $R^\top$ the linear subspace belonging to the top-level stratum of $\pi$ that is cut out by the residue theorem and the deck transformation.
\par
\begin{prop} \label{prop:resolving_residues}
The class of $\ol\cH_k^{\frakR}$ compares inside the Chow ring of $\ol\cH_k^{\frakR_0}$
to the class $\xi$ by the formula
\[
[\ol\cH_k^\frakR] \= -\xi - \sum_{(\pi:\wh\Gamma_\pmarked \to \Gamma) \in \LG^\frakR_1(\ol\cH_k^{\frakR_0})}
\ell_{\wh\Gamma} [D^{\cH_k^{\frakR_0}}_\pi]
- \sum_{(\pi:\wh\Gamma_\pmarked \to \Gamma) \in \LG_{1,\frakR}(\ol\cH_k^{\frakR_0})} \ell_{\wh\Gamma} [D^{\cH_k^{\frakR_0}}_\pi],
\]
where $\LG^\frakR_1(\ol\cH_k^{\frakR_0})$ are the two-level graphs with
$R^\top \cap \frakR^\top = R^\top \cap \frakR_0^\top$, i.e., where the GRC
on top level induced by $\frakR$ does no longer introduce an extra condition,
and where $\LG_{1,\frakR}(\ol\cH_k^{\frakR_0})$ are the two-level graphs where all
the legs involved in the condition forming $\frakR \setminus \frakR_0$ go
to lower level.
\end{prop}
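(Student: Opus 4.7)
The strategy is to exhibit $\ol\cH_k^\frakR$ (up to boundary corrections) as the vanishing locus of a natural section of the dual tautological bundle on $\ol\cH_k^{\frakR_0}$. Since $\lambda_\frakR$ is $\tau$-invariant and $\lambda_{\frakR_0}$ is obtained from $\lambda_\frakR$ by dropping a single $\tau$-orbit of parts, say $\phi, \tau(\phi), \dots, \tau^{d-1}(\phi)$, the relation $\tau^*\omega=\zeta_k\omega$ forces $\sum_{i\in\tau^j(\phi)}r_i=\zeta_k^j\sum_{i\in\phi}r_i$, so the passage from $\frakR_0$ to $\frakR$ imposes the \emph{single} linear condition $f:=\sum_{i\in\phi}r_i=0$ on the residues. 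Under the rescaling $\omega\mapsto \lambda\omega$, each residue scales linearly, so $f$ is a homogeneous function of degree one on the total space $\Omega\cH_k^{[\frakR_0,\cA]}$, i.e.\ defines a section $\tilde f\in H^0(\ol\cH_k^{\frakR_0},\cO(1))$, whose interior zero locus is by construction $\cH_k^\frakR$. The class of its divisor equals $c_1(\cO(1))=-\xi$.

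The main task is then to determine the order of vanishing of $\tilde f$ along each boundary divisor $D_\pi^{\cH_k^{\frakR_0}}$ associated to a cover $\pi:\wh\Gamma_\pmarked\to\Gamma\in\LG_1(\ol\cH_k^{\frakR_0})$. Working in perturbed period coordinates near an interior point of $D_\pi$ as in \autoref{sec:closurelin}, the multi-scale differential at level~$-i$ rescales by $\prodt[i]=t_1^{\ell_{\wh\Gamma}}$ on the bottom level of a two-level graph, and the true residue at a leg $z_i$ living on the bottom component is accordingly $t_1^{\ell_{\wh\Gamma}}$ times the residue $\tilde r_i$ of the bottom-level differential. Splitting the sum according to whether legs in $\phi$ are on top or bottom level yields the local expansion
\begin{equation*}
\tilde f \;=\; \sum_{i\in\phi\cap\top} r_i^{[0]}\;+\;t_1^{\ell_{\wh\Gamma}}\sum_{i\in\phi\cap\bot} \tilde r_i\;+\;O(t_1^{\ell_{\wh\Gamma}+1})\,.
\end{equation*}
Hence $D_\pi$ appears in $\div(\tilde f)$ with multiplicity zero unless the top-level contribution $\sum_{i\in\phi\cap\top}r_i^{[0]}$ vanishes identically on the top-level stratum $\cH_k(\pi_{[0]})$, in which case the multiplicity is exactly $\ell_{\wh\Gamma}$.

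The final step is to identify the two possible sources of this identical vanishing on top level. Either $\phi\cap\top=\emptyset$, i.e., all legs entering the condition $\frakR\setminus\frakR_0$ descend to lower level (this is precisely the defining property of $\LG_{1,\frakR}(\ol\cH_k^{\frakR_0})$), or $\phi\cap\top\neq\emptyset$ but the linear form $\sum_{i\in\phi\cap\top}r_i^{[0]}$ is already a consequence of the residue theorem applied to each top-level component together with the $\tau$-equivariance, i.e., it lies in $R^\top$; this is exactly the condition $R^\top\cap\frakR^\top=R^\top\cap\frakR_0^\top$ defining $\LG^\frakR_1(\ol\cH_k^{\frakR_0})$. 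In all other cases, the top contribution is a genuinely non-zero linear functional on $\cH_k(\pi_{[0]})$ and hence generically non-zero on $D_\pi$. Combining the two cases, one obtains
\begin{equation*}
-\xi\;=\;c_1(\cO(1))\;=\;\div(\tilde f)\;=\;[\ol\cH_k^\frakR]\;+\sum_{\pi\in\LG^\frakR_1(\ol\cH_k^{\frakR_0})}\!\!\ell_{\wh\Gamma}\,[D^{\cH_k^{\frakR_0}}_\pi]\;+\sum_{\pi\in\LG_{1,\frakR}(\ol\cH_k^{\frakR_0})}\!\!\ell_{\wh\Gamma}\,[D^{\cH_k^{\frakR_0}}_\pi]\,,
\end{equation*}
which rearranges to the asserted formula. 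The main obstacle is the careful order-of-vanishing computation in the second step: one must verify, using the structure of perturbed period coordinates recalled from \cite{BDG} and \autoref{sec:closurelin}, that no additional higher-order terms can cancel the leading $t_1^{\ell_{\wh\Gamma}}$ behaviour and that the lower-level residues $\tilde r_i$ are generically non-zero on $\cH_k(\pi_{[-1]})$ (i.e., they are not already forced to vanish by the residues $\frakR_0^\bot$ induced on the bottom level), so that $\ell_{\wh\Gamma}$ is exactly the multiplicity rather than a lower bound.
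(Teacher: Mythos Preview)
Your argument is correct and in fact reconstructs the proof that underlies \cite[Proposition~8.3]{CMZeuler}: build a section of $\cO(1)$ from the extra residue sum, then read off the boundary multiplicities from the scaling behaviour of level parameters. The paper, by contrast, simply cites that proposition for the ambient generalized stratum and pulls the resulting divisor relation back to $\ol\cH_k^{\frakR_0}$ via the transversality of \autoref{prop:transverse}. Your route is more self-contained but repeats work already done in \cite{CMZeuler}; the paper's route is a one-line reduction. The ``main obstacle'' you flag at the end (that the bottom-level residue sum is generically non-zero, so the multiplicity is exactly $\ell_{\wh\Gamma}$ rather than larger) is precisely what the transversality argument in the paper's approach handles automatically, since the intersection of $\ol\cH_k^{\frakR_0}$ with each ambient boundary divisor is transverse and the ambient formula already has the correct coefficients.
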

\par
\begin{proof}
The formula is obtained by intersecting the formula in
\cite[Proposition~8.3]{CMZeuler} with $\ol \cH_k^{\frakR_0}$ and thereby using
the transversality statement from \autoref{prop:transverse}. 
\end{proof}
\par
By pushing down this relation along $d$ and applying relation~\eqref{eq:xi_zeta_comparision} we obtain a similar relation for a generalized stratum of $k$-differentials $\cQ^\frakR$ with residue conditions $\frakR$.
\begin{cor}
	The class of $\ol\cQ^{\frakR}$ compares inside the Chow ring of $\ol\cQ^{\frakR_0}$ to the class $\zeta$ by the formula
	\[
		[\ol\cQ^\frakR] \= -\frac{1}{k}\zeta - \sum_{(\pi:\wh\Gamma_\pmarked \to \Gamma) \in \LG^\frakR_1(\ol\cQ^{\frakR_0})} \ell_{\wh\Gamma} [D^{\cQ^{\frakR_0}}_\pi] - \sum_{(\pi:\wh\Gamma_\pmarked \to \Gamma) \in \LG_{1,\frakR}(\ol\cQ^{\frakR_0})} \ell_{\wh\Gamma} [D^{\cQ^{\frakR_0}}_\pi],
	\]
	where $\LG^\frakR_1(\ol\cQ^{\frakR_0})$ are the two-level graphs with $R^\top \cap \frakR^\top = R^\top \cap \frakR_0^\top$, i.e.~where the GRC
	on top level induced by $\frakR$ does no longer introduce an extra condition and where $\LG_{1,\frakR}(\ol\cQ^{\frakR_0})$ are the two-level graphs where all the legs involved in the condition forming $\frakR \setminus \frakR_0$ go to lower level.
\end{cor}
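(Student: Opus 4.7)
The strategy is to push the relation of \autoref{prop:resolving_residues} along the finite, quasi-finite, unramified map $d\colon \ol\cH_k^{\frakR_0}\to \ol\cQ^{\frakR_0}$ and then divide the resulting equation by $\deg(d)$, following the remark preceding the statement. The degree $\deg(d)=\tfrac{1}{k}\prod_i\gcd(m_i,k)$ is that of \autoref{lem:deg_d}, applied now to the generalized stratum with residue condition $\frakR_0$, and everything else reduces to identifying how $d_*$ acts on the three types of cycle classes occurring in the source relation.

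Concretely, I would first establish three pushforward identities in $\CH^\bullet(\ol\cQ^{\frakR_0})$.
(i) $d_*[\ol\cH_k^{\frakR}]=\deg(d)\cdot[\ol\cQ^{\frakR}]$, because imposing the extra residue condition commutes with the canonical cover construction, so $d$ restricts to a finite map of the same degree between the residue subvarieties.
(ii) $d_*\,\xi=\tfrac{\deg(d)}{k}\,\zeta$, by the projection formula applied to the identity $d^*\zeta=k\xi$ from~\eqref{eq:xi_zeta_comparision}.
(iii) $d_*[D^{\cH_k^{\frakR_0}}_\pi]=\deg(d)\cdot[D^{\cQ^{\frakR_0}}_\pi]$ for each $\pi\in\LG_1^{\frakR}(\ol\cQ^{\frakR_0})\cup\LG_{1,\frakR}(\ol\cQ^{\frakR_0})$, since the boundary divisors of $\ol\cH_k^{\frakR_0}$ and $\ol\cQ^{\frakR_0}$ are indexed by the same covers of level graphs $\pi\colon\wh\Gamma_\pmarked\to\Gamma$, and $d$ restricts to the map $d_\pi$ appearing in diagram~\eqref{dia:k-comens}, which is again an instance of the canonical cover construction and hence of degree $\deg(d)$.

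Combining these three identities and dividing the pushed-forward equation by the common scalar $\deg(d)$ yields precisely the claimed formula, with the coefficient $-\tfrac{1}{k}$ of $\zeta$ arising from the ratio $\tfrac{\deg(d)/k}{\deg(d)}$. The delicate point is (iii): a priori, stacky automorphism groups that grow or shrink along boundary strata could change the degree of the restriction. The justification is that $d$ factors as the quotient by the group $G$ of leg permutations followed by the non-representable inverse of the quotient by the trivially acting $\langle\tau\rangle$, and the same two-step quotient produces the boundary strata of $\ol\cQ^{\frakR_0}$ from those of $\ol\cH_k^{\frakR_0}$; hence the relative degree on each boundary divisor is still $\deg(d)$, and the numerical coincidence that makes the final formula collapse to its simple form is automatic.
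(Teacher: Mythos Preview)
Your proposal is correct and follows essentially the same approach as the paper, which simply states that the formula is obtained by pushing down the relation of \autoref{prop:resolving_residues} along~$d$ and applying~\eqref{eq:xi_zeta_comparision}. You have spelled out the three pushforward identities (i)--(iii) and the justification of (iii) in more detail than the paper does, but the argument is the same; note that (iii) also follows directly from $d$ being unramified (\autoref{lem:deg_d}), which forces the restriction of~$d$ to any boundary divisor to have the same degree.
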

\par
The last expression allows us, in the presence of residue conditions, to reduce
to the previous situations without residue conditions when we want to
evaluate $\alpha$.

\subsection{Values and cross-checks} \label{sec:values}

In this section we provide in Table~\ref{cap:EulerMero2} and Table~\ref{cap:EulerHolo3} some Euler characteristics
for strata of $k$-differentials. We abbreviate
\[\chi_k(\mu) := \chi(\bP \Omega^k_{\mathrm{pr}}\cM_{g,n}(\mu))\]
for the orbifold Euler characteristic of strata of primitive $k$-differentials. Moreover we provide several cross-checks for our values.

\begin{figure}[h]
$$ \begin{array}{|c|c|c|c|c|c|}
\hline  &&&&& \\ [-\halfbls] 
\mu  & (2,2) & (2,1^2) & (1^4) & (5,-1) & (4,1,-1) \\
[-\halfbls] &&&&& \\ 
\hline &&&&& \\ [-\halfbls]
\chi_2(\mu) & -\tfrac{1}{8} & \tfrac{1}{5} & -1 & -\tfrac{7}{15} & \tfrac{6}{5} 
 \\
[-\halfbls] &&&&& \\
\hline
 &&&&& \\ [-\halfbls] 
\mu  & (3,2,-1) & (3,1^2,-1) & (2^2,1,-1) & (2,1^3,-1) & (1^5,-1) \\
[-\halfbls] &&&&& \\ 
\hline &&&&& \\ [-\halfbls]
\chi_2(\mu) & \tfrac{5}{3} & -5 & -6 & 26 & -147
 \\
[-\halfbls] &&&&& \\
\hline
\end{array}
$$
\captionof{table}[foo2]{Euler characteristics of the strata of primitive quadratic differentials in genus $2$ with at most one simple pole.}
\label{cap:EulerMero2}
\end{figure}

The second power of the projectivized Hodge bundle over $\cM_2$ is the union of the strata of quadratic differentials of type $(4)$, $(2,2)$, $(2,1^2)$ and $(1^4)$, if all of them are taken with unmarked zeros.
(Note that there are no quadratic differentials of type~$(3,1)$.)
All quadratic differentials of type $(4)$ are second powers of Abelian differentials of type $(2)$. The stratum $(2,2)$ contains both primitive quadratic differentials and second powers of Abelian differentials of type $(1,1)$.
From Table~\ref{cap:EulerMero2} and \cite[Table~1]{CMZeuler} we read off that
\[
    \chi_1(2) + \frac{1}{2}\chi_2(2,2) + \frac{1}{2}\chi_1(1,1) + \frac{1}{2} \chi_2(2,1^2) + \frac{1}{4!} \chi_2(1^4) = -\frac{1}{80} = \chi(\bP^2) \chi(\cM_2).
\]
Similarly, one checks for the third power of the projectivized Hodge bundle over $\cM_2$ that the numbers in provided in Table~\ref{cap:EulerHolo3} add up to $-\tfrac{1}{48} = \chi(\bP^4)\chi(\cM_2)$.  In the above checks  we have used that $\chi(\cM_2)=-\tfrac{1}{240}$ by \cite{HaZa}.

\begin{figure}[h]
$$ \begin{array}{|c|c|c|c|c|c|c|}
\hline  &&&&&& \\ [-\halfbls] 
\mu  & (6) & (5,1) & (4,2) & (3,3) & (4,1^2) & (3,2,1) \\
[-\halfbls] &&&&&& \\ 
\hline &&&&&& \\ [-\halfbls]
\chi_3(\mu) 
& \tfrac{1}{3} & -\tfrac{4}{5} & -\frac{9}{8} & -\frac{4}{3} & \frac{16}{5} & 4 \\
[-\halfbls] &&&&&& \\
\hline  &&&&&& \\ [-\halfbls] 
\mu  & (2^3) & (3,1^3) & (2^2,1^2) & (2,1^4) & (1^6) & \\
[-\halfbls] &&&&&& \\ 
\hline &&&&&& \\ [-\halfbls]
\chi_3(\mu) 
& \tfrac{41}{10} & -16 & -\tfrac{52}{3} & 90 & -567 & \\
[-\halfbls] &&&&&& \\
\hline
\end{array}
$$
\captionof{table}[foo2]{Euler characteristics of the strata of primitive holomorphic $3$-differentials in genus $2$.}
\label{cap:EulerHolo3}
\end{figure}

Now consider the second power of the projectivized Hodge bundle twisted by the universal section over $\cM_{2,1}$. It decomposes into the unordered strata $(4)$, $(5,-1)$, $(4,1,-1)$, $(3,2,-1)$, $(2,1^2)$, $(3,1^2,-1)$, $(2^2,1,-1)$, $(2,1^3,-1)$, $(1^5,-1)$, $(4,0)$, $(2^2,0)$, $(2,1^2,0)$, $(1^4,0)$, the ordered stratum $(2^2)$, $(2,1^2)$ (since the zero at the unique marked point is distinguished) and the partially ordered stratum $(1^4)$.
The stratum $(2,1^2)$ appears two times in the list: the first time the unique marked point is the zero of order $2$, the second time it is one of the simple zeros.
On the stratum $(1^4)$ one of the simple zeros is distinguished, while the others may be interchanged.
Note that $\chi_k(m_1, \dots, m_n, 0) = (2-2g-n) \chi_k(m_1, \dots, m_n)$.
The contributions in Table~\ref{cap:EulerMero2} and \cite[Table~1]{CMZeuler} add up to $\tfrac{1}{30} = \chi(\bP^3) \chi(\cM_{2,1})$, where we have used that $\chi(\cM_{2,1})=\tfrac{1}{120}$ by \cite{HaZa}.
\par
We present some further cross-checks suggested by the referee.
\par
The stratum $\bP \Omega^2_{\mathrm{pr}}\cM_{2,3}(2,1,1)$ is isomorphic to the space of $3$-marked curves where the markings are at a Weierstrass point and at two hyperelliptic  conjugate points (see \cite[Thm. 1.2]{Lanneau}). The latter space is isomorphic to $\cM_{0,7} / S_5$, where the symmetric group $S_5$ permutes the first five markings, while the last two markings correspond to the three marked points of the genus two curve under the hyperelliptic map. Then indeed we have $\chi_2(2,1,1)=\chi(\cM_{0,7} / S_5)=1/5$.
\par
Similarly the stratum $\bP \Omega^2_{\mathrm{pr}}\cM_{2,2}(2,2)$ is isomorphic to the space of $2$-marked curves where the markings are at the Weierstrass points. This space is a $(\bZ/2\bZ)$-gerbe over $\cM_{0,6}/S_4$, where the $\bZ/2\bZ$ comes from the hyperelliptic involution. Also in this case we get the correct number $\chi_2(2,2)=\chi(\cM_{0,6} / S_4)/2=-1/8$.
\par
Finally, the stratum $\bP \Omega^2_{\mathrm{pr}}\cM_{2,4}(1,1,1,1)$ decomposes as the disjoint union of three copies of the space of curves with two marked pairs of Weierstrass points (the three possibilities arise as the way of grouping the four markings into two pairs). Each of these copies is a double cover of  $\cM_{0,8}/S_6$, leading to the correct number $\chi_2(1,1,1,1)=6\cdot \chi(\cM_0,8)/(6!)=-1$.


\section{Ball quotients} \label{sec:BQ}

The goal of this section is to prove Theorem~\ref{intro:BQcertificate},
which gives an independent proof of the Deligne-Mostow-Thurston construction
(\cite{DeligneMostow86}, \cite{thurstonshapes}) of ball quotients via cyclic
coverings. For this proof of concept we consider the special case of surfaces,
i.e.\ lattices in $\mathrm{PU}(1,2)$.
\par
We first prove a criterion for showing that a two dimensional smooth Deligne-Mumford
stack is a ball quotient via the Bogomolov-Miyaoka-Yau equality. Such
a criterion exists in many contexts, typically for pairs of a variety and a $\bQ$-divisor
with various hypothesis on the singularities a priori allowed, see for example
\cite{GKPT,GT22}.  We anyway found no criterion for stacks in the literature. Only
the inequality was proven in \cite{MYstacky} and only in the compact case.
\par
We then investigate the special two dimensional strata of $k$-differentials of genus
zero considered in Deligne-Mostow-Thurston, compute all the relevant intersection
numbers and construct, via a contraction of some specific divisor, the smooth
surface stack which we finally show to be a ball quotient.
\par
\subsection{Ball quotient criterion}  We provide a version of the
Bogomolov-Miyaoka-Yau inequality for stacks in the surface case,
based on \cite{KNS}. Singularity terminology and basics about the minimal model program
can be found e.g.\ in \cite{KoMori}.
\par
\begin{prop} \label{prop:BQcrit} Suppose that $\ol{\frakB}$ is a smooth
Deligne-Mumford stack of dimension 2 with trivial isotropy group at
the generic point and let $\cD_1$ be a normal crossing divisor. 
Moreover, suppose that $K_{\ol{\frakB}}(\log \cD_1)^2>0$ and  that
$K_{\ol{\frakB}}(\log \cD_1)$ intersects positively any curve not contained
in $\cD_1$. Then the Miyaoka-Yau inequality
\be\label{eq:MY}
c_1^2(K_{\ol{\frakB}}(\log \cD_1)) \leq  3 c_2(K_{\ol{\frakB}}(\log \cD_1))
\ee
holds, with equality if and only if  $\frakB = \ol{\frakB} \setminus \cD_1$ is
a ball quotient, i.e.\ there is a cofinite lattice $\Gamma \in \PU(1,n)$ such
that $\frakB= [\bB^2 /\Gamma]$ as quotient stack, where
$\bB^2= \{(z_1,z_2) \in \bC^2: |z_1|^2 + |z_2|^2 < 1\}$ is the $2$-ball.
\end{prop}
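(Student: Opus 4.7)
The strategy is to reduce the statement to the classical log Bogomolov--Miyaoka--Yau inequality for log-smooth surface pairs and to invoke a uniformization theorem for the equality case. Since $\overline{\frakB}$ is a smooth DM stack of dimension $2$ with trivial generic isotropy and $\cD_1$ is normal crossing, the stacky Chern classes of $\Omega^1_{\overline{\frakB}}(\log \cD_1)$ are well defined as intersection numbers on $\overline{\frakB}$, and $(\overline{\frakB}, \cD_1)$ is a log-smooth orbifold pair.

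First I would verify that the positivity hypotheses place us in the setting where log-BMY applies. The assumption $K_{\overline{\frakB}}(\log \cD_1)^2 > 0$ gives bigness of $K+\cD_1$, while positivity on every curve outside $\cD_1$ shows that $K+\cD_1$ is ample on the open part $\frakB$. If necessary one runs a $(K_{\overline{\frakB}}+\cD_1)$-MMP to contract those boundary components of $\cD_1$ on which $K + \cD_1$ is non-positive, staying within smooth DM-surface stacks with normal crossing boundary; the resulting log pair is log canonical with log canonical divisor nef and big. Since we are in a smooth stacky setting, the Kodaira--Kawamata--Viehweg vanishing and cone theorems carry over from the scheme-theoretic case with essentially the same proofs.

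Next, I would invoke the orbifold log-BMY inequality. In the compact orbifold case it is proved in \cite{MYstacky}, and the log extension follows the Sakai--Miyaoka--Tsuji pattern as in \cite{KNS}; the argument proceeds by equipping $\frakB$ with a complete K\"ahler--Einstein metric of negative Ricci curvature and Poincar\'e-type cusps along $\cD_1$ (whose existence for log canonical orbifold surface pairs is guaranteed by Tian--Yau-type results adapted to the orbifold setting), and integrating Chern--Weil representatives of $c_1^2$ and $c_2$ against this metric. This simultaneously yields the inequality~\eqref{eq:MY} and a pointwise characterization of the equality case as constancy of the holomorphic sectional curvature.

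For the equality case the plan is to apply the Kobayashi--Ochiai characterization: equality in log-BMY together with bigness forces the K\"ahler--Einstein metric to have constant holomorphic sectional curvature $-1$, so that the universal cover of $\frakB$ as a complex orbifold is the complex $2$-ball $\bB^2$ and the deck group yields the desired lattice $\Gamma \subset \PU(1,2)$ with $\frakB \cong [\bB^2/\Gamma]$. The main obstacle will be to lift the scheme-theoretic uniformization statements from \cite{GKPT, GT22} to the DM-stack setting; this reduces to checking that the stacky structure on $\overline{\frakB}$ is completely determined by the pair $(\overline{\frakB}, \cD_1)$ and matches that of the quotient stack on the uniformization side, which follows from the smoothness of $\overline{\frakB}$ and the normal crossing hypothesis on $\cD_1$.
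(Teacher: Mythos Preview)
Your outline diverges from the paper's approach in a way that introduces a genuine gap. The paper does \emph{not} work directly on the stack. Instead it passes to the coarse moduli space $\pi\colon\ol\frakB\to\ol B$ and encodes the stacky structure as a $\bQ$-boundary: the codimension-one isotropy loci $\cD_2^i$ (with isotropy of order~$b_i$) become the divisor $D_2=\sum(1-1/b_i)\pi(\cD_2^i)$, and the Riemann--Hurwitz relation $K_{\ol\frakB}(\log\cD_1)=\pi^*(K_{\ol B}(\log D_1)+D_2)$ identifies the stacky Chern numbers with those of the pair $(\ol B,D_1+D_2)$. After checking that this pair is log-canonical (and $(\ol B,D_2)$ log-terminal) and running a log-MMP on the \emph{coarse} surface, the paper applies the known result \cite[Theorem~12]{KNS} for pairs, then argues that in the equality case the stack and the quotient stack $[\bB^2/\Gamma]$ agree because they have the same coarse space and the same divisorial isotropy.

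Your proposal instead asserts an ``orbifold log-BMY inequality'' as input, citing \cite{MYstacky} for the compact case and saying the log extension ``follows the Sakai--Miyaoka--Tsuji pattern as in \cite{KNS}''. But this is exactly the statement to be proven; the paper's introduction to this section explicitly notes that no such criterion for stacks is available in the literature, only the compact inequality of \cite{MYstacky}. Your K\"ahler--Einstein sketch is plausible but is an outline of a different (and longer) paper, not a proof. Two further concrete issues: you never account for the codimension-one isotropy divisors $\cD_2^i$, which are the whole point of the stacky-vs-scheme discrepancy and are essential both for the singularity analysis and for matching stack structures in the equality case; and your claim that one can run a $(K+\cD_1)$-MMP ``staying within smooth DM-surface stacks with normal crossing boundary'' is not justified (contractions of log-exceptional curves on the coarse space need not lift to smooth stack contractions, which is why the paper runs the MMP downstairs).
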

\par
\begin{proof}
Let $\cD$ be the divisor defined as~$\cD_1$ together with the sum $\cD_2$ of the
divisors $\cD_{2}^i$  with non-trivial isotropy groups of order $b_i$.
Let $\pi:\ol{\frakB}\to \ol B$ be the map to the coarse space and let $D_1=\pi(\cD_1)$,
$D_2=\sum (1-1/b_i)\pi(\cD_{2}^i)$ and $D=D_1+D_2$. 
\par	
We start by assuming that the pair $(\ol B,D)$ is log-canonical and the pair
$(\ol B,D_2)$ is log-terminal. We will show that this assumptions holds in our
situation at the end of the proof.
\par	
Let $\ol B'$ be a log-minimal model given by contracting all the log-exceptional
curves in $D_1$, i.e., contracting all irreducible curves $C\subseteq D_1$ with the
properties $C^2<0$ and $(c_1(K_{\ol B})+[D_1]+[D_2])\cdot C\leq 0$, and
let $D_i'$ be the image of $D_i$, for $i=1,2$. Then 
	\[K_{\ol{B}}(\log D_1)+D_2 \=\pi^*(K_{\ol B'}(\log D'_1)+D_2').\]
Moreover the log-canonical bundle satisfies
	\begin{equation}\label{eq:RH}
		K_{\ol{\frakB}}(\log \cD_1) \= \pi^*(K_{\ol B}(\log D_1)+D_2)\,.
\end{equation}
The fact that the support of the log-exceptional curves is in $\cD_1$,  together
with \eqref{eq:RH}, implies that $K_{\ol B'}+ D_1'+D_2'$ is numerically ample.
By the assumption above on the singularities  we know that $(\ol B,D)$ is log-canonical.
Hence  we are in the situation of applying \cite[Theorem~12]{KNS}.
\par
As a consequence of~\eqref{eq:RH} we know that $c_1^2(K_{\ol{\frakB}}(\log \cD_1))$
coincides with the left hand side of the Miyaoka-Yau inequality of
\cite[Theorem~12]{KNS} applied to $\ol B'$ with boundary divisor $D_1'+D_2'$. 
\par
Moreover, by the Gauss-Bonnet theorem for DM-stacks (see e.g. \cite[Proposition~2.1]{CMZeuler}) we can also identify $c_2(K_{\ol{\frakB}}(\log \cD_1))$ with the right hand side of the inequality of \cite[Theorem~12]{KNS} applied to $\ol B'$ with boundary divisor $D_1'+D_2'$, up to non-log-terminal singularities (similarly as it was done in \cite[Section~3.2]{MYstacky}). By the  assumption above,  the pair $(\ol B,D_2)$ is log-terminal and so the previous identification of the right hand side of \cite[Theorem~12]{KNS} with $c_2(K_{\ol{\frakB}}(\log \cD))$ is true without corrections.
\par
This shows inequality \eqref{eq:MY} and that in the case of equality  $\ol B'\setminus D_1'\cong \ol B\setminus D_1$ is a ball quotient, i.e. $\ol B\setminus D_1\cong \bB^2/\Gamma$. Moreover, in this case, the divisors $D_2^i$ are the branch loci
of~$\pi$ with branch indices $b_i$.
\par
Since $\ol B\setminus D_1$ is the coarse space associated both to $\ol{\frakB}\setminus \cD_1$ and to $[\bB^2/\Gamma]$, this implies that these two DM stacks have to differ by a composition of root constructions along divisors (see e.g. \cite[Section~3.1]{MYstacky}). But since the branch indices of $D_2^i$ can be identified with the isotropy groups of the corresponding divisors in $[\bB^2/\Gamma]$, and since they coincide with the isotropy groups of the corresponding divisor $\ol B\setminus D_1$, we can identify $\ol B\setminus D_1$ with $[\bB^2/\Gamma]$, as non-trivial  root constructions would have changed the size of such isotropy groups.
\par
We are finally left to show the assumption on the singularities. 
First, there exists a resolution  $\widetilde{\frakB}$ of $\ol{\frakB}$  where the proper transform $\widetilde{\cD}$ of $\cD$  is a normal crossing divisor and the exceptional divisors $\cE_i$ are log-exceptional, i.e. $\cE_i^2<0$ and $(c_1(K_{\widetilde{\frakB}})+[\widetilde\cD_1])\cdot \cE_i\leq 0$. Indeed such a resolution can be obtained by blowing-up smooth points of the DM stack, where the numerical conditions can be checked on an étale chart
just as for the usual blow-up of a smooth point of a variety.
\par 
In this situation the corresponding exceptional divisors $E_i$ for the coarse space resolution $\widetilde{B}$ of $\ol{B}$ are also log-exceptional, i.e.,  $(c_1(K_{\widetilde{B}})+[\widetilde D_1]+[\widetilde D_2])\cdot E_i\leq 0$ and $E_i^2\leq 0$. Since contracting log-exceptional divisors does not change the singularity type, this implies that to show that $(\ol B,D_1+D_2)$ is log-canonical  and $(\ol B,D_2)$ is log-terminal, it is enough to show that $(\widetilde{B},\widetilde{D_1}+\widetilde{D_2})$ is log-canonical  and $(\widetilde{B},\widetilde{D}_2)$ is log-terminal. 
\par 
In order to do this, we observe that in general since $(\widetilde{\frakB},
\widetilde{\cD})$ is a smooth DM stack with normal crossing divisor, then
$(\widetilde{B},\widetilde{D_1}+\sum_i \widetilde{D}_2^i)$ is log-canonical. Details
are given in \cite[Theorem~5.1]{CCM}, using  \cite[Proposition A.13]{HHlogcan} . Then we
can use that $\widetilde{B}$ has at worst klt singularities (since it is a surface
with quotient singularities and by \cite[Prop. 4.18]{KoMori}). It is easy to show
that this implies that $(\widetilde{B},\widetilde{D_1}+\sum_i t_i\widetilde{D}_2^i)$
has log-canonical singularities and $(\widetilde{B},\sum_i t_i\widetilde{D}_2^i)$ has
log-terminal singularities, for any $0\leq t_i<1$. The desired statement follows then
by setting $t_i=1-1/b_i$.
\end{proof}

\subsection{Strata of genus zero satisfying (INT)}
Let $(a_1,\ldots,a_5)$ be positive integers
such that $\gcd(a_1,\dots,a_5,k) = 1$ with
\bes
\sum_{i=1}^5 a_i  = 2k,  \quad \text{and for all $i \neq j$} \quad
\Bigl(1-\frac{a_i}k - \frac{a_j}k\Bigr)^{-1} \in \bZ \quad \text{if $a_i + a_j < k$.}
\ees
The first condition states that $\mu=(-a_1,\ldots,-a_5)$ is a
type of a stratum of $k$-differentials on $5$-pointed rational lines and that
the intersection form on eigenspace giving period coordinates has the desired
signature~$(1,2)$. Imposing the gcd-condition lets us work without loss of generality
with primitive $k$-differentials. The last condition is the condition~(INT)
of \cite{DeligneMostow86}. For Deligne-Mostow this condition is key to ensure
that the period map extends as an \'etale map over all boundary divisors.
Thurston \cite{thurstonshapes} uses this condition to show that his cone manifolds
are indeed orbifolds. Mostow completed in \cite{MostowDisc} the $g=0$ picture by showing
that up to the variant $\Sigma\mathrm{INT}$ from \cite{Mostow} these are the
only ball quotient surfaces uniformized by the VHS of a cyclic cover of
$5$-punctured projective line. We recall from \cite[Section~14]{DeligneMostow86}
that there are exactly $27$ five-tuples satisfying INT, and all of them
satisfy in fact the integrality condition INT for all $i \neq j$ with $a_i + a_k
\neq k$.
\par
For us the condition INT has the most important consequence that the
enhancements $\widehat\kappa_e$ of the Abelian covers of the level graphs are
all one. This implies that ghost groups of all strata in this section are trivial.
However the condition INT also enters at other places of the following
computations of automorphism groups and intersection numbers. 
\par
In the sequel we will use the notation $\cQ = \Omega^k \cM_{0,5}(a_1,\dots,a_5)$.
We now list the boundary divisors without horizontal edges. A short case inspection
shows that the only possibilities are the level graphs $\Gamma = \Gamma_{ij}$,
see Figure~\ref{cap:B10bddiv}~left, and $\mathrm{L} = \mathrm{L}_{ij}$,
see Figure~\ref{cap:B10bddiv}~middle, that yield the 'dumbbell' divisors
with two or three legs on bottom level under the condition that
that the $a_i$'s on lower level add up to less than~$k$,
and the level graphs $\Lambda = \cherry$
that yield 'cherry' divisors, see Figure~\ref{cap:B10bddiv}~right ($V$-shaped graphs are
ruled out by $\sum a_i = 2k$).
\begin{figure}
\bes
D_{\Gamma_{45}}=\left[
\begin{tikzpicture}[
baseline={([yshift=-.5ex]current bounding box.center)},
scale=2,very thick,
bend angle=30,
every loop/.style={very thick},
comp/.style={circle,black,draw,thin,inner sep=0pt,minimum size=5pt,font=\tiny},
order bottom left/.style={pos=.05,left,font=\tiny},
order top left/.style={pos=.9,left,font=\tiny},
order bottom right/.style={pos=.05,right,font=\tiny},
order top right/.style={pos=1.9,right,font=\tiny},
order node dis/.style={text width=.75cm}]
\node[comp,fill] (T) [] {};
\node [order node dis,above left] (T-1) at (T.north east) {$-a_1$};
\node [minimum width=18pt,below right] (T-2) at (T.south east) {$-a_3$};
\path (T) edge [shorten >=7pt] (T-1.center);
\path (T) edge [shorten >=7pt] (T-2.center);
\node [minimum width=18pt,above right] (T-2) at (T.north east) {$-a_2$};
\path (T) edge [shorten >=7pt] (T-2.center);
\node[comp,fill] (B) [below=of T] {}
edge 
node [order bottom left] {$ $} 
node [order top left] {$ $} (T);
\node [minimum width=18pt,below right] (B-2) at (B.south east) {$-a_5$};
\path (B) edge [shorten >=7pt] (B-2.center);
\node [minimum width=18pt,below left] (B-2) at (B.south west) {$-a_4$};
\path (B) edge [shorten >=7pt] (B-2.center);
\end{tikzpicture}\right]
D_{\mathrm{L}_{12}}=\left[
\begin{tikzpicture}[
baseline={([yshift=-.5ex]current bounding box.center)},
scale=2,very thick,
bend angle=30,
every loop/.style={very thick},
comp/.style={circle,black,draw,thin,inner sep=0pt,minimum size=5pt,font=\tiny},
order bottom left/.style={pos=.05,left,font=\tiny},
order top left/.style={pos=.9,left,font=\tiny},
order bottom right/.style={pos=.05,right,font=\tiny},
order top right/.style={pos=.9,right,font=\tiny},
order node dis/.style={text width=.75cm}]
\node[comp,fill] (T) [] {};
\node [order node dis,above left] (T-1) at (T.north east) {$-a_1$};
\path (T) edge [shorten >=7pt] (T-1.center);
\path (T) edge [shorten >=7pt] (T-2.center);
\node [minimum width=18pt,above right] (T-2) at (T.north east) {$-a_2$};
\path (T) edge [shorten >=7pt] (T-2.center);
\node[comp,fill] (B) [below=of T] {}
edge 
node [order bottom left] {$ $} 
node [order top left] {$ $} (T);
\node [minimum width=18pt,below right] (B-2) at (B.south east) {$-a_5$};
\path (B) edge [shorten >=7pt] (B-2.center);
\node [minimum width=18pt,below left] (B-2) at (B.south west) {$-a_4$};
\path (B) edge [shorten >=7pt] (B-2.center);
\node [minimum width=18pt,above left] (B-2) at (B.north west) {$-a_3$};
\path (B) edge [shorten >=7pt] (B-2.center);

\end{tikzpicture}\right]
D_{\cherry[12][45]} = \left[
\begin{tikzpicture}[
baseline={([yshift=-.5ex]current bounding box.center)},
scale=2,very thick,
bend angle=30,
every loop/.style={very thick},
comp/.style={circle,black,draw,thin,inner sep=0pt,minimum size=5pt,font=\tiny},
order bottom left/.style={pos=.05,left,font=\tiny},
order top left/.style={pos=.9,left,font=\tiny},
order bottom right/.style={pos=.05,right,font=\tiny},
order top right/.style={pos=.9,right,font=\tiny},
order node dis/.style={text width=.75cm}]
\node[comp,fill] (T) [] {};
\path[draw] (T) edge [shorten >=7pt] (T-1.center);
\node [order node dis,above left] (T-1) at (T.north east) {$-a_3$};
\node[comp,fill] (B) [below left= 1cm and 0.5cm of T] {}
edge 
node [order bottom left] {$ $}
node [order top left] {$ $} (T); 
\node[comp,fill] (C) [below right= 1cm and 0.5cm of T] {}
edge 
node [order bottom right] {$ $}
node [order top right] {$ $} (T);
\node [minimum width=18pt,below right] (B-2) at (B.south west) {$\!\!\!-a_2$};
\path (B) edge [shorten >=7pt] (B-2.center);
\node [minimum width=18pt,below left] (B-2) at (B.south west) {$-a_1$};
\path (B) edge [shorten >=7pt] (B-2.center);
\node [minimum width=18pt,below right] (C-2) at (C.south east) {$-a_5$};
\path (C) edge [shorten >=7pt] (C-2.center);
\node [minimum width=18pt,below left] (C-2) at (C.south east) {$\,\;-a_4$};
\path (C) edge [shorten >=7pt] (C-2.center);
\end{tikzpicture}\right],
\ees
\caption{Level graphs of boundary divisors for strata
$\omoduli[0,5](a_1,\ldots,a_5)$}
\label{cap:B10bddiv}
\end{figure}
We
define $\kappa_{i,j} := k-(a_i+a_j)$, which is both the $k$-enhancement of the
single edge of $\Gamma_{i,j}$ and the negative of the $k$-enhancement of the single edge of $\mathrm{L}_{i,j}$.
\par
\begin{lemma} \label{le:Spi}
Each of the graphs $\Gamma_{i,j}$, $L_{i,j}$ and $\cherry$ is the codomain of a unique
covering of graphs $\pi \in \LG_1(\ol \cQ)$ and for each such covering $S(\pi) = 1$.
\end{lemma}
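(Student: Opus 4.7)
The proof naturally splits into two parts: (i) the existence and uniqueness of the covering $\pi$ for each of the three graph types and (ii) the computation $S(\pi) = 1$.

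For (i), the covering $\pi : \wh \Gamma_\pmarked \to \Gamma$ arises from the canonical cyclic $k$-cover construction applied to a generic $k$-differential realizing the degeneration encoded by $\Gamma$. The combinatorics of $\wh\Gamma$ is then completely determined by $\Gamma$ and the tuple $(a_1,\ldots,a_5)$: the number of preimages of a vertex $v$ equals $\gcd(k, \{m : m \text{ is an order at } v\})$, the number of preimages of an edge $e$ equals $\gcd(k,\kappa_e)$ (which by INT equals $\kappa_e$ for the edges of $\Gamma_{ij}$ and of the cherry), and the adjacency together with the $\langle\tau\rangle$-action is canonical. Uniqueness of $\pi$ follows.

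For (ii), I invoke \autoref{rem:Spi1}: $S(\pi) = 1$ provided each edge of $\Gamma$ is adjacent to at least one vertex with exactly one preimage in $\wh\Gamma$, i.e.\ a vertex $v$ with $\gcd(k, \{\text{orders at }v\}) = 1$. For $\Gamma_{ij}$, the two endpoints of the single edge have preimage counts $\gcd(k,a_i,a_j)$ and $\gcd(k,a_l,a_m,a_n,a_i+a_j)$, where $\{l,m,n\} = \{1,\ldots,5\}\setminus\{i,j\}$. A prime dividing both would divide all of $a_1,\ldots,a_5$ and $k$, contradicting the primitivity assumption $\gcd(k,a_1,\ldots,a_5)=1$; hence one endpoint is uniquely covered. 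The argument for $L_{ij}$ is identical with the roles of top and bottom swapped.

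The main obstacle will be the cherry $\cherry[ij][pq]$, where I must check separately for each of the two edges that it is adjacent to a uniquely-covered vertex. Letting $r$ denote the remaining index, the three preimage counts are $\gcd(k,a_r,a_i+a_j,a_p+a_q)$ (top) and $\gcd(k,a_i,a_j)$, $\gcd(k,a_p,a_q)$ (bottoms). The naive primitivity argument used above does \emph{not} force one of these to equal $1$ for each edge, since a prime may divide $a_i+a_j$ without dividing $a_i$ or $a_j$. To close the argument I will use the INT condition $\kappa_{ij}, \kappa_{pq} \mid k$, which rigidifies the divisibility relations between $k$ and the pair sums $a_i+a_j$ and $a_p+a_q$; combined with primitivity this forces the desired vanishing of the relevant gcds. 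If a uniform argument proves unwieldy, this cherry case can be dispatched by a finite check against the explicit list of INT tuples recorded in \cite{DeligneMostow86} that admit a cherry degeneration.
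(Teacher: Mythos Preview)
Your approach is broadly sound but differs from the paper's in an instructive way, and part~(i) has a genuine gap.

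For part~(i), the assertion that ``the adjacency together with the $\langle\tau\rangle$-action is canonical'' is precisely what needs proof; it is not automatic once the vertex and edge preimage counts are known. The paper supplies the missing argument: since $\tau$ acts transitively on the preimages of each edge, prescribing a single edge of $\wh\Gamma$ over each edge of $\Gamma$ determines the rest, and because the preimages of each vertex of $\Gamma$ form a single $\tau$-orbit, any two such prescriptions yield isomorphic graphs. (Connectedness of $\wh\Gamma$, which you also omit, follows from primitivity: a prime dividing the number of components would divide the preimage counts of every vertex and hence $\gcd(k,a_1,\ldots,a_5)$.) Without this transitivity argument your uniqueness claim is unjustified.

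For part~(ii), your route via \autoref{rem:Spi1} is correct for $\Gamma_{ij}$ and $L_{ij}$, and your observation that the cherry case is more delicate is accurate. But note that the paper's transitivity argument for~(i) already does the work: the same reasoning shows that every element of $\bfG$ (which acts by permuting half-edge preimages, i.e.\ by changing the edge prescription) produces an isomorphic graph, so $\Stab_\bfG(\wh\Gamma)=\bfG$ and hence $S(\pi)=1$ directly from the definition, uniformly for all three graph types. This bypasses \autoref{rem:Spi1} entirely and avoids the cherry case distinction and the fallback to a finite check. So the paper's approach is both shorter and more conceptual: uniqueness and $S(\pi)=1$ are two faces of the same transitivity fact.
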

\par
\begin{proof}
We will give the argument for $\Gamma_{1,2}$, the argument for the other graphs is similar.
The number of preimages of the vertices of $\Gamma_{1,2}$ is $\gcd(k,a_1,a_2)$ for the
bottom level and $\gcd(k,a_3,a_4,a_5)$ for the top level, while the edge has
$\kappa_{1,2}$ preimages.
\par
We claim that for any cover of graphs $\pi : \wh \Gamma_{\pmarked} \to \Gamma_{1,2}$ the
domain is connected. In fact, suppose there are $k'$ components. This subdivides
the top level and the bottom level into subset of equal size. This implies
$k' \mid \gcd(k,a_1,a_2)$ and $k' \mid \gcd(k,a_3,a_4,a_5)$, and hence $k'=1$
because of $\gcd(k,a_1,\dots,a_5)=1$.
\par
To construct such a cover of graphs it suffices to prescribe one edge
of $\wh \Gamma_{\pmarked}$, the other edges are then forced, since $\tau$-acts
transitively on edges. Since the vertices on top and bottom level are indistinguishable
(forming each one orbit $\tau$-orbit) the resulting graph is independent of the
choice of the first edge. In particular $\wh \Gamma_{\pmarked}$ is unique and
$S(\pi) = 1$.
\end{proof}
\par
Next we compute (self)-intersection numbers of boundary divisors.
\par
\begin{lemma} \label{le:self_intersection_bq}
The self-intersection numbers of the boundary divisors of $\ol\cQ$ are
    \bas \
      [D^\cQ_\Gamma]^2 &\= -\frac{\kappa_{i,j}^2}{k^2} - \sum_{\substack{p < q,\,
          a_p+a_q < k \\ p,q \notin \{i,j\}}}
\frac{\kappa_{i,j}\kappa_{p,q}}{k^2}, \\
      [D^\cQ_L]^2 &\= -\frac{\kappa_{i,j}^2}{k^2}
      \quad \text{and} \quad
	  [D^\cQ_\Lambda]^2 \= - \frac{\kappa_{i,j} \kappa_{p,q}}{k^2}.
\eas
The mutual intersection numbers are
	\bas \
[D^\cQ_\Gamma] \cdot [D^\cQ_\mathrm{L}] &\=
\begin{dcases*}
	\frac{|\kappa_{i,j}\kappa_{p,q}|}{k^2} & if $\Gamma \cap \mathrm{L} \neq \emptyset$ \\
0 & otherwise
\end{dcases*}	\\
[D^\cQ_\Gamma] \cdot [D^\cQ_\Lambda] &\=
\begin{dcases*}
\frac{\kappa_{i,j}\kappa_{p,q}}{k^2} & if $\Gamma \cap \Lambda \neq \emptyset$ \\
0 & otherwise.
\end{dcases*}
\eas
\end{lemma}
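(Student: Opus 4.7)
The plan is to derive each intersection number from Corollary~\ref{cor:normalbundle} (for the self-intersections) together with the transversality statement of Proposition~\ref{prop:transverse} (for the mutual intersections), evaluating the resulting classes via the push-forward formula of Lemma~\ref{le:top_degree_comp_new2}. The INT condition plays a structural role throughout: for $\Gamma$-type edges it gives $\kappa_{ij}\mid k$ directly, and a short computation using $\sum a_i=2k$ propagates this to the other edge types that appear, so that on the abelian cover $\widehat\kappa_e=\kappa_e/\gcd(\kappa_e,k)=1$ and $\ell_{\widehat\Gamma}=1$ for every boundary cover $\pi:\widehat\Gamma_\pmarked\to\Gamma$. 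Combined with Lemma~\ref{le:Spi} (which gives $S(\pi)=|\Aut(\Gamma)|=1$), the normal bundle formula collapses to
\[
c_1(\cN_\pi)=-\tfrac{1}{k}\zeta_\pi^\top-c_1(\cL_\pi^\top)+\tfrac{1}{k}\zeta_\pi^\bot,
\]
and the prefactor of the push-forward becomes simply $(\prod_e\kappa_e)/k^L$.

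For the self-intersections I would proceed case-by-case according to the dimensions of the level strata. For $\Gamma=\Gamma_{ij}$ the top is the one-dimensional stratum $\bP\Omega^k\cM_{0,4}$ and the bottom is a point, so the $\zeta_\pi^\bot$-contribution vanishes; the $\zeta_\pi^\top$-integral is evaluated by applying Corollary~\ref{cor:k-adrien} on the top, which rewrites $\zeta$ as a $\psi$-class minus a sum over further non-horizontal two-level degenerations of the four-pointed top $\bP^1$, while the $c_1(\cL_\Gamma^\top)$-term is a sum over the same set of degenerations. These two contributions combine to yield both the principal term $-\kappa_{ij}^2/k^2$ and the cross-terms indexed by pairs $\{p,q\}\subset\{1,\ldots,5\}\setminus\{i,j\}$ with $a_p+a_q<k$. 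For $L=L_{ij}$ the top is zero-dimensional and the bottom one-dimensional; since the three-legged top admits no further non-horizontal degeneration, $\cL_L^\top=0$ and only the $\zeta_\pi^\bot$-term contributes, producing the clean $-\kappa_{ij}^2/k^2$. For the cherry $\Lambda$ both levels are zero-dimensional (the bottom being a disconnected product of two three-legged strata), both $\zeta$-terms are trivial and $\cL_\Lambda^\top=0$, so the value emerges from the prefactor $\prod_e\kappa_e/k^L=\kappa_{ij}\kappa_{pq}/k^2$ alone.

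For the mutual intersections I would apply Proposition~\ref{prop:transverse} to identify $[D_{\pi_1}^\cQ]\cdot[D_{\pi_2}^\cQ]$ with the class of $D_{\pi_1}\cap D_{\pi_2}$, which is non-empty exactly when there is a common three-level degeneration undegenerating to both graphs. A direct combinatorial inspection shows that this happens for $(\Gamma_{ij},L_{pq})$ and $(\Gamma_{ij},\cherry[pq][rs])$ precisely when the leg-sets $\{i,j\}$ and $\{p,q\}$ (or the cherry pairs) are disjoint, in which case the three-level graph is unique; its class is then evaluated via Lemma~\ref{le:top_degree_comp_new2} with $L=2$ and $\prod_e\kappa_e=|\kappa_{ij}\kappa_{pq}|$ to give the stated number. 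The main obstacle is the matching of the $\zeta$-boundary sum from Corollary~\ref{cor:k-adrien} with the $\cL_\Gamma^\top$-correction in the $[D_\Gamma]^2$ computation: the $\zeta$-subtraction involves \emph{all} two-level degenerations of the top $\bP^1$ (including horizontal ones) while $\cL_\Gamma^\top$ records only the non-horizontal ones, and the interplay between these sums together with the identity $\sum a_i=2k$ is what isolates the pairs $\{p,q\}$ with $a_p+a_q<k$ in the final formula.
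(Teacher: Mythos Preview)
Your overall strategy matches the paper's: use Corollary~\ref{cor:normalbundle} for the self-intersections, Corollary~\ref{cor:k-adrien} to rewrite $\zeta$, and Lemma~\ref{le:top_degree_comp_new2} together with Lemma~\ref{le:Spi} to evaluate. The treatment of $[D^\cQ_{\Gamma_{ij}}]^2$, $[D^\cQ_{L_{ij}}]^2$, and the mutual intersections is along the right lines.

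There is, however, a genuine error in your handling of the cherry $[D^\cQ_\Lambda]^2$. You assert that both levels of $\cherry$ are zero-dimensional and hence both $\zeta$-terms vanish, making $c_1(\cN_\Lambda)$ trivial and letting the answer emerge from the prefactor. This is wrong: the bottom level of the cherry is a \emph{generalized} stratum with two components, and the level-wise projectivization is a single overall projectivization, not one per component. Unprojectivized, the bottom is $(\bC^*)^2$ (one scale per three-pointed $\bP^1$); projectivized it is one-dimensional, parametrized by the ratio of the two scales. So $d^\bot_\Lambda = 1$ and $d^\top_\Lambda = 0$, consistent with the dimension formula $\sum d_i = d_\cH - L = 2-1$. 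Consequently $\zeta_\Lambda^\bot$ is \emph{not} trivial on $D^\cQ_\Lambda$, and it is precisely the term $\tfrac{1}{k}\zeta_\Lambda^\bot$ in the normal bundle formula that produces the value $-\kappa_{ij}\kappa_{pq}/k^2$. (Note also that even your prefactor is off: with $L=1$ it is $\kappa_{ij}\kappa_{pq}/k$, not $\kappa_{ij}\kappa_{pq}/k^2$; and if $c_1(\cN_\Lambda)$ were actually zero, the self-intersection would be $0$, not the prefactor.)

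A minor point on your final paragraph: the ``obstacle'' you describe does not in fact arise. The sum in Corollary~\ref{cor:k-adrien} runs over $\prescript{}{i}\LG_1(\ol\cQ)$, and by definition $\LG_1$ consists of level graphs \emph{without} horizontal edges. So the $\zeta$-subtraction already involves only non-horizontal degenerations, matching the support of $\cL^\top_\Gamma$ on the nose; no extra cancellation via $\sum a_i = 2k$ is needed at that step.
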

\par
\begin{proof}
For the self-intersection numbers consider the formula in
Corollary~\ref{cor:normalbundle}. As remarked above, the condition (INT) implies that all
enhancements of the Abelian coverings are $1$ and hence the same is true for
the $\hat{\ell}$-factor in the corollary.
Let $\Delta_{i,j}^{p,q}$ denote the slanted cherry with points $i,j$ on bottom level and points $p,q$ on middle level.
Together with Corollary~\ref{cor:k-adrien} and Corollary~\ref{cor:normalbundle}
we obtain
\[
[D^\cQ_{\Gamma_i,j}]^2 \= \frac{-1}{k} \zeta^\top - c_1(\cL^\top)
\= -\frac{\kappa_{i,j}^2}{k^2} \int_{\ol\cM_{0,4}} \psi_{1} - \sum_{\substack{p < q,\,
a_p+a_q < k \\ p,q \notin \{i,j\}}} [D_{\Delta_{i,j}^{p,q}}^\cQ].
\]
The degree of the slanted cherry is
\begin{equation} \label{eq:stackfactor}
\int_{\ol\cQ} [D_{\Delta_{i,j}^{p,q}}^\cQ] \= \frac{\kappa_{i,j} \kappa_{p,q}}{k^2}
\end{equation}
by applying the second formula in \autoref{le:top_degree_comp_new2} and \autoref{le:Spi}.
The other numbers are obtained similarly.
\end{proof}
\par
\subsection{The contracted spaces}
We want to construct the compactified ball quotient candidate $\ol\frakB$ from $\ol\cQ$ by
contracting the all the divisors $D_{\mathrm{L}}^\cQ$ and $D_{\Lambda}^\cQ$.
This is in fact possible:
\par
\begin{lemma} \label{le:contractible}
The divisors $D_{\mathrm{L}}^\cQ$ and $D_{\Lambda}^\cQ$ of $\ol\cQ$ are contractible.
The DM-stack $\ol\frakB$ obtained from $\ol\cQ$ by contracting those divisors is smooth.
If  $D_{\wt{\mathrm{L}}}^\frakB$ and $D_{\wt\Lambda}^\frakB$ denote
the points in $\frakB$ obtained by contracting the corresponding divisors in~$\cQ$
then
\[	\int_{\ol\frakB} [D_{\wt{\mathrm{L}}}^{\frakB}] \= \frac{\kappa_{i,j}^2}{k^2} \quad
\text{and} \quad
\int_{\ol\frakB} [D_{\wt\Lambda}^{\frakB}] = \frac{\kappa_{i,j}\kappa_{p,q}}{k^2}.
\]
\end{lemma}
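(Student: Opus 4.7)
The plan is to identify each divisor to be contracted as a (possibly stacky) rational curve of negative self-intersection, verify that the global configuration has a negative-definite intersection matrix, and then apply a Grauert-type contractibility theorem in the smooth DM-stack setting.

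First I would analyze the geometry of each divisor. Applying \autoref{prop:linatboundary} with $d=2$ and $L=1$ to the cases $L_{ij}$ and $\Lambda = \cherry[ij][pq]$, the level-stratum decomposition shows that the only moduli of $D^\cQ_{L_{ij}}$ sits on the bottom level, which is the one-dimensional stratum $\bP\Omega^k\cM_{0,4}$ and hence a (stacky) $\bP^1$, while the cherry $D^\cQ_\Lambda$ acquires its one-dimensional modulus from the projectivization of the two 3-pointed bottom-level components, again a (stacky) $\bP^1$. The self-intersections in \autoref{le:self_intersection_bq} are strictly negative for both families (using $a_i+a_j\neq k$ and the sign convention for $\kappa_{ij}$), so these are contraction candidates in the Grauert sense.

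Second I would determine the mutual intersection pattern. Using $\sum_i a_i = 2k$, one checks that distinct $L_{ij}$ cannot share a pair and, more importantly, cannot degenerate simultaneously to a common three-level graph (the only such candidate would be a $V$-shape ruled out by the signature constraint). A similar case analysis rules out intersections between distinct cherries. The only non-zero intersections within $Z := \bigl(\bigcup D^\cQ_{L_{ij}}\bigr) \cup \bigl(\bigcup D^\cQ_\Lambda\bigr)$ are between $L_{ij}$ and a cherry with $\{i,j\}$ among its bottom-level pairs, in which case \autoref{le:self_intersection_bq} gives the intersection. Hence $Z$ decomposes into small connected clusters (isolated $L$'s, isolated cherries, or short trees of $L$'s attached to a cherry), each with a negative-definite intersection matrix.

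Third I would invoke a stacky Grauert--Artin contraction criterion, applicable because $\ol\cQ$ is smooth and $Z$ is negative-definite, to produce the birational morphism $\ol\cQ \to \ol\frakB$ onto a proper algebraic DM-stack that contracts exactly $Z$. Smoothness of $\ol\frakB$ at each image point is checked locally via the normal bundle formula in \autoref{prop:generalnormalbundle}: under the INT hypothesis $k/|\kappa_{ij}|\in \bZ$, so $[D^\cQ_{L_{ij}}]^2 = -1/(k/|\kappa_{ij}|)^2$ has the exact shape of the self-intersection of the exceptional $\bP^1$ in the weighted blow-up of a smooth stacky point with cyclic isotropy of order $k/|\kappa_{ij}|$, and an analogous matching applies to cherries with isotropy of order $(k/\kappa_{ij})(k/\kappa_{pq})$. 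The stated intersection numbers $\int_{\ol\frakB}[\wt L^\frakB] = \kappa_{ij}^2/k^2$ and $\int_{\ol\frakB}[\wt\Lambda^\frakB] = \kappa_{ij}\kappa_{pq}/k^2$ are then immediate from $\int [\mathrm{pt}] = 1/|G|$ for a stacky point with isotropy $G$.

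The main obstacle will be the smoothness verification in the third step: identifying the correct isotropy groups at the image stacky points and reconciling them with the stabilizer data of the cyclic cover of graphs $\pi$. The INT condition enters decisively here, since the integrality of $k/|\kappa_{ij}|$ is exactly what forces the contraction to land in a smooth stacky point rather than a mere cyclic quotient singularity; without it the same argument would only yield that $\ol\frakB$ is a normal log-canonical DM-stack.
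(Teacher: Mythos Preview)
Your overall strategy---identify each divisor as a negative stacky rational curve and contract---is sound, and the paper does essentially this. But your smoothness argument has a genuine gap: matching the self-intersection number to that of an exceptional divisor of a putative weighted blow-up does not establish that the contraction is smooth. A negative-definite Grauert--Artin contraction only yields a normal algebraic space (or DM-stack); smoothness must be verified by other means.

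The paper's approach is to pass to an explicit smooth cover of a tubular neighbourhood on which the divisor becomes an honest $(-1)$-curve, and then apply Castelnuovo. For the cherry this is straightforward: one computes the automorphism group of a point on $D_\Lambda^{\cH_k^\pmarked}$ directly, finds the generic isotropy on $D_\Lambda^\cQ$ to be cyclic of order $k^2/(\kappa_{i,j}\kappa_{p,q})$, checks that the same holds at the two slanted-cherry boundary points, and takes the obvious cyclic cover. For $D_{\mathrm L_{ij}}^\cQ$, however, the argument is considerably more delicate and this is where your proposal falls short. The generic isotropy is $k/|\kappa_{i,j}|$, but after the corresponding cyclic cover the three boundary points of $\wt D_{\mathrm L}^\cQ$ still carry residual isotropy of orders $k/\kappa_{p,q}$, $k/\kappa_{p,r}$, $k/\kappa_{q,r}$. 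One must then take a further cover by the triangle group $T(k/\kappa_{p,q},k/\kappa_{p,r},k/\kappa_{q,r})$, argue that $a_i+a_j>k$ forces this triangle group to be \emph{spherical}, and verify case by case that $|T|=k/|\kappa_{i,j}|$. Only then does the preimage become a $(-1)$-curve in a smooth chart. In particular the isotropy group of the contracted point $D_{\wt{\mathrm L}}^\frakB$ has order $k^2/\kappa_{i,j}^2$, not $k/|\kappa_{i,j}|$ as you wrote; your stated order would give $\int[D_{\wt{\mathrm L}}^\frakB]=|\kappa_{i,j}|/k$, contradicting the lemma. This triangle-group step is the heart of the argument and is entirely absent from your proposal.

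A minor point: your claimed intersection between $D_{\mathrm L_{ij}}^\cQ$ and a cherry with $\{i,j\}$ on bottom level is vacuous, since $\mathrm L_{ij}$ requires $a_i+a_j>k$ while a cherry bottom pair requires the opposite inequality. In fact the $\mathrm L$- and $\Lambda$-divisors are pairwise disjoint from one another (each meets only $\Gamma$-divisors), so the Grauert step is actually simpler than you suggest---but this does not help with the smoothness issue.
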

\begin{proof}
For each of the two types of boundary divisors $D_\mathrm{L}^\cQ$ and $D_\Lambda^\cQ$,
we will write a neighborhood~$U$ as quotient stack $[\wt{U}/G]$ with $\wt{U}$ smooth,
and show that
the preimage of the boundary divisor in~$\wt{U}$ is a~$\bP^1$ with self-intersection
number~$-1$. Castelnuovo's criterion then implies that this curve is smoothly
contractible. The order of~$G$ will be $\tfrac{k^2}{\kappa_{i,j}^2}$ for $D_\mathrm{L}^\cQ$
and $\tfrac{k^2}{\kappa_{i,j}\kappa_{p,q}}$ for $D_\Lambda^\cQ$. 	After contracting
the covering $\bP^1$, the quotient is a point with isotropy group~$G$ and the claim
on the degrees follows.
\par
We first consider a cherry divisor $D_\Lambda^\cQ$. Let $D_\Lambda^{\cH_k^\pmarked}$ denote
its preimage in $\cH_k^\pmarked$. As all the Abelian enhancements of the cover
of $\cherry$ are one, the divisor $D_\Lambda^{\cH_k^\pmarked}$ is irreducible, in fact
isomorphic to~$\bP^1$ with coordinates the scales of the differential forms on
the cherries.
\par
We compute the order of the automorphism group of any point $(\wh X, \wh \omega)$
in $D_\Lambda^{\cH_k^\pmarked}$. Suppose first that $(\wh X, \wh \omega)$ is generic.
The irreducible components of $\wh X$ group into three $\tau$-orbits: The components $\wh X^\top$ corresponding to the top-level vertex of $\cherry$, the components $\wh X^\bot_{i,j}$ corresponding to the vertex with marked points $i,j$, and the components $\wh X^\bot_{p,q}$ corresponding to the vertex with marked points $p,q$.
Observe that there are $\kappa_{i,j}$ edges between $\wh X^\top$ and $\wh X^\bot_{i,j}$ and $\kappa_{p,q}$ edges between $\wh X^\top$ and $\wh X^\bot_{p,q}$.
The restriction of $\tau$ to each of the three (not necessarily connected) curves $\wh X^\top$, $\wh X^\bot_{i,j}$, $\wh X^\bot_{p,q}$ has order $k$.
Given an automorphism of the complete curve $\wh X$ its restrictions to~$\wh X^\top$
and $\wh X^\bot_{i,j}$ need to agree on the $\kappa_{i,j}$ nodes, and the analogue
argument applies to $\wh X^\bot_{p,q}$.
Hence after fixing the automorphism on the top-level curve $\wh X^\top$, there are $\tfrac{k^2}{\kappa_{i,j}\kappa_{p,q}}$ possible choices for the automorphism on the two bottom-level curves left.
Together with the $k$ choices for the top-level automorphism, we obtain
\[
	|\Aut(\wh X, \wh \omega)| \= \frac{k^3}{\kappa_{i,j}\kappa_{p,q}}.
\]
As the non-representable map $\cH_k^\pmarked \to \cQ$ has degree $\tfrac{1}{k}$, this
yields that the generic point of $D_\Lambda^\cQ$ has an isotropy group of size
$r := \frac{k^2}{\kappa_{i,j}\kappa_{p,q}}$. Exactly the same argument also applies to the
two boundary points of $D_\Lambda^\cQ$ corresponding to the slanted cherries.
\par
The automorphism group is thus generated by multiplying the transversal $t$-parameter
(compare \autoref{sec:decompLTB}) by an $r$-th root of unity in local charts
covering all of~$\cherry$. We may thus take for~$U$ any tubular neighborhood
of~$D_\Lambda^\cQ$ and take
a global cover~$\wt{U}$ of degree $\frac{k^2}{\kappa_{i,j}\kappa_{p,q}}$. Comparing with
the degree of the normal bundle in \autoref{le:self_intersection_bq} shows that
preimage of~$D_\Lambda^\cQ$ in~$\wt{U}$ is a $(-1)$-curve. 
\par
We now consider a dumbbell divisor $D_\mathrm{L}^\cQ$. As above one checks that the
isotropy group at the generic point of $D_\mathrm{L}^\cQ$ is of
order~$\tfrac{k}{|\kappa_{i,j}|}$ and that the isotropy groups of the boundary points
of the divisor have a quotient group of that order. Consider a tubular neighborhood
of~$D_\mathrm{L}^\cQ$ and a degree $\tfrac{k}{|\kappa_{i,j}|}$ cover that trivializes
the isotropy group at the generic point. Let $\wt D_\mathrm{L}^\cQ$ be the preimage
of the boundary divisor in this cover.
\par
Let $p,q,r$ denote the three marked points on the bottom level of a point in
$\mathrm{L}_{i,j}$. By applying the above line of arguments again, the three boundary
points of $\wt D_\mathrm{L}^\cQ$ have cyclic isotropy groups of sizes
$\tfrac{k}{\kappa_{p,q}}$, $\tfrac{k}{\kappa_{p,r}}$ and $\tfrac{k}{\kappa_{q,r}}$
respectively. The triangle group $T = T(\tfrac{k}{\kappa_{p,q}}, \tfrac{k}{\kappa_{p,r}},
\tfrac{k}{\kappa_{q,r}})$ is always spherical, because $a_i + a_j > k$ implies
$a_p + a_q + a_r < k$ and hence
\[
2-(1-\frac{\kappa_{p,q}}{k}) - (1-\frac{\kappa_{p,r}}{k}) - (1-\frac{\kappa_{q,r}}{k}) = 2-2\frac{a_p + a_q + a_r}{k} > 0.
\]
This implies that the $T$-cover  of $\wt D_\mathrm{L}^\cQ$ ramified to order
${k}/{\kappa_{p,q}}$ along the divisor where~$\{p,q\}$ have come together etc,
trivializes the isotropy groups on the boundary divisor~$\wt D_\mathrm{L}^\cQ$ and
the preimage of $\wt D_\mathrm{L}^\cQ$ is a~$\bP^1$. More precisely, the isotropy
groups of order ${k}/{\kappa_{p,q}}$ do not fix isolated points on the boundary
divisor but have one-dimensional stabilizer, the boundary divisors
intersecting~$\wt D_\mathrm{L}^\cQ$. This implies that the above $T$-cover actually 
provides a chart of a full tubular neighborhood.
\par
It remains to show that $|T| = {k}/{|\kappa_{i,j}|}$ in order to conclude with 
the normal bundle degree from \autoref{le:self_intersection_bq} that this $\bP^1$
is a $(-1)$-curve. To show this, recall that as $T$ is spherical, there are only
the cases $(\tfrac{k}{\kappa_{p,q}}, \tfrac{k}{\kappa_{p,r}},\tfrac{k}{\kappa_{q,r}}) = (2,2,n)$ for $n \in \bN_{\geq 2}$ and $(\tfrac{k}{\kappa_{p,q}}, \tfrac{k}{\kappa_{p,r}},\tfrac{k}{\kappa_{q,r}}) = (2,3,n)$ for $n \in \{3,4,5\}$ to consider.
In the first case the order of $T(2,2,n)$ is $2n$, and assuming that $\tfrac{k}{\kappa_{p,q}} = \tfrac{k}{\kappa_{p,r}} = 2$, one easily checks that $2\tfrac{k}{\kappa_{q,r}} = \tfrac{k}{|\kappa_{i,j}|}$ by using $\sum_i a_i = 2k$.
In the second case the order of $T(2,3,n)$ is $2\lcm(6,n)$, and the claimed equality follows with a similar argument.
\end{proof}
\par
We will now compute the Chern classes of~$\ol\frakB$. Let $c : \ol \cQ \to
\ol\frakB$ denote the contraction map. Let
\[
	\mathbf{\Gamma} := \{(i,j) \;:\; i<j, a_i+a_j<k\} \quad \text{and} \quad
	\mathbf{L} := \{(i,j) \;:\; i<j, a_i+a_j>k\}
\]
be the pairs of integers appearing as indices of the $\Gamma_{i,j}$ and $L_{i,j}$.
Let $\mathrm{I} = \mathrm{I}^{pq}_{ij}$ denote the common degeneration of $\Gamma_{ij}$
and $\mathrm{L}_{pq}$, i.e.~the three-level graph with points $p, q$ on bottom level,
$i,j$ on top level and the remaining point on the middle level. Accordingly, we write
\begin{align*}
	\mathbf{\Lambda} &:= \{(i,j,p,q) \;:\; i<j, i<p<q, j\notin\{p,q\}, a_i+a_j<k, a_p+a_q<k\} \quad \text{and} \\
	\mathbf{I} &:= \{(i,j,p,q) \;:\; i<j, i<p<q, j\notin\{p,q\}, a_i+a_j>k, a_p+a_q<k\}
\end{align*}
for the quadruples of possible indices. Recall that $D_{\hor}$ is the union of all
boundary divisors $D_{H_{ij}}$ whose level graph has a horizontal edge, i.e.\
corresponding to pairs~$(i,j)$ with $a_i + a_j = k$.
We write
\[
    \mathbf{H} := \{(i,j) \;:\; i<j, a_i+a_j=k\}.
\]
\par
We summarize the intersections of the boundary divisors: The cherry~$D^\cQ_{\cherry}$
intersects precisely $D^\cQ_{\Gamma_{ij}}$ and $\Gamma^\cQ_{pq}$. The divisor
$D_{L_{ij}}$ intersects precisely the three divisors $D^\cQ_{\Gamma_{ab}}$ for any
pair $(a,b)$ disjoint from $\{i,j\}$. For the divisor $D^\cQ_{\Gamma_{ij}}$ consider
any pair $(p,q)$ of the three remaining points as $\{p,q,r\}$. This gives an
intersection with a cherry if $a_p + a_q < k$, with a horizontal divisor if $a_p + a_q = k$
and with an $L$-divisor if $a_p + a_q > k$.
Consequently, the divisor
$D^\cQ_{H_{ij}}$ intersects precisely the three divisors $D^\cQ_{\Gamma_{ab}}$ for any
pair $(a,b)$ disjoint from $\{i,j\}$.
\par
\begin{lemma} \label{le:self_intersection_bq_downstairs}
The self-intersection numbers of the boundary divisors of $\ol\frakB$ are
\bes
	[D^\frakB_{\Gamma_{i,j}}]^2 \= -\frac{\kappa_{i,j}^2}{k^2} + \sum_{\substack{p < q,\,
	a_p+a_q > k \\ p,q \notin \{i,j\}}}
	\frac{\kappa_{i,j}^2}{k^2}
	\qquad \text{and} \qquad
	[D^\frakB_{H_{i,j}}]^2 \= -1.
\ees
The mutual intersection numbers are for $\{i,j\}\cap\{p,q\} = \emptyset$ given by
\bes
[D^\frakB_{\Gamma_{i,j}}] \cdot [D^\frakB_{\Gamma_{p,q}}] \= \frac{\kappa_{i,j}\kappa_{p,q}}{k^2}
\qquad \text{and} \qquad
[D^\frakB_{\Gamma_{i,j}}] \cdot [D^\frakB_{H_{p,q}}] \= \frac{\kappa_{i,j}}{k}
\ees
and for $|\{i,j,p\}| = 3$ by
\bas \  
[D^\frakB_{\Gamma_{i,j}}] \cdot [D^\frakB_{\Gamma_{i,p}}] &\=
\begin{dcases*}
	\frac{\kappa_{i,j}\kappa_{i,p}}{k^2} & if $a_i+a_j+a_p < k$ \\
	0 & otherwise.
\end{dcases*}
\eas
\end{lemma}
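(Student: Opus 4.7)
The plan is to compute each intersection number on $\ol\frakB$ by pulling back along the birational contraction $c\colon \ol\cQ\to \ol\frakB$ from \autoref{le:contractible} and using the values already supplied on $\ol\cQ$ by \autoref{le:self_intersection_bq}. Because $c$ is birational, the projection formula yields $\alpha\cdot\beta = c^*\alpha\cdot c^*\beta$ for any divisor classes $\alpha,\beta$ on $\ol\frakB$. For each $X\in\{\Gamma_{ij},H_{ij}\}$, the strict transform of $D^\frakB_X$ in $\ol\cQ$ is $D^\cQ_X$, so one writes
\[
c^*[D^\frakB_X] \= [D^\cQ_X] + \sum_{E}\alpha_{X,E}[D^\cQ_E]\,,
\]
with $E$ ranging over the contracted divisors $D^\cQ_{L_{pq}}$ and $D^\cQ_{\Lambda_{pq,rs}}$ and the coefficients $\alpha_{X,E}$ uniquely determined by the orthogonality conditions $c^*[D^\frakB_X]\cdot[D^\cQ_{E'}] = 0$ for every contracted $E'$.

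Solving this linear system is the central step. Many entries of the associated intersection matrix among contracted divisors vanish: V-shaped graphs needed for two disjoint $L$'s to meet are excluded by $\sum a_i = 2k$, and the conflict between $\kappa_{ij}>0$ for cherries and $\kappa_{ij}<0$ for $L$'s rules out most $L$-$\Lambda$ pairings. The remaining cross-intersections (for instance $L_{ij}\cap L_{ip}$ at a common triple-collision point) are finite in number and are computed directly from the local model of \autoref{le:contractible}. For the $\alpha_{X,E}$ that decouple, one reads off $\alpha_{X,E} = -[D^\cQ_X]\cdot[D^\cQ_E]/[D^\cQ_E]^2$ from \autoref{le:self_intersection_bq}; in particular $\alpha_{\Gamma_{ij},\Lambda_{ij,pq}} = 1$ and $\alpha_{\Gamma_{ij},L_{pq}} = \kappa_{ij}/|\kappa_{pq}|$ in the cases where the divisors actually meet.

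With the pullbacks in hand, the five intersection numbers on $\ol\frakB$ are obtained by expanding $c^*[D^\frakB_X]\cdot c^*[D^\frakB_{X'}]$ and reducing each summand via \autoref{le:self_intersection_bq}. The one further ingredient needed is $[D^\cQ_{H_{ij}}]^2$, which is not given in \autoref{le:self_intersection_bq} but which follows from \autoref{prop:normalhor}: the horizontal normal bundle is $-\psi_{e^+}-\psi_{e^-}$, integrated over the one-dimensional stratum $D^\cQ_{H_{ij}}$ using the $\psi$-evaluation tools of \autoref{sec:kdiff}. For $[D^\frakB_{\Gamma_{ij}}]^2$ specifically, each cherry correction contributes $2\alpha_\Lambda[D^\cQ_\Gamma]\cdot[D^\cQ_\Lambda]+\alpha_\Lambda^2[D^\cQ_\Lambda]^2 = +\kappa_{ij}\kappa_{pq}/k^2$, which cancels precisely the corresponding negative summand of $[D^\cQ_{\Gamma_{ij}}]^2$, while each disjoint $L_{pq}$ gives a net $+\kappa_{ij}^2/k^2$ after combining the two $\alpha_L$-terms.

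The hard part will be the full combinatorial bookkeeping rather than any single conceptual step: enumerating, for each pair $(X,X')$, the complete list of contracted divisors meeting each strict transform, tracking sign conventions via $|\kappa|$, and verifying that the corrections collapse to the clean formulas stated -- particularly the delicate vanishing $[D^\frakB_{\Gamma_{ij}}]\cdot[D^\frakB_{\Gamma_{ip}}] = 0$ when $a_i+a_j+a_p\geq k$, which reflects that the triple-collision divisor shared by the two strict transforms is of $\Gamma$- or $H$-type (hence not contracted) exactly in this range. Condition (INT) enters implicitly throughout by forcing all abelian-cover enhancements to equal one, so that the $\ell_{\wh\Gamma}$ and prong-matching factors of \autoref{le:self_intersection_bq} trivialize and the final intersection numbers take the clean form displayed in the lemma.
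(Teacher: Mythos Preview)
Your approach is essentially the paper's own: write $c^*[D^\frakB_X]$ as the strict transform plus a correction supported on the contracted divisors, determine the coefficients by orthogonality to each contracted class (which indeed gives $\alpha_{\Gamma_{ij},\Lambda_{ij,pq}}=1$ and $\alpha_{\Gamma_{ij},L_{pq}}=\kappa_{ij}/|\kappa_{pq}|$), and then expand using \autoref{le:self_intersection_bq}; for $[D^\cQ_{H_{ij}}]^2$ use \autoref{prop:normalhor}. One simplification you can make: the contracted divisors are mutually disjoint in $\ol\cQ$ (each $D^\cQ_L$ meets only $\Gamma$-divisors with disjoint indices, and each cherry meets only its two $\Gamma$-divisors), so the linear system for the $\alpha_{X,E}$ is already diagonal and the cross-terms like $L_{ij}\cap L_{ip}$ you worry about do not occur.
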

\begin{proof}
We claim that the pull back of $[D_{\Gamma_{i,j}}^\frakB]$ is given by
\bes
c^* [D_{\Gamma_{i,j}}^\frakB] \= [D_{\Gamma_{i,j}}^\cQ] + \sum_{\substack{p<q,\,a_p+a_q>k\\p,q\notin\{i,j\}}} \frac{\kappa_{i,j}}{|\kappa_{p,q}|} [D_{L_{p,q}}^\cQ] + \sum_{\substack{p<q,\,a_p+a_q<k\\p,q\notin\{i,j\}}} [D_{\cherry}^\cQ].
\ees
To determine the coefficients in the above expression, one may intersect the equation $c^* [D_{\Gamma_{i,j}}^\frakB] = [D_{\Gamma_{i,j}}^\cQ] + \sum_{p,q} l_{p,q} [D_{L_{p,q}}^\cQ] + \sum_{p,q} \lambda_{p,q} [D_{\cherry}^\cQ]$ with unknown coefficients with each of the divisors $[D_{L_{p,q}}^\cQ]$ and $[D_{\cherry}^\cQ]$ in turn.
The left hand side vanishes by push-pull, and the intersection numbers on the right hand side are given by \autoref{le:self_intersection_bq}.
The claimed intersection numbers involving only $\Gamma$-divisors follow again by \autoref{le:self_intersection_bq}.
\par
The pull back of the horizontal divisor is given by $c^*[D_{H_{i,j}}^\frakB] = [D_{H_{i,j}}^\cQ]$.
The intersection number $[D^\frakB_{\Gamma_{i,j}}] \cdot [D^\frakB_{H_{p,q}}]= [D^\cQ_{\Gamma_{i,j}}] \cdot [D^\cQ_{H_{p,q}}]$ follows from \autoref{le:top_degree_comp_new2} and \autoref{le:Spi}.
Finally by \autoref{prop:normalhor} and~\eqref{eq:psicomp}, the normal bundle
of $[D_{H_{i,j}}^\cQ]$ is given by $-\psi_e$ in $\CH(D_{H_{i,j}}^\cQ)$, where
$\psi_e$ is the $\psi$-class supported on the half edge of $H_{i,j}$ that
is adjacent to the vertex with three adjacent marked points.
\end{proof}
\par
\begin{prop} \label{prop:allonBQ} The log canonical bundle on~$\ol\frakB$ has
first Chern class
\be \label{eq:c1onB}
c_1(\Omega^1_{\ol\frakB}(\log D_{\hor}))
\= \sum_{i,j \in \mathbf{\Gamma}} (\frac{k}{2\kappa_{i,j}} - 1)
[D_{\Gamma_{i,j}}^{\frakB}] + \frac{1}{2} [D_{\hor}^{\frakB}] \qquad \text{in} \CH_1(\frakB)
\ee
\par
Its square and the second Chern class are given
by
\be \label{eq:c1squareonB}
c_1(\Omega^1_{\ol\frakB}(\log D_{\hor}))^2 \= 6 - 3\sum_{i,j \in \mathbf{\Gamma}} \frac{\kappa_{i,j}}{k}
+ 3\sum_{i,j \in \mathbf{L}} \frac{\kappa_{i,j}^2}{k^2}
+ 3\sum_{i,j,p,q \in \mathbf{\Lambda}} \frac{\kappa_{i,j}\kappa_{p,q}}{k^2}
\ee
and
\be \label{eq:c2onB}
c_2(\Omega^1_{\ol\frakB}(\log D_{\hor})) \= 2 - \sum_{i,j \in \mathbf{\Gamma}} \frac{\kappa_{i,j}}{k}
+ \sum_{i,j \in \mathbf{L}} \frac{\kappa_{i,j}^2}{k^2}
+ \sum_{i,j,p,q \in \mathbf{\Lambda}} \frac{\kappa_{i,j}\kappa_{p,q}}{k^2}.
\ee
respectively.
\end{prop}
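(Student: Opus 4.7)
The plan is to combine the Chern class and character formulas (\autoref{thm:c1cor}, \autoref{intro:Chern}) applied to $\ol\cQ$ with a push-pull computation along the birational contraction $c:\ol\cQ\to\ol\frakB$ of \autoref{le:contractible}, together with the intersection numbers of \autoref{le:self_intersection_bq_downstairs}.

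For equation~\eqref{eq:c1onB}, I would first apply \autoref{thm:c1cor} to $\ol\cQ$ (viewed as linear submanifold via its canonical cover~$\ol\cH_k$). Condition~(INT) forces all abelian enhancements $\wh\kappa_e=1$, hence $\ell_{\wh\Gamma}=1$, and each non-horizontal two-level graph $\Gamma_{ij}$, $L_{ij}$, $\cherry$ has top level a three-pointed~$\bP^1$, giving $N_\Gamma^\top=1$ and coefficient $N-N_\Gamma^\top=2$. After eliminating the resulting $\zeta$-class via \autoref{cor:k-adrien} (choosing any convenient leg) and converting the resulting $\psi_i$ into boundary divisors of $\moduli[0,5]$ by Keel's relations, I obtain a pure boundary expression for $c_1(\Omega^1_{\ol\cQ}(\log\partial\cQ))$. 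The transfer to $\ol\frakB$ then uses that for a birational contraction of smooth DM stacks whose exceptional locus lies in $D_L^\cQ\cup D_\Lambda^\cQ$, one has $c_1(\Omega^1_{\ol\frakB}(\log D_{\hor}))=c_*c_1(\Omega^1_{\ol\cQ}(\log D_{\hor}^\cQ))$, combined with the elementary identity $c_1(\Omega^1_{\ol\cQ}(\log\partial\cQ))=c_1(\Omega^1_{\ol\cQ}(\log D_{\hor}^\cQ))+[D_\Gamma^\cQ]+[D_L^\cQ]+[D_\Lambda^\cQ]$. Pushforward then kills $c_*[D_L^\cQ]=c_*[D_\Lambda^\cQ]=0$ (contraction to points), and after collecting terms using $\sum_i a_i=2k$ the coefficients simplify to the form in~\eqref{eq:c1onB}.

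For equation~\eqref{eq:c1squareonB}, I would square the expression obtained in~\eqref{eq:c1onB} and substitute the intersection numbers from \autoref{le:self_intersection_bq_downstairs}, then reorganize the sum according to the trichotomy of $a_p+a_q+a_r$ relative to~$k$ (which controls whether the corresponding intersections are of type $\Gamma\cdot\Gamma$, $\Gamma\cdot H$, or vanish).

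For equation~\eqref{eq:c2onB}, apply Gauss-Bonnet \autoref{prop:chiviaTlog} to identify $\int_{\ol\frakB}c_2(\Omega^1_{\ol\frakB}(\log D_{\hor}))$ with the orbifold Euler characteristic $\chi(\frakB)$. Decompose $\chi(\frakB)=\chi(\cQ)-\chi(D_L^\cQ)-\chi(D_\Lambda^\cQ)+\#\{\text{contracted stacky points with orbifold weights}\}$ via the stratification and the contraction, and evaluate each term using \autoref{cor:kstrata} on $\ol\cQ$ and its restriction to one-dimensional boundary components. The main obstacle throughout is combinatorial: tracking stacky isotropy at the contracted points, the descent factors between $\ol\cH_k$ and~$\ol\cQ$, and the enumeration of intersection configurations---with the list of $27$ INT-tuples from \cite{DeligneMostow86} and the ball-quotient volumes in \cite{McMullenGaussBonnet, KozNgu} providing sanity checks.
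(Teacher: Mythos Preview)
Your approach is essentially the paper's: apply \autoref{thm:c1cor} on~$\ol\cQ$, eliminate~$\zeta$ via \autoref{cor:k-adrien} and Keel's relations on~$\barmoduli[0,5]$, then push forward along the contraction; square using \autoref{le:self_intersection_bq_downstairs}; and compute~$c_2$ by an Euler-characteristic stratification of~$\ol\frakB$.

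One computational slip to correct: the top level of~$\Gamma_{ij}$ carries \emph{four} special points (three legs plus one half-edge), not three, so $N_{\Gamma_{ij}}^\top=2$ and the coefficient from \autoref{thm:c1cor} is $N-N_{\Gamma_{ij}}^\top=1$, not~$2$. Thus after subtracting the non-horizontal divisors to pass from $\log\partial\cQ$ to $\log D_{\hor}$, the $\Gamma$-divisors drop out while~$L$ and~$\Lambda$ survive with coefficient~$1$:
\[
c_1(\Omega^1_{\ol\cQ}(\log D_{\hor})) \= \tfrac{3}{k}\,\zeta + \sum_{L} [D_L^\cQ] + \sum_{\Lambda} [D_\Lambda^\cQ]\,.
\]
This is exactly the starting point the paper takes before invoking Keel. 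With this fix the rest of your outline goes through as stated.
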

\par
\begin{proof} To derive~\eqref{eq:c1onB} from \autoref{thm:c1cor} we insert into
\bes
c_1(\Omega^1_\cQ(\log D_{\hor})) \= \frac{3}{k} \cdot \zeta +
\sum_{\mathrm{L}} [D_\mathrm{L}^\cQ] + \sum_{\Lambda} [D_\Lambda^{\cQ}]
\ees
that $5\xi - \sum (m_i+k) \psi_i$ is a sum of boundary terms by the
relation~\eqref{cor:k-adrien}. Consider Keel's relation
\bes
\psi_i \= \frac16 \sum_{c <d \atop i \not\in\{c,d\}}  \Delta_{cd} + \frac12
\sum_{a \neq i} \Delta_{ia}\,,
\ees
where $\Delta_{ij}$ is the boundary divisor in $\barmoduli[0,5]$ where the
points $(i,j)$ have come together. We pull back this relation via the forgetful map
$\pi: \bP\kLMS[\mu][0,5] \to \barmoduli[0,5]$. Since this map is a root-stack
construction  and the isotropy groups of the divisors were computed in th
proof of \autoref{le:contractible}, we obtain
\bes
\pi^* \Delta_{ab} \=   \begin{cases}
\frac1{|\kappa_{ab}|} [D_{\mathrm{L}_{ab}}^\cQ] & \text{if $a+b < -k$} \\
[D_{\mathrm{H}_{ab}}]  & \text{if $a+b = -k$} \\
\frac1{\kappa_{ab}} [D_{\mathrm{\Gamma}_{ab}}^\cQ] + \sum_{\substack{i<j,\,a_i+a_j < k\\i,j\notin\{a,b\}}} \frac1{\kappa_{ab}}
[D_{\cherry[i,j][a,b]}^\cQ] & \text{if $a+b > -k$.} \\
 \end{cases}
\ees
Putting everything together we find in $\CH_1(\cQ)$ that
\ba \label{eq:c1onQ} 
c_1(\Omega^1_{\cQ}(\log D_{\hor})) &\= \sum_{i,j \in \mathbf{\Gamma}} (\frac{k}{2\kappa_{i,j}} - 1) [D_{\Gamma_{i,j}}^{\cQ}] + \sum_{i,j \in \mathbf{L}} (\frac{k}{2|\kappa_{i,j}|} - 1) [D_{\mathrm{L}_{i,j}}^{\cQ}] \\
& \phantom{\=}
+ \sum_{i,j,p,q \in \mathbf{\Lambda}} (\frac{k}{2\kappa_{i,j}} + \frac{k}{2\kappa_{p,q}} - 1) [D_{\cherry}^{\cQ}] + \frac{1}{2} [D_{\hor}^\cQ] 
\ea
and since the divisors $D_{\mathrm{L}_{i,j}}^{\cQ}$ and $D_{\cherry}^{\cQ}$ are smoothly
contractible we deduce~\eqref{eq:c1onB}.

To derive~\eqref{eq:c1squareonB} we first note that
$-\tfrac{1}{4}|\mathbf{\Gamma}| + \tfrac{1}{2}|\mathbf{\Lambda}| + \tfrac{5}{4}|\mathbf{H}| + \tfrac{5}{4}|\mathbf{L}| = 5$
and that for $(i,j) \in \mathbf{L}$ the relation
\bes
	1 + \sum_{\substack{p \in \{1,\dots,5\}\setminus \{i,j\} \\ \{q,r\} = \{1,\dots,5\}\setminus\{i,j,p\}}} \left( -\frac{\kappa_{p,q}+\kappa_{p,r}}{k} + 2\frac{\kappa_{p,q}\kappa_{p,r}}{k^2} + \frac{\kappa_{q,r}^2}{k^2} \right) = 4\frac{\kappa_{i,j}^2}{k^2}
\ees
holds because of $\sum_i a_i = 2k$.
Using those relations and the intersection numbers in~\autoref{le:self_intersection_bq_downstairs} squaring~\eqref{eq:c1onB} yields
\bes
	c_1(\Omega^1_{\ol\frakB}(\log D_{\hor}))^2 \= 5
	-\sum_{i,j \in\mathbf{\Gamma}} \left(2\frac{\kappa_{i,j}}{k} + \frac{\kappa_{i,j}^2}{k^2}\right)
	+2\sum_{i,j,p,q \in\mathbf{\Lambda}} \frac{\kappa_{i,j}\kappa_{p,q}}{k^2} \\
	+4\sum_{i,j\in\mathbf{L}} \frac{\kappa_{i,j}^2}{k^2}
\ees
and~\eqref{eq:c1squareonB} follows because $\sum_i a_i = 2k$ implies
\be \label{eq:zero_relation}
	1+\sum_{i,j \in\mathbf{\Gamma}} \left(-\frac{\kappa_{i,j}}{k} + \frac{\kappa_{i,j}^2}{k^2}\right)
	+\sum_{i,j,p,q \in\mathbf{\Lambda}} \frac{\kappa_{i,j}\kappa_{p,q}}{k^2} \\
	-\sum_{i,j\in\mathbf{L}} \frac{\kappa_{i,j}^2}{k^2} \= 0\,.
\ee

The second Chern class can be computed as
\[
	c_2(\Omega^1_{\ol\frakB}(\log D_{\hor})) \= \chi(\cM_{0,5}) + \sum_{i,j \in \mathbf{\Gamma}} \chi(D_{\Gamma_{i,j}}^{\frakB,\circ})
	+ \sum_{i,j \in \mathbf{L}} \chi(D_{\wt L_{i,j}}^{\frakB})
	+ \sum_{i,j,p,q \in \mathbf{\Lambda}} \chi(D_{{}_{i,j} \wt \Lambda_{p,q}}^{\frakB}),
\]
where $\chi(D_{\Gamma_{i,j}}^{\frakB,\circ}) = \chi(D_{\Gamma_{i,j}}^{\cQ,\circ}) = \frac{\kappa_{i,j}}{k}$ be \autoref{le:top_degree_comp_new2} and \autoref{le:Spi} and the Euler characteristics of the points are given in \autoref{le:contractible}.
\end{proof}
\par
\subsection{The ball quotient certificate}
We can finally put together the previous intersection numbers and use our ball quotient criterion to show that the contracted spaces are ball quotients.

\begin{proof}[Proof of \autoref{intro:BQcertificate}]
We apply \autoref{prop:BQcrit} and check that first that the only log-exceptional
curves for $c_1(\Omega^1_{\ol\frakB}(\log D_{\hor}))$ are the components of $D_{\hor}$.
In fact since the expression~\eqref{eq:c1onB} is an effective divisor and since
$\ol{\frakB} \setminus \cD \cong \moduli[0,5]$ is affine, we only have to check
positivity of $c_1^2$ and the intersection with $D_{H_{ab}}$ and
$D_{\Gamma_{i,j}}^{\frakB}$. For the $D_{\Gamma_{i,j}}^{\frakB}$-intersections this follows
from the intersection numbers in \autoref{le:self_intersection_bq_downstairs}.
In fact, the self-intersection number of~$D_{\Gamma_{i,j}}^{\frakB}$ is negative
only if $a_p + a_q \leq k$ for any pair $\{p,q\}$ disjoint from~$\{i,j\}$.
Using \autoref{le:self_intersection_bq} we compute in this case that
\bes
[D_{\Gamma_{i,j}}^{\frakB}] \cdot c_1(\Omega^1_{\ol\frakB}(\log D_{\hor})) \=
\frac{\kappa_{ij}}k \Bigl(\frac{2a_p + 2a_q + 2a_r - a_i - a_j}{k} -1\Bigr)\,,
\ees
where $\{a_1,a_2,a_3,a_4,a_5\} = \{a_i,a_j,a_p,a_q,a_q\}$. Since $a_i+a_j <k$,
this expression is positive. Moreover, one directly computes
\bes
[D_{H_{a,b}}] \cdot c_1(\Omega^1_{\ol\frakB}(\log D_{\hor})) \= 0\,.
\ees
\par
That $c_1(\Omega^1_{\ol\frakB}(\log D_{\hor}))^2 > 0$ is a consequence of the above, as $c_1(\Omega^1_{\ol\frakB}(\log D_{\hor}))$ is by Equation~\eqref{eq:c1onB} a linear combination of the divisors $D_{\Gamma_{i,j}}^\frakB$ and $D^\frakB_{\hor}$ with positive coefficients.
\end{proof}


\printbibliography

\end{document}